\DeclareFontFamily{U}{mathx}{\hyphenchar\font45}
\DeclareFontShape{U}{mathx}{m}{n}{
      <5> <6> <7> <8> <9> <10>
      <10.95> <12> <14.4> <17.28> <20.74> <24.88>
      mathx10
      }{}
\DeclareSymbolFont{mathx}{U}{mathx}{m}{n}
\DeclareMathSymbol{\bigtimes}{1}{mathx}{"91}
\newcommand{\overbar}[1]{\mkern 6.5mu\overline{\mkern-5.5mu#1\mkern-2.5mu}\mkern 2.5mu}
\newcommand{\overtilde}[1]{\mkern 6.5mu\widetilde{\mkern-5.5mu#1\mkern-2.5mu}\mkern 2.5mu}
\newcommand{\Td}{\mathrm{d}}
\newcommand{\TL}{\mathrm{L}}
\newcommand{\TR}{\mathrm{R}}
\newcommand{\BIb}{{\boldsymbol{b}}}
\newcommand{\BIe}{{\boldsymbol{e}}}
\newcommand{\BIg}{{\boldsymbol{g}}}
\newcommand{\BIj}{{\boldsymbol{j}}}
\newcommand{\BIk}{{\boldsymbol{k}}}
\newcommand{\BIl}{{\boldsymbol{l}}}
\newcommand{\BIp}{{\boldsymbol{p}}}
\newcommand{\BIq}{{\boldsymbol{q}}}
\newcommand{\BIs}{{\boldsymbol{s}}}
\newcommand{\BIu}{{\boldsymbol{u}}}
\newcommand{\BIv}{{\boldsymbol{v}}}
\newcommand{\BIw}{{\boldsymbol{w}}}
\newcommand{\BIx}{{\boldsymbol{x}}}
\newcommand{\BIy}{{\boldsymbol{y}}}
\newcommand{\BIz}{{\boldsymbol{z}}}
\newcommand{\BIJ}{{\boldsymbol{J}}}
\newcommand{\BIK}{{\boldsymbol{K}}}
\newcommand{\BIL}{{\boldsymbol{L}}}
\newcommand{\BIT}{{\boldsymbol{T}}}
\newcommand{\CA}{\mathcal{A}}
\newcommand{\CB}{\mathcal{B}}
\newcommand{\CC}{\mathcal{C}}
\newcommand{\CCCD}{\mathcal{D}}
\newcommand{\CE}{\mathcal{E}}
\newcommand{\CF}{\mathcal{F}}
\newcommand{\CG}{\mathcal{G}}
\newcommand{\CH}{\mathcal{H}}
\newcommand{\CI}{\mathcal{I}}
\newcommand{\CL}{\mathcal{L}}
\newcommand{\CP}{\mathcal{P}}
\newcommand{\CQ}{\mathcal{Q}}
\newcommand{\CV}{\mathcal{V}}
\newcommand{\CX}{\mathcal{X}}
\newcommand{\CY}{\mathcal{Y}}
\newcommand{\CZ}{\mathcal{Z}}
\newcommand{\BCX}{{\boldsymbol{\mathcal{X}}}}
\newcommand{\FC}{\mathfrak{C}}
\newcommand{\FF}{\mathfrak{F}}
\newcommand{\FL}{\mathfrak{L}}
\newcommand{\SFB}{\mathsf{B}}
\newcommand{\SFL}{\mathsf{L}}
\newcommand{\SFO}{\mathsf{O}}
\newcommand{\SFT}{\mathsf{T}}
\newcommand{\SFU}{\mathsf{U}}
\newcommand{\Bzeta}{{\boldsymbol{\zeta}}}
\newcommand{\Biota}{{\boldsymbol{\iota}}}
\newcommand{\Blambda}{{\boldsymbol{\lambda}}}
\newcommand{\Bsigma}{{\boldsymbol{\sigma}}}
\newcommand{\Bphi}{{\boldsymbol{\phi}}}
\newcommand{\R}{\mathbb{R}} % real number
\newcommand{\Q}{\mathbb{Q}} % rational number
\newcommand{\N}{\mathbb{N}} % natural number
\newcommand{\vecone}{\mathbf{1}} % vector of ones
\newcommand{\veczero}{\mathbf{0}} % vector of zeros
\newcommand{\vecinfty}{\boldsymbol{\infty}} % vector of infinities
\newcommand{\INDI}{\mathbbm{1}} % indicator function
\newcommand{\TRANSP}{\mathsf{T}} % transpose
\newcommand{\DIFF}{\Td} % differentiation sign
\newcommand{\DIFFX}[1]{\,\Td{#1}} % differentiation with a measure or a variable name
\newcommand{\DIFFM}[2]{\,{#1}({#2})}% % differentiation with a measure and a variable
\newcommand{\lspan}{\mathrm{span}} % span
\newcommand{\PROB}{\mathbb{P}} % probability measure
\newcommand{\EXP}{\mathbb{E}} % expectation
\newcommand{\clos}{\mathrm{cl}} % closure
\newcommand{\inter}{\mathrm{int}} % interior
\newcommand{\relint}{\mathrm{relint}} % relative interior
\newcommand{\relbd}{\mathrm{relbd}} % relative boundary
\newcommand{\support}{\mathrm{supp}} % support (of measure)
\newcommand{\conv}{\mathrm{conv}} % convex hull
\newcommand{\aff}{\mathrm{aff}} % affine hull
\newcommand{\cone}{\mathrm{cone}} % conic hull
\newcommand{\specialoverline}[1]{\makebox[0pt]{$\phantom{#1}\overline{\phantom{#1}}$}#1}
\DeclareMathOperator*{\argmin}{arg\,min} % argument that minimizes
\DeclareMathOperator*{\argmax}{arg\,max} % argument that maximizes
\DeclareMathOperator*{\minimize}{\mathrm{minimize}} % minimize, for optimization problems
\DeclareMathOperator*{\maximize}{\mathrm{maximize}} % maximize, for optimization problems
\newtheorem{theorem}{Theorem}[section]
\newtheorem{corollary}[theorem]{Corollary}
\newtheorem{lemma}[theorem]{Lemma}
\newtheorem{definition}[theorem]{Definition}
\newtheorem{remark}[theorem]{Remark}
\newtheorem{proposition}[theorem]{Proposition}
\newtheorem{setting}[theorem]{Setting}
\newtheorem{assumption}[theorem]{Assumption}
\numberwithin{equation}{section}
\numberwithin{figure}{section}
\numberwithin{table}{section}
\begin{document}

\author[A. Neufeld]{Ariel Neufeld}
\author[Q. Xiang]{Qikun Xiang}

\address{Division of Mathematical Sciences, Nanyang Technological University, 21 Nanyang Link, 637371 Singapore}
\email{ariel.neufeld@ntu.edu.sg}

\address{Division of Mathematical Sciences, Nanyang Technological University, 21 Nanyang Link, 637371 Singapore}
\email{qikun.xiang@ntu.edu.sg}

\date{}

\title[Numerical method for multi-marginal optimal transport beyond discrete measures]{Numerical method for feasible and approximately \\ optimal solutions of multi-marginal optimal transport \\ beyond discrete measures}

\thanks{To appear in Foundations of Computational Mathematics.}

\thanks{AN and QX gratefully acknowledge the financial support by the MOE AcRF Tier 2 Grant \textit{MOE-T2EP20222-0013}.}

\keywords{multi-marginal optimal transport; linear semi-infinite optimization; Wasserstein barycenter}

\subjclass{49M20, 49Q22, 90C08}

\begin{abstract}
We propose a numerical algorithm for the computation of multi-marginal optimal transport (MMOT) problems involving general probability measures that are not necessarily discrete. 
By developing a relaxation scheme in which marginal constraints are replaced by finitely many linear constraints and by proving a specifically tailored duality result for this setting, we approximate the MMOT problem by a linear semi-infinite optimization problem. 
Moreover, we are able to recover a feasible and approximately optimal solution of the MMOT problem, and its sub-optimality can be controlled to be arbitrarily close to 0 under mild conditions. 
The developed approximation scheme leads to a numerical algorithm which can compute a feasible and approximately optimal solution of the MMOT problem with arbitrarily small sub-optimality. 
Besides the approximately optimal solution, the algorithm also computes upper and lower bounds for the optimal value of the MMOT problem. 
The difference between the computed bounds provides an explicit sub-optimality bound for the computed approximately optimal solution. 
We demonstrate the proposed algorithm in three numerical experiments involving an MMOT problem that stems from fluid dynamics, the Wasserstein barycenter problem, and a large-scale MMOT problem with 100 marginals.
We observe that our algorithm is capable of computing high-quality solutions of these MMOT problems and the computed sub-optimality bounds are much less conservative than their theoretical upper bounds in all the experiments.  
Moreover, we compare our algorithm with existing regularization-based algorithms to showcase its advantages.
\end{abstract}

\maketitle

\section{Introduction}\label{sec:introduction}

In this paper, we develop a numerical method for the computation of multi-marginal optimal transport (MMOT) problems involving general probability measures that are not necessarily discrete. 
Given \mbox{$N\in\N$} Borel probability measures $\mu_1,\ldots,\mu_N$ on Polish spaces $\CX_1,\ldots,\CX_N$ and a cost function \sloppy{\mbox{$f:\CX_1\times\cdots\times\CX_N\to\R$}}, 
we solve the following minimization problem:
\begin{align}
\inf_{\mu\in\Gamma(\mu_1,\ldots,\mu_N)}\bigg\{\int_{\CX_1\times\cdots\times\CX_N}f(x_1,\ldots,x_N)\DIFFM{\mu}{\DIFF x_1,\ldots,\DIFF x_N}\bigg\},
\label{eqn:mmot}
\tag{$\mathsf{OT}$}
\end{align}
where $\Gamma(\mu_1,\ldots,\mu_N)$ denotes the set of couplings of $\mu_1,\ldots,\mu_N$ that is defined below.

\begin{definition}[Coupling]\label{def:coupling}
For $k\in\N$ Polish spaces $\CY_1,\ldots,\CY_k$ and $k$ probability measures \sloppy{${\nu_1\in\CP(\CY_1)},\ldots,\nu_k\in\CP(\CY_k)$}, let $\Gamma(\nu_1,\ldots,\nu_k)$ denote the set of couplings of $\nu_1,\ldots,\nu_k$, defined as 
\begin{align*}
\Gamma(\nu_1,\ldots,\nu_k):=\Big\{\gamma\in\CP(\CY_1\times\cdots\times \CY_k):\text{the marginal of }\gamma\text{ on }\CY_j\text{ is }\nu_j\text{ for }j=1,\ldots,k\Big\}.%
\end{align*}%
\end{definition}%

This is an extension of the classical two-marginal (i.e., $N=2$) optimal transport problem of Monge and Kantorovich, which has been thoroughly studied in the literature; see, e.g., \citep{villani2003topics, villani2008optimal, mccann2014glimpse, peyre2019computational} as well as the recent survey by \citet{benamou2021optimal} and the references therein for applications of optimal transport and discussions about its computation. 
For various theoretical results in the general multi-marginal case (i.e., $N>2$), we refer the reader to the duality results by \citet{kellerer1984duality}, the results on the conditions for the existence of {Monge-type} optimal solutions by \citet{pass2024general}, as well as the survey by \mbox{\citet{pass2015multi}} and the references therein.

The MMOT problem serves as the basis of several related problems such as the Wasserstein barycenter problem \citep{agueh2011barycenters} and the martingale optimal transport problem \citep{beiglbock2013model}. 
The original MMOT problem and its various extensions have many theoretical and practical applications, including but not limited to:
theoretical economics \citep{chiappori2010hedonic, carlier2010matching, galichon2016optimal}, 
density functional theory (DFT) in quantum mechanics \citep{buttazzo2012optimal, cotar2013density, chen2014numerical, cotar2018smoothing, friesecke2022genetic, hu2023global},
computational fluid mechanics \citep{benamou2000computational, benamou2019generalized},
mathematical finance \citep{beiglbock2013model, MOTduality, DonlinskySoner, eckstein2019robust, aquino2019bounds, guyon2024dispersion, henry2019martingale}, 
robust risk management \citep{eckstein2018robust, ennaji2024robust},
statistics \citep{srivastava2015wasp, srivastava2018scalable},
machine learning \citep{rabin2011wasserstein},
tomographic image reconstruction~\citep{abraham2017tomographic},
signal processing \citep{elvander2020multi, haasler2021multi}, 
and operations research \citep{chen2018distributionally, gao2017distributionally, gao2017data}.

There exists a vast literature on the computational aspect of MMOT and related problems. 
Many such studies focus on the case where the marginals are discrete measures with finite support \citep{benamou2015iterative, altschuler2023polynomial, tupitsa2020multimarginal, puccetti2020on}, or where non-discrete marginals are replaced by their discrete approximations \citep{carlier2015numerical, guo2019computational, eckstein2019robust, friesecke2022genetic, hu2023global}. 
When all marginals have finite support, the MMOT problem corresponds to a linear programming problem typically involving a large number of decision variables. 
Moreover, when replacing non-discrete marginals with their discrete approximations, the optimal solutions of the linear programming problem and its dual are not feasible solutions of the original MMOT problem and its dual without discretization. 
This is a crucial shortcoming of discretization-based methods since one is only able to obtain an \textit{infeasible} solution of the MMOT problem that approximates the actual optimal solution, and the approximation error can only be controlled using theoretical estimates that are typically over-conservative in practice.

Instead of discretizing the marginals, \citet{alfonsi2021approximation} and \citet{alfonsi2022constrained} have explored an alternative approximation scheme of MMOT and related problems via relaxing the marginal constraints into a finite collection of linear constraints with respect to test functions. 
It is shown that there exists a discrete probability measure with sparse support that optimizes the relaxed MMOT problem.
Despite that \citet{alfonsi2021approximation} have established the convergence of the optimal solutions of the relaxed MMOT problems to a true optimal solution of the MMOT problem in the limit, a discrete optimal solution of any particular relaxation remains \textit{infeasible} for the MMOT problem.
Moreover, the numerical algorithms in \citep{alfonsi2021approximation} and \citep{alfonsi2022constrained} approximate a discrete optimal solution of the relaxed MMOT problem by optimizing over the positions and probabilities of a finite collection of atoms (i.e., Dirac measures), which corresponds to solving a non-convex optimization problem. 
Hence, there is no guarantee that these algorithms will converge to a globally optimal solution of the relaxed MMOT problem, and it is difficult to quantify and control the approximation error.
\citet{zhou2024efficient} developed an algorithm based on dual gradient ascent for computing a lower bound for MMOT problems with non-discrete marginals when the cost function has a pair-wise structure.
However, the difference between the computed lower bound and the optimal value of the MMOT problem is non-trivial and hard to control unless the cost function has a tree structure.

There are numerical methods for solving MMOT and related problems which use regularization to speed up the computation. 
Most notably, \citet{cuturi2013sinkhorn} proposed to use entropic regularization and the Sinkhorn algorithm for solving the classical optimal transport problem (i.e., when $N=2$). 
See also \citep{nutz2021entropic, eckstein2021quantitative} for the theoretical properties of entropic regularization and the Sinkhorn algorithm. 
While most regularization-based methods deal with discrete marginals, see, e.g., \citep{benamou2019generalized, peyre2019computational, tupitsa2020multimarginal, fan2024parallel}, there are also regularization-based methods for solving MMOT problems with non-discrete marginals. 
These methods involve approximating an infinite-dimensional optimization problem by finite-dimensional parametrizations such as
with reproducing kernel Hilbert spaces, see, e.g., \citep{genevay2016stochastic},
and with deep neural networks, see, e.g., \citep{eckstein2018robust, eckstein2019computation, eckstein2019robust, aquino2019bounds, aquino2020minmax, henry2019martingale, cohen2020estimating}. 
A limitation of regularization-based methods is that the regularization term introduces a bias (see, e.g., \citep[Section~3.3]{benamou2021optimal}) which only goes to 0 asymptotically when the regularization parameter goes to~0. 
When the regularization parameter is close to~0, numerical instability may arise and complicate the computational procedure; see \citep[Section~4.4]{peyre2019computational} for a detailed discussion of this issue in the Sinkhorn algorithm and some remedies. 
Moreover, using deep neural networks to parametrize infinite-dimensional decision variables incurs practical challenges when training the neural networks due to the non-convexity of the objective function, and thus the trained neural networks may represent optimal solutions of the MMOT problem poorly.

In this paper, we tackle the MMOT problem \eqref{eqn:mmot} in its original form without discretization or regularization, and we develop a numerical algorithm that is able to compute \textit{feasible} and \textit{approximately optimal} solutions of \eqref{eqn:mmot}. 
Let us present an overview of our numerical method.
Our method relaxes the marginal constraints in \eqref{eqn:mmot} by linear constraints with respect to a \textit{finite} collection of properly chosen \textit{test functions}, that is, for $i=1,\ldots,N$, rather than requiring the marginal of $\mu$ on $\CX_i$ to be equal to $\mu_i$, we allow the marginal of $\mu$ on $\CX_i$ to be any $\nu_i\in\CP(\CX_i)$ that satisfies the linear constraints $\int_{\CX_i}g\DIFFX{\nu_i}=\int_{\CX_i}g\DIFFX{\mu}_i$ $\forall g\in\CG_i$
with respect to a finite collection $\CG_i$ of test functions on~$\CX_i$.
We call $[\mu_i]_{\CG_i}:=\big\{\nu_i\in\CP(\CX_i):\int_{\CX_i}g\DIFFX{\nu_i}=\int_{\CX_i}g\DIFFX{\mu_i}\;\forall g\in\CG_i\big\}$ the \textit{moment set} centered at $\mu_i$ characterized by the test functions~$\CG_i$. 
The dual of this relaxed MMOT problem corresponds to a linear semi-infinite programming problem which is easier to tackle numerically compared to the original MMOT problem.
Approximately solving this relaxation of \eqref{eqn:mmot} then results in a lower bound for the optimal value of \eqref{eqn:mmot} as well as an infeasible solution $\hat{\mu}\notin\Gamma(\mu_1,\ldots,\mu_N)$.
Subsequently, we recover a feasible solution $\tilde{\mu}$ of \eqref{eqn:mmot} via an operation that we call \textit{reassembly} derived by applying the ``gluing'' operation to the infeasible solution $\hat{\mu}\in\CP(\CX_1\times\cdots\times\CX_N)$ and the pair-wise optimal couplings between the marginals $\hat{\mu}_1,\ldots,\hat{\mu}_N$ of $\hat{\mu}$ and the target marginals $\mu_1,\ldots,\mu_N$ that realize the \mbox{1-Wasserstein} metrics $W_1(\hat{\mu}_1,\mu_1),\ldots,W_1(\hat{\mu}_N,\mu_N)$. 
The constructed feasible solution $\tilde{\mu}$ provides an upper bound for the optimal value of \eqref{eqn:mmot} and the sub-optimality of $\tilde{\mu}$ can be controlled by 
the supremum \mbox{1-Wasserstein} metric between $\mu_i$ and any $\nu_i\in[\mu_i]_{\CG_i}$.
Through thoughtful designs of the test functions $\CG_1,\ldots,\CG_N$, we are able to control the sub-optimality of $\tilde{\mu}$ to be arbitrarily close to~0.
Below is a summary of our main contributions.%
\begin{enumerate}[label=(\arabic*), topsep=0pt]%
\item We develop a relaxation scheme \eqref{eqn:mmotlb} for \eqref{eqn:mmot}, and we introduce the notion of \textit{reassembly} (Definition~\ref{def:reassembly}) for constructing a feasible solution of \eqref{eqn:mmot} when the marginals $\mu_1,\ldots,\mu_N$ are non-discrete (see Theorem~\ref{thm:lowerbound}). 
Moreover, we show that an optimal solution of \eqref{eqn:mmot} can be obtained in the limit if the sub-optimality of the constructed feasible solution can be controlled to shrink to~0 (see Theorem~\ref{thm:lowerboundconverge}).

\item We provide duality results tailored to \eqref{eqn:mmotlb} and its dual \eqref{eqn:mmotlb-dual-lsip} (see Theorem~\ref{thm:duality}). 
Moreover, we analyze the support sparsity of (approximately) optimal solutions of \eqref{eqn:mmotlb} (see Proposition~\ref{prop:mmotlb-sparsity}) as well as the computational complexity of \eqref{eqn:mmotlb-dual-lsip} (see Theorem~\ref{thm:mmot-complexity}).

\item In the compact Euclidean case, i.e., when $\CX_1,\ldots,\CX_N$ are compact subsets of Euclidean spaces, 
we control the sub-optimality of the constructed feasible solution of \eqref{eqn:mmot} to be arbitrarily close to~0
by explicitly constructing moment sets on $\CX_1,\ldots,\CX_N$ whose supremum
\mbox{1-Wasserstein} metrics to $\mu_1,\ldots,\mu_N$ can be controlled to be arbitrarily close to~0, 
and we quantify the number of test functions needed to achieve $\epsilon$-optimality for any $\epsilon>0$ (see Setting~\ref{sett:compact} and Proposition~\ref{prop:momentset-errorcontrol}). 
More generally, 
we extend these results to the unbounded Euclidean case, where
we explicitly construct finitely many test functions such that the supremum \mbox{$p$-Wasserstein} metric (for $p\in[1,\infty)$) within the resulting moment set can be controlled to be arbitrarily close to~0 (see Proposition~\ref{prop:cover-cube} and Theorem~\ref{thm:momentset-euclidean}). 
This yields a non-asymptotic analog of the results of \citet{breeden1978prices} and \citet{talponen2014note} for the model risk involved in pricing financial derivatives given market-implied information (see Corollary~\ref{cor:breedenlitzenberger}). 
These extended results are of independent interest and are presented in Section~\ref{sec:momentset-extended}.

\item We develop a numerical algorithm (Algorithm~\ref{alg:cp-mmot}) for solving the linear semi-infinite optimization problem \eqref{eqn:mmotlb-dual-lsip}. 
Subsequently, we develop a numerical algorithm (Algorithm~\ref{alg:mmot}) which, for any given $\epsilon>0$, is not only capable of computing a \textit{feasible} $\epsilon$-optimal solution of \eqref{eqn:mmot} but also capable of computing upper and lower bounds for its optimal value such that the difference between the bounds is at most $\epsilon$ (see Theorem~\ref{thm:mmotalgo}). 

\item We perform three numerical experiments involving problem instances that include an MMOT problem that stems from fluid dynamics, the Wasserstein barycenter problem, and a large-scale MMOT problem with $N=100$ marginals,
and we compare the proposed algorithm with two regularization-based algorithms by \citet*{genevay2016stochastic} and \citet*{eckstein2019computation}.
The numerical results show that the proposed algorithm outperforms the two existing algorithms as it 
produces feasible high-quality solutions
and computes sub-optimality estimates that are much less conservative than purely theoretically derived sub-optimality estimates.
\end{enumerate}%

The rest of this paper is organized as follows. 
Section~\ref{ssec:settings} introduces the notations in the paper and the settings of the MMOT problem. 
In Section~\ref{ssec:relaxation}, we develop the notions of \textit{moment set} and \textit{reassembly}, and introduce the approximation scheme for \eqref{eqn:mmot}. 
Section~\ref{ssec:duality} presents the duality results specifically tailored to the relaxed MMOT problem, while Section~\ref{ssec:mmot-complexity} presents an analysis of the computational complexity of the resulting linear semi-infinite optimization problem.
In Section~\ref{ssec:reassemblysemidiscrete}, we develop results on the explicit construction of \textit{reassembly}.
In Section~\ref{ssec:momentsetdd}, we explicitly construct test functions in the compact Euclidean case to control the supremum \mbox{1-Wasserstein} metric within the resulting \textit{moment set}.
Section~\ref{sec:numerics} presents our numerical algorithm for approximately solving \eqref{eqn:mmot} and its theoretical properties.
Section~\ref{sec:experiments} showcases the performance of our numerical algorithm in three numerical experiments.
Our extended results on controlling the supremum $p$-Wasserstein metric within a moment set are presented in Section~\ref{sec:momentset-extended}.
Lastly, Section~\ref{sec:proof-theory} contains the proofs of the theoretical results in this paper.

\section{Approximation of multi-marginal optimal transport}\label{sec:theory}

\subsection{Settings and notations}\label{ssec:settings}
Throughout this paper, all vectors are assumed to be column vectors. 
We denote vectors and vector-valued functions by boldface symbols.
In particular, for $n\in\N$, we denote by $\veczero_n$ the vector in $\R^n$ with all entries equal to zero, i.e., $\veczero_n:=(\underbrace{0,\ldots,0}_{n\text{ times}})^\TRANSP$, and we denote by $\vecone_n$ the vector in $\R^n$ with all entries equal to one, i.e., $\vecone_n:=(\underbrace{1,\ldots,1}_{n\text{ times}})^\TRANSP$. We also use $\veczero$ and $\vecone$ when the dimension is unambiguous.
We denote by $\langle\,\cdot\,,\cdot\,\rangle$ the Euclidean dot product, i.e., $\langle\BIx,\BIy\rangle:=\BIx^\TRANSP\BIy$, and we denote by $\|\cdot\|_p$ the $p$-norm of a vector for $p\in[1,\infty]$.
Moreover, we denote $a\vee b:=\max\{a,b\}$, $a\wedge b:=\min\{a,b\}$, and $(a)^+:=\max\{a,0\}$ for all $a,b\in\R$.
Furthermore, we let $\lceil\,\cdot\,\rceil$ denote the ceiling function, i.e., for $a\in\R$, $\lceil a\rceil$ is the smallest integer greater than or equal to~$a$.
A subset of a Euclidean space is called a polyhedron or a polyhedral convex set if it is the intersection of finitely many closed half-spaces. 
In particular, a subset of a Euclidean space is called a polytope if it is a bounded polyhedron. 
For a subset $A$ of a Euclidean space, let $\aff(A)$, $\conv(A)$, $\cone(A)$ denote the affine hull, convex hull, and conic hull of $A$, respectively. Moreover, let $\clos(A)$, $\inter(A)$, $\relint(A)$, $\relbd(A)$ denote the closure, interior, relative interior, and relative boundary of $A$, respectively. 

For a Polish space $\CY$ with its corresponding metric $d_{\CY}(\,\cdot\,,\cdot\,)$, let $\CB(\CY)$ denote the Borel subsets of $\CY$, let $\CP(\CY)$ denote the set of Borel probability measures on $\CY$, and let $\CP_p(\CY)$ denote the Wasserstein space of order~$p$ on $\CY$ for $p\in[1,\infty)$, which is defined by 
\begin{align*}
\CP_p(\CY):=\bigg\{\mu\in\CP(\CY):\exists \hat{z}\in\CY\text{ such that }\int_{\CY}d_{\CY}(\hat{z},z)^p\DIFFM{\mu}{\DIFF z}<\infty\bigg\}.
\end{align*}%
Moreover, for any $z\in\CY$, let $\delta_{z}$ denote the Dirac measure at $z$.
For $\nu\in\CP(\CY)$, let $\support(\nu)$ denote the support of $\nu$ and let $\CL^1(\CY,\nu)$ denote the set of $\nu$-integrable functions on $\CY$. 
For any $\mu,\nu\in\CP(\CY)$ and $p\in[1,\infty)$, let $W_p(\mu,\nu)$ denote the Wasserstein metric of order~$p$ between $\mu$ and $\nu$, which is defined by
\begin{align*}
W_p(\mu,\nu):=\left(\inf_{\gamma\in\Gamma(\mu,\nu)}\bigg\{\int_{\CY\times\CY}d_{\CY}(x,z)^p\DIFFM{\gamma}{\DIFF x,\DIFF z}\bigg\}\right)^{\frac{1}{p}}.
\end{align*}
In particular, $W_p(\mu,\nu)<\infty$ if $\mu,\nu\in\CP_p(\CY)$.

In this paper, we consider $N\in\N$ Polish spaces $(\CX_1,d_{\CX_1}),\ldots,(\CX_N,d_{\CX_N})$ with $N$ associated probability measures $\mu_1,\ldots,\mu_N$. This is presented in the following assumption. 
\begin{assumption}\label{asp:productspace}
$(\CX_1,d_{\CX_1}),\ldots,(\CX_N,d_{\CX_N})$ are $N\in\N$ Polish spaces and $\BCX:=\CX_1\times\cdots\times\CX_N$ is a Polish space equipped with the 1-product metric
\begin{align}
d_{\BCX}\big((x_1,\ldots,x_N),(x'_1,\ldots,x'_N)\big):=\sum_{i=1}^Nd_{\CX_i}(x_i,x'_i)\qquad \forall (x_1,\ldots,x_N),(x'_1,\ldots,x'_N)\in\BCX.
\label{eqn:1prod-metric}
\end{align}
Moreover, $\mu_1\in\CP_1(\CX_1),\ldots,\mu_N\in\CP_1(\CX_N)$.
\end{assumption}

For $i=1,\ldots,N$, let $\pi_i:\BCX\to\CX_i$ denote the projection function onto the $i$-th component. 
For $\mu\in\CP(\BCX)$ and $i=1,\ldots,N$, let $\mu\circ\pi_i^{-1}$ denote the push-forward of $\mu$ under $\pi_i$, which will also be referred to as the $i$-th marginal of $\mu$.
We work under the following technical assumption about the cost function \sloppy{${f:\BCX\to\R}$} in \eqref{eqn:mmot}. 

\begin{assumption}\label{asp:mmotexistence}
In addition to Assumption~\ref{asp:productspace}, $f:\BCX\to\R$ is a function that satisfies:
\begin{enumerate}[label=(\alph*)]
\item $f$ is lower semi-continuous;
\item there exist upper semi-continuous functions $(h_i:\CX_i\to\R)_{i=1:N}$ which satisfy $h_i\in\CL^1(\CX_i,\mu_i)$ $\forall 1\le i\le N$ and $\sum_{i=1}^N {h_i\circ\pi_i(\BIx)}\le f(\BIx)$ $\forall \BIx\in\BCX$.
\end{enumerate}
\end{assumption}

By a multi-marginal extension of \citep[Theorem~4.1]{villani2008optimal}, \eqref{eqn:mmot} attains an optimal solution. 

\subsection{The approximation scheme}\label{ssec:relaxation}

In this subsection, we develop a scheme to approximate \eqref{eqn:mmot} such that the approximation error can be controlled. 
The idea of the approximation scheme is to replace the marginal constraints in \eqref{eqn:mmot} with finitely many linear equality constraints. 
The feasible set of the resulting relaxation of \eqref{eqn:mmot}
contains probability measures on $\BCX$ whose marginals belong to convex subsets of $\CP_1(\CX_1),\ldots,\CP_1(\CX_N)$ known as \textit{moment sets}, see, e.g., \citep{winkler1988extreme}. 
The notion of moment set is formally defined as follows.

\begin{definition}[Moment set]\label{def:momentset}
Let $(\CY,d_{\CY})$ be a Polish space and let $p\in[1,\infty)$. For a collection $\CG$ of $\R$-valued Borel measurable functions on $\CY$, let $\CP_p(\CY;\CG):=\big\{\mu\in\CP_p(\CY):\CG\subseteq\CL^p(\CY,\mu)\big\}$. Let $\overset{\CG}{\sim}$ be defined as the following equivalence relation on $\CP_p(\CY;\CG)$: for all $\mu,\nu\in\CP_p(\CY;\CG)$,
\begin{align}
\mu\overset{\CG}{\sim}\nu \quad \Leftrightarrow \quad \forall g\in\CG,\; \int_{\CY}g\DIFFX{\mu}=\int_{\CY}g\DIFFX{\nu}.
\label{eqn:momentequivalence}
\end{align}
For every $\mu\in\CP_p(\CY;\CG)$, let $[\mu]_{\CG}:=\Big\{\nu\in\CP_p(\CY;\CG):\nu\overset{\CG}{\sim}\mu\Big\}$ be the equivalence class of $\mu$ under~$\overset{\CG}{\sim}$. 
We call $[\mu]_{\CG}$ the moment set centered at $\mu$ characterized by the test functions $\CG$. 
In addition, let $\specialoverline{W}_{p}(\mu,[\mu]_{\CG})$ denote the supremum $W_p$-metric between $\mu$ and members of $[\mu]_{\CG}$, i.e., 
\begin{align*}
\specialoverline{W}_{p}(\mu,[\mu]_{\CG}):=\sup_{\nu\in[\mu]_{\CG}}\big\{W_p(\mu,\nu)\big\}.
%\label{eqn:supdistance}
\end{align*}
Let $\lspan_1(\CG)$ denote the set of finite linear combinations of functions in $\CG$ plus a constant intercept, i.e., $\lspan_1(\CG):=\big\{y_0+\sum_{j=1}^{k}y_jg_{j}:k\in\N_0,\;(y_j)_{j=0:k}\subset\R,\;(g_{j})_{j=1:k}\subseteq\CG\big\}$.
It thus holds by (\ref{eqn:momentequivalence}) that if $\nu\in[\mu]_{\CG}$, then $\int_{\CY}g\DIFFX{\mu}=\int_{\CY}g\DIFFX{\nu}$ for all $g\in\lspan_1(\CG)$. In particular, we have $\mu\overset{\CG}{\sim}\nu$ if and only if $\mu\;\overset{\lspan_1(\CG)}{\scalebox{3.5}[1]{$\sim$}}\;\nu$, and $[\mu]_{\CG}=[\mu]_{\lspan_1(\CG)}$.
Furthermore, under Assumption~\ref{asp:productspace},
for any collections 
$\CG_1\subseteq\CL^1(\CX_1,\mu_1),\ldots,\CG_N\subseteq\CL^1(\CX_N,\mu_N)$ of functions on $\CX_1,\ldots,\CX_N$,
we slightly abuse the notation and define $\Gamma\big([\mu_1]_{\CG_1},\ldots,[\mu_N]_{\CG_N}\big)$ as follows:
\begin{align*}
\Gamma\big([\mu_1]_{\CG_1},\ldots,[\mu_N]_{\CG_N}\big):=\Big\{\gamma\in\Gamma(\nu_1,\ldots,\nu_N):\nu_i\in[\mu_i]_{\CG_i}\;\forall 1\le i\le N\Big\}.
%\label{eqn:momentset-productspace}
\end{align*}%
\end{definition}%

Now, for given marginals $\mu_1\in\CP_1(\CX_1),\ldots,\mu_N\in\CP_1(\CX_N)$ and cost function $f:\BCX\to\R$ satisfying Assumption~\ref{asp:mmotexistence}, as well as $N$ collections of test functions $\CG_1\subseteq\CL^1(\CX_1,\mu_1),\ldots,\CG_N\subseteq\CL^1(\CX_N,\mu_N)$, we aim to solve the following relaxation of \eqref{eqn:mmot}:
\begin{align}
\inf_{\mu\in\Gamma([\mu_1]_{\CG_1},\ldots,[\mu_N]_{\CG_N})}\bigg\{\int_{\BCX}f\DIFFX{\mu}\bigg\}.
\label{eqn:mmotlb}
\tag{$\mathsf{OT}_{\mathsf{relax}}$}
\end{align}%

\begin{remark}
In general, the infimum in \eqref{eqn:mmotlb} is not necessarily attained. This is demonstrated by the following examples, where we let $N=1$, $\BCX=\CX_1=\R_+$, and thus $\Gamma\big([\mu_1]_{\CG_1}\big)=[\mu_1]_{\CG_1}$. 
In the first example, the set $[\mu_1]_{\CG_1}$ lacks tightness. 
In the second example, the set $[\mu_1]_{\CG_1}$ is tight but lacks weak closedness. 
\begin{itemize}
\item Example 1: let $f(x):=\frac{2}{x+2}$ $\forall x\in\R_+$, let $\CG_1:=\{g\}$ where $g(x):=\frac{1}{x+1}$ $\forall x\in\R_+$, and let $\mu_1\in\CP_1(\R_+)$ satisfy $\int_{\R_+}g\DIFFX{\mu_1}=\frac{1}{2}$. 
In this case, $\int_{\R_+}f\DIFFX{\nu}>\frac{1}{2}$ for any $\nu\in[\mu_1]_{\CG_1}$. 
However, if we let $\nu_n:=\frac{n-1}{2n}\delta_{0}+\frac{n+1}{2n}\delta_{n}$ for all $n\in\N$, then $\nu_n\in[\mu_1]_{\CG_1}$ for all $n\in\N$, but $\lim_{n\to\infty}\int_{\R_+}f\DIFFX{\nu_n}=\frac{1}{2}$. Hence, the infimum in \eqref{eqn:mmotlb} is not attained. We remark that the sequence $(\nu_n)_{n\in\N}$ does not converge weakly to any probability measure. 

\item Example 2: let $f(x):=x\wedge 1$ $\forall x\in\R_+$, let $\CG_1:=\{g\}$ where $g(x):=x$ $\forall x\in\R_+$, and let $\mu_1\in\CP_1(\R_+)$ satisfy $\int_{\R_+}g\DIFFX{\mu_1}=1$. 
In this case, the only $\nu\in\CP_1(\R_+)$ that satisfies $\int_{\R_+}f\DIFFX{\nu}=\nobreak0$ is $\nu=\delta_0\notin[\mu_1]_{\CG_1}$. 
Hence, $\int_{\R_+}f\DIFFX{\nu}>0$ for any $\nu\in[\mu_1]_{\CG_1}$. 
However, if we let $\nu_n:=\frac{n-1}{n}\delta_{0}+\frac{1}{n}\delta_{n}$ for all $n\in\N$, then $\nu_n\in[\mu_1]_{\CG_1}$ for all $n\in\N$, but $\lim_{n\to\infty}\int_{\R_+}f\DIFFX{\nu_n}=0$.
Hence, the infimum in \eqref{eqn:mmotlb} is not attained. Observe that here the sequence $(\nu_n)_{n\in\N}$ converges weakly to $\delta_{0}$, which is not in $[\mu_1]_{\CG_1}$. 
\end{itemize}%
\end{remark}%

When $|\CG_i|=m_i\in\N$ for $i=1,\ldots,N$,
that is, when the collections of test functions $\CG_1,\ldots,\CG_N$ are finite,
\eqref{eqn:mmotlb} is a linear optimization problem over the space of probability measures on $\BCX$ subject to $\sum_{i=1}^{N}m_i$ linear equality constraints. 
In this case, it is well-known that there exists an approximately optimal solution of \eqref{eqn:mmotlb} which is supported on at most $\big(2+\sum_{i=1}^Nm_i\big)$ points; see, e.g., \citep[Theorem~3.1]{alfonsi2021approximation}. 
This is detailed in the next proposition.

\begin{proposition}[Sparsely supported approximately optimal solutions of \eqref{eqn:mmotlb}]\label{prop:mmotlb-sparsity}
Let Assumption~\ref{asp:mmotexistence} hold. 
For $i=1,\ldots,N$, let $\CG_i=\big\{g_{i,1},\ldots,g_{i,m_i}\big\}\subset\CL^1(\CX_i,\mu_i)$ where $m_i\in\N$, and let $m:=\sum_{i=1}^Nm_i$.
\widowpenalty-1000
Then, the following statements hold.
\begin{enumerate}[label=(\roman*),beginpenalty=10000]
\item\label{props:mmotlb-sparsity-disc}
For any $\epsilon_0>0$, there exist $1\le q\le m+2$, $(a_k)_{k=1:q}\subset(0,1)$ with $\sum_{k=1}^qa_k=1$, and $(\BIx_k)_{k=1:q}\subseteq\BCX$, such that $\hat{\mu}:=\sum_{k=1}^qa_k\delta_{\BIx_k}\in\CP(\BCX)$ is an $\epsilon_0$-optimal solution of~\eqref{eqn:mmotlb},
i.e., 
$\int_{\BCX}f\DIFFX{\hat{\mu}}\le \inf_{\mu\in\Gamma([\mu_1]_{\CG_1},\ldots,[\mu_N]_{\CG_N})}\big\{\int_{\BCX}f\DIFFX{\mu}\big\}+\epsilon_0$.

\item\label{props:mmotlb-sparsity-cont}
Suppose in addition that $\CX_i$ is compact and $g_{i,j}$ is continuous for $j=1,\ldots,m_i$, $i=1,\ldots,N$. 
Then, there exist $1\le q\le m+2$, $(a_k)_{k=1:q}\subset(0,1)$ with $\sum_{k=1}^qa_k=1$, and $(\BIx_k)_{k=1:q}\subseteq\BCX$, such that $\hat{\mu}:=\sum_{k=1}^qa_k\delta_{\BIx_k}\in\CP(\BCX)$ is an optimal solution of \eqref{eqn:mmotlb}.
\end{enumerate}
\end{proposition}

\begin{proof}[Proof of Proposition~\ref{prop:mmotlb-sparsity}]
See Section~\ref{ssec:proof-approximation}.
\end{proof}

To control the approximation error of \eqref{eqn:mmotlb}, we need to introduce additional assumptions on the cost function $f$ and the marginals $\mu_1,\ldots,\mu_N$ besides Assumption~\ref{asp:mmotexistence}. Since these assumptions depend on additional terms that affect the approximation error, let us define the set $\CA\big(L_f,D,\underline{f}_1,\overline{f}_1,\ldots,\underline{f}_N,\overline{f}_N\big)$ as follows.

\begin{definition}\label{def:controlsets}
Let Assumption~\ref{asp:mmotexistence} hold. 
For $L_f>0$, $D\in\CB(\BCX)$, and Borel measurable functions $\underline{f}_i:\CX_i\to\R$, $\overline{f}_i:\CX_i\to\R$ with $\underline{f}_i\le\overline{f}_i$ for $i=1,\ldots,N$, we say that
$(\mu_1,\ldots,\mu_N,f)\in\CA\big(L_f,D,\underline{f}_1,\overline{f}_1,\ldots,\underline{f}_N,\overline{f}_N\big)$ if the following conditions hold:
\begin{enumerate}[label=(\alph*)]
\item $f:\BCX\to\R$ restricted to $D$ is $L_f$-Lipschitz continuous, i.e., $\big|f(\BIx)-f(\BIx')\big|\le L_fd_{\BCX}(\BIx,\BIx')$ $\forall \BIx,\BIx'\in D$;
\item for $i=1,\ldots,N$, $\underline{f}_i$ and $\overline{f}_i$ are $\mu_i$-integrable, and it holds that
\begin{align*}
\sum_{i=1}^N\underline{f}_i\circ\pi_i(\BIx)\le f(\BIx)\INDI_{\BCX\setminus D}(\BIx) \le \sum_{i=1}^N\overline{f}_i\circ\pi_i(\BIx) \qquad \forall \BIx\in\BCX.
\end{align*}
\end{enumerate}%
\end{definition}%

Under the assumption that $(\mu_1,\ldots,\mu_N,f)\in\CA\big(L_f,D,\underline{f}_1,\overline{f}_1,\ldots,\underline{f}_N,\overline{f}_N\big)$,
we are able to construct an approximately optimal solution of \eqref{eqn:mmot} via an approximately optimal solution of \eqref{eqn:mmotlb},
where the approximation error can be controlled by the supremum $W_1$-metrics $\big(\specialoverline{W}_1(\mu_i,[\mu_i]_{\CG_i})\big)_{i=1:N}$.
This construction utilizes an application of the following gluing lemma, which we call \textit{reassembly}.

\begin{lemma}[Gluing lemma {\citep[Lemma~7.6]{villani2003topics}}]\label{lem:gluing}
Suppose that $\nu_1,\nu_2,\nu_3$ are three probability measures on three Polish spaces $(\CY_1,d_{\CY_1})$, $(\CY_2,d_{\CY_2})$, $(\CY_3,d_{\CY_3})$, respectively, and suppose that $\gamma_{12}\in\Gamma(\nu_1,\nu_2)$, $\gamma_{23}\in\Gamma(\nu_2,\nu_3)$. 
Then, there exists a probability measure $\gamma_{123}\in\CP(\CY_1\times \CY_2\times \CY_3)$ with marginal $\gamma_{12}$ on $\CY_1\times \CY_2$ and marginal $\gamma_{23}$ on $\CY_2\times \CY_3$. 
\end{lemma}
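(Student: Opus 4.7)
The plan is to construct $\gamma$ as the coupling that makes $\CY_1$ and $\CY_3$ conditionally independent given $\CY_2$. The decisive tool is the disintegration theorem for Borel probability measures on Polish spaces, which I would invoke twice. First I would disintegrate $\gamma_{12}$ with respect to its second marginal $\nu_2$ to obtain a Borel-measurable Markov kernel $K_{12}:\CY_2\times\CB(\CY_1)\to[0,1]$, uniquely determined $\nu_2$-a.e., satisfying $\gamma_{12}(A_1\times A_2)=\int_{A_2}K_{12}(y_2,A_1)\DIFFM{\nu_2}{\DIFF y_2}$ for all $A_1\in\CB(\CY_1)$ and $A_2\in\CB(\CY_2)$. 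Symmetrically, disintegrating $\gamma_{23}$ along its first marginal $\nu_2$ produces a Borel kernel $K_{23}:\CY_2\times\CB(\CY_3)\to[0,1]$ with the analogous representation.

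Next I would define $\gamma$ on measurable rectangles by
\[
\gamma(A_1\times A_2\times A_3):=\int_{A_2}K_{12}(y_2,A_1)\,K_{23}(y_2,A_3)\DIFFM{\nu_2}{\DIFF y_2}
\]
and extend it to a Borel probability measure on $\CY_1\times\CY_2\times\CY_3$ by the standard Carath\'eodory / $\pi$-$\lambda$ argument. Joint Borel measurability of the kernels guarantees that the integrand is Borel in $y_2$, and the product structure makes $\gamma$ a bona fide probability. Intuitively, this is the law of the random triple $(Y_1,Y_2,Y_3)$ obtained by first sampling $Y_2\sim\nu_2$ and then drawing $Y_1$ and $Y_3$ \emph{independently} from the conditional laws $K_{12}(Y_2,\cdot)$ and $K_{23}(Y_2,\cdot)$.

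Finally I would verify the two marginal conditions. For the $\CY_1\times\CY_2$ marginal, setting $A_3=\CY_3$ uses $K_{23}(y_2,\CY_3)=1$ for $\nu_2$-a.e.\ $y_2$ to collapse the defining formula into $\int_{A_2}K_{12}(y_2,A_1)\DIFFM{\nu_2}{\DIFF y_2}=\gamma_{12}(A_1\times A_2)$; the analogous computation with $A_1=\CY_1$ recovers $\gamma_{23}$ on $\CY_2\times\CY_3$. The one nontrivial ingredient — and the main obstacle if one tried to weaken the hypotheses — is the existence of regular conditional probabilities depending Borel-measurably on the conditioning variable; this is precisely where the Polish assumption on $\CY_1,\CY_2,\CY_3$ (in particular on $\CY_2$, which carries the conditioning) is essential. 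Once the kernels $K_{12}$ and $K_{23}$ are in hand, the rest of the argument is routine bookkeeping with Fubini's theorem.
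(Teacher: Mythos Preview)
Your proof is correct and is in fact the standard disintegration argument for the gluing lemma. Note, however, that the paper does not supply its own proof of this statement: Lemma~\ref{lem:gluing} is quoted directly from \citep[Lemma~7.6]{villani2003topics} and used as a black box (in the proof of Lemma~\ref{lem:reassembly}). Your construction via conditional independence given $\CY_2$, built from the regular conditional probabilities $K_{12}$ and $K_{23}$, is exactly how Villani proves it, so there is nothing to compare---you have reproduced the cited argument.
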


Using the gluing lemma, one can \textit{reassemble} a coupling of $\mu_1,\ldots,\mu_N$ from any coupling of $\hat{\mu}_1\in\CP_1(\CX_1),\ldots,\hat{\mu}_N\in\CP_1(\CX_N)$. This is detailed in Definition~\ref{def:reassembly} and Lemma~\ref{lem:reassembly} below.%

\begin{definition}[Reassembly]\label{def:reassembly}
Let Assumption~\ref{asp:productspace} hold and let $\mu_1,\ldots,\mu_N$ be defined as in Assumption~\ref{asp:productspace}. 
For any $\hat{\mu}\in\CP_1(\BCX)$, let its marginals on $\CX_1,\ldots,\CX_N$ be denoted by $\hat{\mu}_1,\ldots,\hat{\mu}_N$, respectively. 
Moreover, for $i=1,\ldots,N$, let $\bar{\CX}_i:=\CX_i$ in order to differentiate $\CX_i$ and its copy. 
A probability measure $\tilde{\mu}\in\Gamma(\mu_1,\ldots,\mu_N)\subseteq\CP_1(\BCX)$ is called a reassembly of $\hat{\mu}$ with marginals $\mu_1,\ldots,\mu_N$ if there exists $\gamma\in\CP(\CX_1\times\cdots\times\CX_N\times\bar{\CX}_1\times\cdots\times\bar{\CX}_N)$ which satisfies the following conditions:\widowpenalty-1000
\begin{enumerate}[label=(\alph*),beginpenalty=100000]
\item the marginal of $\gamma$ on $\CX_1\times\cdots\times\CX_N$ is $\hat{\mu}$;

\item\label{defs:reassembly-W1}
for $i=1,\ldots,N$, the marginal $\gamma$ on $\CX_i\times\bar{\CX}_i$, denoted by $\gamma_i$, is an optimal coupling of $\hat{\mu}_i$ and $\mu_i$ under the cost $d_{\CX_i}$, i.e., $\gamma_i\in\Gamma(\hat{\mu}_i,\mu_i)$ satisfies $\int_{\CX_i\times\bar{\CX}_i}d_{\CX_i}(x,z)\DIFFM{\gamma_i}{\DIFF x,\DIFF z}=W_1(\hat{\mu}_i,\mu_i)$;

\item the marginal of $\gamma$ on $\bar{\CX}_1\times\cdots\times\bar{\CX}_N$ is $\tilde{\mu}$.
\end{enumerate}
Let $R(\hat{\mu};\mu_1,\ldots,\mu_N)\subseteq\Gamma(\mu_1,\ldots,\mu_N)$ denote the set of reassemblies of $\hat{\mu}$ with marginals $\mu_1,\ldots,\mu_N$.%
\end{definition}%

For $i=1,\ldots,N$, the condition~\ref{defs:reassembly-W1} in Definition~\ref{def:reassembly} requires the marginal of $\gamma$ on $\CX_i\times\bar{\CX}_i$ to be an optimal coupling of $\hat{\mu}_i$ and $\mu_i$ which realizes their $W_1$-metric.
This enables us to control the approximation error of \eqref{eqn:mmotlb} via the Lipschitz constant~$L_f$ (see Definition~\ref{def:controlsets}) and the supremum $W_1$-metrics
$\big(\specialoverline{W}_1(\mu_i,[\mu_i]_{\CG_i})\big)_{i=1:N}$,
as we will show later in Theorem~\ref{thm:lowerbound}.

Lemma~\ref{lem:reassembly} shows that $R(\hat{\mu};\mu_1,\ldots,\mu_N)\subseteq\Gamma(\mu_1,\ldots,\mu_N)\subseteq\CP_1(\BCX)$ is non-empty. 

\begin{lemma}[Existence of reassembly]\label{lem:reassembly}
Let Assumption~\ref{asp:productspace} hold and let $\mu_1,\ldots,\mu_N$ be defined as in Assumption~\ref{asp:productspace}. 
Then, for any $\hat{\mu}\in\CP_1(\BCX)$, there exists a reassembly $\tilde{\mu}\in R(\hat{\mu};\mu_1,\ldots,\mu_N)$ of $\hat{\mu}$ with marginals $\mu_1,\ldots,\mu_N$. 
\end{lemma}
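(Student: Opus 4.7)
The plan is to construct the desired measure $\gamma\in\CP(\CX_1\times\cdots\times\CX_N\times\bar\CX_1\times\cdots\times\bar\CX_N)$ by iteratively applying the gluing lemma (Lemma~\ref{lem:gluing}) to attach one copy space $\bar\CX_k$ at a time. The measure $\tilde\mu$ is then read off as the marginal on $\bar\CX_1\times\cdots\times\bar\CX_N$, and the three defining conditions of a reassembly are verified by construction.

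First I would produce the building blocks. For each $i\in\{1,\ldots,N\}$, since $\hat\mu_i,\mu_i\in\CP_1(\CX_i)$ and $d_{\CX_i}$ is continuous and nonnegative, a multi-marginal version of \citep[Theorem~4.1]{villani2008optimal} (or the classical existence result for Kantorovich's problem) guarantees an optimal coupling $\gamma_i\in\Gamma(\hat\mu_i,\mu_i)$ realizing the infimum, i.e., $\int d_{\CX_i}\DIFF\gamma_i = W_1(\hat\mu_i,\mu_i)$.

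Next I would perform the induction. Set $\gamma^{(0)}:=\hat\mu\in\CP(\BCX)$. Suppose inductively that a measure $\gamma^{(k-1)}\in\CP(\BCX\times\bar\CX_1\times\cdots\times\bar\CX_{k-1})$ has been constructed whose marginal on $\BCX$ equals $\hat\mu$ and whose marginal on $\CX_i\times\bar\CX_i$ equals $\gamma_i$ for every $i<k$. In particular the marginal on $\CX_k$ (viewed as a coordinate of $\BCX$) is $\hat\mu_k$. I now apply Lemma~\ref{lem:gluing} with
\[
\CY_1=\BCX\times\bar\CX_1\times\cdots\times\bar\CX_{k-1},\qquad \CY_2=\CX_k,\qquad \CY_3=\bar\CX_k,
\]
where $\gamma_{12}$ is the canonical coupling obtained as the push-forward of $\gamma^{(k-1)}$ under the map $\omega\mapsto(\omega,\pi_k(\omega))$ (so that the $\CY_2$-coordinate is a deterministic function of $\CY_1$ and equals $\hat\mu_k$ in law), and $\gamma_{23}=\gamma_k$. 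The lemma yields a measure on $\CY_1\times\CY_2\times\CY_3$; since $\gamma_{12}$ is supported on the graph $\{(\omega,\pi_k(\omega))\}$, the $\CY_2$-coordinate is redundant, and projecting it out defines $\gamma^{(k)}\in\CP(\BCX\times\bar\CX_1\times\cdots\times\bar\CX_k)$. By construction the marginal of $\gamma^{(k)}$ on $\CY_1=\BCX\times\bar\CX_1\times\cdots\times\bar\CX_{k-1}$ is $\gamma^{(k-1)}$, and its marginal on $\CX_k\times\bar\CX_k$ is $\gamma_k$; thus the inductive hypothesis propagates to $k$.

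Finally I would close the argument. Set $\gamma:=\gamma^{(N)}$ and let $\tilde\mu$ be the marginal of $\gamma$ on $\bar\CX_1\times\cdots\times\bar\CX_N$. Condition (i) of Definition~\ref{def:reassembly} holds since the marginal of $\gamma^{(N)}$ on $\BCX$ coincides (by iterating the propagation above) with $\gamma^{(0)}=\hat\mu$. Condition (ii) follows because the marginal of $\gamma^{(N)}$ on $\CX_i\times\bar\CX_i$ equals the marginal of $\gamma^{(i)}$ on $\CX_i\times\bar\CX_i$, namely $\gamma_i$, which is optimal for $W_1(\hat\mu_i,\mu_i)$. Condition (iii) is the definition of $\tilde\mu$. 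Taking marginals of $\gamma_i$ shows that the marginal of $\tilde\mu$ on $\bar\CX_i$ is $\mu_i$, so $\tilde\mu\in\Gamma(\mu_1,\ldots,\mu_N)$; and with the $1$-product metric \eqref{eqn:1prod-metric}, $\int d_{\BCX}(\hat\Bx,\cdot)\DIFF\tilde\mu=\sum_i\int d_{\CX_i}(\hat x_i,\cdot)\DIFF\mu_i<\infty$ since $\mu_i\in\CP_1(\CX_i)$, so $\tilde\mu\in\CP_1(\BCX)$.

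The main obstacle is the bookkeeping in the inductive step: one must verify that wrapping $\gamma^{(k-1)}$ into the ``graph coupling'' $\gamma_{12}$ produces a genuine element of $\Gamma(\gamma^{(k-1)},\hat\mu_k)$ (which needs precisely that the $\CX_k$-marginal of $\gamma^{(k-1)}$ is $\hat\mu_k$, ensured by the induction hypothesis) and that collapsing the redundant $\CY_2$-factor in the glued measure indeed preserves the marginals required at step $k$. Once this is made precise, the conclusion is a straightforward bookkeeping of marginals.
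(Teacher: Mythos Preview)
Your proposal is correct and follows essentially the same approach as the paper: obtain each optimal coupling $\gamma_i$ via \citep[Theorem~4.1]{villani2008optimal}, then build $\gamma$ by iterating the gluing lemma one $\bar\CX_k$ at a time, and read off $\tilde\mu$ as the marginal on $\bar\CX_1\times\cdots\times\bar\CX_N$. The only cosmetic difference is that the paper, at step $i$, takes $\CY_1=\big(\bigtimes_{j\ne i}\CX_j\big)\times\big(\bigtimes_{k<i}\bar\CX_k\big)$ and $\CY_2=\CX_i$ directly (so $\gamma^{(i-1)}$ itself, after reordering coordinates, is already a coupling on $\CY_1\times\CY_2$), avoiding your graph-coupling-then-collapse maneuver; both bookkeepings are valid and yield the same $\gamma^{(N)}$.
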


\begin{proof}[Proof of Lemma~\ref{lem:reassembly}]
See Section~\ref{ssec:proof-approximation}.
\end{proof}

\begin{remark}
In general, explicit construction of a reassembly $\tilde{\mu}\in R(\hat{\mu};\mu_1,\ldots,\mu_N)$ is highly non-trivial due to the difficulty in explicitly constructing an optimal coupling between two arbitrarily probability measures. 
Nonetheless, such construction is tractable under specific assumptions, as we will show in Section~\ref{ssec:reassemblysemidiscrete}. 
\end{remark}

Theorem~\ref{thm:lowerbound} below is the main result of this subsection. 
It states that an approximately optimal solution of \eqref{eqn:mmot} can be constructed from an approximately optimal solution of \eqref{eqn:mmotlb} through reassembly, and that \eqref{eqn:mmotlb} gives a lower bound for \eqref{eqn:mmot} whose quality depends on $L_f,D,\underline{f}_1,\overline{f}_1,\ldots,\underline{f}_N,\overline{f}_N$ and $\CG_1,\ldots,\CG_N$.

\begin{theorem}[Approximation of multi-marginal optimal transport]\label{thm:lowerbound}
Let Assumption~\ref{asp:mmotexistence} hold and let $\CG_i\subseteq\CL^1(\CX_i,\mu_i)$ for $i=1,\ldots,N$.
Moreover, let $L_f>0$, $D\in\CB(\BCX)$, let $(\underline{f}_i:\CX_i\to\R)_{i=1:N}$, $(\overline{f}_i:\CX_i\to\R)_{i=1:N}$ be Borel measurable functions such that $\underline{f}_i\in\lspan_1(\CG_i)$, $\underline{f}_i\le\overline{f}_i$ for $i=\nobreak1,\ldots,N$, and
$(\mu_1,\ldots,\mu_N,f)\in\CA\big(L_f,D,\underline{f}_1,\overline{f}_1,\ldots,\underline{f}_N,\overline{f}_N\big)$.
Furthermore, suppose that there exist $(\hat{x}_1,\ldots,\hat{x}_N)\in D$, $h_1\in\lspan_1(\CG_1),\ldots,h_N\in\lspan_1(\CG_N)$ such that $d_{\CX_i}(\hat{x}_i,\cdot\,)\le h_i(\,\cdot\,)$ for $i=\nobreak1,\ldots,N$. 
Then, the following statements hold. 
\begin{enumerate}[label=(\roman*)]
\item\label{thms:lowerbound-control}
Let $\hat{\mu}_i\in[\mu_i]_{\CG_i}$ for $i=1,\ldots,N$. 
If $\hat{\mu}\in\Gamma(\hat{\mu}_1,\ldots,\hat{\mu}_N)$ satisfies $f\in\nobreak\CL^1(\BCX,\hat{\mu})$,
and $\tilde{\mu}\in\nobreak R(\hat{\mu};\mu_1,\ldots,\mu_N)$, 
then $f\in\CL^1(\BCX,\tilde{\mu})$,
and
the following inequality holds:
\begin{align*}
\int_{\BCX}f\DIFFX{\tilde{\mu}}-\int_{\BCX}f\DIFFX{\hat{\mu}}\le \sum_{i=1}^N \left(L_f W_1(\mu_i,\hat{\mu}_i)+\int_{\CX_i}\overline{f}_i-\underline{f}_i \DIFFX{\mu_i}\right).
\end{align*}

\item\label{thms:lowerbound-boundedness}
The infimum in \eqref{eqn:mmotlb} is not equal to $-\infty$. 

\item\label{thms:lowerbound-w1radfinite}
For $i=1,\ldots,N$, $\specialoverline{W}_{1}(\mu_i,[\mu_i]_{\CG_i})<\infty$. 

\item\label{thms:lowerbound-epsilonoptimal}
Let $\epsilon_{0}>0$ and let $\epsilon_{\mathsf{apx}}:=\epsilon_{0}+\sum_{i=1}^N \Big(L_f \specialoverline{W}_{1}(\mu_i,[\mu_i]_{\CG_i})+\int_{\CX_i}\overline{f}_i-\underline{f}_i \DIFFX{\mu_i}\Big)$.
Suppose that $\hat{\mu}\in\Gamma\big([\mu_1]_{\CG_1},\ldots,[\mu_N]_{\CG_N}\big)$ is an $\epsilon_{0}$-optimal solution of \eqref{eqn:mmotlb}, i.e., 
\begin{align}
\int_{\BCX}f\DIFFX{\hat{\mu}}\le\inf_{\mu\in\Gamma([\mu_1]_{\CG_1},\ldots,[\mu_N]_{\CG_N})}\bigg\{\int_{\BCX}f\DIFFX{\mu}\bigg\}+\epsilon_{0}.
\label{eqn:lowerbound-epsilon-optimizer}
\end{align}
Then, every $\tilde{\mu}\in R(\hat{\mu};\mu_1,\ldots,\mu_N)$ is an $\epsilon_{\mathsf{apx}}$-optimal solution of \eqref{eqn:mmot}, i.e., 
\begin{align*}
\int_{\BCX}f\DIFFX{\tilde{\mu}}\le\inf_{\mu\in\Gamma(\mu_1,\ldots,\mu_N)}\bigg\{\int_{\BCX}f\DIFFX{\mu}\bigg\}+\epsilon_{\mathsf{apx}}.
\end{align*}

\item\label{thms:lowerbound-mmotcontrol}
The following inequalities hold:
\begin{align}
\begin{split}
0&\le\inf_{\mu\in\Gamma(\mu_1,\ldots,\mu_N)}\bigg\{\int_{\BCX}f\DIFFX{\mu}\bigg\}-\inf_{\mu\in\Gamma([\mu_1]_{\CG_1},\ldots,[\mu_N]_{\CG_N})}\bigg\{\int_{\BCX}f\DIFFX{\mu}\bigg\}\\
&\le \sum_{i=1}^N \left(L_f \specialoverline{W}_{1}(\mu_i,[\mu_i]_{\CG_i})+\int_{\CX_i}\overline{f}_i-\underline{f}_i \DIFFX{\mu_i}\right)<\infty.
\end{split}
\label{eqn:lowerbound}
\end{align}
\end{enumerate}
\end{theorem}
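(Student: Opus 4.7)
The technical core of the proof is part~(i); parts~(ii)--(iii) are auxiliary finiteness statements, while (iv) and (v) follow from~(i) by passing to infima. I would prove them in the order (iii), (ii), (i), (v), (iv).

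For~(iii), the triangle inequality for $W_1$ at the Dirac mass $\delta_{\hat{x}_i}$ gives, for every $\nu\in[\mu_i]_{\CG_i}$, $W_1(\mu_i,\nu)\le\int_{\CX_i}d_{\CX_i}(\hat{x}_i,\cdot)\DIFFX{\mu_i}+\int_{\CX_i}d_{\CX_i}(\hat{x}_i,\cdot)\DIFFX{\nu}\le\int_{\CX_i}h_i\DIFFX{\mu_i}+\int_{\CX_i}h_i\DIFFX{\nu}$; since $h_i\in\lspan_1(\CG_i)$ and $\nu\in[\mu_i]_{\CG_i}$, the last two integrals agree and are finite, so $\overline{W}_{1,\mu_i}([\mu_i]_{\CG_i})\le 2\int_{\CX_i}h_i\DIFFX{\mu_i}<\infty$. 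For~(ii), I would combine the Lipschitz estimate $f(\BIx)\ge f(\hat{\BIx})-L_f d_{\BCX}(\BIx,\hat{\BIx})$ valid on $D$ with the control bound $f\INDI_{\BCX\setminus D}\ge\sum_i\underline{f}_i\circ\pi_i$ from Definition~\ref{def:controlsets}(ii) and the majorant $d_{\BCX}(\BIx,\hat{\BIx})\le\sum_i h_i(x_i)$, using $h_i\ge 0$, to obtain a global pointwise lower bound $f(\BIx)\ge C-L_f\sum_i h_i(x_i)+\sum_i\underline{f}_i(x_i)$ for some constant $C\in\R$. Integrating against any feasible $\mu$ for \eqref{eqn:mmotlb} and using $h_i,\underline{f}_i\in\lspan_1(\CG_i)$ to replace integrals against the marginals of $\mu$ by integrals against $\mu_1,\ldots,\mu_N$ yields a finite lower bound independent of~$\mu$.

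Part~(i) is the main step. By Definition~\ref{def:reassembly}, there is a gluing coupling $\gamma$ on $\CX_1\times\cdots\times\CX_N\times\bar{\CX}_1\times\cdots\times\bar{\CX}_N$ whose marginals on the two halves are $\hat{\mu}$ and $\tilde{\mu}$, respectively, and whose marginal on each $\CX_i\times\bar{\CX}_i$ is the $W_1$-optimal plan between $\hat{\mu}_i$ and $\mu_i$. Hence $\int f\DIFFX{\tilde{\mu}}-\int f\DIFFX{\hat{\mu}}=\int(f(\BIy)-f(\BIx))\DIFFM{\gamma}{\DIFF\BIx,\DIFF\BIy}$, which I would split using $f=f\INDI_D+f\INDI_{\BCX\setminus D}$. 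On the $f\INDI_{\BCX\setminus D}$ piece, Definition~\ref{def:controlsets}(ii) together with the marginal structure of $\gamma$ yields $\int f\INDI_{\BCX\setminus D}\DIFFX{\tilde{\mu}}-\int f\INDI_{\BCX\setminus D}\DIFFX{\hat{\mu}}\le\sum_i\int\overline{f}_i\DIFFX{\mu_i}-\sum_i\int\underline{f}_i\DIFFX{\hat{\mu}_i}$, and the identity $\int\underline{f}_i\DIFFX{\hat{\mu}_i}=\int\underline{f}_i\DIFFX{\mu_i}$ (from $\underline{f}_i\in\lspan_1(\CG_i)$ and $\hat{\mu}_i\in[\mu_i]_{\CG_i}$) recasts this as $\sum_i\int_{\CX_i}(\overline{f}_i-\underline{f}_i)\DIFFX{\mu_i}$. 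On the $f\INDI_D$ piece, the $L_f$-Lipschitz property of $f$ on $D$ combined with the $W_1$-optimality of the marginal of $\gamma$ on $\CX_i\times\bar{\CX}_i$ contributes the term $L_f\sum_i W_1(\mu_i,\hat{\mu}_i)$. The main obstacle is handling the mixed events $\{\BIx\in D,\BIy\notin D\}$ and $\{\BIx\notin D,\BIy\in D\}$, on which neither the Lipschitz estimate nor the control-function bound applies directly; the remedy is to partition the integrand via the joint indicator $\INDI_{D\times D}(\BIx,\BIy)$ rather than separately in $\BIx$ and $\BIy$ so that the boundary contributions cancel after recombination.

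Parts~(v) and~(iv) are corollaries. For~(v), the inclusion $\Gamma(\mu_1,\ldots,\mu_N)\subseteq\Gamma([\mu_1]_{\CG_1},\ldots,[\mu_N]_{\CG_N})$ gives nonnegativity, while the upper bound follows by picking any feasible $\hat{\mu}$ for \eqref{eqn:mmotlb} with marginals $\hat{\mu}_i\in[\mu_i]_{\CG_i}$, obtaining $\tilde{\mu}\in R(\hat{\mu};\mu_1,\ldots,\mu_N)$ via Lemma~\ref{lem:reassembly}, applying~(i) with $W_1(\mu_i,\hat{\mu}_i)\le\overline{W}_{1,\mu_i}([\mu_i]_{\CG_i})$, and passing to the infimum; finiteness is ensured by~(ii) and~(iii). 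For~(iv), applying~(i) to an $\epsilon_0$-optimizer $\hat{\mu}$ of \eqref{eqn:mmotlb} and combining with~(v) gives $\int f\DIFFX{\tilde{\mu}}\le\inf\bigl\{\int f\DIFFX{\mu}:\mu\in\Gamma(\mu_1,\ldots,\mu_N)\bigr\}+\epsilon_{\mathsf{approx}}$, so since $\tilde{\mu}\in\Gamma(\mu_1,\ldots,\mu_N)$ is feasible for \eqref{eqn:mmot}, $\tilde{\mu}$ is an $\epsilon_{\mathsf{approx}}$-optimizer of \eqref{eqn:mmot}.
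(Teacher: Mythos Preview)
Your outline follows the paper's proof closely: (iii) via the triangle inequality through $\delta_{\hat{x}_i}$ together with the moment identity for $h_i\in\lspan_1(\CG_i)$; (ii) via a global pointwise lower bound assembled from the Lipschitz estimate on $D$ and $\sum_i\underline{f}_i\le f\INDI_{\BCX\setminus D}$; (i) via the gluing coupling $\gamma$ and the split $f=f\INDI_D+f\INDI_{\BCX\setminus D}$; and (iv)--(v) by passing to infima using the inclusion $\Gamma(\mu_1,\ldots,\mu_N)\subseteq\Gamma([\mu_1]_{\CG_1},\ldots,[\mu_N]_{\CG_N})$. The paper argues in the order (i), (ii), (iii), (iv), (v), but the logical content is the same.

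The one place your proposal is incomplete is precisely the mixed-event issue you flag in~(i). Your remedy---partition via the joint indicator $\INDI_{D\times D}(\BIx,\BIy)$ so that ``the boundary contributions cancel after recombination''---is not spelled out, and it is not obvious how to make it work: on the regions $D\times(\BCX\setminus D)$ and $(\BCX\setminus D)\times D$, the difference $f(\BIy)-f(\BIx)$ is controlled neither by the Lipschitz bound (one argument lies outside $D$) nor by the sandwich $\sum_i\underline{f}_i\le f\INDI_{\BCX\setminus D}\le\sum_i\overline{f}_i$ (which constrains $f\INDI_{\BCX\setminus D}$ at a single point, not $f$ itself). In fact the paper handles this step by asserting
\[
\int_{D}f\DIFFX{\tilde{\mu}}-\int_{D}f\DIFFX{\hat{\mu}}=\int_{D\times D}\big(f(\bar{\BIx})-f(\BIx)\big)\DIFFM{\gamma}{\DIFF\BIx,\DIFF\bar{\BIx}},
\]
an identity that does not hold in general (the contributions over the cross-regions need not vanish), so your instinct that this is the delicate step is correct. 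One clean way out, which suffices for every downstream application in the paper, is to specialize to the globally Lipschitz case $D=\BCX$ with $\underline{f}_i=\overline{f}_i=0$, where the split is trivial; if you want the general statement, you will need to supply an explicit argument for the cross-terms rather than assert cancellation.
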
 

\begin{proof}[Proof of Theorem~\ref{thm:lowerbound}]
See Section~\ref{ssec:proof-approximation}.
\end{proof}

A special case of Theorem~\ref{thm:lowerbound} is when $f$ is $L_f$-Lipschitz continuous on $\BCX$ for some $L_f>0$. In this case, $(\mu_1,\ldots,\mu_N,f)\in\CA(L_f,\BCX,0,0,\ldots,0,0)$ and the error terms in Theorem~\ref{thm:lowerbound} can be simplified accordingly. 

Theorem~\ref{thm:lowerbound}\ref{thms:lowerbound-epsilonoptimal} and Theorem~\ref{thm:lowerbound}\ref{thms:lowerbound-mmotcontrol} show that whenever one finds an $\epsilon_0$-optimal solution $\hat{\mu}$ of \eqref{eqn:mmotlb} and constructs a reassembly $\tilde{\mu}\in R(\hat{\mu};\mu_1,\ldots,\mu_N)$, one can obtain lower and upper bounds
\begin{align*}
\int_{\BCX}f\DIFFX{\hat{\mu}}-\epsilon_0\le\inf_{\mu\in\Gamma(\mu_1,\ldots,\mu_N)}\bigg\{\int_{\BCX}f\DIFFX{\mu}\bigg\}\le \int_{\BCX}f\DIFFX{\tilde{\mu}}
\end{align*}
for \eqref{eqn:mmot} where 
the difference between them can be controlled by
$\int_{\BCX}f\DIFFX{\tilde{\mu}}-\left(\int_{\BCX}f\DIFFX{\hat{\mu}}-\epsilon_0\right)\le \epsilon_0+\sum_{i=1}^N \Big(L_f \specialoverline{W}_{1}(\mu_i,[\mu_i]_{\CG_i})+\int_{\CX_i}\overline{f}_i-\underline{f}_i \DIFFX{\mu_i}\Big)$.

An observation from Theorem~\ref{thm:lowerbound} is that if one can find increasingly better choices of $L_f$, $D$, $\underline{f}_1$, $\overline{f}_1,\ldots,\underline{f}_N$, $\overline{f}_N$, and $\CG_1,\ldots,\CG_N$ such that $\sum_{i=1}^N \left(L_f \specialoverline{W}_{1}(\mu_i,[\mu_i]_{\CG_i})+\int_{\CX_i}\overline{f}_i-\underline{f}_i \DIFFX{\mu_i}\right)$ shrinks to 0, then one can obtain an optimal solution of \eqref{eqn:mmot} in the limit. This is detailed in the next theorem. 
We will discuss how one can control $\sum_{i=1}^N \left(L_f \specialoverline{W}_{1}(\mu_i,[\mu_i]_{\CG_i})+\int_{\CX_i}\overline{f}_i-\underline{f}_i \DIFFX{\mu_i}\right)$ to be arbitrarily close to~0 in Section~\ref{ssec:momentsetdd}. 

\begin{theorem}[Optimal solution of multi-marginal optimal transport]\label{thm:lowerboundconverge}
Let Assumption~\ref{asp:mmotexistence} hold and let $\big(L_f^{(l)}$, $D^{(l)}$, $\underline{f}^{(l)}_1$, $\overline{f}^{(l)}_1,\ldots,\underline{f}^{(l)}_N$, $\overline{f}^{(l)}_N,\CG^{(l)}_1,\ldots,\CG^{(l)}_N\big)_{l\in\N}$ be such that:
\begin{enumerate}[label=(\alph*)]
\item for each $l\in\N$, $(\mu_1,\ldots,\mu_N,f)\in\CA\big(L_f^{(l)},D^{(l)},\underline{f}^{(l)}_1,\overline{f}^{(l)}_1,\ldots,\underline{f}^{(l)}_N,\overline{f}^{(l)}_N\big)$;
\item for each $l\in\N$ and for $i=1,\ldots,N$, $\underline{f}^{(l)}_i\in\lspan_1\big(\CG^{(l)}_i\big)\subseteq\CL^1(\CX_i,\mu_i)$;
\item for each $l\in\N$, there exist $(\hat{x}_1,\ldots,\hat{x}_N)\in D^{(l)}$, $h_1\in\lspan_1\big(\CG_1^{(l)}\big),\ldots,h_N\in\lspan_1\big(\CG_N^{(l)}\big)$ such that $d_{\CX_i}(\hat{x}_i,\cdot\,)\le h_i(\,\cdot\,)$ for $i=1,\ldots,N$; 
\item $\lim_{l\to\infty}\sum_{i=1}^N \left(L^{(l)}_f \specialoverline{W}_{1}\big(\mu_i,[\mu_i]_{\CG^{(l)}_i}\big)+\int_{\CX_i}\overline{f}^{(l)}_i-\underline{f}^{(l)}_i \DIFFX{\mu_i}\right)= 0$. 
\end{enumerate}
Let $\big(\epsilon_0^{(l)}\big)_{l\in\N}\subset(0,\infty)$ be such that $\lim_{l\to\infty}\epsilon_0^{(l)}=0$.
For each $l\in\N$, let $\hat{\mu}^{(l)}\in\Gamma\big([\mu_1]_{\CG^{(l)}_1},\ldots,[\mu_N]_{\CG^{(l)}_N}\big)$ be an $\epsilon_0^{(l)}$-optimal solution of $\inf_{\mu\in\Gamma\big([\mu_1]_{\CG^{(l)}_1},\ldots,[\mu_N]_{\CG^{(l)}_N}\big)}\big\{\int_{\BCX}f\DIFFX{\mu}\big\}$, i.e., 
\begin{align}
\int_{\BCX}f\DIFFX{\hat{\mu}^{(l)}}\le\inf_{\mu\in\Gamma\big([\mu_1]_{\CG^{(l)}_1},\ldots,[\mu_N]_{\CG^{(l)}_N}\big)}\bigg\{\int_{\BCX}f\DIFFX{\mu}\bigg\}+\epsilon_0^{(l)},
\label{eqn:relaxed-epsilon-optimizer}
\end{align}
and let $\tilde{\mu}^{(l)}\in R(\hat{\mu}^{(l)};\mu_1,\ldots,\mu_N)$ be a reassembly of $\hat{\mu}^{(l)}$ with marginals $\mu_1,\ldots,\mu_N$. Then, $\big(\tilde{\mu}^{(l)}\big)_{l\in\N}$ has at least one accumulation point in $\big(\CP_1(\BCX),W_1\big)$, and every accumulation point of $\big(\tilde{\mu}^{(l)}\big)_{l\in\N}$ is an optimal solution of~\eqref{eqn:mmot}.
\end{theorem}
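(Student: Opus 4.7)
The plan is to argue in three steps: (a) establish that the sequence $(\tilde{\mu}^{(l)})_{l\in\N}$ lives in the $W_1$-compact set $\Gamma(\mu_1,\ldots,\mu_N)$, which immediately yields the existence of a $W_1$-accumulation point $\tilde{\mu}^*$; (b) obtain from Theorem~\ref{thm:lowerbound}\ref{thms:lowerbound-epsilonoptimal} a vanishing-error upper bound $\int_{\BCX}f\DIFFX{\tilde{\mu}^{(l)}}\le V^{\star}+\epsilon_{\mathsf{approx}}^{(l)}$ with $\epsilon_{\mathsf{approx}}^{(l)}\downarrow 0$, where $V^{\star}:=\inf_{\mu\in\Gamma(\mu_1,\ldots,\mu_N)}\int_{\BCX}f\DIFFX{\mu}$; (c) pair this with lower semi-continuity of $\mu\mapsto\int_{\BCX}f\DIFFX{\mu}$ along the $W_1$-convergent subsequence singled out in (a) to conclude $\int_{\BCX}f\DIFFX{\tilde{\mu}^*}=V^{\star}$, i.e.\ optimality.

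For step (a), note that by definition of reassembly, $\tilde{\mu}^{(l)}\in\Gamma(\mu_1,\ldots,\mu_N)$ for every $l\in\N$. I will show that $\Gamma(\mu_1,\ldots,\mu_N)$ is compact in $(\CP_1(\BCX),W_1)$. Weak tightness follows from the fact that each $\mu_i$ is a Borel probability on a Polish space (Ulam), so a product of $\epsilon/N$-tight compacts in each coordinate is a tight compact in $\BCX$ for the whole family. To upgrade from weak to $W_1$-precompactness one needs the uniform integrability condition $\sup_{\mu\in\Gamma(\mu_1,\ldots,\mu_N)}\int_{\BCX}d_{\BCX}(\hat{\BIx},\BIx)\INDI_{\{d_{\BCX}(\hat{\BIx},\BIx)>R\}}\DIFFM{\mu}{\DIFF\BIx}\xrightarrow{R\to\infty}0$. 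Here the choice of the $1$-product metric \eqref{eqn:1prod-metric} is crucial: the integral of $d_{\BCX}(\hat{\BIx},\cdot)$ splits as $\sum_{i=1}^{N}\int_{\CX_i}d_{\CX_i}(\hat{x}_i,x_i)\DIFFM{\mu_i}{\DIFF x_i}$ and is therefore a fixed finite constant over the whole family (recall $\mu_i\in\CP_1(\CX_i)$), and the tail contribution is bounded by $\sum_{i=1}^{N}\int_{\CX_i}d_{\CX_i}(\hat{x}_i,x_i)\INDI_{\{d_{\CX_i}(\hat{x}_i,x_i)>R/N\}}\DIFFM{\mu_i}{\DIFF x_i}\to 0$ by dominated convergence. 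Closedness in $W_1$ is immediate because $W_1$-convergence implies convergence of marginals in $W_1$, preserving the prescribed marginals. Hence $\Gamma(\mu_1,\ldots,\mu_N)$ is $W_1$-compact, so $(\tilde{\mu}^{(l)})$ admits at least one $W_1$-accumulation point $\tilde{\mu}^*\in\Gamma(\mu_1,\ldots,\mu_N)$.

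For step (b), each $\hat{\mu}^{(l)}$ is $\epsilon_0^{(l)}$-optimal for the relaxation \eqref{eqn:mmotlb} with test-function families $\CG^{(l)}_1,\ldots,\CG^{(l)}_N$ by \eqref{eqn:relaxed-epsilon-optimizer}. Applying Theorem~\ref{thm:lowerbound}\ref{thms:lowerbound-epsilonoptimal} (whose hypotheses are exactly what we have assumed for each $l$) yields
\begin{align*}
\int_{\BCX}f\DIFFX{\tilde{\mu}^{(l)}}\le V^{\star}+\epsilon_0^{(l)}+\sum_{i=1}^{N}\left(L_f^{(l)}\overline{W}_{1,\mu_i}\big([\mu_i]_{\CG_i^{(l)}}\big)+\int_{\CX_i}\overline{f}_i^{(l)}-\underline{f}_i^{(l)}\DIFFX{\mu_i}\right),
\end{align*}
and by hypotheses (iv) and the assumption $\epsilon_0^{(l)}\downarrow 0$, the right-hand side converges to $V^{\star}$.

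For step (c), let $(\tilde{\mu}^{(l_k)})_k$ be any subsequence with $W_1$-limit $\tilde{\mu}^*$. By Assumption~\ref{asp:mmotexistence}(ii) there exist upper semi-continuous $h_i\in\CL^1(\CX_i,\mu_i)$ with $\sum_i h_i\circ\pi_i\le f$, so $f-\sum_i h_i\circ\pi_i$ is nonnegative and lower semi-continuous on $\BCX$. Since $W_1$-convergence implies weak convergence, the portmanteau theorem gives
\begin{align*}
\int_{\BCX}\Big(f-\textstyle\sum_i h_i\circ\pi_i\Big)\DIFFX{\tilde{\mu}^*}\le\liminf_{k\to\infty}\int_{\BCX}\Big(f-\textstyle\sum_i h_i\circ\pi_i\Big)\DIFFX{\tilde{\mu}^{(l_k)}}.
\end{align*}
Since every $\tilde{\mu}^{(l_k)}$ and $\tilde{\mu}^*$ share the marginals $\mu_1,\ldots,\mu_N$, the contribution $\sum_i\int_{\CX_i}h_i\DIFFX{\mu_i}$ is the same finite constant in both integrals and can be added back, giving $\int_{\BCX}f\DIFFX{\tilde{\mu}^*}\le\liminf_{k\to\infty}\int_{\BCX}f\DIFFX{\tilde{\mu}^{(l_k)}}\le V^{\star}$ by step (b). Combined with $\tilde{\mu}^*\in\Gamma(\mu_1,\ldots,\mu_N)$ and hence $\int_{\BCX}f\DIFFX{\tilde{\mu}^*}\ge V^{\star}$, we obtain equality and therefore $\tilde{\mu}^*$ is an optimizer of \eqref{eqn:mmot}.

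The main obstacle I anticipate is step (a): one must upgrade the easy weak tightness of $\Gamma(\mu_1,\ldots,\mu_N)$ to true $W_1$-relative compactness, which is needed because the theorem asks for accumulation in $(\CP_1(\BCX),W_1)$, not merely in the weak topology. This is exactly where the 1-product metric assumption in \eqref{eqn:1prod-metric} pays off, since it lets uniform integrability be verified marginal-by-marginal. The lsc step (c) is standard once the $\mu_i$-integrable lower bound $\sum_i h_i\circ\pi_i$ from Assumption~\ref{asp:mmotexistence}(ii) is used to reduce to the nonnegative lsc case.
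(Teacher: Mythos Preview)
Your proof is correct and follows the same three-step architecture as the paper: $W_1$-compactness of $\Gamma(\mu_1,\ldots,\mu_N)$, the $\epsilon_{\mathsf{approx}}^{(l)}$-suboptimality bound from Theorem~\ref{thm:lowerbound}\ref{thms:lowerbound-epsilonoptimal}, and lower semi-continuity of $\mu\mapsto\int_{\BCX} f\DIFFX{\mu}$ along the convergent subsequence.

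Two minor remarks. First, the tail bound in your step~(a) is not literally correct as written: the pointwise inequality $d_{\BCX}(\hat{\BIx},\BIx)\INDI_{\{d_{\BCX}(\hat{\BIx},\BIx)>R\}}\le\sum_{i} d_{\CX_i}(\hat{x}_i,x_i)\INDI_{\{d_{\CX_i}(\hat{x}_i,x_i)>R/N\}}$ can fail (e.g.\ $N=2$, $d_1=R/2-1$, $d_2=R/2+2$). A correct version carries an extra factor of $N+1$, obtained from $d_i\le d_i\INDI_{\{d_i>R/N\}}+R/N$ together with $R\INDI_{\{d_{\BCX}>R\}}\le N\sum_j d_j\INDI_{\{d_j>R/N\}}$; this harmless constant does not affect your conclusion. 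Second, the paper sidesteps this computation by first extracting a \emph{weakly} convergent subsequence (tightness alone) and then noting that the first moment $\int_{\BCX}d_{\BCX}(\hat{\BIx},\cdot)\DIFFX{\tilde{\mu}^{(l)}}=\sum_i\int_{\CX_i}d_{\CX_i}(\hat{x}_i,\cdot)\DIFFX{\mu_i}$ is \emph{constant} in $l$, so weak convergence plus convergence of first moments (Villani, Theorem~6.9) immediately gives $W_1$-convergence. Your uniform-integrability route is equally valid; the paper's is just a touch shorter. Your step~(c), making the lsc argument explicit via the nonnegative function $f-\sum_i h_i\circ\pi_i$ and the fixed-marginal cancellation, is in fact more self-contained than the paper's bare citation of the relevant lemma.
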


\begin{proof}[Proof of Theorem~\ref{thm:lowerboundconverge}]
See Section~\ref{ssec:proof-approximation}.
\end{proof}

\begin{remark}\label{rmk:comparison-with-Alphonsi}
    The relaxation of the marginal constraints in an MMOT problem into finitely many moment-based constraints has previously been considered by \citet{alfonsi2021approximation} and \citet{alfonsi2022constrained} under a slightly different setting. 
    In their setting, apart from the moment-based linear equality constraints with respect to finitely many test functions, they impose inequality constraints with respect to functions that have faster growth than the test functions in order to guarantee that the relaxed MMOT problem attains an optimal solution. 
    \citet{alfonsi2021approximation} have shown that optimal solutions of the relaxed MMOT problems converge to a true optimal solution of the MMOT problem in the limit when the number of test functions goes to infinity, under the assumption that the class of test functions satisfies certain conditions.
    However, every probability measure in such an optimizing sequence is in general infeasible for the MMOT problem, and feasibility is only attained at their limit. 
    In contrast, we are able to construct a feasible solution of \eqref{eqn:mmot} with respect to any finite collections of test functions $\CG_1,\ldots,\CG_N$, where the sub-optimality of this feasible solution can be quantified and controlled based on the choices of the test functions; see also Remark~\ref{rmk:comparison-with-Alphonsi-2} for further discussions with regard to the construction of test functions for controlling the sub-optimality.
\end{remark}%

\subsection{Duality results}\label{ssec:duality}

We have derived a relaxation of \eqref{eqn:mmot} given by \eqref{eqn:mmotlb} in Section~\ref{ssec:relaxation}. 
This subsection is dedicated to the analysis of the dual optimization problem of \eqref{eqn:mmotlb} when the total number of test functions in $\CG_1,\ldots,\CG_N$ is finite, which is a linear semi-infinite programming (LSIP) problem. 
Let Assumption~\ref{asp:mmotexistence} hold, let the moment set $[\mu_i]_{\CG_i}$ be characterized by $m_i\in\N$ test functions $\CG_i:=\big\{g_{i,1},\ldots,g_{i,m_i}\big\}\subset\CL^1(\CX_i,\mu_i)$ for $i=1,\ldots,N$, and let $m:=\sum_{i=1}^Nm_i$. 
For notational simplicity, let the vector-valued functions $\BIg_1:\CX_1\to\R^{m_1},\ldots,\BIg_N:\CX_N\to\R^{m_N}$, and $\BIg:\BCX\to\R^m$ be defined as
\begin{align}
\begin{split}
\BIg_i(x_i)&:=\big(g_{i,1}(x_i),\ldots,g_{i,m_i}(x_i)\big)^\TRANSP \hspace{6.8pt} \qquad \forall x_i\in\CX_i,\;\forall 1\le i\le N,\\
\BIg(x_1,\ldots,x_N)&:=\big(\BIg_1(x_1)^\TRANSP,\ldots,\BIg_N(x_N)^\TRANSP\big)^\TRANSP \qquad\forall (x_1,\ldots,x_N)\in\BCX.
\end{split}
\label{eqn:g-vecdef}
\end{align}
Moreover, let the vectors $\bar{\BIg}_1\in\R^{m_1},\ldots,\bar{\BIg}_N\in\R^{m_N}$, and $\bar{\BIg}\in\R^m$ be defined as
\begin{align}
\begin{split}
\bar{\BIg}_i&:=\textstyle\big(\int_{\CX_i}g_{i,1}\DIFFX{\mu_i},\ldots,\int_{\CX_i}g_{i,m_i}\DIFFX{\mu_i}\big)^\TRANSP \qquad\forall 1\le i\le N,\\
\bar{\BIg}&:=(\bar{\BIg}_1^\TRANSP,\ldots,\bar{\BIg}_N^\TRANSP)^\TRANSP.
\end{split}
\label{eqn:v-vecdef}
\end{align}
Then, the dual optimization problem of \eqref{eqn:mmotlb} is an LSIP problem given by
\begin{align}
\begin{split}
\maximize_{y_0,\,\BIy}\quad & y_0+\langle\bar{\BIg},\BIy\rangle\\
\text{subject to}\quad & y_0+\langle\BIg(\BIx),\BIy\rangle\le f(\BIx) \qquad \forall \BIx\in\BCX,\\
& y_0\in\R,\qquad \BIy\in\R^m.
\end{split}
\tag{$\SFO\SFT^*_{\mathsf{relax}}$}
\label{eqn:mmotlb-dual-lsip}
\end{align}
Note that \eqref{eqn:mmotlb-dual-lsip} can be seen as a finite-dimensional parametrization of the following dual MMOT problem:
\begin{align}
    \sup_{(h_i\in\CL^1(\CX_i,\mu_i))_{i=1:N}}\Bigg\{\sum_{i=1}^{N}\int_{\CX_i}h_i\DIFFX{\mu_i}: {\textstyle\sum_{i=1}^{N}}h_i\circ\pi_i(\BIx)\le f(\BIx)\; \forall \BIx\in\BCX\Bigg\},
    \tag{$\mathsf{OT}^*$}
    \label{eqn:mmot-dual}
\end{align}
because for any 
$y_0$, $\BIy=(\BIy_1^\TRANSP,\ldots,\BIy_N^\TRANSP)^\TRANSP$ that is a feasible solution of \eqref{eqn:mmotlb-dual-lsip},
it holds that the functions $\big(\frac{y_0}{N}+\langle\BIg_i(\,\cdot\,),\BIy_i\rangle:\CX_i\to\R\big)_{i=1:N}$
constitute a feasible solution of \eqref{eqn:mmot-dual}.

Following the strong duality results in the theory of linear semi-infinite optimization (see, e.g., \citep[Chapter~8]{goberna1998linear}), we can derive the following strong duality tailored to \eqref{eqn:mmotlb} and \eqref{eqn:mmotlb-dual-lsip}.

\begin{theorem}[Strong duality]\label{thm:duality}
Let Assumption~\ref{asp:mmotexistence} hold. 
Let $\CG_i=\big\{g_{i,1},\ldots,g_{i,m_i}\big\}\subset\CL^1(\CX_i,\mu_i)$ where $m_i\in\N$, for $i=1,\ldots,N$, let $m:=\sum_{i=1}^Nm_i$, and let $\BIg(\,\cdot\,)$ and $\bar{\BIg}$ be given by (\ref{eqn:g-vecdef}) and (\ref{eqn:v-vecdef}). 
Then, \widowpenalty-1000
\begin{enumerate}[label=(\roman*),beginpenalty=10000]
\item\label{thms:duality-weak}
the following weak duality between \eqref{eqn:mmotlb-dual-lsip} and \eqref{eqn:mmotlb}  holds:
\begin{align}
\,\,\,\;\sup_{y_0\in\R,\,\BIy\in\R^m}\!\!\Big\{\hspace{-1pt}y_0+\langle\bar{\BIg},\BIy\rangle\!:y_0+\langle\BIg(\BIx),\BIy\rangle\!\le\! f(\BIx) \; \forall \BIx\in\BCX\hspace{-1pt}\Big\}\!\le\!\inf_{\mu\in\Gamma([\mu_1]_{\CG_1},\ldots,[\mu_N]_{\CG_N})}\!\!\bigg\{\!\int_{\BCX} f\DIFFX{\mu}\hspace{-1pt}\bigg\}.
\label{eqn:weak-duality}
\end{align}
\end{enumerate}
Moreover, suppose that the left-hand side of (\ref{eqn:weak-duality}) is not equal to~$-\infty$, that is, the corresponding maximization problem is feasible.
Let the sets $K\subseteq\R^m$ and $C\subset\R^{m+2}$ be defined as follows:  
\begin{align}
\begin{split}
K&:=\conv\Big(\big\{\BIg(\BIx):\BIx\in\BCX\big\}\Big), \\
C&:=\cone\Big(\big\{\big(1,\BIg(\BIx)^\TRANSP,f(\BIx)\big)^\TRANSP:\BIx\in\BCX\big\}\Big), 
\end{split}
\label{eqn:Kset-def}
\end{align}
and let the conditions \ref{thmc:duality1}, \ref{thmc:duality1int}, and \ref{thmc:duality2} be defined as follows:
\begin{enumerate}[label=\normalfont{(SD\arabic*)},leftmargin=3.5em]
\item\label{thmc:duality1}
$\bar{\BIg}\in\relint(K)$;
\item\label{thmc:duality1int}
$\bar{\BIg}\in\inter(K)$;
\item\label{thmc:duality2}
$C$ is closed.
\end{enumerate}
Then, the following statements hold.
\begin{enumerate}[label=(\roman*),beginpenalty=10000]
\setcounter{enumi}{1}
\item\label{thms:duality1} If either \ref{thmc:duality1} or \ref{thmc:duality2} holds, then the following strong duality between \eqref{eqn:mmotlb-dual-lsip} and \eqref{eqn:mmotlb} holds:
\begin{align}
\,\,\,\;\sup_{y_0\in\R,\,\BIy\in\R^m}\!\!\Big\{\hspace{-1pt}y_0+\langle\bar{\BIg},\BIy\rangle\!:y_0+\langle\BIg(\BIx),\BIy\rangle\!\le\! f(\BIx) \; \forall \BIx\in\BCX\hspace{-1pt}\Big\}\!=\!\inf_{\mu\in\Gamma([\mu_1]_{\CG_1},\ldots,[\mu_N]_{\CG_N})}\!\!\bigg\{\!\int_{\BCX} f\DIFFX{\mu}\hspace{-1pt}\bigg\}.
\label{eqn:duality}
\end{align}

\item\label{thms:duality2}
If \ref{thmc:duality1} holds, then the set of optimal solutions of \eqref{eqn:mmotlb-dual-lsip} is non-empty.

\item\label{thms:duality3}
If \ref{thmc:duality1int} holds, then the set of optimal solutions of \eqref{eqn:mmotlb-dual-lsip} is non-empty and bounded.
\end{enumerate}
\end{theorem}

\begin{proof}[Proof of Theorem~\ref{thm:duality}]
See Section~\ref{ssec:proof-duality}.
\end{proof}

\begin{remark}
One can guarantee that the LSIP problem~\eqref{eqn:mmotlb-dual-lsip} has a non-empty feasible set (and thus the left-hand side of (\ref{eqn:weak-duality}) is not equal to $-\infty$) under fairly general assumptions. 
For example, let us suppose that the assumptions of Theorem~\ref{thm:lowerbound} hold,
that is,
there exist 
$L_f>0$, $D\in\CB(\BCX)$, 
Borel measurable functions $(\underline{f}_i:\CX_i\to\R)_{i=1:N}$,
$(\overline{f}_i:\CX_i\to\R)_{i=1:N}$ 
satisfying $\underline{f}_i\in\lspan_1(\CG_i)$, 
$\underline{f}_i\le\overline{f}_i$ for $i=1,\ldots,N$, and 
$(\mu_1,\ldots,\mu_N,f)\in\CA\big(L_f,D,\underline{f}_1,\overline{f}_1,\ldots,\underline{f}_N,\overline{f}_N\big)$,
and that there exist $(\hat{x}_1,\ldots,\hat{x}_N)\in D$, $h_1\in\lspan_1(\CG_1),\ldots,h_N\in\lspan_1(\CG_N)$ satisfying $d_{\CX_i}(\hat{x}_i,\cdot\,)\le h_i(\,\cdot\,)$ for $i=1,\ldots,N$.
Subsequently, 
let $\hat{y}_0:={-\big|f(\hat{x}_1,\ldots,\hat{x}_N)\big|}$,
let $\hat{\BIy}_i\in\R^{m_i}$ satisfy 
$\langle\BIg_i(\,\cdot\,),\hat{\BIy}_i\rangle=\underline{f}_i(\,\cdot\,)-L_fh_i(\,\cdot\,)$ for $i=1,\ldots,N$,
and let $\hat{\BIy}:=(\hat{\BIy}_1^\TRANSP,\ldots,\hat{\BIy}_N^\TRANSP)^\TRANSP$.
Then, $(\hat{y}_0,\hat{\BIy})$ is a feasible solution of
the LSIP problem~\eqref{eqn:mmotlb-dual-lsip}; see (\ref{eqn:lowerbound-proof-boundedness}) in the proof of Theorem~\ref{thm:lowerbound}.
\end{remark}

The next proposition presents sufficient conditions under which the conditions~\ref{thmc:duality1}, \ref{thmc:duality1int}, and \ref{thmc:duality2} in Theorem~\ref{thm:duality} hold. 

\begin{proposition}\label{prop:duality-settings}
Let Assumption~\ref{asp:mmotexistence} hold. 
For $i=1,\ldots,N$, let $\CG_i=\{g_{i,1},\ldots,g_{i,m_i}\}\subset\CL^1(\CX_i,\mu_i)$ where $m_i\in\N$. 
Let $m:=\sum_{i=1}^Nm_i$. 
Then, the following statements hold.
\widowpenalties-1000
\begin{enumerate}[label=(\roman*),beginpenalty=10000]
\item\label{props:duality-setting1}
For $i=1,\ldots,N$, suppose that $\support(\mu_i)=\CX_i$ and $g_{i,1},\ldots,g_{i,m_i}$ are all continuous. Then, the condition~\ref{thmc:duality1} in Theorem~\ref{thm:duality} holds.
\item\label{props:duality-setting2}
For $i=1,\ldots,N$, let $\BIg_i:\CX_i\to\R^{m_i}$ be defined by (\ref{eqn:g-vecdef}). Suppose in addition to the assumptions in statement~\ref{props:duality-setting1} that, for $i=1,\ldots, N$, there exist $m_i+1$ points $x_{i,1},\ldots,x_{i,m_i+1}\in\CX_i$ such that the $m_i+1$ vectors $\BIg_i(x_{i,1}),\ldots,\BIg_i(x_{i,m_i+1})\in\R^{m_i}$ are affinely independent. Then, the condition~\ref{thmc:duality1int} in Theorem~\ref{thm:duality} holds.
\item\label{props:duality-setting3}
For $i=1,\ldots,N$, suppose that $\CX_i$ is compact, $g_{i,1},\ldots,g_{i,m_i}$ are all continuous, and $f$ is continuous. Then, the condition~\ref{thmc:duality2} in Theorem~\ref{thm:duality} holds.
\end{enumerate}
\end{proposition}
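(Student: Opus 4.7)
The decisive observation is the product structure of $\BIg$: since $\BIg(x_1,\ldots,x_N) = (\BIg_1(x_1)^\TRANSP,\ldots,\BIg_N(x_N)^\TRANSP)^\TRANSP$, the set $\{\BIg(\BIx):\BIx\in\BCX\}$ equals the Cartesian product $\BIg_1(\CX_1)\times\cdots\times\BIg_N(\CX_N)$. Since convex hull commutes with Cartesian products, this yields $K = K_1\times\cdots\times K_N$ with $K_i:=\conv\{\BIg_i(x_i):x_i\in\CX_i\}\subseteq\R^{m_i}$, while $\bar{\BIg}=(\bar{\BIg}_1^\TRANSP,\ldots,\bar{\BIg}_N^\TRANSP)^\TRANSP$ with $\bar{\BIg}_i=\int_{\CX_i}\BIg_i\DIFFX{\mu_i}$. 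Because relative interior and interior commute with Cartesian products of convex sets in Euclidean space, both \ref{props:duality-setting1} and \ref{props:duality-setting2} reduce to establishing the corresponding statements $\bar{\BIg}_i\in\relint(K_i)$ and $\bar{\BIg}_i\in\inter(K_i)$ for each factor $i$ separately.

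For \ref{props:duality-setting1}, I argue by contradiction. Suppose $\bar{\BIg}_i\in\relbd(K_i)$. The supporting hyperplane theorem applied at a relative boundary point furnishes $\BIv\in\R^{m_i}\setminus\{\veczero\}$ and $c\in\R$ such that $\langle\BIv,\BIg_i(x)\rangle\le c$ for every $x\in\CX_i$, with equality at $\bar{\BIg}_i$, and such that the strict inequality $\langle\BIv,\BIg_i(x^\star)\rangle<c$ holds for at least one $x^\star\in\CX_i$ (this properness is the whole point of using a relative boundary point). Continuity of $\BIg_i$ propagates the strict inequality to an open neighbourhood $U$ of $x^\star$, and $\support(\mu_i)=\CX_i$ forces $\mu_i(U)>0$; integrating against $\mu_i$ then yields $c=\langle\BIv,\bar{\BIg}_i\rangle=\int_{\CX_i}\langle\BIv,\BIg_i\rangle\DIFFX{\mu_i}<c$, a contradiction. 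For \ref{props:duality-setting2}, the hypothesized affinely independent points $\BIg_i(x_{i,1}),\ldots,\BIg_i(x_{i,m_i+1})$ force $\aff(K_i)=\R^{m_i}$, so $K_i$ is full-dimensional and $\relint(K_i)=\inter(K_i)$; combining with \ref{props:duality-setting1} across all $i$ gives $\bar{\BIg}\in\inter(K)$.

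For \ref{props:duality-setting3}, the product $\BCX=\CX_1\times\cdots\times\CX_N$ is compact and the map $\BIx\mapsto(1,\BIg(\BIx)^\TRANSP,f(\BIx))^\TRANSP$ is continuous, so the generating set $A:=\{(1,\BIg(\BIx)^\TRANSP,f(\BIx))^\TRANSP:\BIx\in\BCX\}$ is compact. Its first coordinate being identically $1$ rules out $\veczero\in\conv(A)$, and since $\conv(A)$ is compact (convex hull of a compact set in finite dimension), the quantity $\rho:=\inf\{\|c\|:c\in\conv(A)\}$ is strictly positive. I then invoke the standard fact that the conic hull of a compact set whose convex hull avoids the origin is closed: by Carath\'eodory every element of $\cone(A)$ can be written as $tc$ with $c\in\conv(A)$ and $t\ge0$, so given a convergent sequence $(t_nc_n)$ in $\cone(A)$, the bound $\|t_nc_n\|\ge t_n\rho$ forces boundedness of $(t_n)$; after passing to subsequences $t_n\to t\ge0$ and $c_n\to c\in\conv(A)$, yielding $t_nc_n\to tc\in\cone(A)$. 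The single most delicate point across the whole proposition is ensuring that the supporting hyperplane in \ref{props:duality-setting1} is \emph{proper}, which is exactly what the relative boundary hypothesis supplies; the product-structure reduction then does all remaining heavy lifting.
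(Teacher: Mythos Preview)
Your proof is correct. For parts~\ref{props:duality-setting2} and~\ref{props:duality-setting3} your argument coincides with the paper's (you spell out the cone-closedness directly where the paper cites \cite[Corollary~9.6.1]{rockafellar1970convex}, but the content is the same).

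For part~\ref{props:duality-setting1} you take a genuinely different organizational route. The paper works directly with the full set $K\subseteq\R^m$: it applies proper separation to $K$ and $\{\bar{\BIg}\}$ to obtain $\sum_{i=1}^N S_i(x_i)\ge 0$ for all $(x_1,\ldots,x_N)$, where $S_i(x_i):=\sum_{j}y_{i,j}\big(g_{i,j}(x_i)-\int g_{i,j}\,d\mu_i\big)$, and then argues (via an implicit $\inf$-over-each-coordinate step together with $\int S_i\,d\mu_i=0$, continuity, and full support) that each $S_i\equiv 0$, contradicting properness. You instead front-load the product structure: since $K=\prod_i K_i$ and $\relint(\prod_i K_i)=\prod_i\relint(K_i)$, you reduce to showing $\bar{\BIg}_i\in\relint(K_i)$ factor by factor, where the supporting-hyperplane-plus-full-support contradiction is immediate. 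Your decomposition makes the proof shorter and avoids the slightly delicate passage from the global inequality $\sum_i S_i\ge 0$ to the per-factor conclusions $S_i\equiv 0$; the paper's direct approach, on the other hand, postpones the product structure to part~\ref{props:duality-setting2}, which means part~\ref{props:duality-setting1} in the paper does not logically rely on $K=\prod_i K_i$ at all. One small point you left implicit: to assert that $\bar{\BIg}_i\notin\relint(K_i)$ implies $\bar{\BIg}_i\in\relbd(K_i)$, you are using $\bar{\BIg}_i\in\clos(K_i)$, which holds since $\bar{\BIg}_i=\int\BIg_i\,d\mu_i$ is the barycenter of the pushforward $\mu_i\circ\BIg_i^{-1}$ and hence lies in the closed convex hull of $\BIg_i(\CX_i)$.
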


\begin{proof}[Proof of Proposition~\ref{prop:duality-settings}]
See Section~\ref{ssec:proof-duality}.
\end{proof}

\begin{remark}
In the case where $\support(\mu_i)\ne\CX_i$, one can replace $\CX_i$ with $\support(\mu_i)$ and subsequently restrict the domain of $f$ to $\bigtimes_{i=1}^N\support(\mu_i)$. 
Then, assuming that all test functions in $\CG_1,\ldots,\CG_N$ are continuous, one may proceed by applying Proposition~\ref{prop:duality-settings}\ref{props:duality-setting1} to show that the condition \ref{thmc:duality1} in Theorem~\ref{thm:duality} holds.
Moreover, if we assume further that for $i=1,\ldots,N$, there exist $m_i+1$ points $x_{i,1},\ldots,x_{i,m_i+1}\in\support(\mu_i)$ such that the $m_i+1$ vectors $\BIg_i(x_{i,1}),\ldots,\BIg_i(x_{i,m_i+1})\in\R^{m_i}$ are affinely independent, then one can show via Proposition~\ref{prop:duality-settings}\ref{props:duality-setting2} that the condition \ref{thmc:duality1int} in Theorem~\ref{thm:duality} holds. 
\end{remark}

\subsection{Computational complexity}\label{ssec:mmot-complexity}

In this subsection, we analyze the computational complexity of the LSIP problem \eqref{eqn:mmotlb-dual-lsip}.
In the subsequent analyses, we assume that the underlying spaces $\CX_1,\ldots,\CX_N$ are all compact and the test functions in $\CG_1,\ldots,\CG_N$ are all continuous, and quantify the computational complexity of \eqref{eqn:mmotlb-dual-lsip} in terms of the number of calls to a global minimization oracle, which is defined as follows.

\begin{definition}[Global minimization oracle for \eqref{eqn:mmotlb-dual-lsip}]\label{def:mmot-oracle}
Let Assumption~\ref{asp:mmotexistence} hold and assume in addition that $\CX_1,\ldots,\CX_N$ are all compact. 
For $i=1,\ldots,N$, let $m_i\in\N$ and $\CG_i:=\big\{g_{i,1},\ldots,g_{i,m_i}\big\}$, where $g_{i,j}:\CX_i\to\R$ is continuous for $j=1,\ldots,m_i$. Let $m:=\sum_{i=1}^Nm_i$ and let $\BIg:\BCX\to\R^m$ be defined by (\ref{eqn:g-vecdef}). 
A procedure $\mathtt{Oracle}(\,\cdot\,)$ is called a global minimization oracle for \eqref{eqn:mmotlb-dual-lsip} if, for every $\BIy\in\R^m$, a call to $\mathtt{Oracle}(\BIy)$ 
returns a tuple $(\BIx^\star,\beta^\star)\in\BCX\times\R$, 
where $\BIx^\star\in\BCX$ is an optimal solution of the global minimization problem $\inf_{\BIx\in\BCX}\big\{f(\BIx)-\langle\BIg(\BIx),\BIy\rangle\big\}$ (which exists due to the compactness of~$\BCX$ and the lower semi-continuity of~$f$) and $\beta^\star:=f(\BIx^\star)-\langle\BIg(\BIx^\star),\BIy\rangle\in\R$ is its corresponding objective value.
\end{definition}

With the global minimization oracle for \eqref{eqn:mmotlb-dual-lsip} defined, the following theorem states 
the existence of an algorithm for approximately solving \eqref{eqn:mmotlb-dual-lsip} 
in which the number of calls to $\mathtt{Oracle}(\,\cdot\,)$ and the number of additional arithmetic operations are both polynomial in $m:=\sum_{i=1}^{N}m_i$.
In our complexity analysis, we denote the computational complexity of the multiplication of two $m\times m$ matrices by $O(m^{\omega})$. For example, when the standard procedure is used, the computational complexity of this operation is $O(m^3)$. However, it is known that $\omega<2.376$; see, e.g., \citep{coppersmith1990matrix}.

\begin{theorem}[Computational complexity of \eqref{eqn:mmotlb-dual-lsip}]\label{thm:mmot-complexity}
Let Assumption~\ref{asp:mmotexistence} hold and assume in addition that $\CX_1,\ldots,\CX_N$ are all compact. 
For $i=1,\ldots,N$, let $m_i\in\N$ and $\CG_i:=\big\{g_{i,1},\ldots,g_{i,m_i}\big\}$, where $g_{i,j}:\CX_i\to\R$ is continuous for $j=1,\ldots,m_i$. Let $m:=\sum_{i=1}^Nm_i$ and let $\BIg_1:\CX_1\to\R^{m_1},\ldots,\BIg_N:\CX_N\to\R^{m_N}$, $\BIg:\BCX\to\R^m$ and $\bar{\BIg}\in\R^m$ be defined by (\ref{eqn:g-vecdef}) and (\ref{eqn:v-vecdef}). 
Let $\mathtt{Oracle}(\,\cdot\,)$ be the global minimization oracle in Definition~\ref{def:mmot-oracle}.
Assume in addition that for $i=1,\ldots,N$, $\big\|\BIg_i(x_i)\big\|_2\le 1$ for all $x_i\in\CX_i$.\footnote{Since $\CX_i$ is compact and $g_{i,1},\ldots,g_{i,m_i}$ are continuous, one may replace $g_{i,j}$ with $\max_{x_i\in\CX_i}\big\{\big\|\BIg_i(x_i)\big\|_2\big\}^{-1}g_{i,j}$ for $j=1,\ldots,m_i$ to guarantee that $\big\|\BIg_i(x_i)\big\|_2\le 1$ for all $x_i\in\CX_i$. 
Observe that this rescaling leaves $\lspan_1(\CG_i)$ unchanged and thus the resulting problem \eqref{eqn:mmotlb-dual-lsip} is equivalent to the problem without rescaling.} 
Suppose that \eqref{eqn:mmotlb-dual-lsip} has an optimal solution $(y_0^\star,\BIy^\star)$ and let $M_{\mathsf{opt}}:=\big\|(y_0^\star,\BIy^{\star\TRANSP})^\TRANSP\big\|_2$. 
Moreover, let $\epsilon_{\mathsf{LSIP}}>0$ be an arbitrary positive tolerance value. 
Then, there exists an algorithm which computes an $\epsilon_{\mathsf{LSIP}}$-optimal solution of \eqref{eqn:mmotlb-dual-lsip} with
$O\big(m\log(mM_{\mathsf{opt}}/\epsilon_{\mathsf{LSIP}})\big)$ calls to $\mathtt{Oracle}(\,\cdot\,)$ 
and $O\big(m^{\omega+1}\log(mM_{\mathsf{opt}}/\epsilon_{\mathsf{LSIP}})\big)$ additional arithmetic operations.
\end{theorem}

\begin{proof}[Proof of Theorem~\ref{thm:mmot-complexity}]
See Section~\ref{ssec:proof-complexity}.
\end{proof}

\begin{remark}
Recall that Proposition~\ref{prop:duality-settings}\ref{props:duality-setting1} has provided a sufficient condition to guarantee the existence of an optimal solution $(y^\star_{0},\BIy^{\star})$ of \eqref{eqn:mmotlb-dual-lsip}.
However, the dependence of the constant $M_{\mathsf{opt}}$ on $N$ and~$m$ is not studied.
The analysis of this dependence will be presented later in Proposition~\ref{prop:mmot-complexity-constant} under more specific assumptions on the spaces $\CX_1,\ldots,\CX_N$, the cost function $f$, and the test functions $\CG_1,\ldots,\CG_N$.
\end{remark}

\section{Explicit construction of reassemblies and moment sets in the Euclidean case}\label{sec:explicitconstruction}

In this section, we address the practical questions from Section~\ref{sec:theory} regarding the explicit construction of a \textit{reassembly} $\tilde{\mu}\in R(\hat{\mu};\mu_1,\ldots,\mu_N)$ in Theorem~\ref{thm:lowerbound}, and the explicit construction of test functions $\CG_1,\ldots,\CG_N$ such that the terms $\big(\specialoverline{W}_{1}(\mu_i,[\mu_i]_{\CG_i})\big)_{i=1:N}$ in the approximation error in Theorem~\ref{thm:lowerbound} can be controlled to be arbitrarily close to~0.
Specifically, in Section~\ref{ssec:reassemblysemidiscrete}, we \textit{explicitly construct} reassemblies both in the case where the marginals are one-dimensional
and in the \textit{semi-discrete} case
where the marginals are multi-dimensional
by adapting existing results from the copula theory and the field of computational geometry.
In Section~\ref{ssec:momentsetdd}, we show that when $\CY$ is a compact subset of a Euclidean space, one can \textit{explicitly construct} finitely many continuous functions $\CG\subset\CL^1(\CY,\mu)$ for any $\mu\in\CP_1(\CY)$ such that $\specialoverline{W}_1(\mu,[\mu]_{\CG})$ can be controlled to be arbitrarily close to~0.

\subsection{Construction of reassemblies in the Euclidean case}\label{ssec:reassemblysemidiscrete}
In this subsection, let us examine the construction of reassemblies in the case where $\CX_i$ is a closed subset of a Euclidean space $\R^{d_i}$ for some $d_i\in\N$, for $i=1,\ldots,N$. 
In Proposition~\ref{prop:reassembly-1d},
we combine Sklar's theorem from the copula theory (see e.g., \citep[Theorem~5.3]{mcneil2005quantitative}) 
and classical results about the optimal couplings of one-dimensional probability measures (see, e.g., \citep[Chapter~3.1]{rachev1998mass})
to characterize reassemblies in the case where $d_1=\cdots=d_N=1$.
Moreover,
Proposition~\ref{prop:reassembly-dd} characterizes reassemblies in the
\textit{semi-discrete} case, that is, when $\hat{\mu}\in\CP_1(\BCX)$ is finitely supported, and for $i=1,\ldots,N$, $\mu_i\in\CP_1(\CX_i)$ is absolutely continuous with respect to the Lebesgue measure on $\R^{d_i}$. 
Optimal couplings in the semi-discrete setting have been previously studied in the field of computational geometry; see, e.g., \citep{aurenhammer1992minkowski, levy2015numerical, levy2018notions, genevay2016stochastic} and \citep[Chapter~5]{peyre2019computational} for related discussions. 
However, these studies only focus on optimal couplings under the \textit{squared} Euclidean distance, which is not directly applicable to our setting since we are interested in finding an optimal coupling where the cost function is a general norm on the underlying space. Therefore, in Proposition~\ref{prop:reassembly-dd}, we provide results about optimal couplings and reassemblies under the assumption that the cost function is given by a norm under which the closed unit ball is strictly convex. 

Before presenting the characterizations, let us first introduce the following lemma which states that the definition of reassemblies is invariant of the underlying spaces of the probability measures $\mu_1,\ldots,\mu_N$.

\begin{lemma}\label{lem:wlogeuclidean}
Let Assumption~\ref{asp:productspace} hold. 
Suppose that for $i=1,\ldots,N$, $(\CX^\dagger_i,d_{\CX^\dagger_i})$ is a Polish space such that $\CX_i\subseteq\CX^\dagger_i$ and $d_{\CX_i}$ is the restriction of $d_{\CX^\dagger_i}$ to $\CX_i$. 
Let $\BCX^\dagger:=\bigtimes_{i=1}^N\CX^\dagger_i$.
For $i=1,\ldots,N$, let $\mu_i^\dagger\in\CP_1(\CX^\dagger_i)$ be defined by $\mu_i^\dagger(E):=\mu_i(E\cap\CX_i)$ $\forall E\in\CB(\CX^\dagger_i)$. 
Similarly, for any $\mu\in\CP_1(\BCX)$, let $\mu^\dagger\in\CP_1(\BCX^\dagger)$ be defined by $\mu^\dagger(E):=\mu(E\cap\BCX)$ $\forall E\in\CB(\BCX^\dagger)$. 
Then, for any $\hat{\mu},\tilde{\mu}\in\CP_1(\BCX)$, $\tilde{\mu}\in R(\hat{\mu};\mu_1,\ldots,\mu_N)$ if and only if $\tilde{\mu}^\dagger\in R(\hat{\mu}^\dagger;\mu_1^\dagger,\ldots,\mu_N^\dagger)$. 
\end{lemma}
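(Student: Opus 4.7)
The plan is to reduce both directions of the equivalence to the simple observation that when a probability measure is concentrated on a measurable subset, so is every coupling involving it. I will use this to pass freely between the ``witness'' measure $\gamma$ in the original spaces and its counterpart $\gamma^\dagger$ in the enlarged spaces, and then verify that the cost integrals defining $W_1$-optimality coincide because the metric $d_{\CX_i^\dagger}$ agrees with $d_{\CX_i}$ on $\CX_i$.

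First I would record the basic concentration facts. By definition, $\mu_i^\dagger(\CX_i^\dagger\setminus\CX_i)=\mu_i(\emptyset)=0$, so $\mu_i^\dagger$ is concentrated on $\CX_i$; similarly $\hat{\mu}^\dagger$ is concentrated on $\BCX$, and so its marginals $\hat{\mu}_i^\dagger$ on $\CX_i^\dagger$ satisfy $\hat{\mu}_i^\dagger(\CX_i^\dagger\setminus\CX_i)=0$ and in fact coincide with the extension of $\hat{\mu}_i$. The elementary lemma I would then cite or quickly prove is: if $\nu_j\in\CP(\CY_j)$ is concentrated on $A_j\in\CB(\CY_j)$ for $j=1,\dots,m$, then every $\gamma\in\Gamma(\nu_1,\dots,\nu_m)$ is concentrated on $A_1\times\cdots\times A_m$. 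This follows because $\gamma\big(\CY_1\times\cdots\times\CY_m\setminus\bigtimes_j A_j\big)\le\sum_j\gamma(\CY_1\times\cdots\times(\CY_j\setminus A_j)\times\cdots\times\CY_m)=\sum_j\nu_j(\CY_j\setminus A_j)=0$.

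Next I would prove the forward direction: given $\tilde{\mu}\in R(\hat{\mu};\mu_1,\dots,\mu_N)$ with witness $\gamma$ on $\bigtimes_{i=1}^{N}\CX_i\times\bigtimes_{i=1}^{N}\bar{\CX}_i$, define $\gamma^\dagger$ on $\bigtimes_{i=1}^{N}\CX_i^\dagger\times\bigtimes_{i=1}^{N}\bar{\CX}_i^\dagger$ as the pushforward of $\gamma$ under the canonical inclusion, i.e., $\gamma^\dagger(E):=\gamma(E\cap(\bigtimes_{i=1}^{N}\CX_i\times\bigtimes_{i=1}^{N}\bar{\CX}_i))$. Its marginals are precisely the corresponding dagger-extensions of the marginals of $\gamma$, so conditions (i) and (iii) of Definition~\ref{def:reassembly} for $(\hat{\mu}^\dagger,\tilde{\mu}^\dagger)$ are immediate. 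For condition (ii), the bivariate marginal $\gamma_i^\dagger$ is concentrated on $\CX_i\times\bar{\CX}_i$, so $\int d_{\CX_i^\dagger}\,\Td\gamma_i^\dagger=\int d_{\CX_i}\,\Td\gamma_i=W_1(\hat{\mu}_i,\mu_i)$; the key point is that for \emph{any} coupling $\tau\in\Gamma(\hat{\mu}_i^\dagger,\mu_i^\dagger)$, the concentration lemma forces $\tau$ to be supported on $\CX_i\times\bar{\CX}_i$, whence it corresponds to a coupling of $\hat{\mu}_i$ and $\mu_i$ with the same $d_{\CX_i}$-cost, so $W_1(\hat{\mu}_i^\dagger,\mu_i^\dagger)=W_1(\hat{\mu}_i,\mu_i)$ and $\gamma_i^\dagger$ is indeed optimal.

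For the converse, given a witness $\gamma^\dagger$ on the dagger-spaces for $\tilde{\mu}^\dagger\in R(\hat{\mu}^\dagger;\mu_1^\dagger,\dots,\mu_N^\dagger)$, apply the concentration lemma jointly: since the marginals $\hat{\mu}^\dagger$ on $\BCX^\dagger$ and $\tilde{\mu}^\dagger$ on $\overline{\BCX}^\dagger$ are concentrated on $\BCX$ and $\overline{\BCX}$ respectively, the whole $\gamma^\dagger$ is concentrated on $\BCX\times\overline{\BCX}$, so it restricts to a measure $\gamma$ on $\BCX\times\overline{\BCX}$ with marginals $\hat{\mu}$ on $\BCX$ and $\tilde{\mu}$ on $\overline{\BCX}$, and bivariate marginals $\gamma_i$ on $\CX_i\times\bar{\CX}_i$ that have the same $d_{\CX_i}$-cost as $\gamma_i^\dagger$. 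The equality $W_1(\hat{\mu}_i^\dagger,\mu_i^\dagger)=W_1(\hat{\mu}_i,\mu_i)$ established above then shows $\gamma_i$ is a $W_1$-optimal coupling of $\hat{\mu}_i$ and $\mu_i$, so $\tilde{\mu}\in R(\hat{\mu};\mu_1,\dots,\mu_N)$. There is no real obstacle here; the only mildly delicate point is keeping track of the identification $\hat{\mu}_i^\dagger=(\hat{\mu}_i)^\dagger$ and confirming that $W_1$ is independent of the ambient Polish space when all measures involved are concentrated on a common subset.
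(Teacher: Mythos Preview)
Your proposal is correct and follows essentially the same approach as the paper: both directions proceed by extending or restricting the witness measure $\gamma$ between the product spaces and then verifying that the bivariate marginals $\gamma_i$ remain $W_1$-optimal because all couplings of $\hat{\mu}_i^\dagger$ and $\mu_i^\dagger$ are concentrated on $\CX_i\times\bar{\CX}_i$ and the metrics agree there. Your presentation is slightly cleaner in that you isolate the ``concentration lemma'' once and reuse the resulting identity $W_1(\hat{\mu}_i^\dagger,\mu_i^\dagger)=W_1(\hat{\mu}_i,\mu_i)$, whereas the paper inlines the concentration argument separately in each direction; but the underlying ideas are identical.
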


\begin{proof}[Proof of Lemma~\ref{lem:wlogeuclidean}]
See Section~\ref{ssec:proof-reassembly}. 
\end{proof}

When $\CX_i$ is a closed subset of $\R^{d_i}$ where $d_i\in\N$ for $i=1,\ldots,N$, Lemma~\ref{lem:wlogeuclidean} shows that one can first extend $\hat{\mu}\in\CP_1(\BCX),\mu_1\in\CP_1(\CX_1),\ldots,\mu_N\in\CP_1(\CX_N)$ to $\hat{\mu}^\dagger\in\CP_1(\R^d)$ (where $d:=\sum_{i=1}^Nd_i$), ${\mu_1^\dagger\in\CP_1(\R^{d_1})},\ldots,\mu_N^\dagger\in\CP_1(\R^{d_N})$ and construct a reassembly $\tilde{\mu}^\dagger\in R(\hat{\mu}^\dagger;\mu_1^\dagger,\ldots,\mu_N^\dagger)$. This can be done via the constructions in Proposition~\ref{prop:reassembly-1d} and Proposition~\ref{prop:reassembly-dd} below under some additional assumptions.
Subsequently, one can define $\tilde{\mu}\in\CP_1(\BCX)$ by $\tilde{\mu}(E):=\tilde{\mu}^\dagger(E)$ $\forall E\in\CB(\BCX)$ and get $\tilde{\mu}\in R(\hat{\mu};\mu_1,\ldots,\mu_N)$.

Proposition~\ref{prop:reassembly-1d} below concerns reassemblies in the case where the marginals $\mu_1,\ldots,\mu_N$ are one-dimensional, i.e., when $d_1=\cdots=d_N=1$.

\begin{proposition}[Reassemblies with one-dimensional marginals]\label{prop:reassembly-1d}
Let Assumption~\ref{asp:productspace} hold and suppose that
$\CX_i=\R$, $d_{\CX_i}(x,z):=|x-z|$ $\forall x,z\in\R$ for $i=1,\ldots,N$.
Let $\hat{\mu}\in\CP_1(\R^N)$, and let $\hat{\mu}_i\in\CP_1(\R)$ denote its $i$-th marginal for $i=1,\ldots,N$.
Moreover, let $F_{\hat{\mu}}:\R^N\to[0,1]$ denote the distribution function of $\hat{\mu}$, 
i.e., 
$F_{\hat{\mu}}(x_1,\ldots,x_N):=\hat{\mu}\big(\!\bigtimes_{i=1}^N(-\infty,x_i]\big)$ $\forall (x_1,\ldots,x_N)\in\R^N$. 
Similarly, for $i=1,\ldots,N$, 
let $F_{\hat{\mu}_i}:\R\to[0,1]$ and $F_{\mu_i}:\R\to[0,1]$ denote the distribution functions of $\hat{\mu}_i$ and~$\mu_i$, respectively, i.e.,
$F_{\hat{\mu}_i}(x):=\hat{\mu}_i\big((-\infty,x]\big)$,
$F_{\mu_i}(x):=\mu_i\big((-\infty,x]\big)$ $\forall x\in\R$.
Furthermore,
for $i=1,\ldots,N$, 
define 
$F^{-1}_{\mu_i}(u):=\inf\big\{x\in\R:F_{\mu_i}(x)\ge u\big\}$ $\forall u\in[0,1]$,
that is, $F^{-1}_{\mu_i}$ is the left-continuous generalized inverse of $F_{\mu_i}$.
Then, the following statements hold.
\begin{enumerate}[label=(\roman*)]

\item\label{props:reassembly-1d-copula}%
Let $C:[0,1]^N\to[0,1]$ be a distribution function with uniform marginals which satisfies
\begin{align*}
F_{\hat{\mu}}(x_1,\ldots,x_N)=C\big(F_{\hat{\mu}_1}(x_1),\ldots,F_{\hat{\mu}_N}(x_N)\big) \hspace{2.8pt} \qquad \forall (x_1,\ldots,x_N)\in\R^N,
\end{align*}
that is, $C$ is a copula with the dependence structure of $\hat{\mu}$; see, e.g., \citep[Definition~5.1 \& Theorem~5.3]{mcneil2005quantitative} for the definition and existence of copulas.
Subsequently, let $F_{\tilde{\mu}}:\R^N\to[0,1]$ be defined as follows:
\begin{align*}
    F_{\tilde{\mu}}(x_1,\ldots,x_N):=C\big(F_{\mu_1}(x_1),\ldots,F_{\mu_N}(x_N)\big) \qquad \forall (x_1,\ldots,x_N)\in\R^N.
\end{align*}
Then, $F_{\tilde{\mu}}$ is the distribution function of a probability measure $\tilde{\mu}\in\CP(\R^N)$ satisfying $\tilde{\mu}\in\nobreak R(\hat{\mu};\mu_1,\ldots,\mu_N)$. 
\end{enumerate}
In the following, let us suppose in addition that 
$\hat{\mu}=\sum_{j=1}^{J}a_j\delta_{\BIx_j}$,
where $J\in\N$,
$(a_j)_{j=1:J}\subset(0,1)$ satisfy $\sum_{j=1}^{J}a_j=\nobreak1$,
and $\big(\BIx_j=(x_{1,j},\ldots,x_{N,j})\big)_{j=1:J}\subset\R^N$ are distinct points.
For $i=1,\ldots,N$, let 
$\sigma_i:\{1,\ldots,J\}\to\{1,\ldots,J\}$ be a bijection that satisfies
$x_{i,\sigma_i(1)}\le x_{i,\sigma_i(2)}\le\cdots\le x_{i,\sigma_i(J)}$.
Subsequently, let us define $(c_{i,j})_{j=1:J,\,i=0:N}$ as follows:
\begin{align*}
    c_{0,j}:=\sum_{l=1}^{j-1}a_l \qquad \forall 1\le j\le J,  \qquad\qquad c_{i,j}:=\sum_{l=1}^{\sigma_i^{-1}(j)-1}a_{\sigma_i(l)} \qquad \forall 1\le j\le J,\; \forall 1\le i\le N.
\end{align*}
Moreover, let $(\Omega,\CF,\PROB)$ be a probability space, 
let $U_0:\Omega\to[0,1]$ be a random variable with uniform distribution, and let 
$\big(U_{i,j}:\Omega\to[0,1]\big)_{j=1:J,\,i=1:N}$ be a collection of random variables satisfying
\begin{align}
    \PROB\big[U_{i,j}\le u \big| c_{0,j}\le U_0 < c_{0,j}+a_j\big]=(u \wedge 1)^+ \qquad \forall u\in\R,\; \forall 1\le j\le J,\; \forall 1\le i\le N.
    \label{eqn:reassembly-1d-rvcollection}
\end{align}
Using these notions, let us define random variables 
$Z_1,\ldots,Z_N:\Omega\to\R$ as follows:
\begin{align}
    Z_i:=F_{\mu_i}^{-1}\Bigg(\sum_{j=1}^{J}(c_{i,j}+a_jU_{i,j})\INDI_{[c_{0,j},c_{0,j}+a_j)}(U_0)\Bigg) \qquad \forall 1\le i\le N.
    \label{eqn:reassembly-1d-invtrans}
\end{align}
Then,
\begin{enumerate}[label=(\roman*),beginpenalty=10000]
    \setcounter{enumi}{1}
    \item\label{props:reassembly-1d-semidisc}
    the law $\tilde{\mu}\in\CP(\R^N)$ of the random variable $(Z_1,\ldots,Z_N):\Omega\to\R^N$ satisfies 
    $\tilde{\mu}\in R(\hat{\mu};\mu_1,\ldots,\mu_N)$.
\end{enumerate}
\end{proposition}

\begin{proof}[Proof of Proposition~\ref{prop:reassembly-1d}]
    See Section~\ref{ssec:proof-reassembly}.
\end{proof}

\begin{remark}\label{rmk:reassembly-1d-uniform-examples}
    The following list presents some examples of $\big(U_{i,j}:\Omega\to[0,1]\big)_{j=1:J,\,i=1:N}$ in Proposition~\ref{prop:reassembly-1d} which satisfy (\ref{eqn:reassembly-1d-rvcollection}).
    \begin{enumerate}[label=(\alph*)]
        \item
        One may let $(U_{i,j})_{j=1:J,\,i=1:N}$ be independent and identically distributed uniform random variables that are independent of~$U_0$.
        
        \item
        One may let $U_{i,j}:=U_{i}$ for $j=1,\ldots,J$, $i=1,\ldots,N$, where 
        $U_1,\ldots,U_N:\Omega\to[0,1]$ are uniform random variables that are independent of~$U_0$.
        In particular, one may let $U_0,U_1,\ldots,U_N$ be jointly independent, 
        or let $U_1=\cdots=U_N$ be independent of $U_0$.

        \item\label{rmks:reassembly-1d-uniform-examples-deterministic}%
        One may define $U_{i,j}:=\frac{1}{a_{j}}(U_0-c_{0,j})\INDI_{\{[c_{0,j},c_{i,j}+a_j)\}}(U_0)$ for $j=1,\ldots,J$, $i=1,\ldots,N$.
    \end{enumerate}
\end{remark}

Proposition~\ref{prop:reassembly-dd} concerns reassemblies in the semi-discrete case with multi-dimensional marginals.

\begin{proposition}[Reassemblies in the semi-discrete case]\label{prop:reassembly-dd}
Let Assumption~\ref{asp:productspace} hold. 
Suppose that for $i=1,\ldots,N$, $\CX_i=\R^{d_i}$ for $d_i\in\N\cap[2,\infty)$ and that $d_{\CX_i}$ is induced by a norm $\|\cdot\|$ on $\R^{d_i}$ under which the closed unit ball $\big\{\BIx\in\R^{d_i}:\|\BIx\|\le 1\big\}$ is a strictly convex set.\footnote{For example, under the $p$-norm, this condition is satisfied for all $1<p<\infty$ by the Minkowski inequality, but fails when $p=1$ or $p=\infty$.} 
Moreover, suppose that for $i=1,\ldots,N$, $\mu_i\in\CP_1(\CX_i)$ is absolutely continuous with respect to the Lebesgue measure on $\R^{d_i}$. 
Let $\hat{\mu}\in\CP_1(\BCX)$ be a finitely supported measure with marginals $\hat{\mu}_1,\ldots,\hat{\mu}_N$. 
For $i=1,\ldots,N$, let $\hat{\mu}_i$ be represented as $\hat{\mu}_i=\sum_{j=1}^{J_i}a_{i,j}\delta_{\BIx_{i,j}}$ 
where 
$J_i\in\N$,
$(a_{i,j})_{j=1:J_i}\subset(0,1)$ satisfy $\sum_{j=1}^{J_i}a_{i,j}=1$,
and
$(\BIx_{i,j})_{j=1:J_i}\subset\CX_i$ are distinct. 
Then, the following statements hold.\widowpenalties-1000
\begin{enumerate}[label=(\roman*),beginpenalty=10000]
\item\label{props:reassembly-dd1} 
For $i=1,\ldots,N$, there exists $\big(\phi^\star_{i,j}\big)_{j=1:J_i}\subset\R$ which is an optimal solution of the following concave maximization problem:
\begin{align}
\sup_{(\phi_{i,j})_{j=1:J_i}\subset\R}\Bigg\{\sum_{j=1}^{J_i}\phi_{i,j}a_{i,j}-\int_{\R^{d_i}}\max_{1\le j\le J_i}\big\{\phi_{i,j}-\|\BIx_{i,j}-\BIz\|\big\}\DIFFM{\mu_i}{\DIFF\BIz}\Bigg\}.
\label{eqn:reassembly-dd-kant}
\end{align}
Moreover, the optimal value of the problem (\ref{eqn:reassembly-dd-kant}) is equal to $W_1(\hat{\mu}_i,\mu_i)$.

\item\label{props:reassembly-dd2} 
For $i=1,\ldots,N$, let $\big(\phi^\star_{i,j}\big)_{j=1:J_i}\subset\R$ be an optimal solution of the problem~(\ref{eqn:reassembly-dd-kant}), and define
\begin{align}
\quad\qquad V_{i,j}:=\bigg\{\BIz\in\R^{d_i}:\phi_{i,j}^\star-\|\BIx_{i,j}-\BIz\|=\max_{1\le j'\le J_i}\big\{\phi_{i,j'}^\star-\|\BIx_{i,j'}-\BIz\|\big\}\bigg\}\quad \forall 1\le j\le J_i.
\label{eqn:reassembly-dd-partition}
\end{align}
Then, it holds that $\mu_i(V_{i,j})=a_{i,j}$ for $j=1,\ldots,J_i$.

\item\label{props:reassembly-dd3} 
Let $(\Omega,\CF,\PROB)$ be a probability space and let $(X_1,\ldots,X_N):\Omega\to\BCX$ be a random variable with law $\hat{\mu}$. 
For $i=1,\ldots,N$, let $(\phi^\star_{i,j})_{j=1:J_i}\subset\R$
be an optimal solution of the problem~(\ref{eqn:reassembly-dd-kant}),
and let the sets $(V_{i,j})_{j=1:J_i}$ be defined by (\ref{eqn:reassembly-dd-partition}).
Let $(Z_i:\Omega\to\R^{d_i})_{i=1:N}$ be random variables which satisfy
\begin{align*}
\;\;\;\;\;\;\;\;\;\;\PROB[Z_i\in E|X_i=\BIx_{i,j}]=\frac{\mu_i\big(E\cap V_{i,j}\big)}{\mu_i(V_{i,j})} \qquad\forall E\in\CB(\R^{d_i}),\;\forall 1\le j\le J_i,\; \forall 1\le i\le N.
%\label{eqn:reassembly-dd-conditional}
\end{align*}
Let $\tilde{\mu}\in\CP(\BCX)$ be the law of the random variable $(Z_1,\ldots,Z_N):\Omega\to\BCX$. 
Then, it holds that $\tilde{\mu}\in R(\hat{\mu};\mu_1,\ldots,\mu_N)$. 
\end{enumerate}
\end{proposition}

\begin{proof}[Proof of Proposition~\ref{prop:reassembly-dd}]
See Section~\ref{ssec:proof-reassembly}.
\end{proof}

\begin{remark}
We would like to remark that the finite support assumption of $\hat{\mu}$ 
in Proposition~\ref{prop:reassembly-1d}\ref{props:reassembly-1d-semidisc} and
in Proposition~\ref{prop:reassembly-dd} are relevant in practice, 
because the numerical method that we use to solve \eqref{eqn:mmotlb} in Section~\ref{sec:numerics} returns an approximately optimal solution of \eqref{eqn:mmotlb} that has finite support (see Algorithm~\ref{alg:cp-mmot} and Proposition~\ref{prop:cpalgo-properties}). 
\end{remark}

\subsection{Controlling supremum $W_1$-metrics within moment sets in the compact Euclidean case}\label{ssec:momentsetdd}
Theorem~\ref{thm:lowerbound} 
shows that, in order to control the approximation error of \eqref{eqn:mmotlb},
one is required to construct test functions $\CG_1,\ldots,\CG_N$ to control 
$\big(\specialoverline{W}_1(\mu_i,[\mu_i]_{\CG_i})\big)_{i=1:N}$ to be arbitrarily close to~0. 
In this subsection, 
we consider the compact Euclidean case,
i.e., when $\CX_1,\ldots,\CX_N$ are compact subsets of Euclidean spaces, 
where we 
\textit{explicitly construct} $\CG_1,\ldots,\CG_N$ 
and develop upper bounds for
$\big(\specialoverline{W}_1(\mu_i,[\mu_i]_{\CG_i})\big)_{i=1:N}$ that can be controlled to be arbitrarily small.

Let us first recall the notions of faces, extreme points, and extreme directions of convex sets,
and introduce the corresponding notations.

\begin{definition}[Faces, extreme points, and extreme directions of convex sets; see {\citep[Section~18]{rockafellar1970convex}}]\label{def:convexanalysisnotions}
Let $d\in\N$. 
A convex subset $C'$ of a convex set $C\subseteq\R^d$ is called a face of $C$ if
\begin{align*}
\forall \BIx_1,\BIx_2\in C,\; \forall \lambda\in(0,1): \qquad\lambda\BIx_1+(1-\lambda)\BIx_2\in C'\quad \Rightarrow \quad \BIx_1\in C',\;\BIx_2\in C'.
\end{align*}
A point $\BIx$ in a convex set $C\subseteq\R^d$ is called an extreme point (or vertex) of $C$ if it is a face of $C$. 
A vector $\BIz\in\R^d$ is called an extreme direction of a convex set $C\subseteq\R^d$ if there exists $\BIx\in C$ such that $\big\{\BIx+\lambda\BIz:\lambda\ge0\big\}$ is a face of $C$. 
Moreover, 
for any polyhedron $C$,
we let 
$\FF(C)$ denote the set of faces of $C$,
let $V(C)$ denote the set of extreme points of $C$,
and let $D(C)$ denote the set of extreme directions of $C$.
Note that $\emptyset\in \FF(C)$ and every non-empty face $F\in\FF(C)$ is a polyhedron with $V(F)\subseteq V(C)$ and $D(F)\subseteq D(C)$; see, e.g., \citep[Theorem~19.1]{rockafellar1970convex}.
\end{definition}

We now state our results on the explicit construction of test functions for controlling the supremum $W_1$-metric of the resulting moment set as follows.

\begin{proposition}[Supremum $W_1$-metric within a moment set in the compact Euclidean case]\label{prop:momentset-simplex}
Let $d\in\N$, let $\CY\subset\R^d$ be compact,
and let $d_{\CY}$ be induced by a norm $\|\cdot\|$ on $\R^d$.
The following statement holds.
\begin{enumerate}[label=(\roman*),beginpenalty=10000]
    \item\label{props:momentset-simplex-bisection}
    Let $(\FC_r)_{r\in\N_0}$ be a sequence of collections of $d$-simplices constructed as follows.\footnote{A set is called a $d$-simplex if it is the convex hull of $d+\nobreak1$~affinely independent points in $\R^d$.}
    \begin{itemize}
        \item Let $C_0$ be a $d$-simplex such that $C_0\supseteq\CY$.
        Set $\FC_0\leftarrow\{C_0\}$.

        \item Repeat the following steps for $r=1,2,\ldots$
        \begin{itemize}
            \item Let $(\BIv,\BIv')\in\argmax_{(\BIw,\BIw')}\big\{\|\BIw-\BIw'\|:\BIw,\BIw'\in V(C),\; C\in \FC_{r-1}\big\}$.
            
            \item Initialize $\FC_r\leftarrow\FC_{r-1}$.
            
            \item For each $C\in\FC_{r}$ which satisfies $\{\BIv,\BIv'\}\subseteq V(C)$, update $\FC_r$ by bisecting the simplex~$C$ at the midpoint of $\BIv$ and $\BIv'$, i.e.,
            $\FC_{r}\leftarrow (\FC_{r}\setminus C)\cup \Big\{\conv\Big(\big(V(C)\setminus\{\BIv\}\big)\cup \big\{{\textstyle\frac{\BIv+\BIv'}{2}}\big\}\Big),\; \conv\Big(\big(V(C)\setminus\{\BIv'\}\big)\cup \big\{{\textstyle\frac{\BIv+\BIv'}{2}}\big\}\Big)\Big\}$.
        \end{itemize}
    \end{itemize}
    Then, for each $r\in\N_0$,
    $\FC_r$ is a finite collection of $d$-simplices where 
    $\bigcup_{C\in\FC_r}C\supseteq\CY$ and 
    $C_1\cap\nobreak C_2\in \FF(C_1)\cap \FF(C_2)$ $\forall C_1,C_2\in\FC_r$.
    Moreover, it holds that
    \begin{align*}
        \lim_{r\to\infty}\max_{C\in\FC_r}\max_{\BIv,\BIv'\in V(C)}\big\{\|\BIv-\BIv'\|\big\}=\nobreak0.
    \end{align*}
\end{enumerate}
In the following, let $\FC$ be a finite collection of $d$-simplices satisfying
$\bigcup_{C\in\FC}C\supseteq\CY$ and 
$C_1\cap C_2\in \FF(C_1)\cap \FF(C_2)$ $\forall C_1,C_2\in\FC$.
Let us 
denote $\FF(\FC):=\bigcup_{C\in\FC}\FF(C)$,
$V(\FC):=\bigcup_{C\in\FC}V(C)$,
and
define the mesh size $\eta(\FC)>0$ of $\FC$ as follows:
\begin{align}
    \eta(\FC):=\max_{C\in\FC}\max_{\BIv,\BIv'\in V(C)}\big\{\|\BIv-\BIv'\|\big\}.
    \label{eqn:momentset-simplex-meshsize}
\end{align}
Then, the following statement holds.
\begin{enumerate}[label=(\roman*),beginpenalty=10000]
    \setcounter{enumi}{1}
    
    \item\label{props:momentset-simplex-uniquerepresentation}
    For every $F\in\FF(\FC)$, there exist functions
    $\big(\lambda_{F,\BIw}:\bigcup_{C\in\FC}C\to\R_+)_{\BIw\in V(F)}$
    which are affine on~$F$
    and satisfy 
    $\lambda_{F,\BIw}(\BIx)=\nobreak0$
    $\forall \BIx\notin F$
    for each $\BIw\in V(F)$,
    as well as
    \begin{align*}
        \sum_{\BIw\in V(F)}\lambda_{F,\BIw}(\BIx)=\nobreak1,\quad \sum_{\BIw\in V(F)}\lambda_{F,\BIw}(\BIx)\BIw=\BIx \qquad \forall \BIx\in\relint(F).
    \end{align*}
    
\end{enumerate}
Next, for every $\BIv\in V(\FC)$, let $g_{\BIv}:\bigcup_{C\in\FC}C\to\R$ be defined as follows:
\begin{align}
    g_{\BIv}(\BIx):=\sum_{F\in\FF(\FC)}\Bigg(\sum_{\BIw\in V(F)}\lambda_{F,\BIw}(\BIx)\INDI_{\{\BIw=\BIv\}}\Bigg)\INDI_{\relint(F)}(\BIx) \qquad \forall \BIx\in\bigcup_{C\in\FC}C.
    \label{eqn:momentset-simplex-testfuncs}
\end{align}
Subsequently, let us define $\CG_{\mathsf{simp}}(\FC):=\big\{g_{\BIv}:\BIv\in V(\FC)\big\}$.
We call $\CG_{\mathsf{simp}}(\FC)$ the simplicial test functions generated by~$\FC$.
Then, the following statements hold.
\begin{enumerate}[label=(\roman*),beginpenalty=10000]
    \setcounter{enumi}{2}
    \item\label{props:momentset-simplex-continuity}
    Every function in $\CG_{\mathsf{simp}}(\FC)$ is non-negative and continuous.
    Moreover, it holds 
    for every $\BIv\in\nobreak V(\FC)$ and every $C\in\FC$ that $g_{\BIv}$ is affine on $C$, and
    that $\sum_{\BIv\in V(\FC)}g_{\BIv}(\BIx)=\nobreak1$ $\forall\BIx\in\bigcup_{C\in\FC}C$.
    
    \item\label{props:momentset-simplex-upperbound}
    For all $\mu,\nu\in\CP(\CY)$ which satisfy
    $\mu\overset{\CG_{\mathsf{simp}}(\FC)}{\scalebox{3.5}[1]{$\sim$}}\nu$,
    it holds that
    $W_1(\mu,\nu)\le 2\eta(\FC)$.
    In particular, $\specialoverline{W}_1\big(\mu,[\mu]_{\CG_{\mathsf{simp}}(\FC)}\big)\le 2\eta(\FC)$ for all $\mu\in\CP(\CY)$.
    
\end{enumerate}
\end{proposition}

\begin{proof}[Proof of Proposition~\ref{prop:momentset-simplex}]
See Section~\ref{ssec:proof-momentsets}.
\end{proof}

In fact, Proposition~\ref{prop:momentset-simplex} is a special case of our general results in Section~\ref{sec:momentset-extended} about controlling the supremum Wasserstein metric within moment sets via finitely many test functions.
Our generalization in Section~\ref{sec:momentset-extended} is three-fold:
\begin{enumerate}[label=(\arabic*),beginpenalty=10000]
    \item we generalize the results in Proposition~\ref{prop:momentset-simplex} to the case where $\CY\subseteq\R^d$ is unbounded;
    
    \item we propose a class of test functions that are more general than those in Proposition~\ref{prop:momentset-simplex};
    
    \item we are able to control the supremum $W_p$-metric for general $p\in[1,\infty)$
    (note that $W_p(\,\cdot\,,\cdot\,)\ge W_1(\,\cdot\,,\cdot\,)$ $\forall p\in[1,\infty)$).
\end{enumerate}
Even though controlling the supremum $W_p$-metric for $p=1$ suffices for controlling the approximation error of \eqref{eqn:mmotlb} in view of Theorem~\ref{thm:lowerbound}, 
our general results cover the $p\in(1,\infty)$ case since that is of independent interest; see Corollary~\ref{cor:breedenlitzenberger} and Remark~\ref{rmk:breedenlitzdd}.

To apply Proposition~\ref{prop:momentset-simplex} to control the approximation error of \eqref{eqn:mmotlb} via Theorem~\ref{thm:lowerbound},
let us introduce the following setting.

\begin{setting}[The compact Euclidean case]\label{sett:compact}
    The following conditions hold.
    \begin{enumerate}[label=\normalfont{(A\arabic*)}]
        \item\label{settc:compact-measures}
        For $i=1,\ldots,N$ where $N\in\N$, $d_i\in\N$, $\CX_i\subset\R^{d_i}$ is compact, and $d_{\CX_i}$ is induced by a norm $\|\cdot\|$ on $\R^{d_i}$.
        Moreover, 
        $\BCX:=\CX_1\times\cdots\times\CX_N$ 
        is equipped with the 1-product metric (\ref{eqn:1prod-metric}),
        and $\mu_i\in\CP(\CX_i)$ for $i=1,\ldots,N$.
        
        \item\label{settc:compact-cost}
        $f:\BCX\to\R$ is $L_f$-Lipschitz continuous for $L_f>0$.
    \end{enumerate}
    Subsequently, let us consider $(\FC_i)_{i=1:N}$ and $(\CG_i)_{i=1:N}$ that are constructed as follows.
    \begin{enumerate}[label=\normalfont{(TF)}]
        \item\label{settc:compact-testfuncs}
        For $i=1,\ldots,N$, 
        let $\FC_i$ be a finite collection of $d_i$-simplices satisfying
        $\bigcup_{C\in\FC_i}C\supseteq \CX_i$ and
        $C_1\cap\nobreak C_2\in \FF(C_1)\cap \FF(C_2)$ $\forall C_1,C_2\in\FC_i$.
        Then, let $\eta(\FC_i)$ be defined by (\ref{eqn:momentset-simplex-meshsize}),
        let $m_i:=|V(\FC_i)|-1$,
        and let $\CG_{\mathsf{simp}}(\FC_i)$ be the simplicial test functions generated by~$\FC_i$ as constructed in Proposition~\ref{prop:momentset-simplex},
        where we denote $\CG_{\mathsf{simp}}(\FC_i)=\{g_{i,\BIv_{i,0}},g_{i,\BIv_{i,1}},\ldots,g_{i,\BIv_{i,m_i}}\}$ with respect to an arbitrary enumeration
        $\{\BIv_{i,0},\BIv_{i,1},\ldots,\BIv_{i,m_i}\}$ of $V(\FC_i)$.
        Let $g_{i,j}:=g_{i,\BIv_{i,j}}$ for $j=0,1,\ldots,m_i$,
        and let $\CG_i:=\{g_{i,1},\ldots,g_{i,m_i}\}$.
        Moreover, 
        let $m:=\sum_{i=1}^{N}m_i$,
        and let $(\BIg_i:\CX_i\to\R^{m_i})_{i=1:N}$,
        $\BIg:\BCX\to\R^{m}$,
        $(\bar{\BIg}_i\in\R^{m_i})_{i=1:N}$,
        and $\bar{\BIg}\in\R^{m}$ be defined by 
        (\ref{eqn:g-vecdef}) and (\ref{eqn:v-vecdef}).
    \end{enumerate}
\end{setting}
Observe that Assumption~\ref{asp:mmotexistence} holds under Setting~\ref{sett:compact}.
Moreover, 
notice that each $\CG_i$ is formed by removing 
one function from $\CG_{\mathsf{simp}}(\FC_i)$.
This is because of the property that 
$\sum_{\BIv\in V(\FC_i)}g_{i,\BIv}(\BIx)=\nobreak 1$ $\forall \BIx\in\bigcup_{C\in\FC_i}C$ in Proposition~\ref{prop:momentset-simplex}\ref{props:momentset-simplex-continuity},
which shows that 
$g_{i,0}\in\lspan_1\big(\{g_{i,1},\ldots,g_{i,m_i}\}\big)$ 
and thus
$\lspan_1(\CG_i)=\lspan_1\big(\CG_{\mathsf{simp}}(\FC_i)\big)$.

Under Setting~\ref{sett:compact},
not only can we explicitly bound the total number of test functions $m:=\sum_{i=1}^{N}|\CG_i|$ in \eqref{eqn:mmotlb} in order to control its approximation error to any $\epsilon>\nobreak0$,
but we can also explicitly bound the constant 
$M_{\mathsf{opt}}$ in the computational complexity of \eqref{eqn:mmotlb-dual-lsip} in Theorem~\ref{thm:mmot-complexity}.
These results are presented in the two following propositions.

\begin{proposition}[Number of test functions to control the approximation error in Theorem~\ref{thm:lowerbound} under Setting~\ref{sett:compact}]\label{prop:momentset-errorcontrol}
    Under the conditions \ref{settc:compact-measures} and \ref{settc:compact-cost} in Setting~\ref{sett:compact},
    let $\epsilon>0$, $\epsilon_{\mathsf{LSIP}}\in(0,\epsilon)$ be arbitrary.
    For $i=1,\ldots,N$, 
    let $(-\infty<\underline{M}_{i,j}<\overline{M}_{i,j}<\infty)_{j=1:d_i}$ satisfy
    $\bigtimes_{j=1}^{d_i}[\underline{M}_{i,j},\overline{M}_{i,j}]\supseteq\CX_i$.
    Moreover, let $C_{i,\|\cdot\|}\ge 1$ satisfy $\|\BIx_i\|\le C_{i,\|\cdot\|}\|\BIx_i\|_1$ $\forall\BIx_i\in\CX_i$.
    Then, one can explicitly construct $(\FC_i)_{i=1:N}$ and $(\CG_i)_{i=1:N}$ to satisfy the conditions in \ref{settc:compact-testfuncs} as well as 
    \begin{align*}
        \eta(\FC_i)\le \frac{\epsilon-\epsilon_{\mathsf{LSIP}}}{2NL_f},\quad |\CG_i|=\Bigg[\prod_{j=1}^{d_i}\bigg(1+\bigg\lceil\frac{2NL_f(\overline{M}_{i,j}-\underline{M}_{i,j})C_{i,\|\cdot\|}d_i}{\epsilon-\epsilon_{\mathsf{LSIP}}}\bigg\rceil\bigg)\Bigg]-1 \qquad \forall 1\le i\le N.
    \end{align*}
    We refer to Corollary~\ref{cor:momentset-scalability}
    with 
    $p\leftarrow1$, 
    $d\leftarrow d_i$,
    $\CY\leftarrow\CX_i$,
    $(\underline{M}_i,\overline{M}_i)_{i=1:d}\leftarrow(\underline{M}_{i,j},\overline{M}_{i,j})_{j=1:d_i}$,
    $\epsilon\leftarrow\nobreak\frac{\epsilon-\epsilon_{\mathsf{LSIP}}}{NL_f}$,
    $C_{\|\cdot\|}\leftarrow C_{i,\|\cdot\|}$
    for the explicit construction.
    With this choice of $(\CG_i)_{i=1:N}$,
    for every $\epsilon_{\mathsf{LSIP}}$-optimal solution 
    $\hat{\mu}\in\Gamma\big([\mu_1]_{\CG_1},\ldots,[\mu_N]_{\CG_N}\big)$ of \eqref{eqn:mmotlb},
    it holds that every $\tilde{\mu}\in R(\hat{\mu};\mu_1,\ldots,\mu_N)$ is an $\epsilon$-optimal solution of \eqref{eqn:mmot}.
\end{proposition}

\begin{proof}[Proof of Proposition~\ref{prop:momentset-errorcontrol}]
    See Section~\ref{ssec:proof-momentsets}.
\end{proof}

Observe in Proposition~\ref{prop:momentset-errorcontrol} that $m:=\sum_{i=1}^N|\CG_i|$ is exponential in the dimensions $d_1,\ldots,d_N$ of the spaces $\CX_1,\ldots,\CX_N$ while it is polynomial in $N$ when the dimensions $d_1,\ldots,d_N$ are fixed as constants.
Moreover, recall that the support sparsity result about \eqref{eqn:mmotlb} in Proposition~\ref{prop:mmotlb-sparsity} is linear in~$m$.

\begin{proposition}[Explicit expression of $M_{\mathsf{opt}}$ in Theorem~\ref{thm:mmot-complexity} under Setting~\ref{sett:compact}]\label{prop:mmot-complexity-constant}
Under Setting~\ref{sett:compact},
let $D(\BCX):=\sum_{i=1}^N\max_{\BIx_i,\BIx'_i\in\CX_i}\big\{\|\BIx_i-\BIx'_i\|\big\}$,
and assume in addition that 
$\max_{\BIx\in\BCX}\big\{f(\BIx)\big\}=0$,\footnote{This assumption can be satisfied by subtracting a constant from the cost function~$f$, i.e., $f\leftarrow f-\max_{\BIx\in\BCX}\big\{f(\BIx)\big\}$.}
$\bigcup_{C\in\FC_i}C=\CX_i$,
and
$\int_{\CX_i}g_{i,j}\DIFFX{\mu_i}>0$ 
for $j=0,1,\ldots,m_i$,
for $i=1,\ldots,N$.
Then, the following statements hold.
\begin{enumerate}[label=(\roman*),beginpenalty=10000]
    \item\label{props:mmot-complexity-constant-optimizer}
    \eqref{eqn:mmotlb-dual-lsip} admits an optimal solution $(y_0^\star,\BIy^\star)$ that satisfies 
    $\big\|(y_0^\star,\BIy^{\star\TRANSP})^\TRANSP\big\|_2\le 2L_fD(\BCX)(m+1)$.
    
    \item\label{props:mmot-complexity-constant-complexity}
    As a consequence of statement~\ref{props:mmot-complexity-constant-optimizer},
    for any $\epsilon_{\mathsf{LSIP}}>0$,
    there exists an algorithm which computes an $\epsilon_{\mathsf{LSIP}}$-optimal solution of \eqref{eqn:mmotlb-dual-lsip} with
    $O\big(m\log(L_fD(\BCX)m/\epsilon_{\mathsf{LSIP}})\big)$ calls to $\mathtt{Oracle}(\,\cdot\,)$ 
    and $O\big(m^{\omega+1}\log(L_fD(\BCX)m/\epsilon_{\mathsf{LSIP}})\big)$ additional arithmetic operations.
\end{enumerate}
\end{proposition}

\begin{proof}[Proof of Proposition~\ref{prop:mmot-complexity-constant}]
    See Section~\ref{ssec:proof-momentsets}.
\end{proof}

In Proposition~\ref{prop:mmot-complexity-constant},
both the number of calls to $\mathtt{Oracle}(\,\cdot\,)$
and the number of additional arithmetic operations have a polynomial dependence on $m:=\sum_{i=1}^{N}|\CG_i|$.
Note, however, that the computational cost of $\mathtt{Oracle}(\,\cdot\,)$ also depends on~$m$.
Hence, 
combining Proposition~\ref{prop:momentset-errorcontrol} and Proposition~\ref{prop:mmot-complexity-constant} shows that,
for classes of MMOT problems 
in which the dimensions of the underlying spaces $\CX_1,\ldots,\CX_N$ are fixed and in which $\mathtt{Oracle}(\,\cdot\,)$ admits a computationally efficient implementation whose complexity depends polynomially on~$m$, the overall computational complexity of \eqref{eqn:mmotlb-dual-lsip} has a polynomial dependence on the number~$N$ of marginals. 
This is in line with the results of \citet{altschuler2023polynomial} about the computational complexity of MMOT problems with discrete marginals.

\section{Numerical methods}\label{sec:numerics}

In this section, we present our numerical method for approximately solving \eqref{eqn:mmot}. 
Specifically, we first develop a cutting-plane discretization algorithm (i.e., Algorithm~\ref{alg:cp-mmot}) inspired by Conceptual Algorithm~11.4.1 of \citet{goberna1998linear}, 
which, 
for any $\epsilon_{\mathsf{LSIP}}>0$, 
can provide $\epsilon_{\mathsf{LSIP}}$-optimal solutions of both \eqref{eqn:mmotlb-dual-lsip} and \eqref{eqn:mmotlb}. 
Subsequently, we develop an algorithm (i.e., Algorithm~\ref{alg:mmot})
that is able to compute an $\epsilon$-optimal solution of \eqref{eqn:mmot} for any $\epsilon>0$. 
Moreover, it computes both an upper bound and a lower bound for the optimal value of \eqref{eqn:mmot} that are at most~$\epsilon$~apart.

We work under the compact Euclidean case presented in Setting~\ref{sett:compact}.
We develop two sufficient conditions \ref{settc:algo-fullsupport} and \ref{settc:algo-vertices}
in Setting~\ref{sett:algo} below,
which 
ensure the existence of a finite set $\BCX^{\dagger(0)}\subseteq\BCX$ such that 
the linear programming (LP) relaxation of \eqref{eqn:mmotlb-dual-lsip} formed by replacing $\BCX$ with $\BCX^{\dagger(0)}$ has non-empty and bounded superlevel sets.
This is a crucial result to guarantee the convergence of Algorithm~\ref{alg:cp-mmot}, and is
presented in 
Proposition~\ref{prop:boundedness-mmotlb-dual-lsip}\ref{props:boundedness-mmotlb-dual-lsip-boundedness}.
Moreover, 
under the condition \ref{settc:algo-vertices},
Proposition~\ref{prop:boundedness-mmotlb-dual-lsip}\ref{props:boundedness-mmotlb-dual-lsip-explicit} offers a practical method presented in Algorithm~\ref{alg:cp-mmot-initialization} to explicit construct $\BCX^{\dagger(0)}$.

\begin{setting}\label{sett:algo}
    In addition to the conditions \ref{settc:compact-measures}, \ref{settc:compact-cost}, and \ref{settc:compact-testfuncs} in Setting~\ref{sett:compact},
    at least one of the two following conditions \ref{settc:algo-fullsupport} and \ref{settc:algo-vertices} holds.
    \begin{enumerate}[label=\normalfont{(C\arabic*)},beginpenalty=10000]
        \item\label{settc:algo-fullsupport}%
        For $i=1,\ldots,N$, $\support(\mu_i)=\CX_i$ and $\aff(\CX_i\cap C)=\R^{d_i}$ $\forall C\in\FC_i$; 
        in particular, $\aff(\CX_i\cap\nobreak C)=\nobreak\R^{d_i}$ holds if $\inter(C)\cap\inter(\CX_i)\neq\emptyset$.
        
        \item\label{settc:algo-vertices}%
        For $i=1,\ldots,N$, $V(\FC_i)\subseteq\CX_i$ and $\int_{\CX_i}g_{i,j}\DIFFX{\mu_i}>0$ $\forall 0\le j\le m_i$.
    \end{enumerate}
\end{setting}

\setcounter{algocf}{-1}
\begin{algorithm}[t]
\caption{{\bf Explicit construction of $\BCX^{\dagger(0)}$ in Proposition~\ref{prop:boundedness-mmotlb-dual-lsip}}}\label{alg:cp-mmot-initialization}
\KwIn{$\{\BIv_{i,j}\}_{j=0:m_i,\,i=1:N}$, $\big(\int_{\CX_i}g_{i,j}\DIFFX{\mu_i}\big)_{j=0:m_i,\,i=1:N}$.}
\KwOut{$\hat{\mu}^{(0)}$, $\BCX^{\dagger(0)}$, $\mathtt{flag}$.}
\nl$\CQ\leftarrow \emptyset$, $\mathtt{flag}\leftarrow 0$. \\
\nl \For{$i=1,\ldots,N$}{
    \nl $r_i\leftarrow 0$.\\
    \nl \For{$j=0,1,\ldots,m_i$}{
        \nl $\eta_{i,j}\leftarrow \int_{\CX_i}g_{i,j}\DIFFX{\mu_i}$. \\
    }
}
\nl\label{alglin:cpinit-whileloop}\While{$r_i\le m_i$ $\forall 1\le i\le N$}{
    \nl\label{alglin:cpinit-minprob}$\eta_{\min}\leftarrow \bigwedge_{i=1}^{N}\eta_{i,r_i}$. \\
    \nl \For{$i=1,\ldots,N$}{
        \nl\label{alglin:cpinit-subtraction}$\BIx_i\leftarrow\BIv_{i,r_i}$, $\eta_{i,r_i}\leftarrow \eta_{i,r_i}-\eta_{\min}$.\\
    }
    \nl\label{alglin:cpinit-zeroindices}$\BIx\leftarrow(\BIx_1,\ldots,\BIx_N)$, $\CQ\leftarrow\CQ \cup \big\{(\BIx,\eta_{\min})\big\}$, $\CI\leftarrow \big\{i\in\{1,\ldots,N\}:\eta_{i,r_i}=0\big\}$. \\
    \nl\label{alglin:cpinit-flagcond}\If{$|\CI|>1$ and $\bigvee_{i=1}^{N}(m_i-r_i)>0$}{
        \nl $\mathtt{flag}\leftarrow 1$.\\
    }
    \nl\For{each $i\in\CI$}{
        \nl\label{alglin:cpinit-advance}$r_i\leftarrow r_i+1$. \\
    }
}
\nl\label{alglin:cpinit-output}$\hat{\mu}^{(0)}\leftarrow \sum_{(\BIx, \eta)\in \CQ}\eta\delta_{\BIx}$, $\BCX^{\dagger(0)}\leftarrow \support(\hat{\mu}^{(0)})$. \\
\nl \Return $\hat{\mu}^{(0)}$, $\BCX^{\dagger(0)}$, $\mathtt{flag}$. \\
\end{algorithm}

\begin{proposition}[LP relaxations of \eqref{eqn:mmotlb-dual-lsip}]\label{prop:boundedness-mmotlb-dual-lsip}
    Under Setting~\ref{sett:algo},
    the following statements hold.
    \begin{enumerate}[label=(\roman*),beginpenalty=10000]
        \item\label{props:boundedness-mmotlb-dual-lsip-boundedness}%
        The set of optimal solutions of the LSIP problem~\eqref{eqn:mmotlb-dual-lsip} is non-empty and bounded, and there exists a finite set $\BCX^{\dagger(0)}\subseteq\BCX$ such that the following LP relaxation of \eqref{eqn:mmotlb-dual-lsip} has non-empty and bounded superlevel sets: 
        \begin{align}
        \begin{split}
        \maximize_{y_0,\,\BIy}\quad & y_0+\langle\bar{\BIg},\BIy\rangle\\
        \text{subject to}\quad & y_0+\langle\BIg(\BIx),\BIy\rangle\le f(\BIx) \qquad \forall \BIx\in\BCX^{\dagger(0)},\\
        & y_0\in\R,\qquad \BIy\in\R^m.
        \end{split}
        \label{eqn:boundedness-mmotlb-dual-lsip}
        \end{align}
        This means the set $\big\{(y_0,\BIy^\TRANSP)^\TRANSP\in\R^{m+1}:y_0+\langle\BIg(\BIx),\BIy\rangle\le f(\BIx)\;\forall \BIx\in\BCX^{\dagger(0)},\;y_0+\langle\bar{\BIg},\BIy\rangle\ge\alpha\big\}$
        is non-empty and bounded for all $\alpha$ that is less than or equal to the optimal value of (\ref{eqn:boundedness-mmotlb-dual-lsip}).

        \item\label{props:boundedness-mmotlb-dual-lsip-explicit}%
        If the condition \ref{settc:algo-vertices} holds,
        let $(\hat{\mu}^{(0)},\BCX^{\dagger(0)},\mathtt{flag})$ be the outputs of Algorithm~\ref{alg:cp-mmot-initialization}.
        Then, it holds that
        $\hat{\mu}^{(0)}\in\Gamma\big([\mu_1]_{\CG_1},\ldots,[\mu_N]_{\CG_N}\big)$ and the LP problem (\ref{eqn:boundedness-mmotlb-dual-lsip}) has an optimal solution.
        Morevoer, if $\mathtt{flag}=0$, then $\big|\BCX^{\dagger(0)}\big|=m+1$ and the LP problem (\ref{eqn:boundedness-mmotlb-dual-lsip}) with this choice of $\BCX^{\dagger(0)}$ has non-empty and bounded superlevel sets.

    \end{enumerate}
\end{proposition}

\begin{proof}[Proof of Proposition~\ref{prop:boundedness-mmotlb-dual-lsip}]
See Section~\ref{ssec:proof-numerics}.
\end{proof}

\begin{remark}\label{rmk:numerics-assumptions-superlevel-vertices}
    If the condition $V(\FC_i)\subseteq\CX_i$ for all $i=1,\ldots,N$ in \ref{settc:algo-vertices} is not satisfied, then one may extend $\CX_i$ to $\widetilde{\CX}_i:=\bigcup_{C\in\FC_i}C$ for $i=1,\ldots,N$, and then extend the definition of the cost function $f:\BCX\to\R$ to $\overtilde{\BCX}:=\bigtimes_{i=1}^N\widetilde{\CX}_i$ via Proposition~\ref{prop:extension-lip} below.
\end{remark}

\begin{proposition}\label{prop:extension-lip}
    Let $(\CY,d_{\CY})$ be a Polish space and let $D\subseteq\widetilde{D}\subseteq\CY$. Let $f:D\to\R$ be an $L_f$-Lipschitz continuous function for $L_f>0$ and let $\tilde{f}:\widetilde{D}\to\R$ be defined as
    \begin{align*}
    \tilde{f}(x):=\inf_{x'\in D}\big\{f(x')+L_fd_{\CY}(x, x')\big\} \qquad \forall x\in \widetilde{D}.
    \end{align*}
    Then, $\tilde{f}$ is $L_f$-Lipschitz continuous and $\tilde{f}(x)=f(x)$ for all $x\in D$.
 \end{proposition}
    
\begin{proof}[Proof of Proposition~\ref{prop:extension-lip}]
See Section~\ref{ssec:proof-numerics}.
\end{proof}

Algorithm~\ref{alg:cp-mmot} shows our cutting-plane discretization algorithm for solving \eqref{eqn:mmotlb-dual-lsip} and \eqref{eqn:mmotlb}, which is inspired by Conceptual Algorithm~11.4.1 of \citet{goberna1998linear}. Remark~\ref{rmk:cpalgo} explains the assumptions and details of Algorithm~\ref{alg:cp-mmot}. The properties of Algorithm~\ref{alg:cp-mmot} are presented in Proposition~\ref{prop:cpalgo-properties}.

\begin{algorithm}[t]
\caption{{\bf Cutting-plane discretization algorithm for solving \eqref{eqn:mmotlb-dual-lsip} and \eqref{eqn:mmotlb}}}\label{alg:cp-mmot}
\KwIn{$(\CX_i)_{i=1:N}$, $f:\BCX\to\R$, $\BIg:\BCX\to\R^m$, $\bar{\BIg}\in\R^m$, $\BCX^{\dagger(0)}\subseteq\BCX$, $\mathtt{Oracle}(\,\cdot\,)$, $\epsilon_{\mathsf{LSIP}}>0$.}
\KwOut{$\alpha_{\mathsf{relax}}^{\SFU\SFB}$, $\alpha_{\mathsf{relax}}^{\SFL\SFB}$, $\hat{y}_0$, $\hat{\BIy}$, $\hat{\mu}$.}
\nl $r\leftarrow 0$. \\
\nl \While{true}{
\nl Solve~the~LP~problem:~$\alpha^{(r)}\leftarrow\displaystyle\max_{y_0\in\R,\,\BIy\in\R^m}\big\{y_0+\langle\bar{\BIg},\BIy\rangle:y_0+\langle\BIg(\BIx),\BIy\rangle\le f(\BIx)\;\forall \BIx{\in\BCX^{\dagger(r)}}\big\}$, denote the computed primal and dual optimal solutions as $\big(y_0^{(r)},\BIy^{(r)}\big)$ and $\big(\mu^{(r)}_{\BIx}\big)_{\BIx\in\BCX^{\dagger(r)}}$.\label{alglin:cp-lp}\\
\nl Call $\mathtt{Oracle}(\BIy^{(r)})$ and denote the outputs by $(\BIx^\star,s^{(r)})$.\label{alglin:cp-global}\\
\nl\label{alglin:cp-termination1}\If{$y_0^{(r)}-{s}^{(r)}\le\epsilon_{\mathsf{LSIP}}$}{
\nl\label{alglin:cp-termination2}Skip to Line~\ref{alglin:cp-bounds}.\\
}
\nl Let $\BCX^\star\subseteq\BCX$ be a finite set such that $\BIx^\star\in\BCX^\star$.\label{alglin:cp-global-optimizers} \\
\nl $\BCX^{\dagger(r+1)}\leftarrow \BCX^{\dagger(r)}\cup\BCX^\star$.\label{alglin:cp-aggregate}\\ 
\nl $r\leftarrow r+1$.\label{alglin:cp-update}\\
}
\nl $\alpha_{\mathsf{relax}}^{\SFU\SFB}\leftarrow\alpha^{(r)}$, $\alpha_{\mathsf{relax}}^{\SFL\SFB}\leftarrow\alpha^{(r)}-y_0^{(r)}+{s}^{(r)}$.\label{alglin:cp-bounds}\\
\nl $\hat{y}_0\leftarrow {s}^{(r)}$, $\hat{\BIy}\leftarrow\BIy^{(r)}$.\label{alglin:cp-dual}\\
\nl $\hat{\mu}\leftarrow\sum_{\BIx\in\BCX^{\dagger(r)}}\mu^{(r)}_{\BIx}\delta_{\BIx}$.\label{alglin:cp-primal}\\
\nl \Return $\alpha_{\mathsf{relax}}^{\SFU\SFB}$, $\alpha_{\mathsf{relax}}^{\SFL\SFB}$, $\hat{y}_0$, $\hat{\BIy}$, $\hat{\mu}$. \\
\end{algorithm}

\begin{algorithm}[t]
\caption{{\bf Algorithm for solving \eqref{eqn:mmot}}}\label{alg:mmot}
\KwIn{$(\CX_i)_{i=1:N}$, $f:\BCX\to\R$, $\BIg:\BCX\to\R^m$, $\bar{\BIg}\in\R^m$, $\mathtt{Oracle}(\,\cdot\,)$, $\epsilon_{\mathsf{LSIP}}>0$.}
\KwOut{$\alpha^{\SFU\SFB}$, $\alpha^{\SFL\SFB}$, $\tilde{\mu}$, $(\tilde{h}_i)_{i=1:N}$, $\tilde{\epsilon}_{\mathsf{sub}}$.}
\nl\label{alglin:mmot-initcuts}Construct a finite set $\BCX^{\dagger(0)}\subseteq \BCX$ such that the LP relaxation (\ref{eqn:boundedness-mmotlb-dual-lsip}) of \eqref{eqn:mmotlb-dual-lsip} has non-empty and bounded superlevel sets. \\
\nl\label{alglin:mmot-cpalgo}$\big(\alpha_{\mathsf{relax}}^{\SFU\SFB},\alpha_{\mathsf{relax}}^{\SFL\SFB},\hat{y}_0,\hat{\BIy},\hat{\mu}\big)\leftarrow$ the outputs of Algorithm~\ref{alg:cp-mmot} with inputs $\big((\CX_i)_{i=1:N}, f, \BIg, \bar{\BIg}, \BCX^{\dagger(0)}, \mathtt{Oracle}(\,\cdot\,), \epsilon_{\mathsf{LSIP}}\big)$. \\
\nl\label{alglin:mmot-reassembly}Let $\tilde{\mu}\in R(\hat{\mu};\mu_1,\ldots,\mu_N)$. \\
\nl\label{alglin:mmot-bounds}$\alpha^{\SFL\SFB}\leftarrow\alpha_{\mathsf{relax}}^{\SFL\SFB}$, $\alpha^{\SFU\SFB}\leftarrow\int_{\BCX}f\DIFFX{\tilde{\mu}}$, $\tilde{\epsilon}_{\mathsf{sub}}\leftarrow \alpha^{\SFU\SFB} - \alpha^{\SFL\SFB}$. \\
\nl\label{alglin:mmot-dual-prepare}Express $\hat{\BIy}=(\hat{\BIy}_1^\TRANSP,\ldots,\hat{\BIy}_N^\TRANSP)^\TRANSP$, where $\hat{\BIy}_i\in\R^{m_i}$ for $i=1,\ldots,N$.\\
\nl\label{alglin:mmot-dual-forloop}\For{$i=1,\ldots,N$}{
\nl\label{alglin:mmot-dual}$\tilde{h}_i(\,\cdot\,)\leftarrow \frac{\hat{y}_0}{N}+\langle\BIg_i(\,\cdot\,),\hat{\BIy}_i\rangle$.\\
}
\nl\label{alglin:mmot-return}\Return $\alpha^{\SFU\SFB}$, $\alpha^{\SFL\SFB}$, $\tilde{\mu}$, $(\tilde{h}_i)_{i=1:N}$, $\tilde{\epsilon}_{\mathsf{sub}}$.
\end{algorithm}

\begin{remark}[Details of Algorithm~\ref{alg:cp-mmot}]\label{rmk:cpalgo}%
In Algorithm~\ref{alg:cp-mmot}, we 
work under Setting~\ref{sett:algo}.
Below is a list explaining the inputs to Algorithm~\ref{alg:cp-mmot}. 
\begin{itemize}[beginpenalty=10000]
\item%
$(\CX_i)_{i=1:N}$, $f:\BCX\to\R$, $\BIg:\BCX\to\R^{m}$, and $\bar{\BIg}\in\R^m$ are given by \ref{settc:compact-measures}, \ref{settc:compact-cost}, and \ref{settc:compact-testfuncs} in Setting~\ref{sett:compact}.

\item%
$\BCX^{\dagger(0)}\subseteq\BCX$ is a finite set that guarantees the non-emptiness and boundedness of the superlevel sets of the LP relaxation (\ref{eqn:boundedness-mmotlb-dual-lsip}) of \eqref{eqn:mmotlb-dual-lsip};
see Proposition~\ref{prop:boundedness-mmotlb-dual-lsip}.

\item%
$\mathtt{Oracle}(\,\cdot\,)$ is the global minimization oracle in Definition~\ref{def:mmot-oracle}.

\item%
$\epsilon_{\mathsf{LSIP}}>0$ is a pre-specified numerical tolerance value (see Proposition~\ref{prop:cpalgo-properties}).%
\end{itemize}%

In the following, we provide explanations of some lines in Algorithm~\ref{alg:cp-mmot}.
\begin{itemize}[beginpenalty=10000]%
\item%
Line~\ref{alglin:cp-lp} solves an LP relaxation of \eqref{eqn:mmotlb-dual-lsip} where the semi-infinite constraint is replaced by finitely many constraints each corresponding to an element of $\BCX^{\dagger(r)}$. When solving the LP relaxation in Line~\ref{alglin:cp-lp} by the dual simplex algorithm (see, e.g., \citep[Chapter~6.4]{vanderbei2020linear}) or the interior point algorithm (see, e.g., \citep[Chapter~18]{vanderbei2020linear}), one can obtain the corresponding optimal solution of the dual LP problem from the outputs of these algorithms.

\item%
Line~\ref{alglin:cp-global-optimizers} allows more than one constraint to be generated in each iteration. 
The finite set $\BCX^\star\subseteq\BCX$ can be thought of as a set of approximately optimal solutions of the global minimization problem solved by $\mathtt{Oracle}(\BIy^{(r)})$.
\end{itemize}%
\end{remark}%

\begin{proposition}[Properties of Algorithm~\ref{alg:cp-mmot}]\label{prop:cpalgo-properties}
Under Setting~\ref{sett:algo}, the following statements hold. 
\begin{enumerate}[label=(\roman*),beginpenalty=10000]
\item\label{props:cpalgo-termination}%
Algorithm~\ref{alg:cp-mmot} terminates after finitely many iterations. 

\item\label{props:cpalgo-bounds}%
$\alpha_{\mathsf{relax}}^{\SFL\SFB}\le$ \eqref{eqn:mmotlb-dual-lsip} $\le\alpha_{\mathsf{relax}}^{\SFU\SFB}$ where $\alpha_{\mathsf{relax}}^{\SFU\SFB}-\alpha_{\mathsf{relax}}^{\SFL\SFB}\le\epsilon_{\mathsf{LSIP}}$.

\item\label{props:cpalgo-dual}%
$(\hat{y}_0,\hat{\BIy})$ is an $\epsilon_{\mathsf{LSIP}}$-optimal solution of \eqref{eqn:mmotlb-dual-lsip} with $\hat{y}_0+\langle\bar{\BIg},\hat{\BIy}\rangle=\alpha_{\mathsf{relax}}^{\SFL\SFB}$. 

\item\label{props:cpalgo-primal}%
$\hat{\mu}$ is an $\epsilon_{\mathsf{LSIP}}$-optimal solution of \eqref{eqn:mmotlb} with $\int_{\BCX}f\DIFFX{\hat{\mu}}=\alpha_{\mathsf{relax}}^{\SFU\SFB}$. 
Moreover, $\support(\hat{\mu})$ is finite.
\end{enumerate}
\end{proposition}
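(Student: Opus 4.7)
I would first handle (ii)--(iv) assuming termination, and then prove finite termination (i) by a compactness-plus-contradiction argument in the spirit of the classical cutting-plane analysis.

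For parts (ii)--(iv), suppose the algorithm exits the while-loop at iteration $r$, i.e., $y_0^{(r)}-s^{(r)}\le\epsilon_{\mathsf{LSIP}}$. The LP in Line~\ref{alglin:cp-lp} is a relaxation of \eqref{eqn:mmotlb-dual-lsip} obtained by retaining only the constraints indexed by $\BCX^{\dagger(r)}$, hence $\alpha^{(r)}\ge$~\eqref{eqn:mmotlb-dual-lsip}, which yields $\alpha_{\mathsf{relax}}^{\SFU\SFB}\ge$~\eqref{eqn:mmotlb-dual-lsip}. By Definition~\ref{def:mmot-oracle}, $s^{(r)}=\min_{\BIx\in\BCX}\{f(\BIx)-\langle\BIg(\BIx),\BIy^{(r)}\rangle\}$, so the pair $(s^{(r)},\BIy^{(r)})$ is feasible for \eqref{eqn:mmotlb-dual-lsip} with objective $s^{(r)}+\langle\bar{\BIg},\BIy^{(r)}\rangle=\alpha^{(r)}-y_0^{(r)}+s^{(r)}=\alpha_{\mathsf{relax}}^{\SFL\SFB}$; hence $\alpha_{\mathsf{relax}}^{\SFL\SFB}\le$~\eqref{eqn:mmotlb-dual-lsip}, and the termination test gives $\alpha_{\mathsf{relax}}^{\SFU\SFB}-\alpha_{\mathsf{relax}}^{\SFL\SFB}=y_0^{(r)}-s^{(r)}\le\epsilon_{\mathsf{LSIP}}$. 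This establishes (ii) and (iii). For (iv), since $\BCX^{\dagger(r)}\supseteq\BCX^{\dagger(0)}$ the LP in Line~\ref{alglin:cp-lp} inherits bounded superlevel sets from Proposition~\ref{prop:boundedness-mmotlb-dual-lsip}, so its optimum is finite and attained and LP strong duality applies. The LP dual is a finite-dimensional minimization of $\sum_{\BIx\in\BCX^{\dagger(r)}}f(\BIx)\mu_{\BIx}$ over weights $\mu_{\BIx}\ge 0$ satisfying $\sum_{\BIx}\mu_{\BIx}=1$ and $\sum_{\BIx}\BIg(\BIx)\mu_{\BIx}=\bar{\BIg}$; consequently $\hat{\mu}=\sum_{\BIx\in\BCX^{\dagger(r)}}\mu^{(r)}_{\BIx}\delta_{\BIx}$ belongs to $\Gamma([\mu_1]_{\CG_1},\ldots,[\mu_N]_{\CG_N})$ and $\int_{\BCX}f\,\Td\hat{\mu}=\alpha^{(r)}=\alpha_{\mathsf{relax}}^{\SFU\SFB}$; sub-optimality $\le\epsilon_{\mathsf{LSIP}}$ then follows from the weak duality \eqref{eqn:mmotlb}~$\ge$~\eqref{eqn:mmotlb-dual-lsip}~$\ge\alpha_{\mathsf{relax}}^{\SFL\SFB}$ in Theorem~\ref{thm:duality}\ref{thms:duality-weak}.

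For part (i), I argue by contradiction: assume the algorithm never terminates, so for every $r$ the oracle returns $(\BIx^{*(r)},s^{(r)})$ with $s^{(r)}=f(\BIx^{*(r)})-\langle\BIg(\BIx^{*(r)}),\BIy^{(r)}\rangle$ and $y_0^{(r)}-s^{(r)}>\epsilon_{\mathsf{LSIP}}$. The values $\alpha^{(r)}$ are non-increasing in $r$ (the constraint set grows) and bounded below by the finite value of \eqref{eqn:mmotlb-dual-lsip}, so $(y_0^{(r)},\BIy^{(r)})$ lies in the superlevel set of the $\BCX^{\dagger(0)}$-LP at level $\inf_r\alpha^{(r)}$, which is bounded by Proposition~\ref{prop:boundedness-mmotlb-dual-lsip}. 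Together with compactness of $\BCX$, I extract a subsequence $r_k$ with $(y_0^{(r_k)},\BIy^{(r_k)},\BIx^{*(r_k)})\to(y_0^\infty,\BIy^\infty,\BIx^\infty)$. For fixed $k$ and every $\ell>k$ we have $\BIx^{*(r_k)}\in\BCX^{\dagger(r_\ell)}$, so LP feasibility at iteration $r_\ell$ yields $y_0^{(r_\ell)}+\langle\BIg(\BIx^{*(r_k)}),\BIy^{(r_\ell)}\rangle\le f(\BIx^{*(r_k)})$. Sending $\ell\to\infty$ and then $k\to\infty$ and using continuity of $\BIg$ and $f$ (the test functions are continuous under Assumption~\ref{asp:numerics} and $f$ is Lipschitz) gives $y_0^\infty+\langle\BIg(\BIx^\infty),\BIy^\infty\rangle\le f(\BIx^\infty)$, i.e., $y_0^\infty\le f(\BIx^\infty)-\langle\BIg(\BIx^\infty),\BIy^\infty\rangle$. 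On the other hand, passing the strict inequality $y_0^{(r_k)}-s^{(r_k)}>\epsilon_{\mathsf{LSIP}}$ to the limit yields $y_0^\infty-\big(f(\BIx^\infty)-\langle\BIg(\BIx^\infty),\BIy^\infty\rangle\big)\ge\epsilon_{\mathsf{LSIP}}>0$, a contradiction.

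The main obstacle is the uniform boundedness of the iterates $(y_0^{(r)},\BIy^{(r)})$ throughout the entire run of the algorithm, needed to extract a convergent subsequence in the termination argument. This is precisely what the choice of the initial constraint set $\BCX^{\dagger(0)}$ via Proposition~\ref{prop:boundedness-mmotlb-dual-lsip} is designed to guarantee; without it, the iterates could escape to infinity and the continuity-based passage to the limit would break down.
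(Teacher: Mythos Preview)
Your proposal is correct and follows essentially the same approach as the paper. For parts (ii)--(iv) your argument is identical to the paper's (feasibility of $(\hat{y}_0,\hat{\BIy})$ via the oracle, feasibility of $\hat{\mu}$ via LP duality on the finite system, then weak duality from Theorem~\ref{thm:duality}\ref{thms:duality-weak}); for part (i) the paper simply invokes \cite[Theorem~11.2]{goberna1998linear}, whereas you write out the standard compactness-and-contradiction argument that underlies that theorem---your identification of Proposition~\ref{prop:boundedness-mmotlb-dual-lsip} as the source of the needed uniform bound on the iterates is exactly right.
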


\begin{proof}[Proof of Proposition~\ref{prop:cpalgo-properties}]
See Section~\ref{ssec:proof-numerics}.
\end{proof}

\begin{remark}
    Our cutting-plane discretization algorithm, i.e., Algorithm~\ref{alg:cp-mmot}, uses the idea of adaptive constraint generation, which is also present in column generation algorithms for large-scale linear programming problems.
    Column generation algorithms have been used for solving optimal transport and related problems involving discrete probability measures, see, e.g.,
    \citet{friesecke2022genetic, bogwardt2022column}.
    The difference between Algorithm~\ref{alg:cp-mmot} and column generation algorithms is that the LSIP problem \eqref{eqn:mmotlb-dual-lsip} has infinitely many constraints in general, while column generation algorithms are designed for solving large-scale linear programming problems involving only finitely many decision variables and constraints. 
    Therefore, the justification for the convergence of Algorithm~\ref{alg:cp-mmot} requires more delicate analyses. 
\end{remark}

The concrete procedure for computing an $\epsilon$-optimal solution of \eqref{eqn:mmot} is presented in Algorithm~\ref{alg:mmot}. 
Theorem~\ref{thm:mmotalgo} shows the properties of Algorithm~\ref{alg:mmot}.

%\pagebreak

\begin{theorem}[Properties of Algorithm~\ref{alg:mmot}]\label{thm:mmotalgo}
Under Setting~\ref{sett:algo}, 
let $(\rho_i)_{i=1:N}$ satisfy $\rho_i\ge \specialoverline{W}_{1}(\mu_i,[\mu_i]_{\CG_i})$ for $i=1,\ldots,N$, and let $\epsilon_{\mathsf{theo}}:=\epsilon_{\mathsf{LSIP}}+L_f\sum_{i=1}^N\rho_i$.
Then, the following statements hold.
\begin{enumerate}[label=(\roman*), beginpenalty=10000]
\item\label{thms:mmotalgo-bounds}%
$\alpha^{\SFL\SFB}\le$ \eqref{eqn:mmot} $\le\alpha^{\SFU\SFB}$
where $\alpha^{\mathsf{UB}}-\alpha^{\mathsf{LB}}=\tilde{\epsilon}_{\mathsf{sub}}\le \epsilon_{\mathsf{theo}}$.

\item\label{thms:mmotalgo-primal}%
$\tilde{\mu}$ is an $\tilde{\epsilon}_{\mathsf{sub}}$-optimal solution of \eqref{eqn:mmot} with $\int_{\BCX}f\DIFFX{\tilde{\mu}}=\alpha^{\mathsf{UB}}$.

\item\label{thms:mmotalgo-dual}%
$(\tilde{h}_i)_{i=1:N}$ is an $\tilde{\epsilon}_{\mathsf{sub}}$-optimal solution of \eqref{eqn:mmot-dual} with $\sum_{i=1}^{N}\int_{\CX_i}\tilde{h}_i\DIFFX{\mu_i}=\alpha^{\mathsf{LB}}$.

\item\label{thms:mmotalgo-control}%
Let us assume further that, for $i=1,\ldots,N$, $\support(\mu_i)=\CX_i$
and that
there exists a finite collection $\widetilde{\FC}_i$ of $d_i$-simplices satisfying
$\bigcup_{C\in\widetilde{\FC}_i}C=\CX_i$ and $C_1\cap C_2\in \FF(C_1)\cap \FF(C_2)$ $\forall C_1,C_2\in\nobreak\widetilde{\FC}_i$.
Then, for any $\epsilon>0$ and any $\epsilon_{\mathsf{LSIP}}\in(0,\epsilon)$, 
one can explicitly construct $(\FC_i)_{i=1:N}$ 
via the iterative bisection procedure in Proposition~\ref{prop:momentset-simplex}\ref{props:momentset-simplex-bisection}
to satisfy the conditions in \ref{settc:compact-testfuncs} 
as well as $\eta(\FC_i)\le\frac{\epsilon-\epsilon_{\mathsf{LSIP}}}{2NL_f}$ for $i=1,\ldots,N$.
Subsequently, using $\BIg(\,\cdot\,)$, $\bar{\BIg}$ constructed by \ref{settc:compact-testfuncs} in Algorithm~\ref{alg:mmot} guarantees $\tilde{\epsilon}_{\mathsf{sub}}\le \epsilon$.
\end{enumerate}
\end{theorem}

\begin{proof}[Proof of Theorem~\ref{thm:mmotalgo}]
See Section~\ref{ssec:proof-numerics}.
\end{proof}

\begin{remark}[Sub-optimality estimate in Algorithm~\ref{alg:mmot} and its a priori upper bound]\label{rmk:mmotalgo-control-gap}
    Theorem~\ref{thm:mmotalgo}\ref{thms:mmotalgo-control} is a theoretical statement which says that, for any given $\epsilon>0$, one can explicitly choose the inputs of Algorithm~\ref{alg:mmot} such that an $\epsilon$-optimal solution of \eqref{eqn:mmot} can be computed. 
    However, from a numerical viewpoint, it is more practical to specify the inputs 
    $\epsilon_{\mathsf{LSIP}}>0$,
    $\BIg:\BCX\to\R^m$,
    $\bar{\BIg}\in\R^m$ of Algorithm~\ref{alg:mmot} based on, for example, the available budget of computation, and subsequently evaluate the sub-optimality of the computed solution $\tilde{\mu}$ of \eqref{eqn:mmot} from the computed output $\tilde{\epsilon}_{\mathsf{sub}}$; see Theorem~\ref{thm:mmotalgo}\ref{thms:mmotalgo-primal}. 
    The term $\epsilon_{\mathsf{theo}}$ in Theorem~\ref{thm:mmotalgo} is a theoretical upper bound for the sub-optimality estimate $\tilde{\epsilon}_{\mathsf{sub}}$ computed by Algorithm~\ref{alg:mmot} that is based on the upper estimates $(\rho_i)_{i=1:N}$ of $\big(\specialoverline{W}_{1}(\mu_i,[\mu_i]_{\CG_i})\big)_{i=1:N}$. 
    It is therefore called an 
    a priori error bound or an
    a priori upper bound for~$\tilde{\epsilon}_{\mathsf{sub}}$ and it can be computed independent of Algorithm~\ref{alg:mmot}. 
    The computed value of $\tilde{\epsilon}_{\mathsf{sub}}$ is typically much less conservative compared to its a priori upper bound $\epsilon_{\mathsf{theo}}$, as we will demonstrate in the numerical experiments in Section~\ref{sec:experiments}. 
\end{remark}

\section{Numerical experiments}\label{sec:experiments}

In this section, we demonstrate Algorithm~\ref{alg:mmot} in three numerical experiments.
In Section~\ref{ssec:experiments-fluid}, we apply Algorithm~\ref{alg:mmot} to an MMOT problem that stems from fluid dynamics, in which the marginals are one-dimensional and the cost function admits a graphical structure. 
In Section~\ref{ssec:experiments-barycenter}, we examine the well-known MMOT formulation of the Wasserstein barycenter problem where we use Algorithm~\ref{alg:mmot} to approximately compute the Wasserstein barycenter of two-dimensional probability measures. 
In Section~\ref{ssec:experiments-CPWA}, we showcase the performance of Algorithm~\ref{alg:mmot} in an MMOT problem with a continuous piece-wise affine cost function and $N=100$ one-dimensional marginals. 
The code used in this work is available on our GitHub repository.\footnote{URL: \url{https://github.com/qikunxiang/MultiMarginalOptimalTransport}.}

\subsubsection*{Comparison with existing methods}
In Experiment~1 and Experiment~2 (Section~\ref{ssec:experiments-fluid} and Section~\ref{ssec:experiments-barycenter}),
we compare Algorithm~\ref{alg:mmot} with two methods for numerically solving MMOT problems with non-discrete marginals.
The first method is
developed by
\citet*{genevay2016stochastic} 
and utilizes reproducing kernel Hilbert space (RKHS) parametrizations.
The second method based on neural networks (NN) is developed by \citet*{eckstein2019computation}.%
\footnote{The code that implements these two methods are adapted from the code available on the GitHub repository of \citet{eckstein2019computation}: \url{https://github.com/stephaneckstein/OT_Comparison}.}
Both methods parametrize and solve the following regularized version of 
\eqref{eqn:mmot-dual}:
\begin{align}
    \sup_{(h_i\in\CC_b(\CX_i))_{i=1:N}}\Bigg\{\Bigg(\sum_{i=1}^{N}\int_{\CX_i}h_i\DIFFX{\mu_i}\Bigg)-\int_{\BCX}\frac{1}{\gamma}\rho\Big(\gamma\Big(\big({\textstyle\sum_{i=1}^{N}}h_i\circ\pi_i(\BIx)\big)-f(\BIx)\Big)\Big)\DIFFM{\theta}{\DIFF\BIx}\Bigg\},
    \tag{$\mathsf{OT}^*_{\mathsf{reg}}$}
    \label{eqn:mmot-dual-regularized}
\end{align}
where 
$\CC_b(\CX_i)$ denotes the set of continuous and bounded functions on $\CX_i$ for $i=1,\ldots,N$,
$\rho:\R\to\R_+$ is a differentiable, strictly convex, and increasing function called the regularization function,
$\gamma>0$ is called the inverse regularization parameter,
and $\theta\in\CP(\BCX)$ is called the sampling measure.
For $i=1,\ldots,N$,
the RKHS-based method of \citet{genevay2016stochastic} parametrizes $h_i\in\CC_b(\CX_i)$
by an RKHS $\big(\CH_i,\langle\,\cdot\,,\cdot\,\rangle_{\CH_i}\big)$ with respect to a positive definite kernel function $\kappa_i:\CX_i\times\CX_i\to\R$,
whereas the NN-based method of \citet{eckstein2019computation} parametrizes $h_i\in\CC_b(\CX_i)$ 
by a neural network.
Each method solves the parametrized maximization problem via a variant of the stochastic gradient descent (SGD) algorithm.
Moreover, \citep[Theorem~2.2]{eckstein2019computation} shows that 
if $(\hat{h}_i)_{i=1:N}$ is an optimal solution of \eqref{eqn:mmot-dual-regularized},
then $\hat{\mu}_{\mathsf{NN}}\in\CP(\BCX)$ defined by
\begin{align}
    \frac{\DIFF\hat{\mu}_{\mathsf{NN}}}{\DIFF\theta}:=\rho'\Big(\gamma\Big(\big({\textstyle\sum_{i=1}^N}\hat{h}_i\circ\pi_i\big)-f\Big)\Big)
    \label{eqn:mmot-dual-regularized-primalopt}
\end{align}
is an optimal solution of the following dual optimization problem of \eqref{eqn:mmot-dual-regularized}:
\begin{align}
    \inf_{\mu\in\Gamma(\mu_1,\ldots,\mu_N)}\bigg\{\int_{\BCX}f\DIFFX{\mu} + \int_{\BCX}\frac{1}{\gamma}\rho^*\bigg(\frac{\DIFF \mu}{\DIFF\theta}\bigg)\DIFFX{\theta}\bigg\},
    \tag{$\mathsf{OT}_{\mathsf{reg}}$}
    \label{eqn:mmot-primal-regularized}
\end{align}
where $\rho^*(z):=\sup_{v\in\R}\big\{zv-\rho(v)\big\}$ $\forall z\in\R$ denotes the convex conjugate of $\rho$.
However, we would like to remark that neither method is able to compute feasible solutions of \eqref{eqn:mmot-dual} nor \eqref{eqn:mmot} in practice.
Despite that the optimal value of \eqref{eqn:mmot-dual-regularized} is an upper bound for the optimal value of \eqref{eqn:mmot-dual}, 
it is impractical to obtain exact optimal solutions of \eqref{eqn:mmot-dual-regularized} in general, 
and the objective value of any approximately optimal solution of \eqref{eqn:mmot-dual-regularized} is not guaranteed to be an upper bound nor a lower bound for the optimal value of \eqref{eqn:mmot-dual}.
Moreover, $\hat{\mu}_{\mathsf{NN}}$ constructed via (\ref{eqn:mmot-dual-regularized-primalopt}) with respect to an approximately optimal solution $(\tilde{h}_i)_{i=1:N}$ of \eqref{eqn:mmot-dual-regularized} is generally infeasible for \eqref{eqn:mmot-primal-regularized} and thus also infeasible for~\eqref{eqn:mmot}.

In the following, let us present the concrete settings of these two methods in our numerial experiments.
In \eqref{eqn:mmot-dual-regularized}, we set $\theta=\mu_1\otimes\cdots\otimes\mu_N$ to be the product measure of $\mu_1,\ldots,\mu_N$,
set $\rho(v):=\big((v)^+\big)^{2}$ $\forall v\in\R$,
and vary the inverse regularization parameter $\gamma>0$ among a number of preselected values in each experiment.
We choose to use the quadratic regularization function rather than the exponential regularization function due to its stability, as reported by \citet{eckstein2019computation}.
For any candidate solution $(\tilde{h}_i)_{i=1:N}$ computed by either method,
we approximate its objective value with respect to \eqref{eqn:mmot-dual-regularized} via Monte Carlo integration with 50000 independent samples. 
Moreover, we run each SGD algorithm 20 times for each problem instance in order to examine its stability. 

In the RKHS-based method, 
we use the same Laplace kernel 
$\kappa_i(\BIx,\BIx'):=\exp\left({-\frac{\|\BIx-\BIx'\|_2}{\varsigma}}\right)$ $\forall\BIx,\BIx'\in\nobreak\CX_i$
for $i=1,\ldots,N$,
where $\varsigma>0$ is called the bandwidth parameter.
We use $\varsigma=\frac{1}{2}$ in Experiment~1 and $\varsigma=8$ in Experiment~2.
The implementation of the SGD algorithm is adapted from Algorithm~3 in \citep{genevay2016stochastic}, with two differences:
we use the quadratic regularization function rather than the exponential regularization function,
and we adjust the step size by an exponential decay scheme.
Concretely, the step size in the SGD algorithm decays from $6\times 10^{-5}$ to $10^{-7}$ over $6\times 10^{5}$ iterations.
These hyperparameters are selected based on initial trial runs.

In the NN-based method, we parametrize each $h_i$ with a \mbox{5-layer}
ReLU network with $64d$ neurons per layer, where $d=1$ in Experiment~1 and $d=2$ in Experiment~2 are the dimensions of the underlying spaces $\CX_1,\ldots,\CX_N$.
We use the Adam optimizer with parameters $\beta_1=0.99$, $\beta_2=0.995$, where the step size decreases from $10^{-4}$ to $5\times 10^{-7}$ over $50000+10000Nd$ iterations via a second-order polynomial decay scheme,
and each iteration utilizes 2048 independent samples from the marginals $\mu_1,\ldots,\mu_N$ and 4096 independent samples from the sampling measure $\theta$ to approximate the objective function of \eqref{eqn:mmot-dual-regularized}.
These hyperparameter settings follow the source code of \citet{eckstein2019computation}.
Moreover, for any candidate solution $(\tilde{h}_i)_{i=1:N}$ of \eqref{eqn:mmot-dual-regularized},
we follow the source code of \citet{eckstein2019computation} to construct $\hat{\mu}_{\mathsf{NN}}$ by (\ref{eqn:mmot-dual-regularized-primalopt}) and generate independent random samples $\hat{\mu}_{\mathsf{NN}}$ via rejection sampling.

\subsection{Experiment~1: fluid dynamics}\label{ssec:experiments-fluid}

In the first numerical experiment, we consider the optimization problem proposed by 
\citet{brenier1989least, brenier1993dual, brenier1999minimal, brenier2008generalized}
as a relaxation of the Euler equation of incompressible fluid with given initial (i.e., at time $t=0$) and final distributions (i.e., at time $t=1$), expressed by a volume preserving map $\Xi:[0,1]^d\to[0,1]^d$ satisfying
$\FL_{[0,1]^d}\circ \Xi^{-1}=\FL_{[0,1]^d}$,
where $\FL_{[0,1]^d}$ denotes the Lebesgue measure on $[0,1]^d$.
\citet[Section~4.3]{benamou2015iterative} proposed to discretize the original problem of Brenier both in time and in space, which results in a discrete MMOT problem with $N\in\N$ marginals representing $N$ discrete time points where each marginal is a uniform discrete measure supported on a grid in $[0,1]^d$. 
We consider the $d=1$ variant of the problem, which can be seen as considering the projection of the fluid in $[0,1]^3$ onto one of the axes. 
This discrete MMOT problem 
is presented as follows:
\begin{align*}
    \inf_{\mu\in\Gamma(\hat{\mu}_{\CCCD},\ldots,\hat{\mu}_{\CCCD})}\bigg\{\int_{\CCCD^N}\big(x_N-\sigma(x_1)\big)^2+{\textstyle\sum_{i=1}^{N-1}}(x_{i+1}-x_i)^2\DIFFM{\mu}{\DIFF x_1,\ldots,\DIFF x_N}\bigg\},
\end{align*}
where $\CCCD$ contains $|\CCCD|\in\N$ equally-spaced points in $[0,1]$, 
$\hat{\mu}_{\CCCD}:=\frac{1}{|\CCCD|}\sum_{x\in\CCCD}\delta_{x}$,
and $\sigma:\CCCD\to\CCCD$ is a permutation of the points in $\CCCD$ which can be seen as a discretization of the volume preserving map $\Xi:[0,1]\to[0,1]$.
Here, $x_1$ represents the initial position of a fluid particle at time $t=0$, $x_i$ represents the position of the particle at time $t=\frac{i-1}{N}$ for $i=2,\ldots,N$, and $\sigma(x_1)$ represents the final position of the particle at time $t=1$. 
The cost function in the above problem exhibits a graphical structure 
corresponding to the circular graph:
$(\CV,\CE)$ where 
$\CV:=\{1,\ldots,N\}$,
$\CE:=\big\{\{1,2\},\ldots,\{N-\nobreak1,N\}, \{1,N\}\big\}$.
This discrete MMOT problem has been numerically studied by \citet{benamou2015iterative, ba2022accelerating}, and \citet{altschuler2023polynomial}. 

We adopt an alternative approach with $N$ discrete time points but \textit{without spatial discretization}. 
Specifically, we consider the following MMOT problem:
\begin{align}
    \inf_{\mu\in\Gamma(\mu_1,\ldots,\mu_N)}\bigg\{\int_{[0,1]^N}\big(x_N-\Xi(x_1)\big)^2+{\textstyle\sum_{i=1}^{N-1}}(x_{i+1}-x_{i})^2\DIFFM{\mu}{\DIFF x_1,\ldots,\DIFF x_N}\bigg\},
    \label{eqn:experiments-fluid-OT}
\end{align}
where ${\CX_1=\cdots=\CX_N}=[0,1]$ and $\mu_1=\cdots=\mu_N=\FL_{[0,1]}$.
Subsequently, we can use Algorithm~\ref{alg:mmot} to compute 
lower and upper bounds for the optimal value of (\ref{eqn:experiments-fluid-OT}), 
an approximately optimal solution $\tilde{\mu}$ of (\ref{eqn:experiments-fluid-OT}), and a sub-optimality estimate.
The following proposition covers the details when applying Algorithm~\ref{alg:mmot} to approximately solve (\ref{eqn:experiments-fluid-OT}).

\begin{proposition}[Experiment~1]\label{prop:experiments-fluid}
    For $i=1,\ldots,N$, 
    let $\CX_i=[0,1]$, $d_{\CX_i}(x,z):=|x-z|$ $\forall x,z\in[0,1]$,
    and let $\mu_i:=\FL_{[0,1]}$.
    Let $\Xi:[0,1]\to[0,1]$ be an $L_{\Xi}$-Lipschitz continuous function for $L_{\Xi}>0$ which satisfies $\FL_{[0,1]}\circ \Xi^{-1}=\FL_{[0,1]}$.
    Moreover, let $\BCX:=[0,1]^N$ and let $f:\BCX\to\R$ be defined as follows:
    \begin{align}
        f(x_1,\ldots,x_N):= -2x_N\Xi(x_1) - 2\sum_{i=1}^{N-1}x_ix_{i+1} \qquad \forall (x_1,\ldots,x_N)\in[0,1]^N.
        \label{eqn:experiments-fluid-cost}
    \end{align}
    Let us consider the problem \eqref{eqn:mmot} and its dual optimization problem \eqref{eqn:mmot-dual} with respect to the cost function $f$ in (\ref{eqn:experiments-fluid-cost}).
    Then, the following statements hold.
    \begin{enumerate}[label=(\roman*),beginpenalty=10000]
        \item\label{props:experiments-fluid-setting}%
        The conditions \ref{settc:compact-measures} and \ref{settc:compact-cost} in Setting~\ref{sett:compact} are satisfied with respect to $L_f\leftarrow 2L_{\Xi}+2$.
        
        \item\label{props:experiments-fluid-shift}%
        Let $\tilde{\alpha}^\star$ denote the optimal value of \eqref{eqn:mmot}.
        Then, the optimal value of (\ref{eqn:experiments-fluid-OT}) is equal to $\tilde{\alpha}^\star+\frac{2N}{3}$.
        Likewise, if $\tilde{\alpha}^{\mathsf{LB}}$, $\tilde{\alpha}^{\mathsf{UB}}$ are lower and upper bounds for the optimal value of 
        \eqref{eqn:mmot},
        then 
        $\alpha^{\mathsf{LB}}:=\tilde{\alpha}^{\mathsf{LB}}+\frac{2N}{3}$, 
        $\alpha^{\mathsf{UB}}:=\tilde{\alpha}^{\mathsf{UB}}+\frac{2N}{3}$
        are lower and upper bounds for the optimal value of~(\ref{eqn:experiments-fluid-OT}).

        \item\label{props:experiments-fluid-primaldual-solutions}%
        Let $\epsilon>0$,
        let $\tilde{\mu}$ be an $\epsilon$-optimal solution of \eqref{eqn:mmot}, and let $(\tilde{h}^\dagger_i)_{i=1:N}$ be an $\epsilon$-optimal solution of \eqref{eqn:mmot-dual}.
        Let
        $\tilde{h}_1(x):=\tilde{h}^\dagger_1(x)+x^2+\Xi(x)^2$ $\forall x\in[0,1]$,
        and let 
        $\tilde{h}_i(x):=\tilde{h}^\dagger_i(x)+2x^2$ $\forall x\in[0,1]$ for $i=2,\ldots,N$.
        Then, $\tilde{\mu}$ is an $\epsilon$-optimal solution of (\ref{eqn:experiments-fluid-OT}),
        and $(\tilde{h}_i)_{i=1:N}$ is an $\epsilon$-optimal solution of the dual optimization problem of (\ref{eqn:experiments-fluid-OT}).
    \end{enumerate}
    In the following, let $\hat{\mu}\in\CP\big([0,1]^N\big)$ be given by $\hat{\mu}:=\sum_{j=1}^J a_j \delta_{\BIx_j}$, 
    where $J\in\N$, 
    $(a_j)_{j=1:J}\subset(0,1)$, 
    $\sum_{j=1}^Ja_j=1$, and $\big\{\BIx_j=(x_{1,j},\ldots,x_{N,j}):1\le j\le J\big\}$ 
    are distinct points satisfying $x_{1,1}\le x_{1,2}\le\nobreak \cdots\le x_{1,J}$. 
    For $i=2,\ldots,N$, 
    let $\sigma_i:\{1,\ldots,J\}\to\{1,\ldots,J\}$ be a bijection that satisfies $x_{i,\sigma_i(1)}\le x_{i,\sigma_i(2)}\le\nobreak \cdots \le x_{i,\sigma_i(J)}$.
    Moreover, let $(c_{i,j})_{j=1:J,\,i=1:N}$ be defined as follows:
    \begin{align*}
        c_{1,j}:=\sum_{l=1}^{j-1}a_l \qquad \forall 1\le j\le J, \qquad\qquad c_{i,j}:=\sum_{l=1}^{\sigma_i^{-1}(j)-1}a_{\sigma_i(l)} \qquad \forall 1\le j\le J,\; \forall 2\le i\le N.
    \end{align*}
    Furthermore, let $\Xi_1:[0,1]\to[0,1]$ be the identity map, and let $\big(\Xi_i:[0,1]\to\R\big)_{i=2:N}$ be defined as follows:
    \begin{align*}
        \Xi_i(x):= \begin{cases}
            c_{i,j} + x - c_{1,j} & \forall x\in [c_{1,j}, c_{1,j+1}), \; \forall 1\le j\le J-1, \\ 
            c_{i,J} + x - c_{1,J} & \forall x\in[c_{1,J},1],
        \end{cases}\qquad \forall 2\le i\le N.
    \end{align*}
    \noindent Then, \widowpenalties-1000
    \begin{enumerate}[label=(\roman*),beginpenalty=10000]
        \setcounter{enumi}{3}
        \item\label{props:experiments-fluid-reassembly}%
        $\Xi_i\big([0,1]\big)=[0,1]$ for $i=1,\ldots,N$, and
        $\tilde{\mu}:=\mu_1\circ (\Xi_1,\ldots,\Xi_N)^{-1}\in\CP\big([0,1]^N\big)$ satisfies $\tilde{\mu}\in R(\hat{\mu};\mu_1,\ldots,\mu_N)$
        as well as 
        \begin{align*}
            \int_{\BCX}f\DIFFX{\tilde{\mu}}=\int_{0}^{1}-2\Xi_N(x)\Xi(x)-2{\textstyle\sum_{i=1}^{N-1}}\Xi_i(x)\Xi_{i+1}(x) \DIFFX{x}.
        \end{align*}
    \end{enumerate}
\end{proposition}

\begin{proof}[Proof of Proposition~\ref{prop:experiments-fluid}]
    See Section~\ref{ssec:proof-experiments}.
\end{proof}

Using the results in Proposition~\ref{prop:experiments-fluid},
we can use Algorithm~\ref{alg:mmot} to approximately solve 
\eqref{eqn:mmot} with respect to the cost function $f$ defined in (\ref{eqn:experiments-fluid-cost}),
where the construction of the reassembly~$\tilde{\mu}$ in Line~\ref{alglin:mmot-reassembly}
and the computation of $\int_{\BCX}f\DIFFX{\tilde{\mu}}$ in Line~\ref{alglin:mmot-bounds}
are carried out via Proposition~\ref{prop:experiments-fluid}\ref{props:experiments-fluid-reassembly}.
We let $\big(\tilde{\alpha}^{\mathsf{UB}},\tilde{\alpha}^{\mathsf{LB}},\tilde{\mu},(\tilde{h}^\dagger_i)_{i=1:N},\tilde{\epsilon}_{\mathsf{sub}}\big)$
denote the outputs of Algorithm~\ref{alg:mmot},
and set
$\alpha^{\mathsf{LB}}:=\tilde{\alpha}^{\mathsf{LB}}+\frac{2N}{3}$,
$\alpha^{\mathsf{UB}}:=\tilde{\alpha}^{\mathsf{UB}}+\frac{2N}{3}$,
$\tilde{h}_1(x):=\tilde{h}^\dagger_1(x)+x^2+\Xi(x)^2$ $\forall x\in[0,1]$,
and 
$\tilde{h}_i(x):=\tilde{h}^\dagger_i(x)+2x^2$ $\forall x\in[0,1]$
for $i=2,\ldots,N$.
Theorem~\ref{thm:mmotalgo},
Proposition~\ref{prop:experiments-fluid}\ref{props:experiments-fluid-shift}, and Proposition~\ref{prop:experiments-fluid}\ref{props:experiments-fluid-primaldual-solutions} then
guarantee that
$\alpha^{\mathsf{LB}}$, $\alpha^{\mathsf{UB}}$
are lower and upper bounds for the optimal value of~(\ref{eqn:experiments-fluid-OT}),
$\tilde{\mu}$ is an $\epsilon$-optimal solution of (\ref{eqn:experiments-fluid-OT}),
and $(\tilde{h}_i)_{i=1:N}$ is an $\epsilon$-optimal solution of the dual optimization problem of (\ref{eqn:experiments-fluid-OT}).
Moreover, the functions $(\Xi_i)_{i=1:N}$ constructed by Proposition~\ref{prop:experiments-fluid}\ref{props:experiments-fluid-reassembly}
are volume preserving maps, 
where $\Xi_2(x),\ldots,\Xi_N(x),\Xi(x)$
can be interpreted as 
the positions of a fluid particle at time $t=\frac{1}{N},\frac{2}{N},\ldots,\frac{N-1}{N},1$, respectively, given its initial position $x\in[0,1]$ at $t=0$.

\subsubsection*{Experimental settings}
We consider the two volume preserving maps $\Xi^{(1)},\Xi^{(2)}:[0,1]\to[0,1]$ below:
\begin{align*}
    \Xi^{(1)}(x):=\begin{cases}
        2x & \textstyle \forall x\in\big[0,\frac{1}{2}\big], \\
        2-2x & \forall \textstyle x\in\big(\frac{1}{2},1\big],
    \end{cases}
    \qquad 
    \Xi^{(2)}(x):=\begin{cases}
        1-4x & \textstyle \forall x\in\big[0,\frac{1}{4}\big], \\
        4x-1 & \textstyle \forall x\in\big(\frac{1}{4},\frac{1}{2}\big], \\
        3-4x & \textstyle \forall x\in\big(\frac{1}{2},\frac{3}{4}\big], \\
        4x-3 & \textstyle \forall x\in\big(\frac{3}{4},1\big].
    \end{cases}
\end{align*}
For each volume preserving map, we perform time discretization with respect to $N=20$ discrete time points.
Moreover, we let $\FC=\big\{\big[0,\frac{1}{m_0}\big],\big[\frac{1}{m_0},\frac{2}{m_0}\big],\ldots,\big[\frac{m_0-1}{m_0},1\big]\big\}$
where $m_0\in\N$ is varied between 4~and~128,
and we construct
$\CG_1=\cdots=\CG_N$
as well as $\BIg(\,\cdot\,)$, $\bar{\BIg}$
by \ref{settc:compact-testfuncs}
with respect to $\FC_i\leftarrow \FC$ $\forall 1\le\nobreak i\le\nobreak N$.
Thus, we have $|\CG_1|=\cdots=|\CG_N|=m_0$, that is, we use 4~to~128 test functions for each marginal.
Concretely, the test functions $(g_{i,j})_{j=1:m_0,\,i=1:N}$ are defined as follows:
\begin{align*}
g_{i,j}(x)&:=\textstyle {\big(m_0x-j+1\big)^+}\wedge \big(j+1-m_0 x\big)^+ \qquad\forall x\in[0,1],\; \forall 1\le j\le m_0,\; \forall 1\le i\le N.
%\label{eqn:experiments-fluid-basis}
\end{align*}
Furthermore, we set $\epsilon_{\mathsf{LSIP}}=10^{-5}$ for all values of~$m_0$ in Algorithm~\ref{alg:mmot}.
We remark that the graphical structure in the cost function (\ref{eqn:experiments-fluid-cost}) guarantees that $\mathtt{Oracle}(\,\cdot\,)$ in Definition~\ref{def:mmot-oracle} can be tractably implemented with high efficiency; see the discussion of \citet[Section~5]{altschuler2023polynomial}.

\begin{figure}[t]
    \includegraphics[width=0.99\linewidth]{./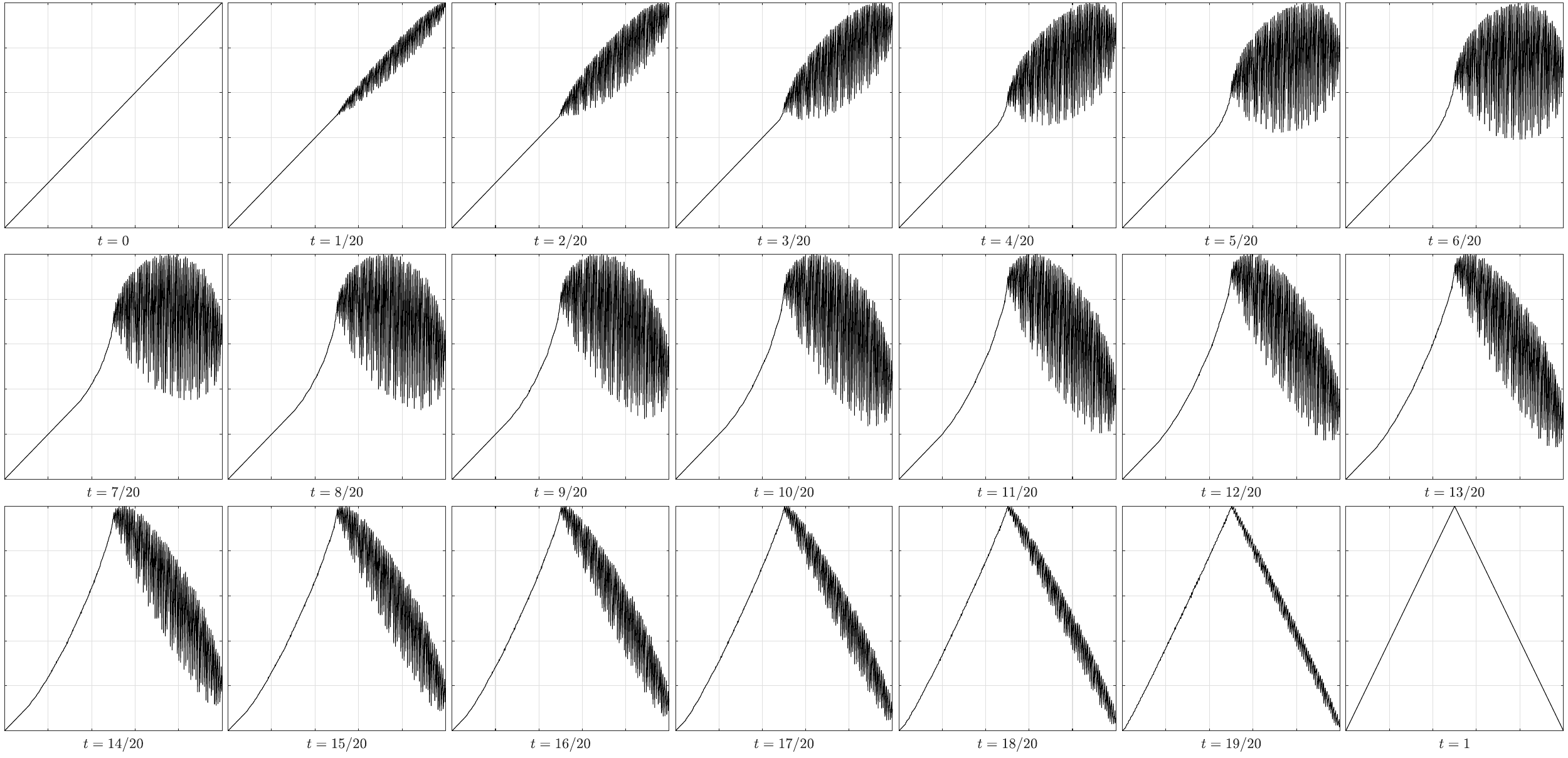}
    
    \caption{\textbf{Experiment~1} -- Volume preserving maps $\Xi^{(1)}_1,\ldots,\Xi^{(1)}_N$.}\label{fig:fluid-maps1}
\end{figure}

\begin{figure}[t]
    \includegraphics[width=0.99\linewidth]{./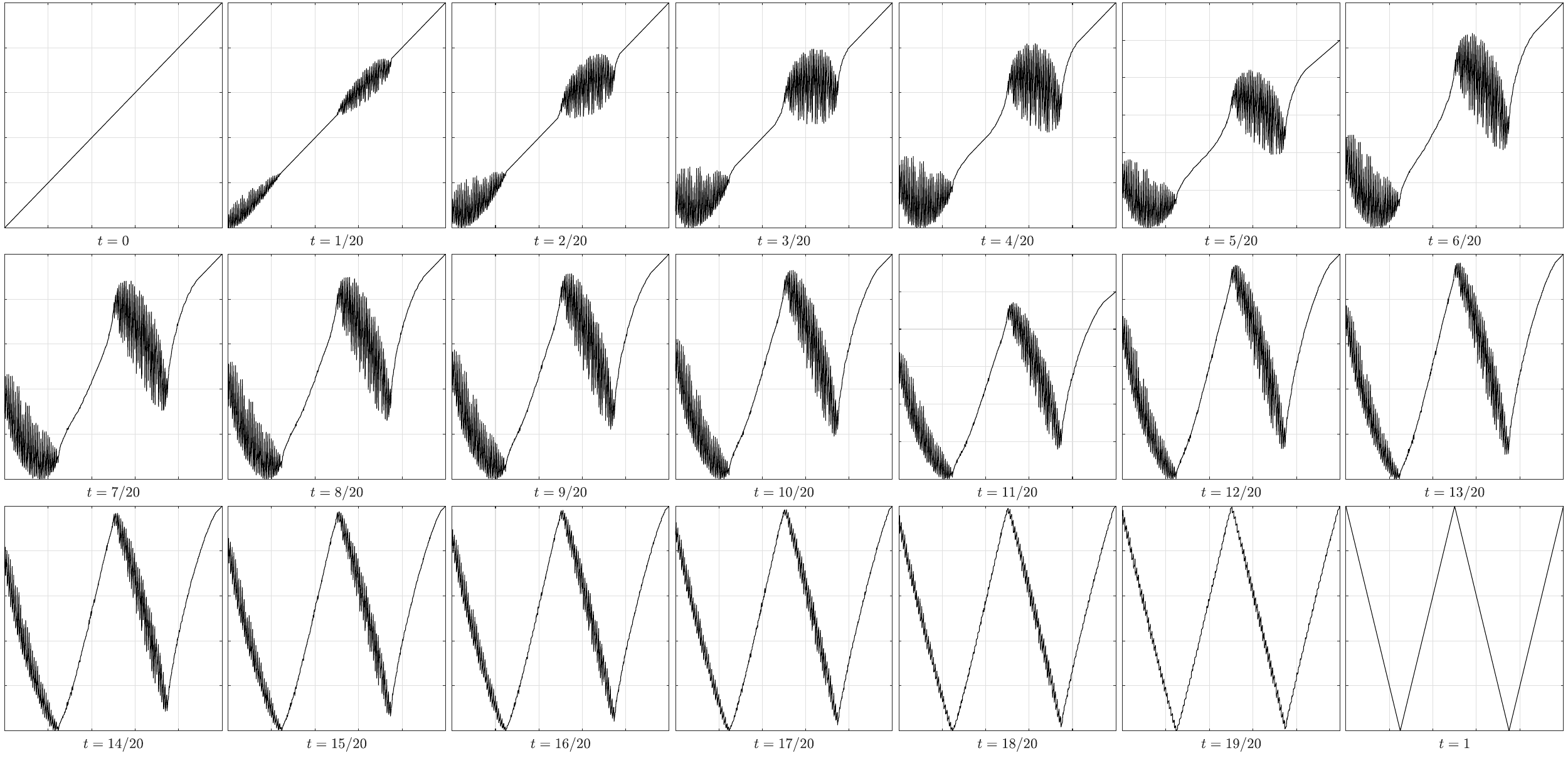}
    
    \caption{\textbf{Experiment~1} -- Volume preserving maps $\Xi^{(2)}_1,\ldots,\Xi^{(2)}_N$.}\label{fig:fluid-maps2}
\end{figure}

\begin{figure}[t]
    \includegraphics[width=0.45\linewidth]{./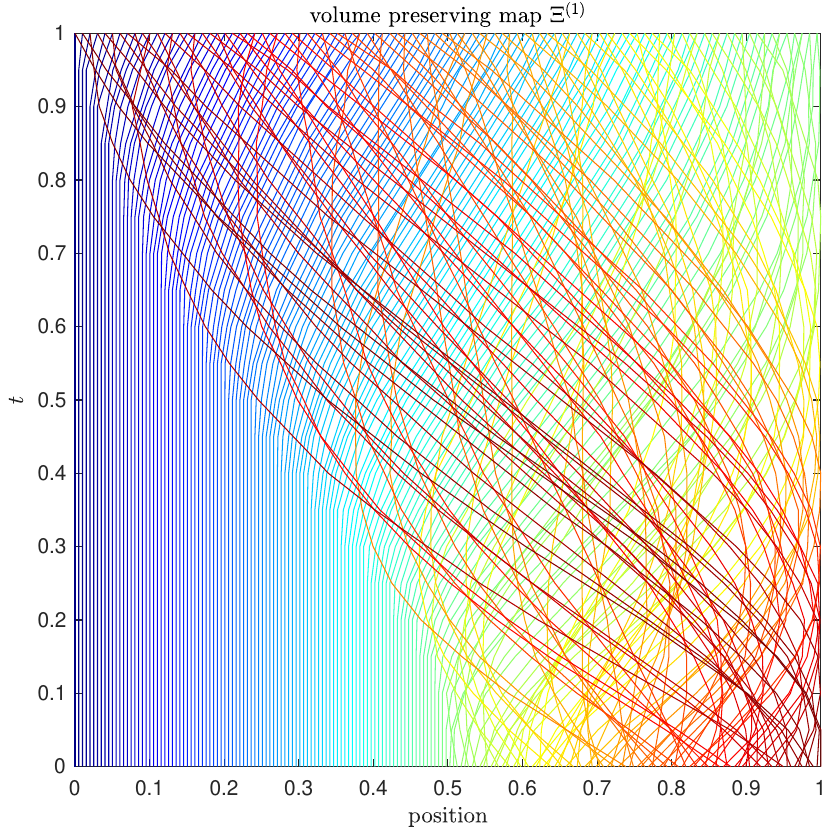}~
    \includegraphics[width=0.45\linewidth]{./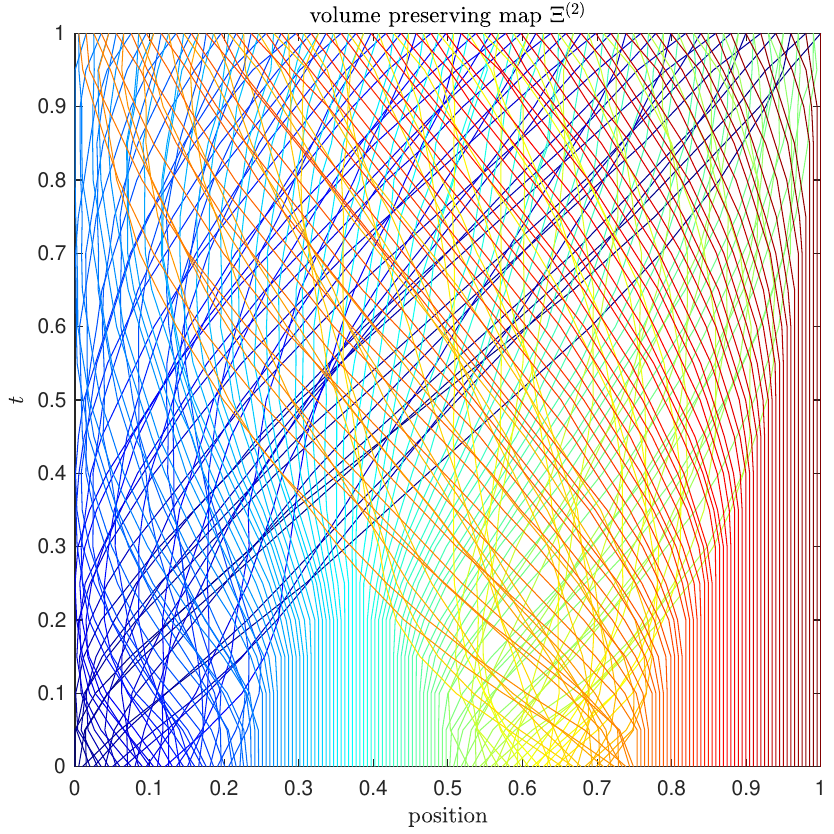}
    
    \caption{\textbf{Experiment~1} -- Trajectories of the fluid particles from $t=0$ to $t=1$ implied by the computed volume preserving maps $\Xi^{(1)}_1,\ldots,\Xi^{(1)}_N$ and $\Xi^{(2)}_1,\ldots,\Xi^{(2)}_N$.}\label{fig:fluid-trajectories}
\end{figure}

\subsubsection*{Results and discussions}
Figure~\ref{fig:fluid-maps1} and Figure~\ref{fig:fluid-maps2} show the volume preserving maps constructed via Proposition~\ref{prop:experiments-fluid}\ref{props:experiments-fluid-reassembly}, that is, 
$\Xi^{(1)}_1,\ldots,\Xi^{(1)}_N$ are constructed with respect to $\Xi\leftarrow\Xi^{(1)}$
and $\Xi^{(2)}_1,\ldots,\Xi^{(2)}_N$ are constructed with respect to $\Xi\leftarrow\Xi^{(2)}$.
The left (resp., right) panel of Figure~\ref{fig:fluid-trajectories} shows the trajectories of the fluid particles implied by the computed volume preserving maps, that is, it plots $\Xi^{(1)}_1(x),\Xi^{(1)}_2(x),\ldots,\Xi^{(1)}_N(x),\Xi^{(1)}(x)$ (resp., $\Xi^{(2)}_1(x),\Xi^{(2)}_2(x),\ldots,\Xi^{(2)}_N(x),\Xi^{(2)}(x)$) to represent the positions of a fluid particle at time $t=0,\frac{1}{N},\frac{2}{N}, \ldots,\frac{N-1}{N},1$ given the initial position $x\in[0,1]$ of the particle.
We use different colors for the trajectories depending on their initial positions for better visualization. 
Similar numerical results have been produced by \citet[Figures~1--6]{brenier2008generalized}, \citet[Figures~5--7]{benamou2015iterative}, and \citet[Figures~8 \& 9]{ba2022accelerating}. 
In particular, Figure~\ref{fig:fluid-maps1} and the left panel of Figure~\ref{fig:fluid-trajectories} are very similar to Figure~1 and Figure~4 of \citep{brenier2008generalized}. 
In Figure~\ref{fig:fluid-maps1} and Figure~\ref{fig:fluid-maps2}, oscillations of the volume preserving maps are observed in certain intervals of the domain; specifically, $[1/2, 1]$ in Figure~\ref{fig:fluid-maps1} and $[0,1/4]$ and $[1/2, 3/4]$ in Figure~\ref{fig:fluid-maps2}.
This is an indication that mass tends to be split in these regions and that particles with approximately the same initial positions will have diverging trajectories. 
Indeed, the divergence of trajectories in these regions are observed in Figure~\ref{fig:fluid-trajectories}. 
We remark that the crossings of the trajectories do not hinder their physical interpretation because they are projections of three-dimensional trajectories into a single dimension; see the discussion of \citet{brenier2008generalized}.
Using the terminologies of \citet{brenier2008generalized}, 
potential flows are observed 
in the interval $[0, 1/2]$ 
in the left panel of Figure~\ref{fig:fluid-trajectories} 
and in the intervals $[1/4, 1/2]$ and $[3/4, 1]$ 
in the right panel of Figure~\ref{fig:fluid-trajectories}, 
while vortical flows are observed 
in the interval $[1/2, 1]$ 
in the left panel of Figure~\ref{fig:fluid-trajectories} 
and in the intervals 
$[0, 1/4]$ and $[1/2, 3/4]$ in the right panel of Figure~\ref{fig:fluid-trajectories}.
In contrast, Figure~5 in \citep{benamou2015iterative} and Figure~9 in \citep{ba2022accelerating} resemble Figure~\ref{fig:fluid-maps1}, but are much more blurry due to the effect of regularization. 
Compared to the spatial discretization approaches, one advantage of our approach is that the volume preserving maps constructed via Algorithm~\ref{alg:cp-mmot} and Proposition~\ref{prop:experiments-fluid}\ref{props:experiments-fluid-reassembly} allow one to approximate the trajectory of a particle at any initial position in $[0,1]$, rather than being restricted to initial positions in the grid. 
Moreover, our algorithm provides a sub-optimality estimate of the computed solution $\tilde{\mu}$ for the MMOT problem; see our results and discussions in the next paragraph. 

\begin{figure}[t]
    \includegraphics[width=0.45\linewidth]{./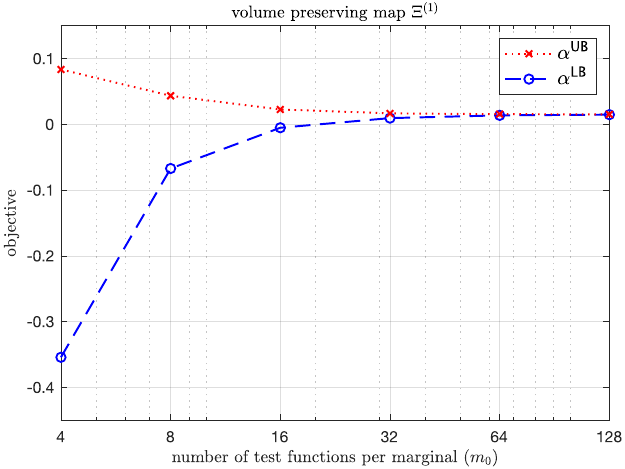}~
    \includegraphics[width=0.45\linewidth]{./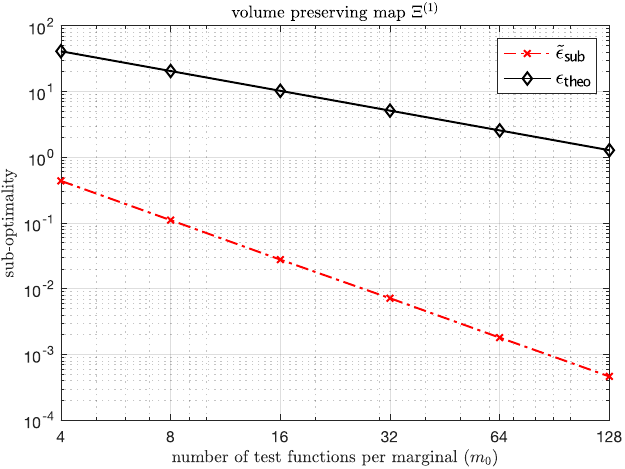}

    \vspace{4pt}

    \includegraphics[width=0.45\linewidth]{./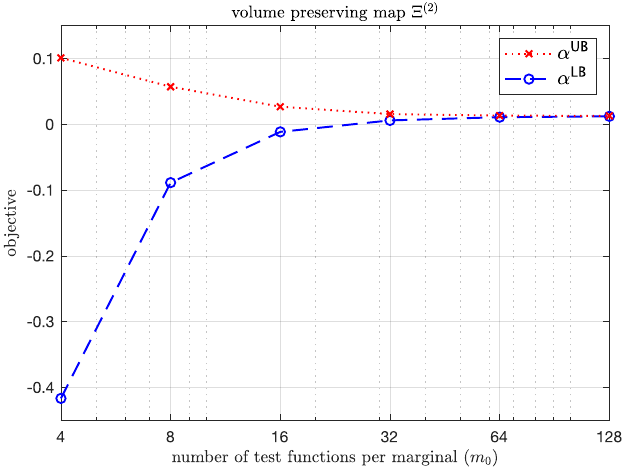}~
    \includegraphics[width=0.45\linewidth]{./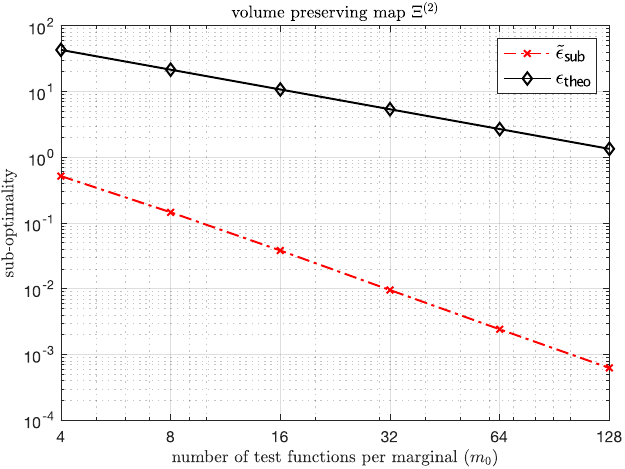}
    
    \caption{\textbf{Experiment~1} -- Left column: the lower bound $\alpha^{\SFL\SFB}$ and the upper bound~$\alpha^{\SFU\SFB}$. Right column: the sub-optimality estimate $\tilde{\epsilon}_{\mathsf{sub}}$ and its a priori upper bound $\epsilon_{\mathsf{theo}}$ on the log-scale.}\label{fig:fluid-bounds}
\end{figure}

Figure~\ref{fig:fluid-bounds} shows the values of the lower bound $\alpha^{\SFL\SFB}$, the upper bound $\alpha^{\SFU\SFB}$, and the sub-optimality estimate $\tilde{\epsilon}_{\mathsf{sub}}$ computed by Algorithm~\ref{alg:mmot}, along with the a priori error bound $\epsilon_{\mathsf{theo}}$ derived via Theorem~\ref{thm:mmotalgo} with respect to $\rho_i\leftarrow 2\eta(\FC)= \frac{2}{m_0}$ $\forall 1\le i\le N$. 
It can be observed from the left column of Figure~\ref{fig:fluid-bounds} that, for both volume preserving maps $\Xi^{(1)}$ and $\Xi^{(2)}$, the lower bound $\alpha^{\SFL\SFB}$ and the upper bound $\alpha^{\SFU\SFB}$ are initially far apart when $m_0=4$ test functions are used for each marginal. 
When $m_0=64$ and $m_0=128$ test functions are used for each marginal, the differences between the lower bound $\alpha^{\SFL\SFB}$ and the upper bound $\alpha^{\SFU\SFB}$ become small. 
This is confirmed by the sub-optimality estimates plotted on the log-scale in the right column of Figure~\ref{fig:fluid-bounds}. 
Indeed, when $m_0=128$ test functions are used for each marginal, 
we get $\tilde{\epsilon}_{\mathsf{sub}}=4.6400\times 10^{-4}$ with the volume preserving map $\Xi^{(1)}$ and $\tilde{\epsilon}_{\mathsf{sub}}=6.2476\times 10^{-4}$ with the volume preserving map~$\Xi^{(2)}$. 
This indicates that the approximately optimal solutions $\tilde{\mu}$ for \eqref{eqn:mmot} computed by Algorithm~\ref{alg:mmot} are close to being optimal. 
Moreover, observe from the right column of Figure~\ref{fig:fluid-bounds} that the computed sub-optimality estimates $\tilde{\epsilon}_{\mathsf{sub}}$ are two to three orders of magnitude smaller than their a priori upper bounds~$\epsilon_{\mathsf{theo}}$, and they seem to be decreasing at a faster rate than $\epsilon_{\mathsf{theo}}$. 
This highlights an important practical advantage of our numerical method, which is that our algorithm produces practically meaningful sub-optimality estimates that are not over-conservative. 
If one uses approximation methods that do not compute both lower and upper bounds for \eqref{eqn:mmot} in this problem instance (such as by discretization of the marginals, or by a regularization-based method) and relies on a theoretical estimate of the approximation error, then one will end up with an error estimate that is orders of magnitude too conservative, which limits the practicality of such approximation methods;
see also the subsequent results comparing our algorithm with two existing algorithms.

\subsubsection*{Comparison with existing methods}
\begin{figure}[t]
    \includegraphics[height=200pt]{./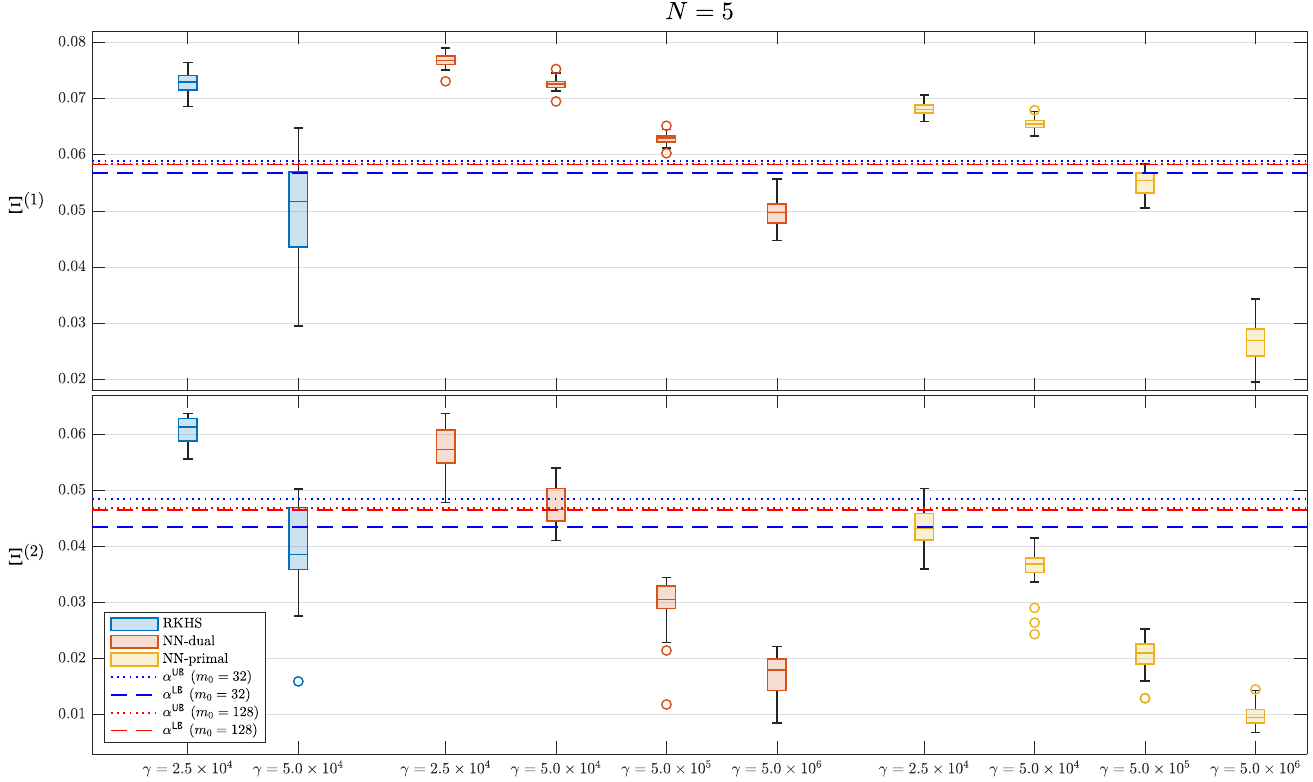}

    \vspace{8pt}

    \hspace{4.4pt}\includegraphics[height=200pt]{./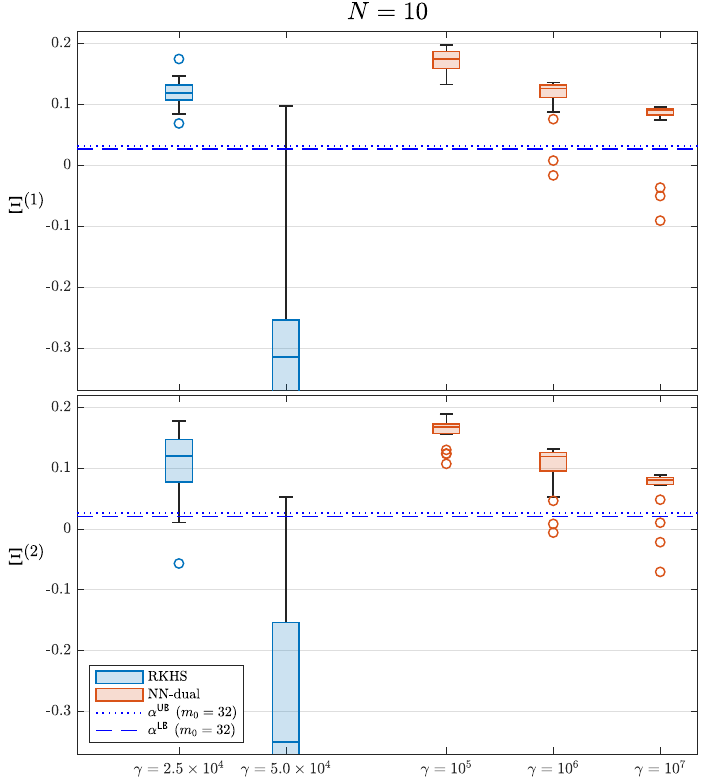}~\hspace{3.6pt}
    \includegraphics[height=200pt]{./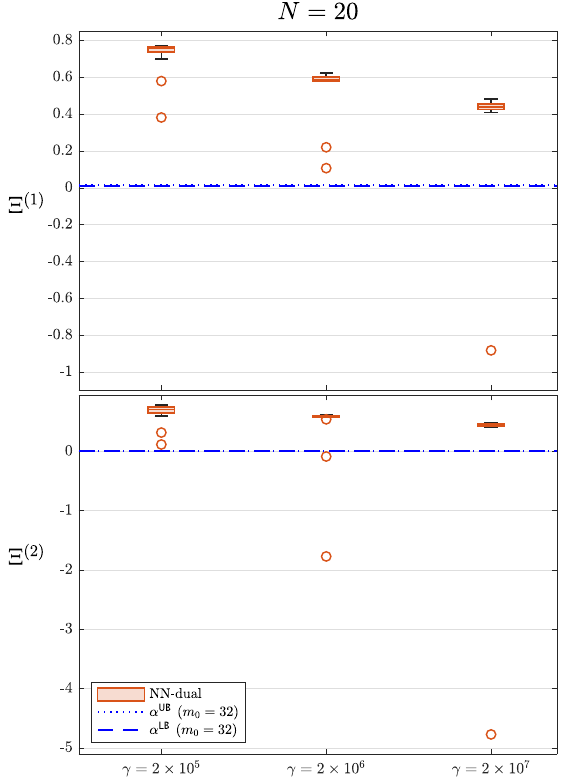}
    
    \caption{\textbf{Experiment~1} -- Comparison of the objective values computed by Algorithm~\ref{alg:mmot}, the RKHS-based algorithm, and the NN-based algorithm.}\label{fig:fluid-comparisons}
\end{figure}

In order to compare our algorithm with the two regularization-based algorithms,
we consider instances of the MMOT problem in 
(\ref{eqn:experiments-fluid-OT})
where $N\in\{5,10,20\}$.
That is, 
in addition to the time discretized version of the problem of \citet{brenier1989least, brenier1993dual, brenier1999minimal, brenier2008generalized} with $N=20$ discrete time points
studied above,
we also examine coarser discretizations with $N=5$ and $N=10$ discrete time points 
which contain fewer marginals.
Figure~\ref{fig:fluid-comparisons} compares the bounds $\alpha^{\mathsf{LB}}$, $\alpha^{\mathsf{UB}}$ computed by Algorithm~\ref{alg:mmot} 
with the outputs of the RKHS-based algorithm and the NN-based algorithm,
where box plots are used to visualize the variability among 20 independent runs of each SGD algorithm.
Specifically, 
the bounds $\alpha^{\mathsf{LB}}$, $\alpha^{\mathsf{UB}}$ computed by Algorithm~\ref{alg:mmot} 
when $m_0=32$ and when $m_0=128$
are shown as horizontal lines,
the objective values with respect to \eqref{eqn:mmot-dual-regularized} computed by the RKHS-based algorithm are shown as box plots with blue color, and
the objective values with respect to \eqref{eqn:mmot-dual-regularized} computed by the NN-based algorithm are shown as box plots with red color.
We did not test the performance of the RKHS-based algorithm when $N=20$ because it did not converge when $N=10$ and $\gamma=5.0\times 10^{4}$.
Similar patterns
can be observed in Figure~\ref{fig:fluid-comparisons}
from the outputs of both regularization-based algorithms.
On the one hand, when the inverse regularization parameter~$\gamma$ is small,
the convergence of both algorithms seem to be stable, while the regularization term introduced large biases.
This means that even if the RKHS-based algorithm or the NN-based algorithm produces a candidate solution of \eqref{eqn:mmot-dual-regularized} with small sub-optimality, the difference between the optimal value of \eqref{eqn:mmot-dual-regularized} and the optimal value of \eqref{eqn:mmot} is large.
In addition, observe that this bias seems to become larger when $N$ is large.
On the other hand, for large values of $\gamma$, both algorithms struggled to converge, especially when $N$~is large.
When $N=20$, observe that the NN-based algorithm did not compute objective values that reasonably approximate the optimal value of \eqref{eqn:mmot}, with all three values of~$\gamma$.
One potential way to obtain results better than those shown in Figure~\ref{fig:fluid-comparisons} from the RKHS-based algorithm or the NN-based algorithm is to 
reduce the step sizes in the SGD algorithm and increase the number of iterations at the expense of longer computation time.

Moreover, Figure~\ref{fig:fluid-comparisons} also shows the primal objective values computed by the NN-based algorithm,
i.e., $\int_{\BCX}f\DIFFX{\hat{\mu}_{\mathsf{NN}}}$ where $\hat{\mu}_{\mathsf{NN}}$ is defined by (\ref{eqn:mmot-dual-regularized-primalopt}), 
as box plots with yellow color.
As discussed earlier, $\hat{\mu}_{\mathsf{NN}}$ constructed this way is not feasible for \eqref{eqn:mmot}.
To examine how much the marginals of $\hat{\mu}_{\mathsf{NN}}$ deviate from $\FL_{[0,1]}$, we perform chi-squared goodness-of-fit tests with 50000 samples and 10~bins.
The maximum p-value among the results of all tests was 
$2.3937\times 10^{-24}$, which indicates that $\hat{\mu}_{\mathsf{NN}}$ constructed via the NN-based algorithm severely violates the marginal constraints in \eqref{eqn:mmot}.
Furthermore, it becomes numerically challenging to generate samples from $\hat{\mu}_{\mathsf{NN}}$ via rejection sampling when $N$~is large.
Because of this, we did not compute the primal objective values when $N=10$ and when $N=20$.

\begin{figure}[t]
    \includegraphics[width=\linewidth]{./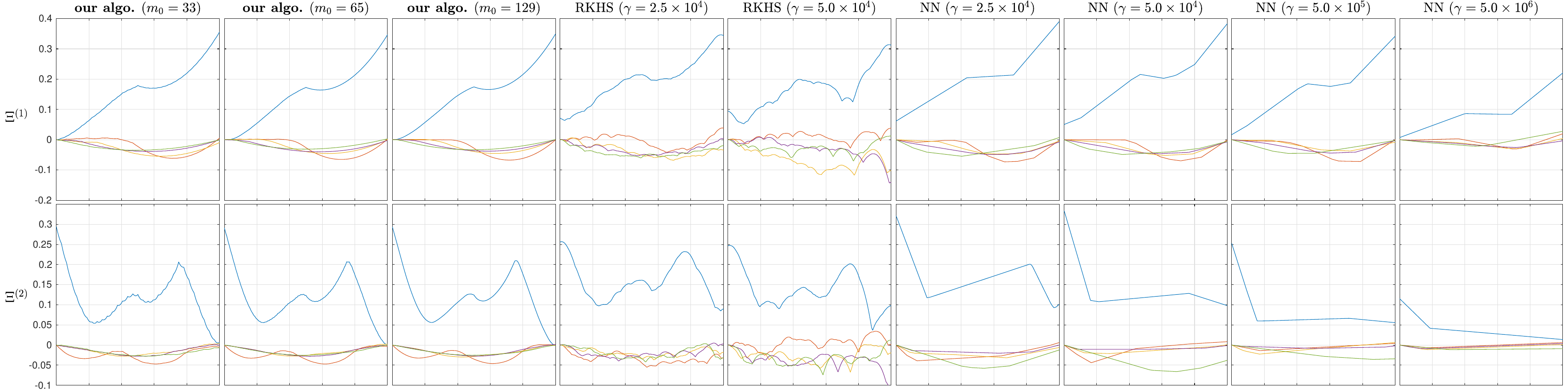}
    
    \caption{\textbf{Experiment~1} -- Approximate dual solutions computed by Algorithm~\ref{alg:mmot}, the RKHS-based algorithm, and the NN-based algorithm when $N=5$.}\label{fig:fluid-dualfuncs-5}
\end{figure}

Figure~\ref{fig:fluid-dualfuncs-5}
presents the approximate solutions of \eqref{eqn:mmot-dual} computed by Algorithm~\ref{alg:mmot} and the approximate solutions of \eqref{eqn:mmot-dual-regularized} computed by the RKHS-based algorithm and the NN-based algorithm when $N=\nobreak5$.
Note that the decision variables in \eqref{eqn:mmot-dual} and \eqref{eqn:mmot-dual-regularized} are invariant under addition of constants that sum up to~0.
Hence, for the purpose of visual comparison, we have shifted every approximate solution $(\tilde{h}_i)_{i=1:N}$ such that $\tilde{h}_i(0)=0$ for $i=2,\ldots,N$.
From Figure~\ref{fig:fluid-dualfuncs-5},
one can observe that the solutions computed by the RKHS-based algorithm and the NN-based algorithm exhibit some similar structures compared to the high-quality solutions computed by Algorithm~\ref{alg:mmot}.
Moreover, one can observe that the solutions computed by the RKHS-based algorithm and the NN-based algorithm are infeasible.
For example, one checks that 
$\big(x_N-\Xi^{(1)}(x_1)\big)^2+\sum_{i=1}^{N-1}(x_{i+1}-x_{i})^2=0$ when
$x_1=\cdots=x_N=0$, 
but the values of
$\sum_{i=1}^{N}\tilde{h}_i(0)$
computed by the RKHS-based algorithm and the NN-based algorithm in the top row of Figure~\ref{fig:fluid-dualfuncs-5} are all positive.

\subsection{Experiment~2: Wasserstein barycenter}\label{ssec:experiments-barycenter}

In the second numerical experiment, we use Algorithm~\ref{alg:mmot} to study the Wasserstein barycenter problem introduced by \citet{agueh2011barycenters}.
In this problem, one is given $N\in\N$ probability measures 
$\mu_1\in\CP_2(\CX_1),\ldots,\mu_N\in\CP_2(\CX_N)$ 
where $\CX_i\subseteq \R^d$ is equipped with the Euclidean metric for $i=1,\ldots,N$, 
and one seeks to find a barycenter~$\bar{\nu}\in\CP_2(\R^d)$ of $\mu_1,\ldots,\mu_N$ in the second order Wasserstein space, defined to be an optimal solution of the following minimization problem:
\begin{align}
    \inf_{\nu\in\CP_2(\R^d)}\Bigg\{\frac{1}{N}\sum_{i=1}^N W_2(\mu_i,\nu)^2\Bigg\}.
    \label{eqn:experiments-barycenter-definition}
\end{align}
Wasserstein barycenter ``lifts'' the notion of barycenter from the Euclidean space to the space of probability measures while preserving the geometric properties of the underlying Euclidean space.
Thus, it can be seen as an average or a summary of the input probability measures $\mu_1,\ldots,\mu_N$. 
Due to this property, it has been widely applied to various fields such as statistical inference \citep{srivastava2015wasp,srivastava2018scalable, li2020continuous, bigot2019penalization}, unsupervised clustering \citep{ye2014scaling, ye2017fast,puccetti2020on}, geometric shape interpolation \citep{solomon2015convolutional, lindheim2023simple}, etc.
There have also been an abundance of studies about the computation of Wasserstein barycenter, see, for example, \citep{altschuler2023polynomial, alvarez2016fixed, anderes2016discrete, borgwardt2020lp, borgwardt2022integer, claici2018stochastic, li2020continuous, luise2019sinkhorn, puccetti2020on, korotin2022wasserstein, staib2017parallel, yang2021fast} and the references therein.

It is well-known that (\ref{eqn:experiments-barycenter-definition}) can be formulated into an MMOT problem; see, e.g., \citep[Section~4]{agueh2011barycenters}.
In the following proposition, let us state the properties of this formulation under Setting~\ref{sett:compact} as well as some details when utilizing Algorithm~\ref{alg:mmot} to compute an approximate Wasserstein barycenter, i.e., an approximately optimal solution of (\ref{eqn:experiments-barycenter-definition}).
%\pagebreak
\begin{proposition}[Experiment~2]\label{prop:experiments-barycenter-mmot}
    For $i=1,\ldots,N$, let $\CX_i\subset\R^d$ be a compact set equipped with the Euclidean metric, and let $\mu_i\in\CP(\CX_i)$.
    Let $\BCX:=\bigtimes_{i=1}^{N}\CX_i$. 
    Then,\widowpenalties-1000
    \begin{enumerate}[label=(\roman*),beginpenalty=10000]
        \item\label{props:experiments-barycenter-mmot-formulation}%
        the optimal value of (\ref{eqn:experiments-barycenter-definition}) is equal to the optimal value of the following MMOT problem:
        \begin{align}
            \inf_{\mu\in\Gamma(\mu_1,\ldots,\mu_N)}\bigg\{\int_{\BCX}\min_{\BIz\in\CZ}\Big\{{\textstyle\frac{1}{N}\sum_{i=1}^N}\|\BIx_i-\BIz\|_2^2\Big\} \DIFFM{\mu}{\DIFF\BIx_1,\ldots,\DIFF\BIx_N}\bigg\}.
            \label{eqn:experiments-barycenter-mmot-formulation}
        \end{align}
    \end{enumerate}
    In the following,
    let $f:\BCX\to\R$ be defined as follows:
    \begin{align}
        f(\BIx_1,\ldots,\BIx_N):=-\frac{1}{N^2}\sum_{i=1}^N\sum_{j=1}^N\langle\BIx_i,\BIx_j\rangle \qquad \forall (\BIx_1,\ldots,\BIx_N)\in\BCX.
        \label{eqn:experiments-barycenter-cost}
    \end{align}
    Let us consider the problem \eqref{eqn:mmot} and its dual optimization problem \eqref{eqn:mmot-dual} with respect to the cost function $f$ in (\ref{eqn:experiments-barycenter-cost}).
    Then, 
    \begin{enumerate}[label=(\roman*),beginpenalty=10000]
        \setcounter{enumi}{1}
        \item\label{props:experiments-barycenter-setting}%
        the conditions \ref{settc:compact-measures} and \ref{settc:compact-cost} in Setting~\ref{sett:compact} hold with respect to $L_f\leftarrow \frac{2}{N}\sup_{\BIz\in\CZ}\big\{\|\BIz\|_2\big\}$.
    \end{enumerate}
    Now, 
    let $\CZ:=\frac{1}{N}\sum_{i=1}^{N}\CX_i$ (here the addition of sets in $\R^d$ denotes the Minkowski sum),
    let $\bar{\BIz}(\BIx_1,\ldots,\BIx_N):=\frac{1}{N}\sum_{i=1}^N\BIx_i$ $\forall (\BIx_1,\ldots,\BIx_N)\in\BCX$,
    and let
    $C_{\mathsf{quad}}:=\frac{1}{N}\sum_{i=1}^N \int_{\CX_i}\|\BIx_i\|_2^2\DIFFM{\mu_i}{\DIFF\BIx_i}$.
    Moreover, let $\epsilon_{\mathsf{LSIP}}>0$,
    let $(\CG_i)_{i=1:N}$, $\BIg(\,\cdot\,)$, $\bar{\BIg}$ be constructed by \ref{settc:compact-testfuncs},
    let $(\rho_i)_{i=1:N}$ satisfy $\rho_i\ge \specialoverline{W}_{1}(\mu_i,[\mu_i]_{\CG_i})$ $\forall 1\le i\le N$,
    and let $\epsilon_{\mathsf{theo}}:=\epsilon_{\mathsf{LSIP}}+\frac{2}{N}\sup_{\BIz\in\CZ}\big\{\|\BIz\|_2\big\}\sum_{i=1}^N \rho_i$.
    Let $\big(\tilde{\alpha}^{\mathsf{UB}},\tilde{\alpha}^{\mathsf{LB}},\tilde{\mu},(\tilde{h}^\dagger_i)_{i=1:N},\tilde{\epsilon}_{\mathsf{sub}}\big)$ be the outputs of Algorithm~\ref{alg:mmot},
    and let 
    $\hat{\mu}$ denote the probability measure computed by Algorithm~\ref{alg:cp-mmot} in Line~\ref{alglin:mmot-cpalgo} of Algorithm~\ref{alg:mmot}.
    Then, the following statements hold.
    \begin{enumerate}[label=(\roman*),beginpenalty=10000]
        \setcounter{enumi}{2}
        \item\label{props:experiments-barycenter-bounds}%
        $\alpha^{\mathsf{LB}}:=\tilde{\alpha}^{\mathsf{LB}}+C_{\mathsf{quad}}$ and $\alpha^{\mathsf{UB}}:=\tilde{\alpha}^{\mathsf{UB}}+C_{\mathsf{quad}}$ are lower and upper bounds for the optimal value of (\ref{eqn:experiments-barycenter-definition}),
        and $\alpha^{\mathsf{UB}}-\alpha^{\mathsf{LB}}=\tilde{\epsilon}_{\mathsf{sub}}\le \epsilon_{\mathsf{theo}}$.

        \item\label{props:experiments-barycenter-dual-solution}%
        For $i=1,\ldots,N$, let $\tilde{h}_i(\BIx):=\tilde{h}^\dagger_i(\BIx)+\frac{1}{N}\|\BIx\|_2^2$ $\forall \BIx\in\CX_i$.
        Then, $(\tilde{h}_i)_{i=1:N}$ is an $\tilde{\epsilon}_{\mathsf{sub}}$-optimal solution of the dual optimization problem of (\ref{eqn:experiments-barycenter-mmot-formulation}).

        \item\label{props:experiments-barycenter-primal-solution-W1}%
        $\tilde{\nu}:=\tilde{\mu}\circ \bar{\BIz}^{-1}$ is an $\tilde{\epsilon}_{\mathsf{sub}}$-optimal solution of (\ref{eqn:experiments-barycenter-definition}).
        
        \item\label{props:experiments-barycenter-primal-solution-W2}%
        Let $\hat{\nu}:=\hat{\mu}\circ \bar{\BIz}^{-1}\in\CP(\CZ)$ and let $\hat{\gamma}_i\in\Gamma(\mu_i,\hat{\nu})$ satisfy $\int_{\CX_i\times\CZ}\|\BIx_i-\BIz\|_2^2\DIFFM{\hat{\gamma}_i}{\DIFF\BIx_i,\DIFF\BIz} = W_2(\mu_i,\hat{\nu})^2$ for $i=1,\ldots,N$. 
        Then, there exists $\hat{\gamma}\in\CP(\BCX\times\CZ)$ such that, for $i=1,\ldots,N$, the marginal of $\hat{\gamma}$ on $\CX_i\times\CZ$ is equal to $\hat{\gamma}_i$.
        Moreover, let $\breve{\mu}$ be the marginal of $\hat{\gamma}$ on $\BCX$, 
        and let $\breve{\nu}:=\breve{\mu}\circ \bar{\BIz}^{-1}$, 
        $\beta^{\mathsf{UB}}:=\int_{\BCX}f\DIFFX{\breve{\mu}}+C_{\mathsf{quad}}$,
        $\tilde{\xi}_{\mathsf{sub}}:=\beta^{\mathsf{UB}} - \tilde{\alpha}^{\mathsf{LB}} - C_{\mathsf{quad}}$. 
        Then, $\breve{\nu}$ is a $\tilde{\xi}_{\mathsf{sub}}$-optimal solution of (\ref{eqn:experiments-barycenter-definition}),
        $\beta^{\mathsf{UB}}$ is an upper bound for the optimal value of (\ref{eqn:experiments-barycenter-definition}),
        and $\tilde{\xi}_{\mathsf{sub}}\le \epsilon_{\mathsf{theo}}$.
    \end{enumerate}
\end{proposition}

\begin{proof}[Proof of Proposition~\ref{prop:experiments-barycenter-mmot}]
    See Section~\ref{ssec:proof-experiments}.
\end{proof}

\begin{figure}[t]
    \includegraphics[width=\linewidth]{./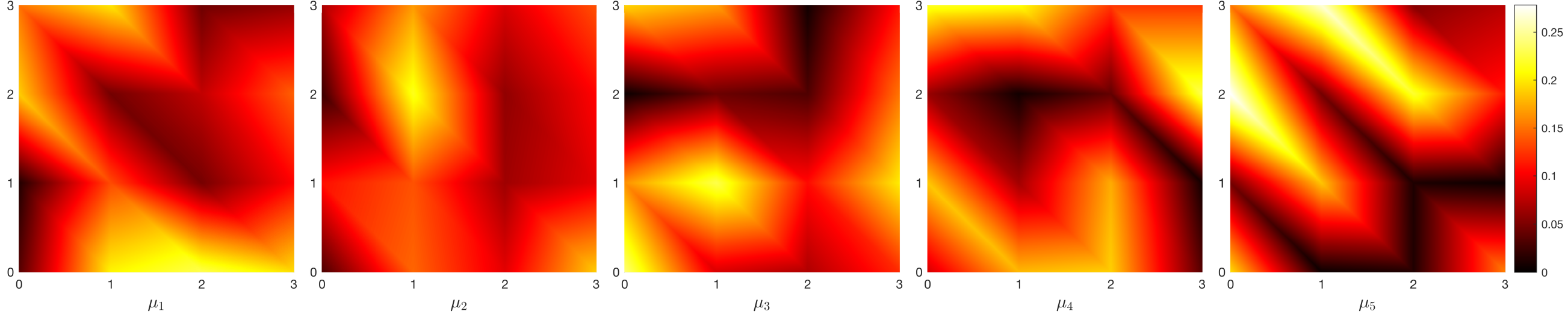}
    
    \caption{\textbf{Experiment~2} -- Probability density functions of $\mu_1,\ldots,\mu_N$.}\label{fig:barycenter-inputs}
\end{figure}

\subsubsection*{Experimental settings}
In this experiment, we use Algorithm~\ref{alg:mmot} and Proposition~\ref{prop:experiments-barycenter-mmot} to approximately compute the Wasserstein barycenter of $N=5$ absolutely continuous probability measures supported on $\CX_1=\cdots=\CX_{5}=[0,3]^2$ with continuous piece-wise affine density functions. 
Figure~\ref{fig:barycenter-inputs} shows the probability density functions of $\mu_1,\ldots,\mu_5\in\CP([0,3]^2)$ as color plots. 
Moreover, we let 
$\FC$ be a finite collection of 2-simplices (i.e., triangles) that satisfy
$\bigcup_{C\in\FC}C=[0,1]^2$ and $C_1\cap C_2\in\FF(C_1)\cap\FF(C_2)$ $\forall C_1,C_2\in\FC$.
Subsequently, we construct $\CG_1=\cdots=\CG_N$ 
as well as $\BIg(\,\cdot\,)$, $\bar{\BIg}$
by \ref{settc:compact-testfuncs} with respect to $\FC_i\leftarrow \FC$ $\forall 1\le i\le N$.
Let $m_0:=|V(\FC)|-1=|\CG_1|=\cdots=|\CG_N|$, which varies between 15 and 5775, that is, we use 15~to~5775 test functions per marginal. 
Furthermore, for each value of~$m_0$, we set $\epsilon_{\mathsf{LSIP}}=10^{-4}$ and use 
Algorithm~\ref{alg:mmot} and 
Proposition~\ref{prop:experiments-barycenter-mmot} 
to compute 
a lower bound $\alpha^{\mathsf{LB}}$ and
two upper bounds $\alpha^{\mathsf{UB}}$, $\beta^{\mathsf{UB}}$ for the optimal value of (\ref{eqn:experiments-barycenter-definition}),
as well as an $\tilde{\epsilon}_{\mathsf{sub}}$-optimal solution $(\tilde{h}_i)_{i=1:N}$ of the dual optimization problem of (\ref{eqn:experiments-barycenter-mmot-formulation}),  
an $\tilde{\epsilon}_{\mathsf{sub}}$-approximate Wasserstein barycenters~$\tilde{\nu}$,
and a $\tilde{\xi}_{\mathsf{sub}}$-approximate Wasserstein barycenter $\breve{\nu}$.
Proposition~\ref{prop:experiments-barycenter-mmot} guarantees that $\tilde{\epsilon}_{\mathsf{sub}}\le \epsilon_{\mathsf{theo}}$ and
$\tilde{\xi}_{\mathsf{sub}}\le \epsilon_{\mathsf{theo}}$,
where $\epsilon_{\mathsf{theo}}:=\epsilon_{\mathsf{LSIP}}+4\sup_{\BIz\in\CZ}\big\{\|\BIz\|_2\big\}\eta(\FC)$ is derived from Theorem~\ref{thm:mmotalgo} and Proposition~\ref{prop:experiments-barycenter-mmot}
with respect to $\rho_i\leftarrow 2\eta(\FC)$ $\forall 1\le i\le N$.
We remark that we are able to tractably implement $\mathtt{Oracle}(\,\cdot\,)$ in Definition~\ref{def:mmot-oracle} using the methods introduced by \citet[Section~3.2]{altschuler2021wasserstein}, and the resulting implementation is highly efficient.\footnote{Our implementation utilizes the Computational Geometry Algorithms Library (CGAL) \citep{CGAL} as well as the C++ Kd-tree library implemented by \citet{dalitz2009kd}.}
The upper bounds $\alpha^{\mathsf{UB}}$ and $\beta^{\mathsf{UB}}$ are approximately computed by Monte Carlo integration with $10^8$ independent random samples, and we repeat this 100 times to examine the Monte Carlo errors.

\begin{figure}[t]
    \includegraphics[width=0.9\linewidth]{./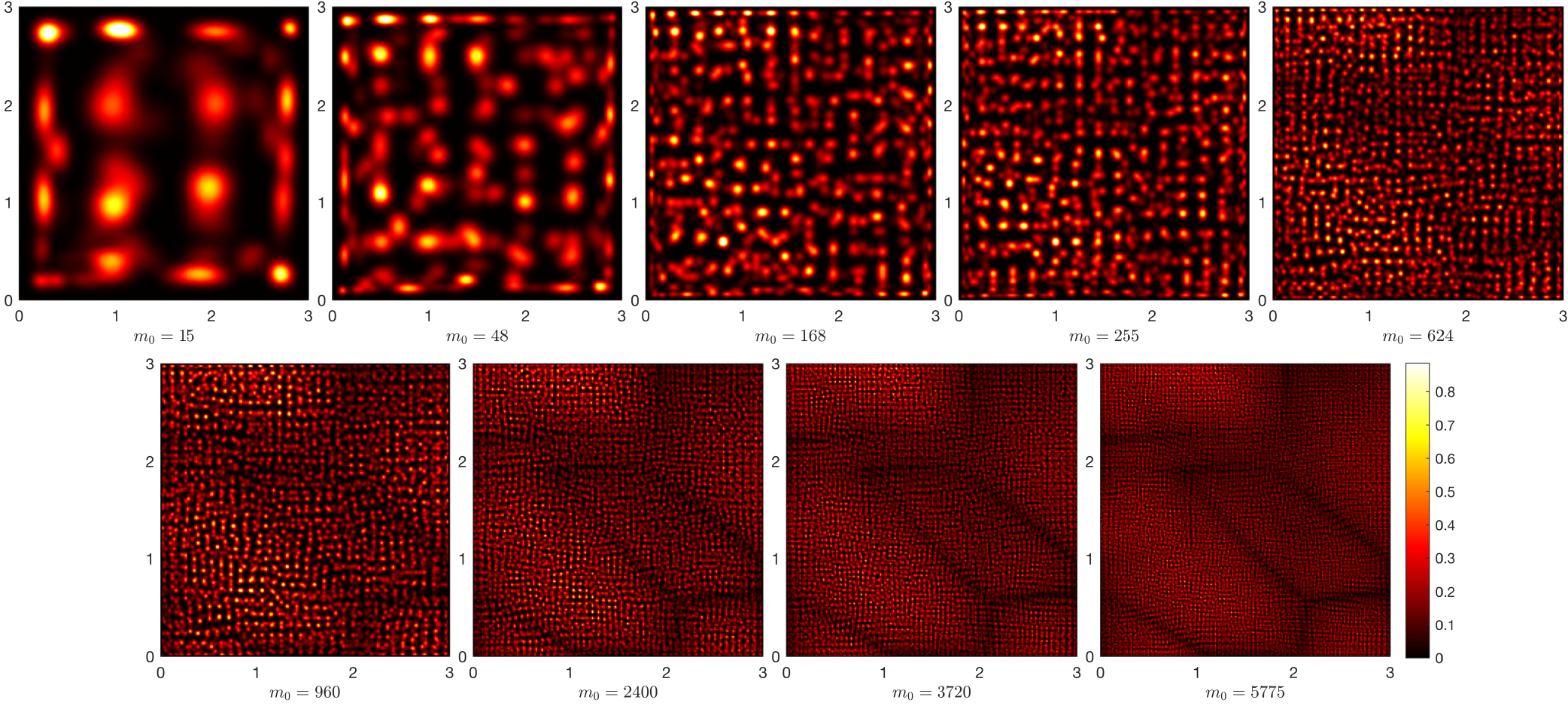}
    \caption{\textbf{Experiment~2} -- Histograms of the approximate Wasserstein barycenter~$\tilde{\nu}$.}\label{fig:barycenter-outputs}
\end{figure}

\begin{figure}[t]
    \includegraphics[width=0.9\linewidth]{./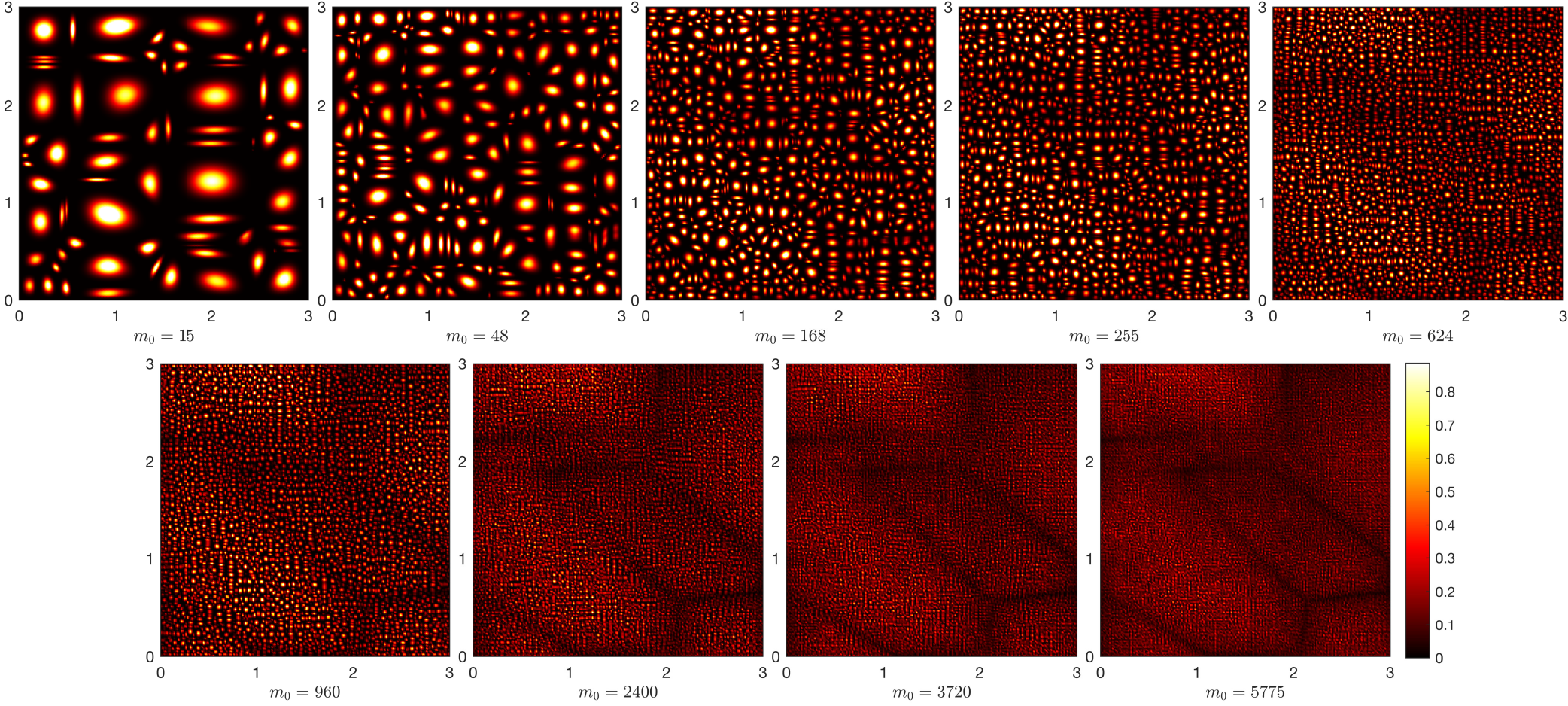}
    \caption{\textbf{Experiment~2} -- Histograms of the approximate Wasserstein barycenter~$\breve{\nu}$.}\label{fig:barycenter-W2OToutputs}
\end{figure}

\subsubsection*{Results and discussions}
In order to visualize $\tilde{\nu}$ and $\breve{\nu}$, we generate $10^{10}$ random samples from each of them and show the corresponding histograms as color plots in Figure~\ref{fig:barycenter-outputs} and Figure~\ref{fig:barycenter-W2OToutputs}.
Note that the histograms have been normalized into probability density functions (i.e., they integrate to~1 on $[0,3]^2$), and the color bars in Figure~\ref{fig:barycenter-outputs} and Figure~\ref{fig:barycenter-W2OToutputs} show the density values corresponding to the colors. 
Observe that the approximate Wasserstein barycenters~$\tilde{\nu}$ in Figure~\ref{fig:barycenter-outputs} are absolutely continuous probability measures on $[0,3]^2$ that are mixtures of ``blob''-shaped components, where the number of blobs increases with the number~$m_0$ of test functions. 
As the value of~$m_0$ becomes large, the sizes of these blobs shrink, and they begin to form a spatial pattern with a noticeable structure that approximates the true Wasserstein barycenter of $\mu_1,\ldots,\mu_5$. 
This pattern contains regions with high and low probabilities with irregular boundaries. 
The approximate Wasserstein barycenters~$\breve{\nu}$ in Figure~\ref{fig:barycenter-W2OToutputs} are also absolutely continuous probability measures formed with blob-shaped components. 
With small values of~$m_0$, the approximate Wasserstein barycenters $\breve{\nu}$ and $\tilde{\nu}$ look different as $\breve{\nu}$ is formed with a larger number of blobs where each one is more concentrated. 
However, when $m_0\ge2400$, $\breve{\nu}$ and $\tilde{\nu}$ begin to look indistinguishable as they both exhibit the same spatial pattern. 
This is an indication that both $\tilde{\nu}$ and $\breve{\nu}$ are accurate approximations of the true Wasserstein barycenter of $\mu_1,\ldots,\mu_5$.

\begin{figure}[t]
    \includegraphics[width=0.33\linewidth]{./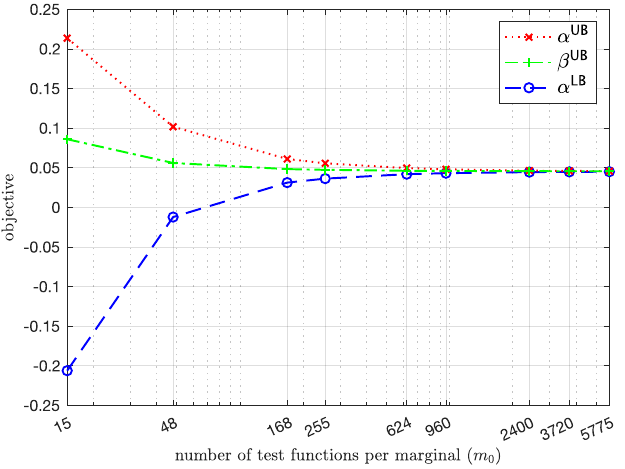}~
    \includegraphics[width=0.33\linewidth]{./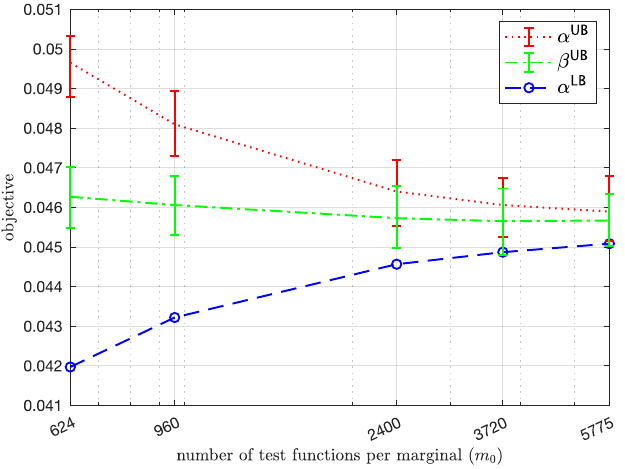}~
    \includegraphics[width=0.33\linewidth]{./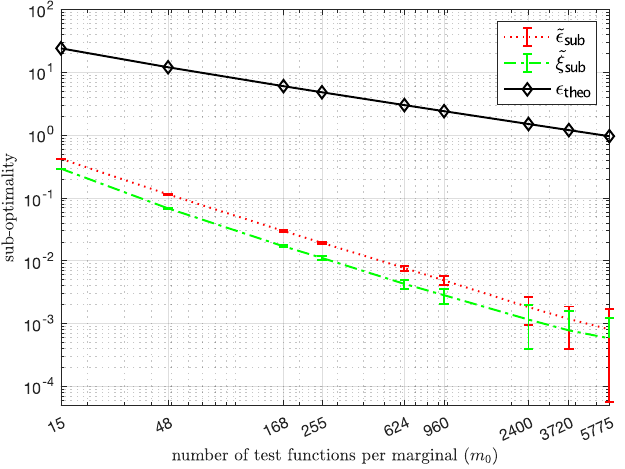}
    
    \caption{\textbf{Experiment~2} -- Left: the lower bound $\alpha^{\SFL\SFB}$ and the upper bounds $\alpha^{\SFU\SFB}$, $\beta^{\SFU\SFB}$. 
    Center: magnification of the right half of the left panel. The error bars indicate the Monte Carlo errors when computing the upper bounds. 
    Right: the sub-optimality estimates $\tilde{\epsilon}_{\mathsf{sub}}$, $\tilde{\xi}_{\mathsf{sub}}$ and their a priori upper bound $\epsilon_{\mathsf{theo}}$ on the log-scale.}\label{fig:barycenter-bounds}
\end{figure}

Figure~\ref{fig:barycenter-bounds} shows the values of $\alpha^{\mathsf{LB}}$, $\alpha^{\mathsf{UB}}$, $\beta^{\mathsf{UB}}$, $\tilde{\epsilon}_{\mathsf{sub}}$, $\tilde{\xi}_{\mathsf{sub}}$, and $\epsilon_{\mathsf{theo}}$ plotted against the number~$m_0$ of test functions per marginal. 
The left panel of Figure~\ref{fig:barycenter-bounds} shows the values of $\alpha^{\mathsf{LB}}$, $\alpha^{\mathsf{UB}}$, and $\beta^{\mathsf{UB}}$. 
When the value of~$m_0$ is small, the lower bound $\alpha^{\mathsf{LB}}$ is far from the upper bounds $\alpha^{\mathsf{UB}}$, $\beta^{\mathsf{UB}}$. 
As $m_0$ increases, the differences between the bounds shrink. 
We provide a magnified version of the $m_0\ge 624$ part of the left panel in the center panel of Figure~\ref{fig:barycenter-bounds}. 
The error bars indicate the 95\% Monte Carlo intervals of the upper bounds. 
When $m_0=5775$, we have $\tilde{\epsilon}_{\mathsf{sub}}:=\alpha^{\mathsf{UB}}-\alpha^{\mathsf{LB}}=8.1107\times 10^{-4}$ and $\tilde{\xi}_{\mathsf{sub}}:=\beta^{\mathsf{UB}}-\alpha^{\mathsf{LB}}=5.8522\times 10^{-4}$ (here $\alpha^{\mathsf{UB}}$ and $\beta^{\mathsf{UB}}$ are averages of the 100 Monte Carlo integrals), indicating that both $\tilde{\nu}$ and $\breve{\nu}$ are close to the true Wasserstein barycenter of $\mu_1,\ldots,\mu_5$. 
In addition, we show the values of the sub-optimality estimates $\tilde{\epsilon}_{\mathsf{sub}}$, $\tilde{\xi}_{\mathsf{sub}}$ and their a priori upper bound $\epsilon_{\mathsf{theo}}$ on the log-scale in the right panel of Figure~\ref{fig:barycenter-bounds}.
Whenever the lower bound $\alpha^{\mathsf{LB}}$ falls within the 95\% Monte Carlo intervals of the upper bound $\alpha^{\mathsf{UB}}$ or $\beta^{\mathsf{UB}}$, the lower branch of the corresponding error bar is omitted.
Observe that the computed sub-optimality estimates $\tilde{\epsilon}_{\mathsf{sub}}$, $\tilde{\xi}_{\mathsf{sub}}$ are around two to three orders of magnitude smaller than their a priori upper bound $\epsilon_{\mathsf{theo}}$, and they seem to decrease at a faster rate compared to $\epsilon_{\mathsf{theo}}$. 
This coincides with our observations in Experiment~1 in Section~\ref{ssec:experiments-fluid}. 
Moreover, $\beta^{\mathsf{UB}}< \alpha^{\mathsf{UB}}$ holds for all values of~$m_0$, indicating that $\breve{\nu}$ is more accurate than $\tilde{\nu}$ as an approximate Wasserstein barycenter.
Furthermore, the difference between $\alpha^{\mathsf{UB}}$ and $\beta^{\mathsf{UB}}$ is more pronounced for small values of~$m_0$ and becomes insignificant for $m_0\ge 2400$. 
This is in agreement with our qualitative observations from Figure~\ref{fig:barycenter-outputs} and Figure~\ref{fig:barycenter-W2OToutputs}.

\subsubsection*{Comparison with existing methods}
Figure~\ref{fig:barycenter-comparisons} compares the bounds $\alpha^{\mathsf{LB}}$, $\alpha^{\mathsf{UB}}$ computed by Algorithm~\ref{alg:mmot}
when $m_0=624$
with the outputs of the RKHS-based algorithm and the NN-based algorithm,
where box plots are used to visualize the variability among 20 independent runs of each SGD algorithm.
One can observe the same dilemma discussed in Experiment~1, 
that is, 
a small value of $\gamma$ leads to large biases in the objective values,
and a large value of~$\gamma$ negatively affects convergence.
Overall, neither the RKHS-based algorithm nor the NN-based algorithm was able to reliably approximate the optimal value of \eqref{eqn:mmot}, 
while the solutions computed by our algorithm are not only feasible but also close to being optimal.

\begin{figure}[t]
    \includegraphics[width=0.45\linewidth]{./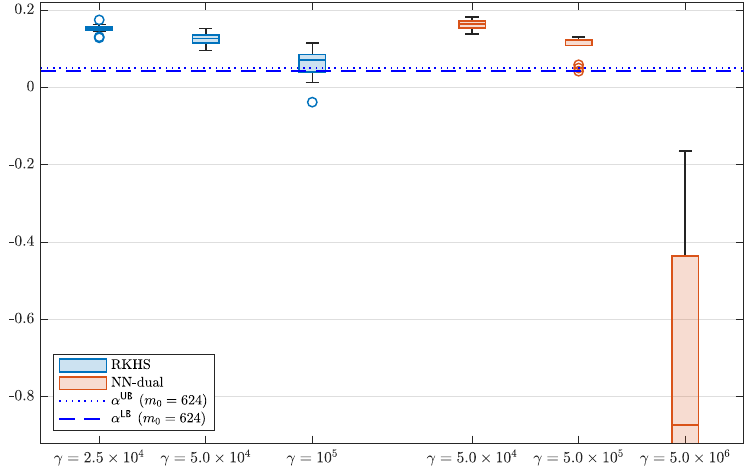}
    
    \caption{\textbf{Experiment~2} -- Comparison of the objective values computed by Algorithm~\ref{alg:mmot}, the RKHS-based algorithm, and the NN-based algorithm.}\label{fig:barycenter-comparisons}
\end{figure}

\subsection{Experiment~3: continuous piece-wise affine cost function}\label{ssec:experiments-CPWA}

In the third numerical experiment, we consider a large-scale MMOT problem with $N=100$ marginals.
Specifically, for $i=1,\ldots,N$, we let $\CX_i:=[-10, 10]$ and let $\mu_i\in\CP(\CX_i)$ be a mixture of normal distributions truncated to $\CX_i$, where the number of mixture components is randomly generated between 3~and~5 and the parameters of each mixture component are also randomly generated. 
Moreover, we let $f:\BCX\to\R$ be the following continuous piece-wise affine (CPWA) function:
\begin{align*}
\begin{split}
f(\BIx):=\left(\sum_{k=1}^{2}\big|\langle\BIs^+_{k},\BIx\rangle-t^+_{k}\big|\right)-\left(\sum_{k=1}^{2}\big|\langle\BIs^-_{k},\BIx\rangle-t^-_{k}\big|\right),
\end{split}
%\label{eqn:numerics-exp-f}
\end{align*}
where $\BIs^+_{1},\BIs^+_{2},\BIs^-_{1},\BIs^-_{2}\in\R^N$ are randomly generated from the unit sphere in $\R^N$, and $t^+_{1},t^+_{2},t^-_{1},t^-_{2}\in\nobreak\R$ are randomly generated real numbers. 
Notice that $f$ is neither convex nor concave, and that $f(\BIx)$ cannot be separated into a sum of functions involving disjoint components of $\BIx$ (otherwise \eqref{eqn:mmot} can be decomposed into independent sub-problems). 
We chose this $f$ in order to demonstrate the performance of Algorithm~\ref{alg:mmot} in a large-scale setting when $N$ is large. 

\subsubsection*{Experimental settings}
To approximately solve \eqref{eqn:mmot} with respect to the above cost function~$f$,
for $i=1,\ldots,N$,
we define
$\FC_i=\big\{[\kappa_{i,0},\kappa_{i,1}],\ldots,\allowbreak[\kappa_{i,m_0-1},\kappa_{i,m_0}]\big\}$,
where $m_0\in\N$ is varied among four values: 4, 8, 16, and 32,
and 
${-10}=\kappa_{i,0}<\kappa_{i,1}<\cdots<\kappa_{i,m_0}=10$ 
satisfy 
$\mu_i\big([\kappa_{i,j-1},\kappa_{i,j}]\big)=\frac{1}{m_0}$ $\forall 1\le
\nobreak j\le \nobreak m_0$. 
Subsequently, we construct 
$\CG_1,\ldots,\CG_N$, $\BIg(\,\cdot\,)$, $\bar{\BIg}$ by \ref{settc:compact-testfuncs},
which yields
\begin{align*}
g_{i,j}(x)&:=\frac{(x-\kappa_{i,j-1})^+}{\kappa_{i,j}-\kappa_{i,j-1}}\wedge \frac{(\kappa_{i,j+1}-x)^+}{\kappa_{i,j+1}-\kappa_{i,j}} && \forall x\in[-10,10],\;\forall 1\le j\le m_0-1,\; \forall 1\le i\le N,\\
g_{i,m_0}(x)&:=\frac{(x-\kappa_{i,m_0-1})^+}{\kappa_{i,m_0}-\kappa_{i,m_0-1}} && \forall x\in[-10,10],\; \forall 1\le i\le N.
%\label{eqn:numerics-exp-ifb}
\end{align*}
Moreover, for each value of $m_0$, we fix $\epsilon_{\mathsf{LSIP}}=10^{-4}$,
and derive 
the a priori error bound $\epsilon_{\mathsf{theo}}$ via Theorem~\ref{thm:mmotalgo} with respect to $\rho_i\leftarrow 2\eta(\FC_i)= 2\max_{1\le j\le m_0}\{\kappa_{i,j}-\kappa_{i,j-1}\}$ $\forall 1\le i\le N$.
We remark that $\mathtt{Oracle}(\,\cdot\,)$ in Definition~\ref{def:mmot-oracle} can be formulated into a mixed-integer linear programming problem and solved with Gurobi~\citep{gurobi}, which is a state-of-the-art mixed-integer solver. 
Furthermore, since $\CX_1,\ldots,\CX_N$ are all one-dimensional, the reassembly $\tilde{\mu}\in R(\hat{\mu};\mu_1,\ldots,\mu_N)$
in Line~\ref{alglin:mmot-reassembly} of Algorithm~\ref{alg:mmot}
is constructed by Proposition~\ref{prop:reassembly-1d}\ref{props:reassembly-1d-semidisc}. 
Subsequently, the approximate computation of $\alpha^{\SFU\SFB}$ in Line~\ref{alglin:mmot-bounds} of Algorithm~\ref{alg:mmot} is done via Monte Carlo integration with $10^{6}$ independent samples. 
The Monte Carlo step is repeated 100 times in order to examine the Monte Carlo errors.

\begin{figure}[t]
    \includegraphics[width=0.45\linewidth]{./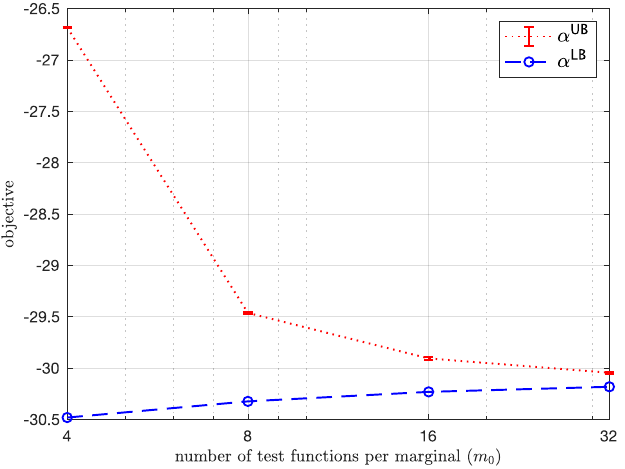}~
    \includegraphics[width=0.45\linewidth]{./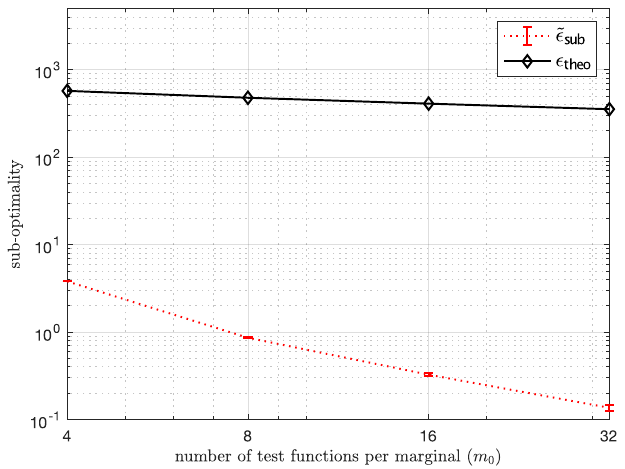}
    
    \caption{\textbf{Experiment~3} -- Left: the lower bound $\alpha^{\SFL\SFB}$ and the upper bound $\alpha^{\SFU\SFB}$. The error bars indicate the Monte Carlo errors when computing the upper bound. 
    Right: the sub-optimality estimate $\tilde{\epsilon}_{\mathsf{sub}}$ and its a priori upper bound $\epsilon_{\mathsf{theo}}$ on the log-scale.}\label{fig:CPWA-bounds}
\end{figure}

\subsubsection*{Results and discussions}
The results in this experiment are shown in Figure~\ref{fig:CPWA-bounds}. 
The error bars in Figure~\ref{fig:CPWA-bounds} indicate the 95\% Monte Carlo intervals of the upper bounds. 
The left panel of Figure~\ref{fig:CPWA-bounds} shows the computed values of the lower bound $\alpha^{\SFL\SFB}$ and the upper bound $\alpha^{\SFU\SFB}$.
Similar to the results in Experiment~1 and Experiment~2, the bounds are far apart when $m_0=4$ and get closer when $m_0$ increases. 
When $m_0=32$, we get $\alpha^{\SFL\SFB}=-30.1813$, $\alpha^{\SFU\SFB}=-30.0451$, and their difference $\tilde{\epsilon}_{\mathsf{sub}}=0.1362$. 
This shows that when $m_0=32$ test functions are used for each marginal, the computed approximately optimal solution $\tilde{\mu}$ has a decently small sub-optimality. 
Moreover, the sub-optimality estimates~$\tilde{\epsilon}_{\mathsf{sub}}$ in the right panel of Figure~\ref{fig:CPWA-bounds} are two to three orders of magnitude smaller than their a priori upper bounds~$\epsilon_{\mathsf{theo}}$. 
Note that in this experiment, $\epsilon_{\mathsf{theo}}$ decreases slowly as $m_0$ increases due to the non-uniform choices of $\kappa_{i,0},\kappa_{i,1},\ldots,\kappa_{i,m_0}$. 
This experiment demonstrates that even when the cost function $f$ has a non-trivial structure and when the number $N$ of marginals is large, our algorithm is capable of computing approximately optimal solutions of \eqref{eqn:mmot} with high accuracy. 
Since the RKHS-based algorithm and the NN-based algorithm have 
both experienced difficulties in converging to approximately optimal solutions in Experiment~1 when $N$ is large, 
we did not test their performances in Experiment~3.

\section{Extended results: construction of moment sets with controlled supremum $W_p$-metrics in the possibly unbounded Euclidean case}\label{sec:momentset-extended}

In this section, we present our extended results on the explicit construction of moment sets that generalize our results in Proposition~\ref{prop:momentset-simplex} to the possibly unbounded Euclidean case, and to the control of the supremum $W_p$-metric for general $p\in[1,\infty)$.

Let us first introduce the following notions of polyhedral cover, vertex interpolation function set, and $p$-radial function set.

\begin{definition}[Polyhedral cover]\label{def:polycover}
Let $d\in\N$, let $\R^d$ be equipped with a norm $\|\cdot\|$, and let $\CY\subseteq\R^d$ be closed. 
A collection $\FC$ of subsets of $\R^d$ is called a polyhedral cover of $\CY$ if 
$\FC$ is a finite collection of polyhedra 
such that $V(C)\neq\emptyset$ $\forall C\in\FC$,
$\bigcup_{C\in\FC}C\supseteq\CY$,
and $C_1\cap C_2\in\FF(C_1)\cap \FF(C_2)$ $\forall C_1,C_2\in\FC$.
A polyhedral cover $\FC$ of $\CY$ is called bounded if every $C\in\FC$ is bounded (which only exists if $\CY$ is bounded). 
Let $\FF(\FC):=\bigcup_{C\in\FC}\FF(C)$,
$V(\FC):=\bigcup_{C\in\FC}V(C)$, and
$D(\FC):=\bigcup_{C\in\FC}D(C)$. 
Moreover, let $\eta(\FC):=\max_{C\in\FC}\max_{\BIv,\BIv'\in V(C)}\big\{\|\BIv-\BIv'\|\big\}$ denote the mesh size of a polyhedral cover $\FC$.
\end{definition}
Notice that the finite collection $\FC$ of $d$-simplices in Proposition~\ref{prop:momentset-simplex} is an example of a bounded polyhedral cover.

\begin{definition}[Vertex interpolation function set and $p$-radial function set]\label{def:cover-interp}
Let $d\in\N$, let $\R^d$ be equipped with a norm $\|\cdot\|$, and let $\FC$ be a polyhedral cover of a closed set $\CY\subseteq\R^{d}$.
\begin{itemize}[leftmargin=15pt,beginpenalty=10000]
    \item A set of continuous and non-negative functions $\big\{(g_{\BIv}:\bigcup_{C\in\FC}C\to\R):\BIv\in V(\FC)\big\}$ is called a 
    vertex interpolation function set 
    (abbreviated to VIFS)
    for $\FC$ if it satisfies the properties \ref{defs:cover-vif-normalize} and \ref{defs:cover-vif-disjoint} below.\widowpenalty-1000
    \begin{enumerate}[label=\normalfont{(VIFS\arabic*)},leftmargin=4em,beginpenalty=20000]
        \item\label{defs:cover-vif-normalize}%
        For every $F\in\FF(\FC)$ and every $\BIx\in F$, $\sum_{\BIv\in V(F)}g_{\BIv}(\BIx)=1$.

        \item\label{defs:cover-vif-disjoint}%
        For every $F\in\FF(\FC)$, every $\BIx\in F$, and every $\BIv\in V(\FC)\setminus V(F)$, $g_{\BIv}(\BIx)=0$. 
    \end{enumerate}
    In particular, \ref{defs:cover-vif-normalize} and \ref{defs:cover-vif-disjoint} imply that $\big\{g_{\BIv}:\BIv\in V(\FC)\big\}$ satisfies the following orthonormality property: 
    \begin{align}
        g_{\BIv}(\BIv')=\INDI_{\{\BIv=\BIv'\}} \qquad \forall \BIv,\BIv'\in V(\FC).
        \label{eqn:cover-vif-orthonormality}
    \end{align}

    \item When $\FC$ is not bounded (hence $D(\FC)\ne\emptyset$), a set of continuous and non-negative functions ${\big\{(\overline{g}_{\BIu}:\bigcup_{C\in\FC}C\to\R):\BIu\in D(\FC)\big\}}$ is called a 
    $p$-radial function set 
    (abbreviated to $p$-RFS)
    for $\FC$ with respect to some $p\in[1,\infty)$ if it satisfies the property \ref{defs:cover-rf-bound} below.
    \begin{enumerate}[label=\normalfont{($p$-RFS)},leftmargin=4em]
        \item\label{defs:cover-rf-bound}%
        For every unbounded $F\in\FF(\FC)$ and every $\BIx\in F$, 
        \begin{align*}
        \left(\min_{\BIy\in\conv(V(F))}\big\{\|\BIx-\BIy\|\big\}\right)^p\le \sum_{\BIu\in D(F)}\overline{g}_{\BIu}(\BIx).
        \end{align*}
    \end{enumerate}
\end{itemize}
\end{definition}

The above definitions can be interpreted as follows. 
Let us consider a polyhedral cover $\FC$ of $\CY$ and a VIFS $\big\{g_{\BIv}:\BIv\in V(\FC)\big\}$ for $\FC$. 
The property \ref{defs:cover-vif-normalize} requires that the functions $\big\{g_{\BIv}:\BIv\in V(\FC)\big\}$ form a non-negative partition of unity on each face $F\in\FF(\FC)$.
The property \ref{defs:cover-vif-disjoint} requires each $g_{\BIv}(\,\cdot\,)$ to be local to the faces adjacent to the vertex $\BIv$, 
i.e., 
$g_{\BIv}(\,\cdot\,)$ is local to
$\{F\in\FF(\FC):\BIv\in V(F)\}$.
In the case where $\FC$ is not bounded, let us consider a $p$-RFS $\big\{\overline{g}_{\BIu}:\BIu\in D(\FC)\big\}$ for~$\FC$. 
As required by the property \ref{defs:cover-rf-bound}, for an unbounded face $F\in\FF(\FC)$, the functions $\big\{\overline{g}_{\BIu}:\BIu\in D(F)\big\}$ control the $p$-th power of the distance traveled when transporting a point $\BIx\in F$ to the bounded set $\conv(V(F))$.
Informally speaking, the functions $\big\{\overline{g}_{\BIu}:\BIu\in D(\FC)\big\}$ are used to control the rate at which probability mass can ``escape to infinity'' in each possible direction $\BIu\in D(\FC)$.

The following proposition shows that the simplicial test functions $\CG_{\mathsf{simp}}(\FC)$ in Proposition~\ref{prop:momentset-simplex} constitute a VIFS for $\FC$.

\begin{proposition}[Simplicial test functions are VIFS]\label{prop:momentset-simplex-VIFS}
    Let $d\in\R$, let $\CY\subset\R^d$ be compact, and let $d_{\CY}$ be a metric induced by a norm $\|\cdot\|$ on $\R^d$.
    Let $\FC$ be a finite collection of $d$-simplices which satisfy 
    $\bigcup_{C\in\FC}C\supseteq\CY$ and 
    $C_1\cap C_2\in \FF(C_1)\cap \FF(C_2)$ $\forall C_1,C_2\in\FC$.
    Then, $\FC$ is a bounded polyhedral cover of~$\CY$.
    Moreover, the simplitial test functions $\CG_{\mathsf{simp}}(\FC)$ defined in Proposition~\ref{prop:momentset-simplex} is a VIFS for $\FC$.
\end{proposition}

\begin{proof}[Proof of Proposition~\ref{prop:momentset-simplex-VIFS}]
    See Section~\ref{ssec:proof-momentset-extended}.
\end{proof}

In the following, let us demonstrate the explicit construction of a finite set of functions whose linear combinations 
contain a VIFS as well as a $p$-RFS for a polyhedral cover consisting of hyperrectangles instead of simplices.

\begin{proposition}[VIFS and $p$-RFS for hyperrectangles]\label{prop:cover-cube}
Let $d\in\N$ and let $\R^d$ be equipped with a norm $\|\cdot\|$. 
For $i=1,\ldots,d$, let $n_i\in\N$, $\beta_i>0$, $\underline{\kappa}_i\in\R$, and let $\kappa_{i,j}:=\underline{\kappa}_i+j\beta_i$ for $j=0,\ldots,n_i$. 
Moreover, for $i=1,\ldots,d$, let $\CI_i:=\big\{[\kappa_{i,0},\kappa_{i,1}],\ldots,[\kappa_{i,n_i-1},\kappa_{i,n_i}]\big\}$, let $\overline{\CI}_i:=\CI_i \cup \big\{(-\infty,\kappa_{i,0}],[\kappa_{i,n_i},\infty)\big\}$, and let $\FC_0$, $\FC$, $\CG_0$, and $\CG_p$ for $p\in[1,\infty)$ be defined as follows:
\begin{align*}
%\begin{split}
    \FC_0&:=\big\{I_1\times\cdots\times I_d : I_i\in\CI_i\;\forall 1\le i\le d\big\},\\
    \FC&:=\big\{I_1\times\cdots\times I_d : I_i\in\overline{\CI}_i\;\forall 1\le i\le d\big\},\allowdisplaybreaks\\
    \CG_0&:=\bigg\{\R^d\ni(x_1,\ldots,x_d)^\TRANSP\mapsto \Big(\max_{i\in L}\big\{\beta_i^{-1}(x_i-\kappa_{i,j_i})^+\big\}\Big)^+\in\R : \\
    &\hspace{14em} j_i\in\{0,1,\ldots,n_i\}\;\forall i\in L,\; L\subseteq\{1,\ldots,d\}\bigg\},\allowdisplaybreaks\\
    \CG_p&:=\begin{cases}
        \CG_0 \cup \big\{\R^d\ni(x_1,\ldots,x_d)^\TRANSP\mapsto x_i\in\R:1\le i\le d\big\} & \text{if }p=1,\\
        \CG_0 \cup \big\{\R^d\ni(x_1,\ldots,x_d)^\TRANSP\mapsto \big((\kappa_{i,0}-x_i)^+\big)^p\in\R:1\le i\le d\big\}\\
        \phantom{\CG_0}\cup \big\{\R^d\ni(x_1,\ldots,x_d)^\TRANSP\mapsto \big((x_i-\kappa_{i,n_i})^+\big)^p\in\R:1\le i\le d\big\} & \text{if }p>1.
    \end{cases}
%\end{split}
%\label{eqn:cover-cube-basis}
\end{align*}
Then, the following statements hold.
\begin{enumerate}[label=(\roman*)]
    \item\label{props:cover-cube-bounded}%
    $\FC_0$ is a polyhedral cover of $\bigtimes_{i=1}^d[\kappa_{i,0},\kappa_{i,n_i}]$ 
    (and so also of any closed subset $\CY\subseteq \bigtimes_{i=1}^d[\kappa_{i,0},\kappa_{i,n_i}]$)
    and
    one can explicitly construct $\CG_{\mathsf{VIFS}}\subset\lspan_1(\CG_0)$ which is a VIFS for~$\FC_0$.

    \item\label{props:cover-cube-unbounded}%
    $\FC$ is a polyhedral cover of $\R^d$ (and so also of any closed subset $\CY\subseteq\R^d$) and, 
    for all $p\in[1,\infty)$, 
    one can explicitly construct $\CG_{\mathsf{VIFS}}\subset\lspan_1(\CG_p)$ which is a VIFS for $\FC$.
    Moreover, for all $p\in[1,\infty)$, 
    let $C_{\|\cdot\|}\ge 1$ be a constant such that $\|\BIx\|\le C_{\|\cdot\|}\|\BIx\|_p$ for all $\BIx\in\R^d$.
    Then,
    \begin{align*}
        \CG_{p\text{-}\mathsf{RFS}}&:=\Big\{\R^d\ni(x_1,\ldots,x_d)^\TRANSP\mapsto \big(C_{\|\cdot\|}(\kappa_{i,0}-x_i)^+\big)^p\in\R:1\le i\le d\Big\} \\
        &\qquad \cup \Big\{\R^d\ni(x_1,\ldots,x_d)^\TRANSP\mapsto \big(C_{\|\cdot\|}(x_i-\kappa_{i,n_i})^+\big)^p\in\R:1\le i\le d\Big\}
    \end{align*}
    is a $p$-RFS for $\FC$
    and $\CG_{p\text{-}\mathsf{RFS}}\subset\lspan_1(\CG_p)$.
\end{enumerate}
\end{proposition}

\begin{proof}[Proof of Proposition~\ref{prop:cover-cube}]
See Section~\ref{ssec:proof-momentset-extended}.
\end{proof}

For $p\in[1,\infty)$ and for a polyhedral cover $\FC$ of a closed set $\CY\subseteq\R^d$, 
the following theorem establishes an upper bound for $W_p(\mu,\nu)$ for all $\mu,\nu\in\CP_p(\CY;\CG)$ satisfying $\mu\overset{\CG}{\sim}\nu$
whenever $\lspan_1(\CG)$ contains a VIFS and a $p$-RFS for~$\FC$.

\begin{theorem}[$W_p$ upper bound for a moment set in the Euclidean case]\label{thm:momentset-euclidean}
Let $p\in[1,\infty)$, $d\in\N$, let $\CY\subseteq\R^d$ be closed, and let $d_{\CY}$ be a metric induced by a norm $\|\cdot\|$ on $\R^d$. 
Let $\FC$ be a polyhedral cover of~$\CY$ with mesh size $\eta(\FC)$.
Then, the following statements hold.
\begin{enumerate}[label=(\roman*)]
    \item\label{thms:momentset-euclidean-bounded}%
    If $\FC$ is bounded
    (which means $\CY$ is compact)
    and $\CG$ is 
    a collection of $\R$-valued Borel measurable functions where $\lspan_1(\CG)$ contains a VIFS for $\FC$, 
    then it holds that
    \begin{align*}
    \forall \mu,\nu\in\CP_p(\CY;\CG):\qquad \mu\overset{\CG}{\sim}\nu \quad \Rightarrow \quad W_p(\mu,\nu)\le 2\eta(\FC).
    \end{align*}
    
    \item\label{thms:momentset-euclidean-unbounded}%
    If $\FC$ is not bounded and $\CG$ 
    is a collection of $\R$-valued Borel measurable functions 
    where $\lspan_1(\CG)$ contains both a VIFS and a $p$-RFS $\big\{\overline{g}_{\BIu}:\BIu\in D(\FC)\big\}$ for $\FC$, 
    then it holds that
    \begin{align*}
        \qquad\qquad\forall \mu,\nu\in\CP_p(\CY;\CG): \qquad \mu\overset{\CG}{\sim}\nu \quad \Rightarrow \quad W_p(\mu,\nu)\le 2\eta(\FC)+2\left(\sum_{\BIu\in D(\FC)}\int_{\CY}\overline{g}_{\BIu}\DIFFX{\mu}\right)^{\frac{1}{p}}.
    \end{align*}
\end{enumerate}
\end{theorem}

\begin{proof}[Proof of Theorem~\ref{thm:momentset-euclidean}]
See Section~\ref{ssec:proof-momentset-extended}.
\end{proof}

Theorem~\ref{thm:momentset-euclidean}\ref{thms:momentset-euclidean-bounded} and Proposition~\ref{prop:momentset-simplex-VIFS} 
provide us with an explicit estimate of the number of test functions in $\CG$ needed in order to control 
$W_p(\mu,\nu)\le\nobreak\epsilon$ for all pairs of $\mu,\nu\in\CP(\CY)$ satisfying $\mu\overset{\CG}{\sim}\nu$ in the compact Euclidean case.
This is presented below in Corollary~\ref{cor:momentset-scalability}.
Note that Proposition~\ref{prop:momentset-errorcontrol} is a consequence of Corollary~\ref{cor:momentset-scalability}.

\begin{corollary}[Number of test functions in $\CG$ to control $\sup_{\mu\overset{\CG}{\sim}\nu}\{W_p(\mu,\nu)\}$]\label{cor:momentset-scalability}
Let $p\in[1,\infty)$, $d\in\N$, let $\CY\subseteq\bigtimes_{i=1}^d[\underline{M}_i,\overline{M}_i]$ be closed, where $-\infty<\underline{M}_i<\overline{M}_i<\infty$ for $i=1,\ldots,d$, and let $d_{\CY}$ be a metric induced by a norm $\|\cdot\|$ on $\R^d$. 
Let $\epsilon>0$ be arbitrary, let $C_{\|\cdot\|}\ge 1$ be a constant such that $\|\BIx\|\le C_{\|\cdot\|}\|\BIx\|_p$ for all $\BIx\in\R^d$, and let $n_i:=\Big\lceil\frac{2(\overline{M}_i-\underline{M}_i)C_{\|\cdot\|} d^{1/p}}{\epsilon}\Big\rceil$ for $i=1,\ldots,d$. 
Next, let 
\begin{align*}
    \widehat{V}:=\Big\{\big({\textstyle \underline{M}_{1}+\frac{j_{1}}{n_{1}}(\overline{M}_{1}-\underline{M}_{1}), \ldots, \underline{M}_{d}+\frac{j_{d}}{n_{d}}(\overline{M}_{d}-\underline{M}_{d})}\big)^\TRANSP: 
    (j_1,\ldots,j_d)\in\textstyle\bigtimes_{i=1}^{d}\{0,1,\ldots,n_i\}\Big\},
\end{align*}
and let 
$\FC$ be a Delaunay triangulation of $\widehat{V}$,
that is,
$\FC$ is a polyhedral cover of 
$\bigtimes_{i=1}^d[\underline{M}_i,\overline{M}_i]$
where every $C\in\FC$ is a $d$-simplex, 
$V(\FC)=\widehat{V}$,
and the Delaunay condition holds:
for each $C\in\FC$, its unique circum-hypersphere $S_{C}$ satisfies
$\inter\big(\conv(S_C)\big)\cap V(\FC)=\nobreak \emptyset$.
See, for example, \citep{elshakhs2024comprehensive}
for a review of explicit procedures for constructing a Delaunay triangulation.
Note that Delaunay triangulation of $\widehat{V}$ is not unique here.
Subsequently, let $\CG_{\mathsf{simp}}(\FC)$ be the simplicial test functions generated by $\FC$ defined in Proposition~\ref{prop:momentset-simplex}.
Then, it holds that 
$\big|\CG_{\mathsf{simp}}(\FC)\big|=\prod_{i=1}^d\Big(1+\Big\lceil\frac{2(\overline{M}_i-\underline{M}_i)C_{\|\cdot\|} d^{1/p}}{\epsilon}\Big\rceil\Big)$ and $W_p(\mu,\nu)\le\epsilon$ for any $\mu,\nu\in\CP_p\big(\CY;\CG_{\mathsf{simp}}(\FC)\big)$ satisfying $\mu\overset{\CG_{\mathsf{simp}}(\FC)}{\scalebox{3.5}[1]{$\sim$}}\nu$.
\end{corollary}

\begin{proof}[Proof of Corollary~\ref{cor:momentset-scalability}]
    See Section~\ref{ssec:proof-momentset-extended}.
\end{proof}

The following corollary is a consequence of Proposition~\ref{prop:cover-cube} and Theorem~\ref{thm:momentset-euclidean}\ref{thms:momentset-euclidean-unbounded} which has a natural interpretation in the context of mathematical finance, as discussed in Remark~\ref{rmk:breedenlitzdd}. 
\begin{corollary}\label{cor:breedenlitzenberger}
Let $p\in[1,\infty)$, $d\in\N$, let $\CY\subseteq\R^d$ be closed, and let $d_{\CY}$ be a metric induced by a norm $\|\cdot\|$ on $\R^d$. 
Let $\mu\in\CP_p(\CY)$ and let $\mu_i\in\CP_p(\R)$ denote the $i$-th marginal of $\mu$ for $i=1,\ldots,d$. 
Let $\beta>0$ and let $C_{\|\cdot\|}\ge1$ be a constant such that $\|\BIx\|\le C_{\|\cdot\|}\|\BIx\|_p$ for all $\BIx\in\R^d$.
For $i=1,\ldots,d$, let $n_i\in\N$, $\underline{\kappa}_i\in\R$, and let $\kappa_{i,j}:=\underline{\kappa}_i+j\beta$ for $j=0,\ldots,n_i$. 
Moreover, let $\CG_p$ be a finite collection of functions given by
\begin{align*}
%\begin{split}
\CG_0&:=\bigg\{\R^d\ni(x_1,\ldots,x_d)^\TRANSP\mapsto \left(\max_{i\in L}\big\{(x_i-\kappa_{i,j_i})^+\big\}\right)^+\in\R : \\
&\hspace{10.5em} j_i\in\{0,1,\ldots,n_i\}\;\forall i\in L,\; L\subseteq\{1,\ldots,d\}\bigg\},\\
\CG_p&:=\begin{cases}
    \CG_0 \cup \big\{\R^d\ni(x_1,\ldots,x_d)^\TRANSP\mapsto x_i\in\R:1\le i\le d\big\} & \text{if }p=1,\\
    \CG_0 \cup \big\{\R^d\ni(x_1,\ldots,x_d)^\TRANSP\mapsto \big((\kappa_{i,0}-x_i)^+\big)^p\in\R:1\le i\le d\big\} \\
    \hspace{13.8pt} \cup\hspace{2.3pt} \big\{\R^d\ni(x_1,\ldots,x_d)^\TRANSP\mapsto \big((x_i-\kappa_{i,n_i})^+\big)^p\in\R:1\le i\le d\big\} & \text{if }p>1.
\end{cases}
%\end{split}
%\label{eqn:best-of-call}
\end{align*}
Then, 
\begin{align}
\begin{split}
\specialoverline{W}_{p}(\mu,[\mu]_{\CG_p})&\le 2C_{\|\cdot\|} d^{1/p}\beta+2C_{\|\cdot\|}\left(\sum_{i=1}^d\int_{\R}\big((\kappa_{i,0}-x_i)^+\big)^p+\big((x_i-\kappa_{i,n_i})^+\big)^p\DIFFM{\mu_i}{\DIFF x_i}\right)^{\frac{1}{p}}.
\end{split}
\label{eqn:breedenlitzenberger-bound}
\end{align} 
In particular, for any $\epsilon>0$, one can choose $\beta>0$, $(n_i)_{i=1:d}\subset\N$, and $(\underline{\kappa}_i)_{i=1:d}\subset\R$ such that
$\CG_p$ satisfies
$\specialoverline{W}_{p}(\mu,[\mu]_{\CG_p})\le\epsilon$.
\end{corollary}

\begin{proof}[Proof of Corollary~\ref{cor:breedenlitzenberger}]
See Section~\ref{ssec:proof-momentset-extended}.
\end{proof}

\begin{remark}[Financial interpretation of Corollary~\ref{cor:breedenlitzenberger}]\label{rmk:breedenlitzdd}
Corollary~\ref{cor:breedenlitzenberger} has a natural interpretation in mathematical finance. 
Consider a financial market where $d\in\N$ risky assets are traded. Let $\CY\subseteq\R^d$ (typically $\CY=\R^d_+$) be a closed set that corresponds to the possible prices of these assets at a fixed future time, called the maturity. 
Then, for $i=1,\ldots,d$, the function
\begin{align*}
    \CY\ni(x_1,\ldots,x_d)^\TRANSP \quad \mapsto\quad x_i\in\R
\end{align*}
corresponds to the payoff at maturity when investing into a single unit of asset $i$. 
Moreover, for $i=1,\ldots,d$, the functions
\begin{align*}
    \CY\ni(x_1,\ldots,x_d)^\TRANSP \quad \mapsto\quad \left((x_i-\kappa_i)^+\right)^p\in\R
\end{align*}
corresponds to an exotic option whose payoff is equal to that of a European call option with strike price~$\kappa_i$ raised to the $p$-th power, and the function
\begin{align*}
    \CY\ni(x_1,\ldots,x_d)^\TRANSP \quad \mapsto\quad \left((\kappa_i-x_i)^+\right)^p\in\R
\end{align*}
corresponds to an exotic option whose payoff is equal to that of a European put option with strike price~$\kappa_i$ raised to the $p$-th power.
Furthermore, for any non-empty set $L\subseteq\{1,\ldots,d\}$ and any $(\kappa_{i})_{i\in L}\subset\R$, the function
\begin{align*}
 \CY\ni(x_1,\ldots,x_d)^\TRANSP \quad \mapsto\quad \left(\max_{i\in L}\big\{(x_i-\kappa_{i})^+\big\}\right)^+\in\R
\end{align*}
corresponds to the payoff of a best-of-call option (a type of financial derivative) written on the assets in the set $L$ with strike prices $(\kappa_{i})_{i\in L}$. 
Let us consider $\mu\in\CP_p(\CY)$ as a risk-neutral pricing measure for this financial market, and let $\CG_p$ be defined by Corollary~\ref{cor:breedenlitzenberger}. 
In the case where $p=1$, the set $[\mu]_{\CG_p}$ corresponds to the set of risk-neutral pricing measures that produce the same forward prices (for each of the assets) as well as the same prices of best-of-call options written on any non-empty subset $L$ of the assets with strikes 
$\big\{\underline{\kappa}_i+j\beta:j\in\{0,1,\ldots, n_i\},\;i\in L\big\}$. 
In the case where $p>1$, the set $[\mu]_{\CG_p}$ corresponds to the set of risk-neutral pricing measures that produce the same prices of the exotic options with power payoffs with strikes $(\kappa_{i,0})_{i=1:d},(\kappa_{i,n_i})_{i=1:d}$ as well as the same prices of best-of-call options written on any non-empty subset $L$ of the assets with strikes 
$\big\{\underline{\kappa}_i+j\beta:j\in\{0,1,\ldots, n_i\},\;i\in L\big\}$.
$\specialoverline{W}_{p}(\mu,[\mu]_{\CG_p})$ is thus the supremum model risk measured via the $W_p$-metric when we only assume the knowledge of forward prices (or the aforementioned exotic option prices in the case where $p>1$) and the aforementioned best-of-call option prices. 

Corollary~\ref{cor:breedenlitzenberger} states that, for any $\epsilon>0$, one can select finitely many best-of-call options to control the supremum model risk to $\specialoverline{W}_{p}(\mu,[\mu]_{\CG_p})\le\epsilon$.
This is related to the classical result of \citet{breeden1978prices}, which states that: for $\mu\in\CP_1(\R)$ that is absolutely continuous with respect to the Lebesgue measure, if the function
\begin{align*}
\R\ni \kappa \quad\mapsto\quad \int_{\R}(x-\kappa)^+\DIFFM{\mu}{\DIFF x}\in\R
\end{align*}
is twice continuously differentiable, then it uniquely characterizes the density of $\mu$. 
\citet{talponen2014note} later generalized this result to the multi-dimensional case. 
Theorem~2.1 of \citep{talponen2014note} states that: for $\mu\in\CP_1(\R^d_+)$ that is absolutely continuous with respect to the Lebesgue measure, the density of $\mu$ is uniquely characterized by the function
\begin{align*}
\R^d_+\ni(\kappa_1,\ldots,\kappa_d) \quad\mapsto\quad \int_{\R^m_+}\left(\max_{1\le i\le d}\big\{(x_i-\kappa_{i})^+\big\}\right)^+\DIFFM{\mu}{\DIFF x_1,\ldots,\DIFF x_d}\in\R.
\end{align*}
Corollary~\ref{cor:breedenlitzenberger} can therefore be seen as a non-asymptotic generalization of \citep[Theorem~2.1]{talponen2014note}. 
\end{remark}

\begin{remark}\label{rmk:comparison-with-Alphonsi-2}
\citet{alfonsi2021approximation} have also developed results on controlling $\sup_{\mu\overset{\CG}{\sim}\nu}\big\{W_p(\mu,\nu)\big\}$ with specific classes of test functions $\CG$.
However, they have only considered the case where the underlying space $\CY$ is a compact interval in $\R$, and they have only constructed classes of discontinuous test functions, while their convergence results (see \citep[Theorem~4.1 \& Proposition~4.2]{alfonsi2021approximation}) rely on the assumption that the test functions are all continuous.
Moreover, discontinuity of the test functions not only complicates the duality results but also makes it hard to treat the relaxed problem numerically; recall that Proposition~\ref{prop:duality-settings} and Theorem~\ref{thm:mmot-complexity} both require the continuity of the test functions in $\CG_1,\ldots,\CG_N$. 
In contrast, the test functions we have constructed in this section to control $\sup_{\mu\overset{\CG}{\sim}\nu}\big\{W_p(\mu,\nu)\big\}$ are all continuous, and our construction can account for $d\ge2$ dimensions as well as the case where $\CY$ is unbounded. 
\end{remark}

\section{Proofs of theoretical results}\label{sec:proof-theory}

\subsection{Proofs of results in Section~\ref{ssec:relaxation}}\label{ssec:proof-approximation}

%% Proof of Proposition (existence of a sparse optimizer)
\begin{proof}[Proof of Proposition~\ref{prop:mmotlb-sparsity}]
Let us first prove statement~\ref{props:mmotlb-sparsity-disc}.
Let us fix an arbitrary $\epsilon_0>0$ as well as an arbitrary $\epsilon_0$-optimal solution $\mu_{\epsilon_0}\in\Gamma\big([\mu_1]_{\CG_1},\ldots,[\mu_N]_{\CG_N}\big)$ of \eqref{eqn:mmotlb},
i.e., $\int_{\BCX}f\DIFFX{\mu_{\epsilon_0}}\le \inf_{\mu\in\Gamma([\mu_1]_{\CG_1},\ldots,[\mu_N]_{\CG_N})}\big\{\int_{\BCX}f\DIFFX{\mu}\big\} + \epsilon_0$. 
For $i=1,\ldots,N$, denote the $i$-th marginal of $\mu_{\epsilon_0}$ by $\mu_{i,\epsilon_0}\in\CP(\CX_i)$.
It thus holds that $\mu_{i,\epsilon_0}\in[\mu_i]_{\CG_i}$.
Define $\Bphi:\BCX\to\R^{m+2}$ by 
\begin{align*}
\Bphi(x_1,\ldots,x_N)&:=\big(1, g_{1,1}(x_1),\ldots, g_{1,m_1}(x_1),\ldots,g_{N,1}(x_N), \ldots, g_{N,m_N}(x_N),f(x_1,\ldots,x_N)\big)^\TRANSP\\
&\hspace{290pt}\forall (x_1,\ldots,x_N)\in\BCX.
\end{align*}
By an application of Tchakaloff's theorem in \citep[Corollary~2]{bayer2006proof}, there exist $1\le q\le m+2$, $(a_{k})_{k=1:q}\subset(0,1)$, and 
$(\BIx_k)_{k=1:q}\subseteq\BCX$ such that
\begin{align}
\sum_{k=1}^qa_k&=\int_{\BCX}1\DIFFX{\mu_{\epsilon_0}}=1,\label{eqn:mmotlb-sparsity-proof-normalize}  \\
\sum_{k=1}^qa_kg_{i,j}(\pi_i(\BIx_k))&=\int_{\BCX}g_{i,j}\DIFFX{\mu_{i,\epsilon_0}}=\int_{\CX_i}g_{i,j}\DIFFX{\mu_i} \qquad\forall 1\le j\le m_i,\;\forall 1\le i\le N,\label{eqn:mmotlb-sparsity-proof-moment}\\
\sum_{k=1}^qa_k f(\BIx_k)&=\int_{\BCX}f\DIFFX{\mu_{\epsilon_0}}\le \inf_{\mu\in\Gamma([\mu_1]_{\CG_1},\ldots,[\mu_N]_{\CG_N})}\bigg\{\int_{\BCX}f\DIFFX{\mu}\bigg\} + \epsilon_0.\label{eqn:mmotlb-sparsity-proof-optimal}
\end{align}
Let $\hat{\mu}:=\sum_{k=1}^qa_k\delta_{\BIx_k}$. 
Then, it follows from (\ref{eqn:mmotlb-sparsity-proof-normalize}) that $\hat{\mu}\in\CP(\BCX)$. 
For $i=1,\ldots,N$, let us denote the marginal of $\hat{\mu}$ on $\CX_i$ by $\hat{\mu}_i$.
Subsequently, (\ref{eqn:mmotlb-sparsity-proof-moment}) guarantees that
\begin{align*}
    \int_{\CX_i}g_{i,j}\DIFFX{\hat{\mu}_i}=\int_{\BCX}{g_{i,j}\circ\pi_i}\DIFFX{\hat{\mu}}=\sum_{k=1}^qa_kg_{i,j}(\pi_i(\BIx_k))=\int_{\CX_i}g_{i,j}\DIFFX{\mu_i} \qquad \forall 1\le j\le m_i,\; \forall 1\le i\le N,
\end{align*}
and it hence holds that $\hat{\mu}\in\Gamma\big([\mu_1]_{\CG_1},\ldots,[\mu_N]_{\CG_N}\big)$. 
Finally, (\ref{eqn:mmotlb-sparsity-proof-optimal}) implies that
\begin{align*}
\int_{\BCX}f\DIFFX{\hat{\mu}}=\sum_{k=1}^qa_kf(\BIx_k)\le \inf_{\mu\in\Gamma([\mu_1]_{\CG_1},\ldots,[\mu_N]_{\CG_N})}\bigg\{\int_{\BCX}f\DIFFX{\mu}\bigg\} + \epsilon_0,
\end{align*}
which shows that $\hat{\mu}$ is an $\epsilon_0$-optimal solution of \eqref{eqn:mmotlb}. This proves statement~\ref{props:mmotlb-sparsity-disc}.
To prove statement~\ref{props:mmotlb-sparsity-cont}, observe that when $\CX_1,\ldots,\CX_N$ are compact and all test functions $(g_{i,j})_{j=1:m_i,\,i=1:N}$ are continuous, an optimal solution $\mu^\star$ of \eqref{eqn:mmotlb} is attained since $\Gamma\big([\mu_1]_{\CG_1},\ldots,[\mu_N]_{\CG_N}\big)$ is a closed subset of the compact metric space $\big(\CP(\BCX),W_1\big)$ (see, e.g., \citep[Remark~6.19]{villani2008optimal}) and the mapping $\CP(\BCX)\ni\mu\mapsto\int_{\BCX}f\DIFFX{\mu}\in\R$ is lower semi-continuous (see, e.g., \citep[Lemma~4.3]{villani2008optimal}). The statement then follows from the same argument used in the proof of statement~\ref{props:mmotlb-sparsity-disc} with $\mu_{\epsilon_0}$ replaced by $\mu^\star$ and with $\epsilon_0$ replaced by~0. 
The proof is now complete. 
\end{proof}

%% Proof of Lemma (reassembly)
\begin{proof}[Proof of Lemma~\ref{lem:reassembly}]
Let $\hat{\mu}_1,\ldots,\hat{\mu}_N$ denote the marginals of $\hat{\mu}$ on $\CX_1,\ldots,\CX_N$, respectively. 
Since $\hat{\mu}\in\CP_1(\BCX)$, we have $\hat{\mu}_i\in\CP_1(\CX_i)$ for $i=1,\ldots,N$ by (\ref{eqn:1prod-metric}). 
Moreover, for $i=1,\ldots,N$, the existence of an optimal coupling $\gamma_i$ of $\hat{\mu}_i$ and $\mu_i$ under the cost function $d_{\CX_i}$ follows from the non-negativity and continuity of $d_{\CX_i}$ and \citep[Theorem~4.1]{villani2008optimal}. The existence of a probability measure $\gamma\in\CP(\CX_1\times\cdots\times\CX_N\times\allowbreak\bar{\CX}_1\times\nobreak\cdots\times\nobreak\bar{\CX}_N)$ that satisfies the conditions in Definition~\ref{def:reassembly} follows from the following inductive argument that repeatedly applies Lemma~\ref{lem:gluing}. 
Specifically, 
let $\gamma^{(0)}:=\hat{\mu}\in\Gamma(\hat{\mu}_1,\ldots,\hat{\mu}_N)\subseteq\CP(\CX_1\times\cdots\times\CX_N)$.
Then, for $i=1,\ldots,N$, one applies Lemma~\ref{lem:gluing} with
$\CY_1\leftarrow \big(\bigtimes_{1\le j\le N,\,j\ne i}\CX_j\big)\times\big(\bigtimes_{1\le k\le i-1}\bar{\CX}_k\big)$,
$\CY_2\leftarrow\CX_i$,
$\CY_3\leftarrow\bar{\CX}_i$,
$\gamma_{12}\leftarrow \gamma^{(i-1)}$,
$\gamma_{23}\leftarrow \gamma_i$
to ``glue together'' 
$\gamma^{(i-1)}\in\Gamma(\hat{\mu}_1,\ldots,\hat{\mu}_{N},\mu_1,\ldots,\mu_{i-1})$ and 
$\gamma_i\in\Gamma(\hat{\mu}_i,\mu_i)$ and obtain 
$\gamma^{(i)}\in\Gamma(\hat{\mu}_1,\ldots,\hat{\mu}_{N},\mu_1,\ldots,\mu_{i})\subseteq\CP(\CX_1\times\cdots\times\CX_N\times\bar{\CX}_1\times\cdots\times\nobreak\bar{\CX}_i)$.
One may check that $\gamma^{(N)}$ satisfies all the required properties of $\gamma$ and thus letting $\gamma:=\gamma^{(N)}$ completes the construction. 
Finally, one checks that the marginal $\tilde{\mu}$ of $\gamma$ on $\bar{\CX}_1\times\cdots\times\bar{\CX}_N$ satisfies $\tilde{\mu}\in R(\hat{\mu};\mu_1,\ldots,\mu_N)$. 
The proof is now complete.
\end{proof}

%% Proof of Theorem (MMOT approximation)
\begin{proof}[Proof of Theorem~\ref{thm:lowerbound}]
To prove statement~\ref{thms:lowerbound-control}, 
let us first denote $\hat{\BIx}:=(\hat{x}_1,\ldots,\hat{x}_N)$
and prove that $f\in\CL^1(\BCX,\tilde{\mu})$.
Observe that since 
$(\mu_1,\ldots,\mu_N,f)\in\CA(L_f,D,\underline{f}_1,\overline{f}_1,\ldots,\underline{f}_N,\overline{f}_N)$ and
$\hat{\BIx}\in D$, 
it holds that
$f(\BIx)\ge f(\hat{\BIx})-L_fd_{\BCX}(\hat{\BIx},\BIx)\ge -\big|f(\hat{\BIx})\big|-L_fd_{\BCX}(\hat{\BIx},\BIx)$
$\forall\BIx\in D$,
and that
$f(\BIx)\INDI_{\BCX\setminus D}(\BIx)\ge \sum_{i=1}^{N}\underline{f}_i\circ\pi_i(\BIx)$ $\forall\BIx\in\BCX$.
Thus, combining these two inequalities and
using the assumption
$d_{\CX_i}(\hat{x}_i,\cdot\,)\le h_i(\,\cdot\,)$ for $i=\nobreak1,\ldots,N$ yield
\begin{align}
\begin{split}
f(\BIx)&=f(\BIx)\INDI_D(\BIx) + f(\BIx)\INDI_{\BCX\setminus D}(\BIx) \\
&\ge {-\big|f(\hat{\BIx})\big|}-L_f\left(\sum_{i=1}^Nd_{\CX_i}(\hat{x}_i,x_i)\right)+\left(\sum_{i=1}^N\underline{f}_i(x_i)\right)\\
&\ge -\big|f(\hat{\BIx})\big|+\left(\sum_{i=1}^N \underline{f}_i(x_i) - L_fh_i(x_i)\right) \qquad \forall \BIx=(x_1,\ldots,x_N)\in\BCX.
\end{split}
\label{eqn:lowerbound-proof-boundedness}
\end{align}
Moreover, we similarly get
\begin{align}
    f(\BIx)\le \big|f(\hat{\BIx})\big|+\left(\sum_{i=1}^N \overline{f}_i(x_i) + L_fh_i(x_i)\right) \qquad \forall \BIx=(x_1,\ldots,x_N)\in\BCX.
    \label{eqn:lowerbound-proof-boundedness-above}
\end{align}
Since $\underline{f}_i,\overline{f}_i\in\CL^1(\CX_i,\mu_i)$, 
$h_i\in\lspan_1(\CG_i)\subseteq\CL^1(\CX_i,\mu_i)$
for $i=1,\ldots,N$ by assumption,
and since $\tilde{\mu}\in R(\hat{\mu};\mu_1,\ldots,\mu_N)\subseteq\Gamma(\mu_1,\ldots,\mu_N)$,
combining (\ref{eqn:lowerbound-proof-boundedness}) and (\ref{eqn:lowerbound-proof-boundedness-above}) proves that $f\in\CL^1(\BCX,\tilde{\mu})$.
Next,
let us split $\int_{\BCX}f\DIFFX{\tilde{\mu}}-\int_{\BCX}f\DIFFX{\hat{\mu}}$ into two parts:
\begin{align}
\int_{\BCX}f\DIFFX{\tilde{\mu}}-\int_{\BCX}f\DIFFX{\hat{\mu}}=\left(\int_{D}f\DIFFX{\tilde{\mu}}-\int_{D}f\DIFFX{\hat{\mu}}\right)+\left(\int_{\BCX\setminus D}f\DIFFX{\tilde{\mu}}-\int_{\BCX\setminus D}f\DIFFX{\hat{\mu}}\right),
\label{eqn:lowerbound-step1}
\end{align}
and control them separately. 
By the assumption that $\tilde{\mu}\in R(\hat{\mu};\mu_1,\ldots,\mu_N)$ and Definition~\ref{def:reassembly}, there exists a probability measure $\gamma\in\CP(\CX_1\times\cdots\times\CX_N\times\bar{\CX}_1\times\cdots\times\bar{\CX}_N)$
where $\bar{\CX}_i:=\CX_i$ for $i=1,\ldots,N$, 
such that the marginal of $\gamma$ on $\CX_1\times\cdots\times\CX_N$ is $\hat{\mu}$, the marginal $\gamma_i\in\Gamma(\hat{\mu}_i,\mu_i)$ of $\gamma$ on $\CX_i\times\bar{\CX}_i$ satisfies $\int_{\CX_i\times\bar{\CX}_i}d_{\CX_i}(x,\bar{x})\DIFFM{\gamma_i}{\DIFF x,\DIFF \bar{x}}=W_1(\hat{\mu}_i,\mu_i)$ for $i=1,\ldots,N$, and the marginal of $\gamma$ on $\bar{\CX}_1\times\cdots\times\bar{\CX}_N$ is~$\tilde{\mu}$. 
Thus, we have by 
the definition of $\CA(L_f,D,\underline{f}_1,\overline{f}_1,\ldots,\underline{f}_N,\overline{f}_N)$ in Definition~\ref{def:controlsets} and the definition of $d_{\BCX}(\,\cdot\,,\cdot\,)$ in Assumption~\ref{asp:productspace} that
\begin{align}
\begin{split}
\int_{D}f\DIFFX{\tilde{\mu}}-\int_{D}f\DIFFX{\hat{\mu}}&= \int_{D\times D}f(\bar{\BIx})-f(\BIx)\DIFFM{\gamma}{\DIFF \BIx, \DIFF \bar{\BIx}}\\
&\le \int_{D\times D}L_fd_{\BCX}(\BIx,\bar{\BIx})\DIFFM{\gamma}{\DIFF\BIx,\DIFF\bar{\BIx}}\\
&\le \int_{\BCX\times \BCX}L_fd_{\BCX}(\BIx,\bar{\BIx})\DIFFM{\gamma}{\DIFF\BIx,\DIFF\bar{\BIx}}\\
&=\sum_{i=1}^N L_f\int_{\CX_i\times\CX_i}d_{\CX_i}(x_i,\bar{x}_i)\DIFFM{\gamma_i}{\DIFF x_i,\DIFF\bar{x}_i}=\sum_{i=1}^N L_fW_1(\hat{\mu}_i,\mu_i).
\end{split}
\label{eqn:lowerbound-step1-1}
\end{align}
On the other hand, 
since $\tilde{\mu}\in \Gamma(\mu_1,\ldots,\mu_N)$, 
and since $\hat{\mu}_i\in[\mu_i]_{\CG_i}$, $\underline{f}_i\in\lspan_1(\CG_i)$ for $i=1,\ldots,N$, it follows that
\begin{align}
\begin{split}
\int_{\BCX\setminus D}f\DIFFX{\tilde{\mu}}-\int_{\BCX\setminus D}f\DIFFX{\hat{\mu}}&\le\int_{\BCX}{\textstyle\sum_{i=1}^N\overline{f}_i\circ\pi_i}\DIFFX{\tilde{\mu}}-\int_{\BCX}{\textstyle\sum_{i=1}^N\underline{f}_i\circ\pi_i}\DIFFX{\hat{\mu}}\\
&=\sum_{i=1}^N\int_{\CX_i}\overline{f}_i\DIFFX{\mu_i}-\int_{\CX_i}\underline{f}_i\DIFFX{\hat{\mu}_i}=\sum_{i=1}^N\int_{\CX_i}\overline{f}_i-\underline{f}_i\DIFFX{\mu_i}.
\end{split}
\label{eqn:lowerbound-step1-2}
\end{align}
Subsequently, combining (\ref{eqn:lowerbound-step1}), (\ref{eqn:lowerbound-step1-1}), and (\ref{eqn:lowerbound-step1-2}) completes the proof of statement~\ref{thms:lowerbound-control}.

Furthermore, 
notice that
integrating (\ref{eqn:lowerbound-proof-boundedness}) with respect to any $\hat{\mu}\in\Gamma\big([\mu_1]_{\CG_1},\ldots,[\mu_N]_{\CG_N}\big)$ 
and 
using the assumption
$\underline{f}_i,h_i\in\lspan_1(\CG_i)\subseteq\CL^1(\CX_i,\mu_i)$ for $i=\nobreak1,\ldots,N$ result in
\begin{align*}
\int_{\BCX}f\DIFFX{\hat{\mu}}&\ge -\big|f(\hat{\BIx})\big|+\sum_{i=1}^N\int_{\BCX}\underline{f}_i\circ\pi_i - L_f h_i\circ \pi_i\DIFFX{\hat{\mu}}= -\big|f(\hat{\BIx})\big|+\sum_{i=1}^N\int_{\CX_i}\underline{f}_i-L_fh_i\DIFFX{\mu_i}>-\infty.
\end{align*}
This proves statement~\ref{thms:lowerbound-boundedness}.

To prove statement~\ref{thms:lowerbound-w1radfinite},
observe that the assumptions $d_{\CX_i}(\hat{x}_i,\cdot\,)\le h_i(\,\cdot\,)$ and $h_i\in\lspan_1(\CG_i)\subseteq\CL^1(\CX_i,\mu_i)$ for $i=\nobreak1,\ldots,N$
guarantee that
\begin{align*}
W_1(\mu_i,\nu_i)&\le  W_1(\mu_i,\delta_{\hat{x}_i})+W_1(\nu_i,\delta_{\hat{x}_i})\\
&=W_1(\mu_i,\delta_{\hat{x}_i})+\int_{\CX_i}d_{\CX_i}(\hat{x}_i,x)\DIFFM{\nu_i}{\DIFF x}\\
&\le W_1(\mu_i,\delta_{\hat{x}_i})+\int_{\CX_i}h_i(x)\DIFFM{\nu_i}{\DIFF x}\\
&=W_1(\mu_i,\delta_{\hat{x}_i})+\int_{\CX_i}h_i(x)\DIFFM{\mu_i}{\DIFF x}<\infty \qquad \forall \nu_i\in[\mu_i]_{\CG_i},\;\forall 1\le i\le N.
\end{align*}
It thus holds for $i=1,\ldots,N$ that $\specialoverline{W}_{1}(\mu_i,[\mu_i]_{\CG_i})=\sup_{\nu_i\in[\mu_i]_{\CG_i}}\big\{W_1(\mu_i,\nu_i)\big\}\le W_1(\mu_i,\delta_{\hat{x}_i})+\int_{\CX_i}h_i(x)\DIFFM{\mu_i}{\DIFF x}<\infty$, which proves statement~\ref{thms:lowerbound-w1radfinite}. 

Let us now prove statement~\ref{thms:lowerbound-epsilonoptimal}. 
For $i=1,\ldots,N$, let $\hat{\mu}_i\in\CP_1(\CX_i)$ denote the marginal of $\hat{\mu}$ on $\CX_i$. We have $\hat{\mu}_i\in[\mu_i]_{\CG_i}$ by the definition of $\Gamma\big([\mu_1]_{\CG_1},\ldots,[\mu_N]_{\CG_N}\big)$. 
By statement~\ref{thms:lowerbound-control}, we have
\begin{align}
\begin{split}
\int_{\BCX}f\DIFFX{\tilde{\mu}}-\int_{\BCX}f\DIFFX{\hat{\mu}}&\le \sum_{i=1}^N L_f W_1(\mu_i,\hat{\mu}_i)+\int_{\CX_i}\overline{f}_i-\underline{f}_i \DIFFX{\mu_i}\\
&\le \sum_{i=1}^N L_f \specialoverline{W}_{1}(\mu_i,[\mu_i]_{\CG_i})+\int_{\CX_i}\overline{f}_i-\underline{f}_i \DIFFX{\mu_i}.
\end{split}
\label{eqn:lowerbound-proof-part3}
\end{align}
Moreover, since it holds by definition that $\mu_i\in[\mu_i]_{\CG_i}$ for $i=1,\ldots,N$, we have \sloppy{$\Gamma(\mu_1,\ldots,\mu_N)\subseteq\Gamma\big([\mu_1]_{\CG_1},\ldots,[\mu_N]_{\CG_N}\big)$}. Hence,
\begin{align}
\inf_{\mu\in\Gamma([\mu_1]_{\CG_1},\ldots,[\mu_N]_{\CG_N})}\bigg\{\int_{\BCX}f\DIFFX{\mu}\bigg\}\le\inf_{\mu\in\Gamma(\mu_1,\ldots,\mu_N)}\bigg\{\int_{\BCX}f\DIFFX{\mu}\bigg\}.
\label{eqn:lowerbound-proof-relaxation}
\end{align}
We then combine (\ref{eqn:lowerbound-proof-part3}), (\ref{eqn:lowerbound-epsilon-optimizer}), (\ref{eqn:lowerbound-proof-relaxation}), and the definition of $\epsilon_{\mathsf{apx}}$ to finish the proof of statement~\ref{thms:lowerbound-epsilonoptimal}. 

Finally, let us prove statement~\ref{thms:lowerbound-mmotcontrol}. 
The first inequality in (\ref{eqn:lowerbound}) follows from (\ref{eqn:lowerbound-proof-relaxation}). 
Moreover, for every $\hat{\mu}\in\Gamma\big([\mu_1]_{\CG_1},\ldots,[\mu_N]_{\CG_N}\big)$ and every $\tilde{\mu}\in R(\hat{\mu};\mu_1,\ldots,\mu_N)\subseteq\Gamma(\mu_1,\ldots,\mu_N)$, (\ref{eqn:lowerbound-proof-part3}) implies that
\begin{align*}
\inf_{\mu\in\Gamma(\mu_1,\ldots,\mu_N)}\bigg\{\int_{\BCX}f\DIFFX{\mu}\bigg\}-\int_{\BCX}f\DIFFX{\hat{\mu}} &\le \int_{\BCX}f\DIFFX{\tilde{\mu}}-\int_{\BCX}f\DIFFX{\hat{\mu}}\\
&\le \sum_{i=1}^N \left(L_f \specialoverline{W}_{1}(\mu_i,[\mu_i]_{\CG_i})+\int_{\CX_i}\overline{f}_i-\underline{f}_i \DIFFX{\mu_i}\right)<\infty.
\end{align*} 
Taking the infimum over $\hat{\mu}\in\Gamma\big([\mu_1]_{\CG_1},\ldots,[\mu_N]_{\CG_N}\big)$ in the inequality above proves the second inequality in (\ref{eqn:lowerbound}). 
The proof is now complete. 
\end{proof}

%% Proof of Theorem (MMOT optimizer convergence)
\begin{proof}[Proof of Theorem~\ref{thm:lowerboundconverge}]
By a multi-marginal extension of \citep[Lemma~4.4]{villani2008optimal}, one can show that the set of probability measures $\Gamma(\mu_1,\ldots,\mu_N)$ is weakly precompact. 
Hence, $\big(\tilde{\mu}^{(l)}\big)_{l\in\N}\subseteq\Gamma(\mu_1,\ldots,\mu_N)$ has at least one weakly convergent subsequence. Now, assume without loss of generality that $\tilde{\mu}^{(l)}$ converges weakly to ${\tilde{\mu}\in\CP(\BCX)}$ as $l\to\infty$. 
For $i=1,\ldots,N$ and for any bounded continuous function $h:\CX_i\to\R$, we have
\begin{align*}
\int_{\BCX}h\circ\pi_i\DIFFX{\tilde{\mu}}=\lim_{l\to\infty}\int_{\BCX}h\circ\pi_i\DIFFX{\tilde{\mu}^{(l)}}=\int_{\CX_i}h\DIFFX{\mu_i}.
\end{align*}
Thus, $\tilde{\mu}\in\Gamma(\mu_1,\ldots,\mu_N)$. Moreover, for any $\hat{\BIx}=(\hat{x}_1,\ldots,\hat{x}_N)\in\BCX$, we have
by the definition of $d_{\BCX}(\,\cdot\,,\cdot\,)$ in Assumption~\ref{asp:productspace} that
\begin{align*}
\lim_{l\to\infty}\int_{\BCX}d_{\BCX}(\hat{\BIx},\BIx)\DIFFM{\tilde{\mu}^{(l)}}{\DIFF \BIx}=\sum_{i=1}^N\int_{\CX_i}d_{\CX_i}(\hat{x}_i,x)\DIFFM{\mu_i}{\DIFF x}=\int_{\BCX}d_{\BCX}(\hat{\BIx},\BIx)\DIFFM{\tilde{\mu}}{\DIFF \BIx}. 
\end{align*}
Therefore, it follows from \citep[Definition~6.8]{villani2008optimal} and \citep[Theorem~6.9]{villani2008optimal} that $\tilde{\mu}^{(l)}\to\tilde{\mu}$ in $\big(\CP_1(\BCX),W_1\big)$ as $l\to\infty$. 
For every $l\in\N$, we have by (\ref{eqn:relaxed-epsilon-optimizer}) and Theorem~\ref{thm:lowerbound}\ref{thms:lowerbound-epsilonoptimal} that 
\begin{align*}
\inf_{\mu\in\Gamma(\mu_1,\ldots,\mu_N)}\bigg\{\int_{\BCX}f\DIFFX{\mu}\bigg\}&\ge \int_{\BCX}f\DIFFX{\tilde{\mu}^{(l)}}-\left(\sum_{i=1}^N L^{(l)}_f \specialoverline{W}_{1}\big(\mu_i,[\mu_i]_{\CG^{(l)}_i}\big)+\int_{\CX_i}\overline{f}^{(l)}_i-\underline{f}^{(l)}_i \DIFFX{\mu_i}\right)-\epsilon_0^{(l)}.
\end{align*}
Moreover, it follows from Assumption~\ref{asp:mmotexistence} and a multi-marginal extension of  \citep[Lemma~4.3]{villani2008optimal} that $\liminf_{l\to\infty}\big\{\int_{\BCX}f\DIFFX{\tilde{\mu}^{(l)}}\big\}\ge\int_{\BCX}f\DIFFX{\tilde{\mu}}$. 
Hence,
\begin{align*}
\int_{\BCX}f\DIFFX{\tilde{\mu}}&\ge \inf_{\mu\in\Gamma(\mu_1,\ldots,\mu_N)}\bigg\{\int_{\BCX}f\DIFFX{\mu}\bigg\}\\
&\ge \liminf_{l\to\infty}\left[\int_{\BCX}f\DIFFX{\tilde{\mu}^{(l)}}-\left(\sum_{i=1}^N L^{(l)}_f \specialoverline{W}_{1}\big(\mu_i,[\mu_i]_{\CG^{(l)}_i}\big)+\int_{\CX_i}\overline{f}^{(l)}_i-\underline{f}^{(l)}_i \DIFFX{\mu_i}\right)-\epsilon_0^{(l)}\right]\\
&\ge \int_{\BCX}f\DIFFX{\tilde{\mu}}.
\end{align*}
This shows that $\tilde{\mu}\in\Gamma(\mu_1,\ldots,\mu_N)$ is an optimal solution of \eqref{eqn:mmot}. The proof is now complete. 
\end{proof}

\subsection{Proofs of results in Section~\ref{ssec:duality}}\label{ssec:proof-duality}

%% Proof of Theorem (duality)
\begin{proof}[Proof of Theorem~\ref{thm:duality}]
For any $y_0\in\R$ and $\BIy\in\R^m$ that satisfy $y_0+\langle\BIg(\BIx),\BIy\rangle\le f(\BIx)$ $\forall\BIx\in\BCX$, and for any $\mu\in\Gamma\big([\mu_1]_{\CG_1},\ldots,[\mu_N]_{\CG_N}\big)$, it holds that
\begin{align*}
y_0+\langle\bar{\BIg},\BIy\rangle=\int_{\BCX}y_0+\langle\BIg(\BIx),\BIy\rangle\DIFFM{\mu}{\DIFF\BIx}\le\int_{\BCX}f(\BIx)\DIFFM{\mu}{\DIFF\BIx}.
%\label{eqn:duality-proof-weak}
\end{align*}
Taking the supremum over all such $y_0\in\R$ and $\BIy\in\R^m$ and taking the infimum over all $\mu\in\Gamma\big([\mu_1]_{\CG_1},\ldots,[\mu_N]_{\CG_N}\big)$ in the above inequality yields the weak duality (\ref{eqn:weak-duality}). This proves statement~\ref{thms:duality-weak}.

Now, to establish the strong duality, we assume that the left-hand side of (\ref{eqn:weak-duality}) is not equal to~$-\infty$. 
We first show that $\bar{\BIg}\in\clos(K)$. Suppose for the sake of contradiction that $\bar{\BIg}\notin\clos(K)$. 
Then, 
due to the strong separation of the closed convex set $\clos(K)$ and the compact and convex set $\{\bar{\BIg}\}$
(see, e.g., \citep[Corollary~11.4.2]{rockafellar1970convex}), there exist $\BIy\in\R^m$ and $\alpha>0$ such that $\langle\BIw,\BIy\rangle-\langle\bar{\BIg},\BIy\rangle\ge\alpha$ for all $\BIw\in\clos(K)$. In particular, we have $\langle\BIg(\BIx),\BIy\rangle-\langle\bar{\BIg},\BIy\rangle\ge\alpha$ for all $\BIx\in\BCX$. 
However, this implies that 
\begin{align*}
0=\langle\bar{\BIg},\BIy\rangle-\langle\bar{\BIg},\BIy\rangle=\int_{\BCX}\langle\BIg(\BIx),\BIy\rangle-\langle\bar{\BIg},\BIy\rangle\DIFFM{\mu}{\DIFF\BIx}\ge\alpha>0 \qquad \forall \mu\in\Gamma(\mu_1,\ldots,\mu_N),
\end{align*}
which is a contradiction. This shows that $\bar{\BIg}\in\clos(K)$. 

Next, to prove statement~\ref{thms:duality1}, let us first suppose that the condition~\ref{thmc:duality1} holds, i.e., $\bar{\BIg}\in\relint(K)$. 
Let $U:=\cone\big(\big\{(1,\BIg(\BIx)^\TRANSP)^\TRANSP:\BIx\in\BCX\big\}\big)=\cone\big(\big\{(1,\BIu^\TRANSP)^\TRANSP:\BIu\in K\big\}\big)\subset\R^{m+1}$. 
By \citep[Corollary~6.8.1]{rockafellar1970convex}, it holds that
\begin{align}
\relint(U)=\big\{(\lambda,\lambda\BIu^\TRANSP)^\TRANSP:\lambda>0,\;\BIu\in\relint(K)\big\}.
\label{eqn:lsip-duality-relint-cone}
\end{align}
Under the assumption that $\bar{\BIg}\in\relint(K)$, we have $(1,\bar{\BIg}^\TRANSP)^\TRANSP\in\relint(U)$, and thus by \citep[Theorem~8.2]{goberna1998linear} (see the fifth case in \citep[Table~8.1]{goberna1998linear}), with $M\leftarrow U$, $N\leftarrow C$, $K\leftarrow\cone\big(C\cup\big\{(\veczero_{m+1}^\TRANSP,1)^\TRANSP\big\}\big)$, $c\leftarrow(1,\bar{\BIg}^\TRANSP)^\TRANSP$ in the notation of \citep{goberna1998linear} (see also \citep[p.~81 \& p.~49]{goberna1998linear}), the left-hand side of (\ref{eqn:duality}) is equal to the optimal value of the following minimization problem:
\begin{align}
\begin{split}
\minimize_{(a_l,\,\BIx_l)}\quad & \sum_{l=1}^k a_l f(\BIx_l)\\
\text{subject to} \quad & \sum_{l=1}^k a_l=1,\qquad \sum_{l=1}^k a_l\BIg(\BIx_l)=\bar{\BIg},\\
& k\in\N,\qquad (a_l)_{l=1:k}\subset\R_+,\qquad (\BIx_l)_{l=1:k}\subset\BCX.
\end{split}
\label{eqn:lsip-duality}
\end{align}
Notice that for any $(a_l)_{l=1:k}\subset\R_+$, $(\BIx_l)_{l=1:k}\subseteq\BCX$ that is feasible for (\ref{eqn:lsip-duality}), 
we have $\hat{\mu}:=\sum_{l=1}^ka_l\delta_{\BIx_l}\in\CP(\BCX)$.
Subsequently, it holds by (\ref{eqn:g-vecdef}) and (\ref{eqn:v-vecdef}) that $\hat{\mu}$ satisfies 
\begin{align*}
\int_{\BCX}g_{i,j}\circ\pi_i\DIFFX{\hat{\mu}}&=\sum_{l=1}^ka_l g_{i,j}\circ\pi_i(\BIx_l)=\int_{\CX_i}g_{i,j}\DIFFX{\mu_i} \qquad\forall 1\le j\le m_i,\;\forall 1\le i\le N.
\end{align*}
This shows that $\hat{\mu}\in\Gamma\big([\mu_1]_{\CG_1},\ldots,[\mu_N]_{\CG_N}\big)$ and thus $\hat{\mu}$ is feasible for the right-hand side of (\ref{eqn:duality}). 
Moreover, since $\int_{\BCX}f\DIFFX{\hat{\mu}}=\sum_{l=1}^ka_lf(\BIx_l)$, it holds that the right-hand side of (\ref{eqn:duality}) is less than or equal to the optimal value of (\ref{eqn:lsip-duality}),
which is in turn equal to the left-hand side of (\ref{eqn:duality}). Consequently, (\ref{eqn:duality}) follows from the weak duality in statement~\ref{thms:duality-weak}. 
Furthermore, note that statement~\ref{thms:duality2} follows directly from \citep[Theorem~8.1(v)]{goberna1998linear} and the property $(1,\bar{\BIg}^\TRANSP)^\TRANSP\in\relint(U)$.

In the case where the condition~\ref{thmc:duality1} does not hold, 
i.e., when $\bar{\BIg}\notin\relint(K)$,
let us establish (\ref{eqn:duality}) under the condition~\ref{thmc:duality2}, i.e., when $C$ is closed.
While we have $(1,\bar{\BIg}^\TRANSP)^\TRANSP\in\{1\}\times \clos(K)\subset \clos(U)$, we have by (\ref{eqn:lsip-duality-relint-cone}) that $(1,\bar{\BIg}^\TRANSP)^\TRANSP\notin\relint(U)$.
It hence holds that $(1,\bar{\BIg}^\TRANSP)^\TRANSP\in\relbd(U)$.
By the assumption that the left-hand side of (\ref{eqn:weak-duality}) is not equal to~$-\infty$, we have by \citep[Theorem~4.5]{goberna1998linear}, again with $M\leftarrow U$, $N\leftarrow C$, $K\leftarrow\cone\big(C\cup\big\{(\veczero_{m+1}^\TRANSP,1)^\TRANSP\big\}\big)$, $c\leftarrow(1,\bar{\BIg}^\TRANSP)^\TRANSP$ in the notation of \citep{goberna1998linear}, that $\cone\big(C\cup\big\{(\veczero_{m+1}^\TRANSP,1)^\TRANSP\big\}\big)$ is also closed. Thus, (\ref{eqn:duality}) follows from \citep[Theorem~8.2]{goberna1998linear} (see the sixth case in \citep[Table~8.1]{goberna1998linear}) and a similar argument as above. 
We have thus proved statement~\ref{thms:duality1}. 

Finally, if the condition~\ref{thmc:duality1int} holds, then $K\subseteq\R^m$ has a non-empty interior, and thus contains $m+1$ affinely independent points, say $\BIg_1,\ldots,\BIg_{m+1}$. 
Consequently, one checks that $(0,\veczero_m^\TRANSP)^\TRANSP,\allowbreak(1,\BIg_1^\TRANSP)^\TRANSP,\ldots,(1,\BIg_{m+1}^\TRANSP)^\TRANSP\in U$ are $m+2$ affinely independent points in $\R^{m+1}$, which implies that $\aff(U)=\R^{m+1}$ and $(1,\bar{\BIg}^\TRANSP)^\TRANSP\in\relint(U)=\inter(U)$. 
Statement~\ref{thms:duality3} then follows from \citep[Theorem~8.1(vi)]{goberna1998linear}, with $M\leftarrow U$, $c\leftarrow(1,\bar{\BIg}^\TRANSP)^\TRANSP$ in the notation of \citep{goberna1998linear}. 
The proof is now complete. 
\end{proof}

% Proof of Proposition (sufficient conditions for the duality)
\begin{proof}[Proof of Proposition~\ref{prop:duality-settings}]
Let us first prove statement~\ref{props:duality-setting1}. 
Suppose for the sake of contradiction that $\bar{\BIg}\notin\relint(K)$. 
By \citep[Theorem~20.2]{rockafellar1970convex}, there exists a hyperplane
\begin{align*}
H:=\big\{\BIw\in\R^m:\langle\BIw,\BIy\rangle=\alpha\big\},
\end{align*}
where $\BIy=\big(y_{1,1},\ldots,y_{1,m_1},\ldots,y_{N,1},\ldots,y_{N,m_N}\big)\in\R^m$, $\alpha\in\R$, and $\BIy\ne\veczero$,
that properly separates the convex set $K$ and the polyhedral convex set $\{\bar{\BIg}\}$ and that $K\nsubseteq H$. 
Suppose without loss of generality that $\bar{\BIg}$ is contained in the closed half-space $\big\{\BIw\in\R^m:\langle\BIw,\BIy\rangle\le\alpha\big\}$. Then, we have $\langle\BIg(\BIx),\BIy\rangle\ge \alpha\ge\langle\bar{\BIg},\BIy\rangle$ for all $\BIx\in\BCX$. 
This and the definitions of $\BIg(\,\cdot\,)$ and $\bar{\BIg}$ in (\ref{eqn:g-vecdef}) and (\ref{eqn:v-vecdef}) imply that
\begin{align}
\sum_{i=1}^N\inf_{x_i\in\CX_i}\Bigg\{\sum_{j=1}^{m_i}y_{i,j}\bigg(g_{i,j}(x_i)-\int_{\CX_i}g_{i,j}\DIFFX{\mu_i}\bigg)\Bigg\}\ge0.
\label{eqn:duality-settings-proof1}
\end{align}
We will prove the following claim.

\noindent\textbf{Claim}~(C): whenever $\inf_{x_i\in\CX_i}\Big\{\sum_{j=1}^{m_i}y_{i,j}\big(g_{i,j}(x_i)-\int_{\CX_i}g_{i,j}\DIFFX{\mu_i}\big)\Big\}\ge0$ for some $i\in\{1,\ldots,N\}$,
it holds that 
$\sum_{j=1}^{m_i}y_{i,j}\big(g_{i,j}(x_i)-\int_{\CX_i}g_{i,j}\DIFFX{\mu_i}\big)=0$ for all $x_i\in\CX_i$.

If Claim~(C) holds, then we can conclude by (\ref{eqn:duality-settings-proof1}) that $\sum_{i=1}^N\sum_{j=1}^{m_i}y_{i,j}\big(g_{i,j}(x_i)-\int_{\CX_i}g_{i,j}\DIFFX{\mu_i}\big)=0$ for all $x_1\in\CX_1,\ldots,x_N\in\CX_N$. 

Let us prove Claim~(C) now. 
Suppose for the sake of contradiction that there exist $i\in\{1,\ldots,N\}$ and $\hat{x}_i\in\CX_i$ such that 
$\inf_{x_i\in\CX_i}\Big\{\sum_{j=1}^{m_i}y_{i,j}\big(g_{i,j}(x_i)-{\textstyle\int_{\CX_i}}g_{i,j}\DIFFX{\mu_i}\big)\Big\}\ge0$ 
and $\beta:=\sum_{j=1}^{m_i}y_{i,j}\big(g_{i,j}(\hat{x}_i)-{\textstyle\int_{\CX_i}}g_{i,j}\DIFFX{\mu_i}\big)>0$. 
Then, by the continuity of $g_{i,1},\ldots,g_{i,m_i}$, there exists an open set $E\subseteq\CX_i$ such that $\hat{x}_i\in E$ and
\begin{align*}
\sum_{j=1}^{m_i}y_{i,j}\bigg(g_{i,j}(x_i)-\int_{\CX_i}g_{i,j}\DIFFX{\mu_i}\bigg)>\frac{\beta}{2}\qquad\forall x_i\in E.
\end{align*}
By the assumption that $\support(\mu_i)=\CX_i$, we have $\mu_i(E)>0$. Thus, it follows that
\begin{align*}
0&=\int_{\CX_i}{\textstyle\sum_{j=1}^{m_i}}y_{i,j}\big(g_{i,j}(x_i)-{\textstyle\int_{\CX_i}}g_{i,j}\DIFFX{\mu_i}\big)\DIFFM{\mu_i}{\DIFF x_i}\\
&\ge \int_{E}{\textstyle\sum_{j=1}^{m_i}}y_{i,j}\big(g_{i,j}(x_i)-{\textstyle\int_{\CX_i}}g_{i,j}\DIFFX{\mu_i}\big)\DIFFM{\mu_i}{\DIFF x_i} \ge \frac{\beta\mu_i(E)}{2}>0,
\end{align*}
which is a contradiction. Hence, Claim~(C) holds. 

Therefore, we have shown that indeed $\sum_{i=1}^N\sum_{j=1}^{m_i}y_{i,j}\big(g_{i,j}(x_i)-{\textstyle\int_{\CX_i}}g_{i,j}\DIFFX{\mu_i}\big)=0$ holds for all $x_1\in\CX_1,\ldots,x_N\in\CX_N$. 
This implies that $\langle\BIg(\BIx),\BIy\rangle=\langle\bar{\BIg},\BIy\rangle=\alpha$ for all $\BIx\in\BCX$, 
and thus $\BIg(\BIx)\in H$ for all $\BIx\in \BCX$. 
This yields $K=\conv\big(\big\{\BIg(\BIx):\BIx\in\BCX\big\}\big)\subseteq H$, which contradicts $K\nsubseteq H$. 
The proof of statement~\ref{props:duality-setting1} is now complete. 

To prove statement~\ref{props:duality-setting2}, let $K_i:=\conv\big(\big\{\BIg_i(x_i):x_i\in\CX_i\big\}\big)$ for $i=1,\ldots,N$. It follows from (\ref{eqn:g-vecdef}) and (\ref{eqn:Kset-def}) that $K=\bigtimes_{i=1}^NK_i$. Since $K_i$ contains $\conv\big(\big\{\BIg_i(x_{i,1}),\ldots,\BIg_i(x_{i,m_i+1})\big\}\big)$, which is an $m_i$-simplex, we have $\aff(K_i)=\R^{m_i}$. Therefore, $\aff(K)=\bigtimes_{i=1}^N\R^{m_i}=\R^m$ and $\relint(K)=\inter(K)$. Consequently, statement~\ref{props:duality-setting2} follows from statement~\ref{props:duality-setting1}. 

Finally, to prove statement~\ref{props:duality-setting3}, notice that if $\CX_i$ is compact, $g_{i,1},\ldots,g_{i,m_i}$ are all continuous for $i=1,\ldots,N$, and $f$ is continuous, then the set 
$V:=\Big\{\big(1,\BIg(\BIx)^\TRANSP,f(\BIx)\big)^\TRANSP:\BIx\in\BCX\Big\}$ is compact.
By \citep[Theorem~17.2]{rockafellar1970convex}, it holds that $\conv(V)$ is also compact. 
Since $C:=\cone(V)=\cone(\conv(V))$ and $\veczero\notin\conv(V)$, the condition~\ref{thmc:duality2} in Theorem~\ref{thm:duality} follows from \citep[Corollary~9.6.1]{rockafellar1970convex}. 
The proof is now complete.
\end{proof}

\subsection{Proofs of results in Section~\ref{ssec:mmot-complexity}}\label{ssec:proof-complexity}
%% Proof of Theorem (LSIP complexity)
\begin{proof}[Proof of Theorem~\ref{thm:mmot-complexity}]
Throughout this proof, let us fix an arbitrary tolerance value $\epsilon_{\mathsf{LSIP}}>0$.
Let $\alpha^\star\in\R$ denote the optimal value of \eqref{eqn:mmotlb-dual-lsip}, let $S\subset\R^{m+1}$ denote the feasible set of \eqref{eqn:mmotlb-dual-lsip}, i.e., $S:=\big\{(y_0,\BIy^\TRANSP)^\TRANSP\in\R^{m+1}:y_{0}+\langle\BIg(\BIx),\BIy\rangle\le f(\BIx)\;\forall \BIx\in\BCX\big\}$, and let $S_{\alpha}\subseteq S$ denote the \mbox{$\alpha$-superlevel} set of \eqref{eqn:mmotlb-dual-lsip} for any $\alpha\in\R$, i.e., $S_{\alpha}:=\big\{(y_0,\BIy^\TRANSP)^\TRANSP\in S:y_0+\langle\bar{\BIg},\BIy\rangle\ge\alpha\big\}$. 
Moreover, for any $r>0$, let $B(r)\subset\R^{m+1}$ denote the closed Euclidean ball with radius~$r$ centered at the origin,
i.e., $B(r):=\big\{\BIw\in\R^{m+1}:\|\BIw\|_2\le r\big\}$.
In this proof, we apply the \textit{volumetric center} algorithm of \citet{vaidya1996new}, where we consider the maximization of the linear objective function
$c\big((y_0,\BIy^\TRANSP)^\TRANSP\big):=y_0+\langle\bar{\BIg},\BIy\rangle$ 
$\forall (y_0,\BIy^\TRANSP)^\TRANSP\in\R^{m+1}$
over the feasible set $S\,\cap\, B(M_{\mathsf{opt}}+\epsilon_{\mathsf{LSIP}})$.
By assumption, restricting the feasible set of \eqref{eqn:mmotlb-dual-lsip} to $S\,\cap\, B(M_{\mathsf{opt}}+\epsilon_{\mathsf{LSIP}})$ does not affect its optimal value. 
In order to apply the theory of \citet{vaidya1996new}, we need to establish the two following claims.
\begin{itemize}[leftmargin=15pt]
\item \textbf{Claim}~(I): 
the set $S_{\alpha^\star-\epsilon_{\mathsf{LSIP}}}\cap B(M_{\mathsf{opt}}+\epsilon_{\mathsf{LSIP}})$ contains a closed Euclidean ball with radius $\frac{\epsilon_{\mathsf{LSIP}}}{2\sqrt{N+1}}$.
\item \textbf{Claim}~(II):
one can implement a so-called \textit{separation oracle} using the global minimization oracle $\mathtt{Oracle}(\,\cdot\,)$, which takes any $\hat{y}_0\in\R$, $\hat{\BIy}\in\R^m$ as inputs, 
and returns two outputs: 
a one-bit output indicating whether $(\hat{y}_0,\hat{\BIy}^\TRANSP)^\TRANSP\in S\,\cap\, B(M_{\mathsf{opt}}+\epsilon_{\mathsf{LSIP}})$ holds,
and a vector-valued output $(\hat{g}_0,\hat{\BIg}^\TRANSP)^\TRANSP\in\R^{m+1}$.
In the case where $(\hat{y}_0,\hat{\BIy}^\TRANSP)^\TRANSP\notin S\,\cap\, B(M_{\mathsf{opt}}+\epsilon_{\mathsf{LSIP}})$,
the output $(\hat{g}_0,\hat{\BIg}^\TRANSP)^\TRANSP$ satisfies
\begin{align*}
    \hat{g}_0y_0+\langle\hat{\BIg},\BIy\rangle\le \hat{g}_0\hat{y}_0+\langle\hat{\BIg},\hat{\BIy}\rangle \qquad \forall (y_0,\BIy^\TRANSP)^\TRANSP\in S\cap B(M_{\mathsf{opt}}+\epsilon_{\mathsf{LSIP}}).
\end{align*}
In the case where $(\hat{y}_0,\hat{\BIy}^\TRANSP)^\TRANSP\in S\,\cap\, B(M_{\mathsf{opt}}+\epsilon_{\mathsf{LSIP}})$,
the output $(\hat{g}_0,\hat{\BIg}^\TRANSP)^\TRANSP$ satisfies
\begin{align*}
    \qquad\quad\hat{g}_0y_0+\langle\hat{\BIg},\BIy\rangle\le \hat{g}_0\hat{y}_0+\langle\hat{\BIg},\hat{\BIy}\rangle \qquad \forall (y_0,\BIy^\TRANSP)^\TRANSP\in\R^{m+1} \text{ where }c\big((y_0,\BIy^\TRANSP)^\TRANSP\big)\ge c\big((\hat{y}_0,\hat{\BIy}^\TRANSP)^\TRANSP\big).
\end{align*}
Moreover, each call to this separation oracle involves at most a single call to $\mathtt{Oracle}(\,\cdot\,)$, as well as $O(m)$ additional arithmetic operations.
\end{itemize}
To prove Claim~(I), let $(y_0^\star,\BIy^\star)$ be the optimal solution of \eqref{eqn:mmotlb-dual-lsip} in the statement of the theorem.
Note that $(y^\star_0,\BIy^{\star\TRANSP})^\TRANSP\in S$, 
$y^\star_0+\langle\bar{\BIg},\BIy^\star\rangle=\alpha^\star$, and
$\big\|(y^\star_0,\BIy^{\star\TRANSP})^\TRANSP\big\|_2=M_{\mathsf{opt}}$.
Next, let $\hat{y}_0:=y^\star_0-\frac{\epsilon_{\mathsf{LSIP}}}{2}$, $\hat{\BIy}:=\BIy^\star$,
let $(u_0,\BIu^\TRANSP)^\TRANSP\in B(1)$ be arbitrary, 
and let 
$(y^{\circ}_0,\BIy^{\circ\TRANSP})^\TRANSP:=(\hat{y}_0,\hat{\BIy}^{\TRANSP})^\TRANSP+\frac{\epsilon_{\mathsf{LSIP}}}{2\sqrt{N+1}}(u_0,\BIu^\TRANSP)^\TRANSP$.
Hence, $(y^{\circ}_0,\BIy^{\circ\TRANSP})^\TRANSP\in\R^{m+1}$ is an arbitrary point in the closed Euclidean ball with radius $\frac{\epsilon_{\mathsf{LSIP}}}{2\sqrt{N+1}}$ centered at 
$(\hat{y}_0,\hat{\BIy}^{\TRANSP})^\TRANSP$.
In the following, we will prove Claim~(I) by showing that 
$(y^\circ_0,\BIy^{\circ\TRANSP})^\TRANSP\in S$, 
$y^\circ_0+\langle\bar{\BIg},\BIy^\circ\rangle\ge\alpha^\star-\epsilon_{\mathsf{LSIP}}$,
and
$\big\|(y^\circ_0,\BIy^{\circ\TRANSP})^\TRANSP\big\|_2\le M_{\mathsf{opt}}+\epsilon_{\mathsf{LSIP}}$.

First, we show that $(y^\circ_0,\BIy^{\circ\TRANSP})^\TRANSP\in S$.
It follows from the property that $y^\star_0+\langle\BIg(\BIx),\BIy^\star\rangle\le f(\BIx)$ $\forall \BIx\in\BCX$ and the Cauchy--Schwarz inequality that
\begin{align*}
y^\circ_0 + \langle\BIg(\BIx),\BIy^\circ\rangle&=y^\star_0-\frac{\epsilon_{\mathsf{LSIP}}}{2}+\langle\BIg(\BIx),\BIy^\star\rangle+\frac{\epsilon_{\mathsf{LSIP}}}{2\sqrt{N+1}}u_0+\frac{\epsilon_{\mathsf{LSIP}}}{2\sqrt{N+1}}\langle\BIg(\BIx),\BIu\rangle\\
&\le f(\BIx)-\frac{\epsilon_{\mathsf{LSIP}}}{2}+\frac{\epsilon_{\mathsf{LSIP}}}{2\sqrt{N+1}}\big(u_0+\langle\BIg(\BIx),\BIu\rangle\big)\\
&\le f(\BIx)-\frac{\epsilon_{\mathsf{LSIP}}}{2}+\frac{\epsilon_{\mathsf{LSIP}}}{2\sqrt{N+1}}\big\|(1,\BIg(\BIx)^\TRANSP)^\TRANSP\big\|_2 \big\|(u_0,\BIu^\TRANSP)^\TRANSP\big\|_2\\
&\le f(\BIx)-\frac{\epsilon_{\mathsf{LSIP}}}{2}+\frac{\epsilon_{\mathsf{LSIP}}}{2\sqrt{N+1}}\Bigg(1+\sum_{i=1}^N\big\|\BIg_i(x_i)\big\|_2^2\Bigg)^{\frac{1}{2}} \qquad\! \forall \BIx=(x_1,\ldots,x_N)\in\BCX.
\end{align*}
Using the assumption that $\big\|\BIg_i(x_i)\big\|_2\le 1$ $\forall x_i\in\CX_i$, for $i=1,\ldots,N$,
we get $y^\circ_0 + \langle\BIg(\BIx),\BIy^\circ\rangle\le f(\BIx)$ $\forall\BIx\in\BCX$ and thus $(y^\circ_0,\BIy^{\circ\TRANSP})^\TRANSP\in S$.
Second,
to show that $y^\circ_0+\langle\bar{\BIg},\BIy^\circ\rangle\ge\alpha^\star-\epsilon_{\mathsf{LSIP}}$,
we let $\bar{\BIg}_1,\ldots,\bar{\BIg}_N$ be defined by (\ref{eqn:v-vecdef}).
For $i=1,\ldots,N$, it holds by the assumption that $\big\|\BIg_i(x_i)\big\|_2\le 1$ $\forall x_i\in\CX_i$ and Jensen's inequality that $\|\bar{\BIg}_i\|_2\le 1$. 
Subsequently, using the property that $y^\star_0+\langle\bar{\BIg},\BIy^\star\rangle=\alpha^\star$ as well as the Cauchy--Schwarz inequality, we get
\begin{align*}
y^\circ_0+\langle\bar{\BIg},\BIy^\circ\rangle&= y^\star_0-\frac{\epsilon_{\mathsf{LSIP}}}{2}+\langle\bar{\BIg},\BIy^\star\rangle+\frac{\epsilon_{\mathsf{LSIP}}}{2\sqrt{N+1}}u_0+\frac{\epsilon_{\mathsf{LSIP}}}{2\sqrt{N+1}}\langle\bar{\BIg},\BIu\rangle\\
&=\alpha^\star-\frac{\epsilon_{\mathsf{LSIP}}}{2}+\frac{\epsilon_{\mathsf{LSIP}}}{2\sqrt{N+1}}\big(u_0+\langle\bar{\BIg},\BIu\rangle\big)\\
&\ge \alpha^\star-\frac{\epsilon_{\mathsf{LSIP}}}{2}-\frac{\epsilon_{\mathsf{LSIP}}}{2\sqrt{N+1}} \big\|(1,\bar{\BIg}^\TRANSP)^\TRANSP\big\|_2 \big\|(u_0,\BIu^\TRANSP)^\TRANSP\big\|_2\\
&\ge \alpha^\star-\frac{\epsilon_{\mathsf{LSIP}}}{2}-\frac{\epsilon_{\mathsf{LSIP}}}{2\sqrt{N+1}} \Bigg(1+\sum_{i=1}^N\|\bar{\BIg}_i\|_2^2\Bigg)^{\frac{1}{2}}\ge\alpha^\star-\epsilon_{\mathsf{LSIP}}.
\end{align*}
Third, the triangle inequality guarantees that
\begin{align*}
\big\|(y^\circ_0,\BIy^{\circ\TRANSP})^\TRANSP\big\|_2&\le\big\|(\hat{y}_0,\hat{\BIy}^{\TRANSP})^\TRANSP\big\|_2+\frac{\epsilon_{\mathsf{LSIP}}}{2\sqrt{N+1}}\big\|(u_0,\BIu^\TRANSP)^\TRANSP\big\|_2\\
&\le\big\|(y^\star_0,\BIy^{\star\TRANSP})^\TRANSP\big\|_2+\frac{\epsilon_{\mathsf{LSIP}}}{2}+\frac{\epsilon_{\mathsf{LSIP}}}{2\sqrt{N+1}}\\
&=M_{\mathsf{opt}}+\frac{\epsilon_{\mathsf{LSIP}}}{2}+\frac{\epsilon_{\mathsf{LSIP}}}{2\sqrt{N+1}}<M_{\mathsf{opt}}+\epsilon_{\mathsf{LSIP}}.
\end{align*}
Therefore, we conclude that $(y^{\circ}_0,\BIy^{\circ\TRANSP})^\TRANSP\in S_{\alpha^\star-\epsilon_{\mathsf{LSIP}}}\cap B(M_{\mathsf{opt}}+\epsilon_{\mathsf{LSIP}})$
and thus 
$S_{\alpha^\star-\epsilon_{\mathsf{LSIP}}}\cap B(M_{\mathsf{opt}}+\epsilon_{\mathsf{LSIP}})$ contains a closed Euclidean ball with radius $\frac{\epsilon_{\mathsf{LSIP}}}{2\sqrt{N+1}}$ centered at $(\hat{y}_0,\hat{\BIy}^\TRANSP)^\TRANSP$.
The proof of Claim~(I) is complete.

To prove Claim~(II), let us fix arbitrary inputs $\hat{y}_0\in\R$ and $\hat{\BIy}\in\R^m$ 
and implement the separation oracle via the following procedure.
\begin{itemize}
    \item Step~1: computing $\big\|(\hat{y}_0,\hat{\BIy}^\TRANSP)^\TRANSP\big\|_2$ and checking whether 
    $\big\|(\hat{y}_0,\hat{\BIy}^\TRANSP)^\TRANSP\big\|_2\le M_{\mathsf{opt}}+\epsilon_{\mathsf{LSIP}}$.
    If $\big\|(\hat{y}_0,\hat{\BIy}^\TRANSP)^\TRANSP\big\|_2> M_{\mathsf{opt}}+\epsilon_{\mathsf{LSIP}}$, skip Step~2 and return
    $(\hat{y}_0,\hat{\BIy}^\TRANSP)^\TRANSP\notin S\,\cap\, B(M_{\mathsf{opt}}+\epsilon_{\mathsf{LSIP}})$ as well as 
    $(\hat{g}_0,\hat{\BIg}^\TRANSP)^\TRANSP\leftarrow\big(\hat{y}_0,\hat{\BIy}^\TRANSP)^\TRANSP$ as outputs.
    Otherwise, proceed to Step~2.

    \item Step~2: calling $\mathtt{Oracle}(\hat{\BIy})$ and denoting the outputs by 
    $(\BIx^\star,\beta^\star)\in\BCX\times\R$.
    If $\beta^\star< \hat{y}_0$, 
    return 
    $(\hat{y}_0,\hat{\BIy}^\TRANSP)^\TRANSP\notin S\,\cap\, B(M_{\mathsf{opt}}+\epsilon_{\mathsf{LSIP}})$ as well as 
    $(\hat{g}_0,\hat{\BIg}^\TRANSP)^\TRANSP\leftarrow(1,\BIg(\BIx^\star)^\TRANSP)^\TRANSP$ as outputs.
    Otherwise, 
    return
    $(\hat{y}_0,\hat{\BIy}^\TRANSP)^\TRANSP\in S\,\cap\, B(M_{\mathsf{opt}}+\epsilon_{\mathsf{LSIP}})$ as well as 
    $(\hat{g}_0,\hat{\BIg}^\TRANSP)^\TRANSP\leftarrow(-1,-\bar{\BIg}^\TRANSP)^\TRANSP$ as outputs.
\end{itemize}
One may check that the above procedure incurs at most a single call to $\mathtt{Oracle}(\,\cdot\,)$ and $O(m)$ additional arithmetic operations.
First, in the case where $\big\|(\hat{y}_0,\hat{\BIy}^\TRANSP)^\TRANSP\big\|_2> M_{\mathsf{opt}}+\epsilon_{\mathsf{LSIP}}$ holds in Step~1,
it holds that $(\hat{y}_0,\hat{\BIy}^\TRANSP)^\TRANSP\notin S\,\cap\, B(M_{\mathsf{opt}}+\epsilon_{\mathsf{LSIP}})$, and
the Cauchy--Schwarz inequality implies that
the output $(\hat{g}_0,\hat{\BIg}^\TRANSP)^\TRANSP\leftarrow\big(\hat{y}_0,\hat{\BIy}^\TRANSP)^\TRANSP$ satisfies
\begin{align*}
    \hat{g}_0y_0+\langle\hat{\BIg},\BIy\rangle
    &\le \big\|(\hat{y}_0,\hat{\BIy}^\TRANSP)^\TRANSP\big\|_2\big\|(y_0,\BIy^\TRANSP)^\TRANSP\big\|_2\\
    &\le (M_{\mathsf{opt}}+\epsilon_{\mathsf{LSIP}}) \big\|(\hat{y}_0,\hat{\BIy}^\TRANSP)^\TRANSP\big\|_2\\
    &<\big\|(\hat{y}_0,\hat{\BIy}^\TRANSP)^\TRANSP\big\|_2^2\\
    &= \hat{g}_0\hat{y}_0+\langle\hat{\BIg},\hat{\BIy}\rangle \qquad \forall (y_0,\BIy^\TRANSP)^\TRANSP \in S\cap B(M_{\mathsf{opt}}+\epsilon_{\mathsf{LSIP}}).
\end{align*}
Second, in the case where 
$\big\|(\hat{y}_0,\hat{\BIy}^\TRANSP)^\TRANSP\big\|_2\le M_{\mathsf{opt}}+\epsilon_{\mathsf{LSIP}}$ holds in Step~1 and
$\beta^\star<\hat{y}_0$ holds in Step~2,
it holds by Definition~\ref{def:mmot-oracle} that
$\hat{y}_0>\beta^\star=f(\BIx^\star)-\langle\BIg(\BIx^\star),\hat{\BIy}\rangle$
and thus $(\hat{y}_0,\hat{\BIy}^\TRANSP)^\TRANSP\notin S\,\cap\, B(M_{\mathsf{opt}}+\epsilon_{\mathsf{LSIP}})$.
In this case, the output $(\hat{g}_0,\hat{\BIg}^\TRANSP)^\TRANSP\leftarrow(1,\BIg(\BIx^\star)^\TRANSP)^\TRANSP$ satisfies
\begin{align*}
    \hat{g}_0y_0+\langle\hat{\BIg},\BIy\rangle
    &= y_0 + \langle\BIg(\BIx^\star),\BIy\rangle\\
    &\le f(\BIx^\star)\\
    &<\hat{y}_0+\langle\BIg(\BIx^\star),\hat{\BIy}\rangle\\
    &=\hat{g}_0\hat{y}_0+\langle\hat{\BIg},\hat{\BIy}\rangle \qquad \forall (y_0,\BIy^\TRANSP)^\TRANSP \in S\cap B(M_{\mathsf{opt}}+\epsilon_{\mathsf{LSIP}}).
\end{align*}
Third, in the case where 
$\big\|(\hat{y}_0,\hat{\BIy}^\TRANSP)^\TRANSP\big\|_2\le M_{\mathsf{opt}}+\epsilon_{\mathsf{LSIP}}$ holds in Step~1 and
$\beta^\star\ge\hat{y}_0$ holds in Step~2,
we have by Definition~\ref{def:mmot-oracle} that
$\hat{y}_0\le\beta^\star=\inf_{\BIx\in\BCX}\big\{f(\BIx)-\langle\BIg(\BIx),\hat{\BIy}\rangle\big\}$,
which shows that 
$\hat{y}_0+\langle\BIg(\BIx),\hat{\BIy}\rangle\le f(\BIx)$ $\forall\BIx\in\BCX$ 
and thus 
$(\hat{y}_0,\hat{\BIy}^\TRANSP)^\TRANSP\in S\,\cap\, B(M_{\mathsf{opt}}+\epsilon_{\mathsf{LSIP}})$.
Moreover, in this case, 
the output $(\hat{g}_0,\hat{\BIg}^\TRANSP)^\TRANSP\leftarrow(-1,-\bar{\BIg}^\TRANSP)^\TRANSP$ satisfies
\begin{align*}
    \hat{g}_0y_0+\langle\hat{\BIg},\BIy\rangle
    &= -y_0 - \langle\bar{\BIg},\BIy\rangle=-c\big((y_0,\BIy^\TRANSP)^\TRANSP\big)\\
    &\le -c\big((\hat{y}_0,\hat{\BIy}^\TRANSP)^\TRANSP\big)\\
    &=\hat{g}_0\hat{y}_0+\langle\hat{\BIg},\hat{\BIy}\rangle \qquad \forall (y_0,\BIy^\TRANSP)^\TRANSP \text{ where }c\big((y_0,\BIy^\TRANSP)^\TRANSP\big)\ge c\big((\hat{y}_0,\hat{\BIy}^\TRANSP)^\TRANSP\big).
\end{align*}
This completes the proof of Claim~(II).

Now that both Claim~(I) and Claim~(II) have been established, one can apply the volumetric center algorithm of \citet{vaidya1996new} to compute an $\epsilon_{\mathsf{LSIP}}$-optimal solution of the concave maximization problem with objective function
$c(\,\cdot\,)$ and feasible set $S\,\cap\,B(M_{\mathsf{opt}}+\epsilon_{\mathsf{LSIP}})$,
which is also an $\epsilon_{\mathsf{LSIP}}$-optimal solution
of \eqref{eqn:mmotlb-dual-lsip}.
Let us briefly describe Vaidya's algorithm as follows.
The algorithm begins with a polytope $P_0\subset\R^{m+1}$ such that $B(M_{\mathsf{opt}}+\epsilon_{\mathsf{LSIP}})\subset P_0$, 
and sets $\hat{\BIv}_0\in\R^{m+1}$ to be the volumetric center of $P_0$;
see \citep[p.~294--295]{vaidya1996new} for the definition and the properties of the volumetric center.
Subsequently, each iteration $r=1,2,\ldots$ of the algorithm performs the following operations.
\begin{itemize}
    \item Choose one out of the two following operations:
    \begin{itemize}
        \item either add a closed half-space to those characterizing $P_{r-1}$ to form an updated polytope $P_{r}\subset P_{r-1}$ where the added closed half-space is constructed from the output of the separation oracle in Claim~(II) using $\hat{\BIv}_{r-1}$ as the input;
        
        \item or remove a closed half-space from those characterizing $P_{r-1}$ to form an updated polytope $P_{r}\supset P_{r-1}$.
    \end{itemize}
    As a consequence of Claim~(II), this step incurs at most a single call to $\mathtt{Oracle}(\,\cdot\,)$ and $O(m)$ additional arithmetic operations.

    \item Perform a constant number of Newton-type updates to $\hat{\BIv}_{r-1}$ to compute $\hat{\BIv}_{r}\in\R^{m+1}$ as an approximation of the volumetric center of $P_{r}$.
    This step incurs $O(m^\omega)$ arithmetic operations.
\end{itemize}
The algorithm terminates whenever the volume of 
the polytope $P_{r}$ is less that the volume of $B\big(\frac{\epsilon_{\mathsf{LSIP}}}{2\sqrt{N+1}}\big)$.
Claim~(I) guarantees that when the algorithm terminates it outputs an $\epsilon_{\mathsf{LSIP}}$-optimal solution of the concave maximization problem with objective function
$c(\,\cdot\,)$ and feasible set $S\,\cap\,B(M_{\mathsf{opt}}+\epsilon_{\mathsf{LSIP}})$.
Moreover, the result of \citet{vaidya1996new} with
$n\leftarrow m+1$, 
$L\leftarrow \log_2\Big(\frac{M_{\mathsf{opt}}+\epsilon_{\mathsf{LSIP}}}{\epsilon_{\mathsf{LSIP}}/(2\sqrt{N+1})}\Big)$
in the notation of \citep{vaidya1996new}
states that this algorithm is guaranteed to terminate within
$O\big((m+1)\log_2\big(2(m+\nobreak1)\sqrt{N+1}(M_{\mathsf{opt}}+\epsilon_{\mathsf{LSIP}})/\epsilon_{\mathsf{LSIP}}\big)\big)=O\big(m\log(mM_{\mathsf{opt}}/\epsilon_{\mathsf{LSIP}})\big)$ 
iterations.
Consequently, 
combining this with the per-iteration cost analyses above,
we conclude that an $\epsilon_{\mathsf{LSIP}}$-optimal solution of \eqref{eqn:mmotlb-dual-lsip} can be computed with
$O\big(m\log(mM_{\mathsf{opt}}/\epsilon_{\mathsf{LSIP}})\big)$
calls to $\mathtt{Oracle}(\,\cdot\,)$
and 
$O\big(m^{\omega+1}\log(mM_{\mathsf{opt}}/\epsilon_{\mathsf{LSIP}})\big)$
additional arithmetic operations.
The proof is now complete. 
\end{proof}

\subsection{Proof of results in Section~\ref{ssec:reassemblysemidiscrete}}\label{ssec:proof-reassembly}

% Proof of Lemma (reassembly subset -> Euclidean)
\begin{proof}[Proofs of Lemma~\ref{lem:wlogeuclidean}]
In this proof, we let $\bar{\CX}_i:=\CX_i$, $\bar{\CX}^\dagger_i:=\CX^\dagger_i$ for $i=1,\ldots,N$, and let $\overbar{\BCX}:=\bigtimes_{i=1}^N\bar{\CX}_i$, $\overbar{\BCX}^\dagger:=\bigtimes_{i=1}^N\bar{\CX}^\dagger_i$ in order to differentiate copies of the same space. 
Let us first assume that $\tilde{\mu}\in R(\hat{\mu};\mu_1,\ldots,\mu_N)$ for some $\hat{\mu},\tilde{\mu}\in\CP_1(\BCX)$
and show that $\tilde{\mu}^{\dagger}\in R(\hat{\mu}^\dagger;\mu_1^\dagger,\ldots,\mu_N^\dagger)$. 
For $i=1,\ldots,N$, let $\hat{\mu}_i$ denote the $i$-th marginal of $\hat{\mu}$ and let $\hat{\mu}_i^\dagger$ denote the $i$-th marginal of $\hat{\mu}^\dagger$. 
By Definition~\ref{def:reassembly}, $\tilde{\mu}\in R(\hat{\mu};\mu_1,\ldots,\mu_N)$ implies that there exists $\gamma\in\CP(\BCX\times\overbar{\BCX})$ such that the marginal of $\gamma$ on $\BCX$ is $\hat{\mu}$, the marginal of $\gamma$ on $\overbar{\BCX}$ is $\tilde{\mu}$, 
and the marginal $\gamma_i\in\Gamma(\hat{\mu}_i,\mu_i)$ of $\gamma$ on $\CX_i\times\bar{\CX}_i$ satisfies $\int_{\CX_i\times\bar{\CX}_i}d_{\CX_i}(x,z)\DIFFM{\gamma_i}{\DIFF x,\DIFF z}=W_1(\hat{\mu}_i,\mu_i)$ for $i=1,\ldots,N$. 
Let us define $\gamma^\dagger\in\CP(\BCX^\dagger\times\overbar{\BCX}^\dagger)$ by $\gamma^\dagger(E):=\gamma\big(E\cap(\BCX\times\overbar{\BCX})\big)$ $\forall E\in\CB(\BCX^\dagger\times\overbar{\BCX}^\dagger)$. 
Then, the marginal of $\gamma^\dagger$ on $\BCX^\dagger$ is exactly~$\hat{\mu}^\dagger$ and the marginal of $\gamma^\dagger$ on $\overbar{\BCX}^\dagger$ is exactly~$\tilde{\mu}^\dagger$. 
For $i=1,\ldots,N$, let us denote the marginal of $\gamma^\dagger$ on $\CX^\dagger_i\times\bar{\CX}^\dagger_i$ by $\gamma_i^\dagger$. 
Then, it holds by construction that, for $i=1,\ldots,N$, $\gamma_i^\dagger(E)=\gamma_i(E)$ $\forall E\in\CB(\CX_i\times\bar{\CX}_i)$ and in particular $\gamma_i^\dagger(\CX_i\times\bar{\CX}_i)=1$. 
We thus get
\begin{align}
\int_{\CX^\dagger_i\times\bar{\CX}^\dagger_i}d_{\CX^\dagger_i}(x,z)\DIFFM{\gamma_i^\dagger}{\DIFF x,\DIFF z}&=\int_{\CX_i\times\bar{\CX}_i}d_{\CX_i}(x,z)\DIFFM{\gamma_i}{\DIFF x,\DIFF z}=W_1(\hat{\mu}_i,\mu_i).
\label{eqn:wlogeuclidean-proof-1-1}
\end{align}
Next, let us fix an arbitrary $\theta_i^\dagger\in\Gamma(\hat{\mu}_i^\dagger,\mu_i^\dagger)$.
We have $\theta_i^\dagger\big((\CX^\dagger_i\times\bar{\CX}^\dagger_i)\setminus(\CX_i\times\bar{\CX}_i)\big)\le\hat{\mu}_i^\dagger(\CX^\dagger_i\setminus\CX_i)+\mu_i^\dagger(\bar{\CX}^\dagger_i\setminus\nobreak\bar{\CX}_i)=0$ and thus $\theta_i^\dagger(\CX_i\times\bar{\CX}_i)=1$. Let us define $\theta_i\in\CP(\CX_i\times\bar{\CX}_i)$ by $\theta_i(E):=\theta_i^\dagger(E)$ $\forall E\in\CB(\CX_i\times\bar{\CX}_i)$. 
Then, since $\theta_i\in\Gamma(\hat{\mu}_i,\mu_i)$, we have
\begin{align}
\int_{\CX^\dagger_i\times\bar{\CX}^\dagger_i} d_{\CX^\dagger_i}(x,z) \DIFFM{\theta_i^\dagger}{\DIFF x,\DIFF z}&=\int_{\CX_i\times\bar{\CX}_i}d_{\CX_i}(x,z)\DIFFM{\theta_i}{\DIFF x,\DIFF z}\ge W_1(\hat{\mu}_i,\mu_i).
\label{eqn:wlogeuclidean-proof-1-2}
\end{align}
Combining (\ref{eqn:wlogeuclidean-proof-1-1}) and (\ref{eqn:wlogeuclidean-proof-1-2}) 
shows that $\gamma_i^\dagger$ is an optimal coupling between $\hat{\mu}_i^\dagger$ and $\mu_i^\dagger$ with respect to the cost function $d_{\CX^\dagger_i}$. Consequently, it holds by Definition~\ref{def:reassembly} that $\tilde{\mu}^\dagger\in R(\hat{\mu}^\dagger;\mu_1^\dagger,\ldots,\mu_N^\dagger)$. 

Conversely, let us assume that $\tilde{\mu}^\dagger\in R(\hat{\mu}^\dagger;\mu_1^\dagger,\ldots,\mu_N^\dagger)$ for some $\hat{\mu},\tilde{\mu}\in\CP_1(\BCX)$
and show that $\tilde{\mu}\in R(\hat{\mu};\mu_1,\ldots,\mu_N)$. 
Again, for $i=1,\ldots,N$, let $\hat{\mu}_i$ denote the $i$-th marginal of $\hat{\mu}$ and let $\hat{\mu}_i^\dagger$ denote the $i$-th marginal of $\hat{\mu}^\dagger$. 
Definition~\ref{def:reassembly} implies that there exists $\gamma^\dagger\in\CP(\BCX^\dagger\times\overbar{\BCX}^\dagger)$ such that the marginal of $\gamma^\dagger$ on $\BCX^\dagger$ is $\hat{\mu}^\dagger$, the marginal of $\gamma^\dagger$ on $\overbar{\BCX}^\dagger$ is $\tilde{\mu}^\dagger$, 
and the marginal $\gamma_i^\dagger\in\Gamma(\hat{\mu}_i^\dagger,\mu_i^\dagger)$ of $\gamma^\dagger$ on $\CX^\dagger_i\times\bar{\CX}^\dagger_i$ satisfies $\int_{\CX^\dagger_i\times\bar{\CX}^\dagger_i}d_{\CX^\dagger_i}(x,z)\DIFFM{\gamma_i^\dagger}{\DIFF x,\DIFF z}=W_1(\hat{\mu}_i^\dagger,\mu_i^\dagger)$ for $i=1,\ldots,N$. 
Since $\gamma^\dagger(\BCX\times\overbar{\BCX})=1$ by assumption, let us define $\gamma\in\CP(\BCX\times\overbar{\BCX})$ by $\gamma(E):=\gamma^\dagger(E)$ $\forall E\in\CB(\BCX\times\overbar{\BCX})$. 
Thus, the marginal of $\gamma$ on $\BCX$ is exactly $\hat{\mu}$ and the marginal of $\gamma$ on $\overbar{\BCX}$ is exactly $\tilde{\mu}$. For $i=1,\ldots,N$, let us denote the marginal of $\gamma$ on $\CX_i\times\bar{\CX}_i$ by $\gamma_i$. For $i=1,\ldots,N$, it holds by construction that $\gamma_i(E)=\gamma_i^\dagger(E)$ $\forall E\in\CB(\CX_i\times\bar{\CX}_i)$. We hence get
\begin{align}
\int_{\CX_i\times\bar{\CX}_i}d_{\CX_i}(x,z)\DIFFM{\gamma_i}{\DIFF x,\DIFF z}&=\int_{\CX^\dagger_i\times\bar{\CX}^\dagger_i}d_{\CX^\dagger_i}(x,z)\DIFFM{\gamma_i^\dagger}{\DIFF x,\DIFF z}=W_1(\hat{\mu}_i^\dagger,\mu_i^\dagger).
\label{eqn:wlogeuclidean-proof-2-1}
\end{align}
Let us now fix an arbitrary $\theta_i\in\Gamma(\hat{\mu}_i,\mu_i)$, and define $\theta_i^\dagger\in\CP(\CX^\dagger_i\times\bar{\CX}^\dagger_i)$ by $\theta_i^\dagger(E):=\theta_i\big(E\cap(\CX_i\times\bar{\CX}_i)\big)$ $\forall E\in\CB(\CX^\dagger_i\times\bar{\CX}^\dagger_i)$. 
Subsequently, it holds that $\theta_i^\dagger\in\Gamma(\hat{\mu}_i^\dagger,\mu_i^\dagger)$ and hence
\begin{align}
\int_{\CX_i\times\bar{\CX}_i}d_{\CX_i}(x,z)\DIFFM{\theta_i}{\DIFF x,\DIFF z}&=\int_{\CX^\dagger_i\times\bar{\CX}^\dagger_i}d_{\CX^\dagger_i}(x,z)\DIFFM{\theta_i^\dagger}{\DIFF x,\DIFF z}
\ge W_1(\hat{\mu}_i^\dagger,\mu_i^\dagger).
\label{eqn:wlogeuclidean-proof-2-2}
\end{align}
Combining (\ref{eqn:wlogeuclidean-proof-2-1}) and (\ref{eqn:wlogeuclidean-proof-2-2}) shows that $\gamma_i$ is an optimal coupling between $\hat{\mu}_i$ and $\mu_i$ with respect to the cost function $d_{\CX_i}$. Consequently, it holds by Definition~\ref{def:reassembly} that $\tilde{\mu}\in R(\hat{\mu};\mu_1,\ldots,\mu_N)$. The proof is now complete. 
\end{proof}

Before we prove Proposition~\ref{prop:reassembly-1d}, let us first establish the following lemma.
\begin{lemma}\label{lem:optimal-coupling-1d}
    Under the settings of Proposition~\ref{prop:reassembly-1d},
    let $(\Omega,\CF,\PROB)$ be a probability space and let
    $V_1,\ldots,V_N:\Omega\to[0,1]$ be uniformly distributed random variables.
    For $i=1,\ldots,N$,
    let $F^{-1}_{\hat{\mu}_i}(u):=\inf\big\{x\in\R:F_{\hat{\mu}_i}(x)\ge u\big\}$ for all $u\in[0,1]$,
    that is, $F^{-1}_{\hat{\mu}_i}$ is the left-continuous generalized inverse of $F_{\hat{\mu}_i}$,
    and let $X_i,Z_i:\Omega\to\R$ be random variables satisfying 
    $X_i=F_{\hat{\mu}_i}^{-1}(V_i)$ and 
    $Z_i=F_{\mu_i}^{-1}(V_i)$ $\PROB$-almost surely.
    Then, the following statements hold.
    \begin{enumerate}[beginpenalty=10000,label=(\roman*)]
        \item\label{lems:optimal-coupling-1d-equivalence}
        For $i=1,\ldots,N$ and for any $x\in\R$, 
        $\INDI_{\{X_i\le x\}}=\INDI_{\{V_i\le F_{\hat{\mu}_i}(x)\}}$ 
        holds $\PROB$-almost surely.
        Similarly, for $i=1,\ldots,N$ and for any $z\in\R$,
        $\INDI_{\{Z_i\le z\}}=\INDI_{\{V_i\le F_{\mu_i}(z)\}}$
        holds $\PROB$-almost surely.
        
        \item\label{lems:optimal-coupling-1d-optimality}
        For $i=1,\ldots,N$, the law $\gamma_i\in\CP(\R\times\R)$ of the random variable $(X_i,Z_i):\Omega\to\R\times\R$ satisfies 
        $\gamma_i\in\Gamma(\hat{\mu}_i,\mu_i)$ and 
        $\int_{\R\times\R}|x-z|\DIFFM{\gamma_i}{\DIFF x,\DIFF z}=W_1(\hat{\mu}_i,\mu_i)$.
    \end{enumerate}
\end{lemma}

\begin{proof}[Proof of Lemma~\ref{lem:optimal-coupling-1d}]
    Let us fix an arbitrary $i\in\{1,\ldots,N\}$.
    It follows from \citep[Proposition~A.3(v)--(vi)]{mcneil2005quantitative}
    that 
    $V_i\le F_{\hat{\mu}_i}\big(F^{-1}_{\hat{\mu}_i}(V_i)\big)$ $\PROB$-almost surely, 
    and that $F^{-1}_{\hat{\mu}_i}\big(F_{\hat{\mu}_i}(x)\big)\le x$ for all $x\in\R$.
    Subsequently, combining these two inequalities with the non-decreasing properties of 
    $F_{\hat{\mu}_i}$ and $F^{-1}_{\hat{\mu}_i}$
    yields that
    \begin{align*}
    X_i\le x &\quad\Rightarrow\quad V_i\le F_{\hat{\mu}_i}\big(F^{-1}_{\hat{\mu}_i}(V_i)\big)= F_{\hat{\mu}_i}(X_i)\le F_{\hat{\mu}_i}(x) \quad \PROB\text{-a.s.} && \forall x\in\R,\; \forall 1\le i\le N,\\
    V_i\le F_{\hat{\mu}_i}(x) &\quad\Rightarrow\quad X_i=F^{-1}_{\hat{\mu}_i}(V_i)\le F^{-1}_{\hat{\mu}_i}(F_{\hat{\mu}_i}(x))\le x \quad \PROB\text{-a.s.} && \forall x\in\R,\; \forall 1\le i\le N.
    \end{align*}
    It thus holds that 
    $\INDI_{\{X_i\le x\}}=\INDI_{\{V_i\le F_{\hat{\mu}_i}(x)\}}$ 
    $\PROB$-almost surely for all $x\in\R$.
    The same argument applied to 
    $Z_i$, $F_{\mu_i}$, and $F_{\mu_i}^{-1}$ proves that
    $\INDI_{\{Z_i\le z\}}=\INDI_{\{V_i\le F_{\mu_i}(z)\}}$
    $\PROB$-almost surely for all $z\in\R$.
    The proof of statement~\ref{lems:optimal-coupling-1d-equivalence} is complete.

    Since $V_i$ is uniformly distributed on $[0,1]$, statement~\ref{lems:optimal-coupling-1d-equivalence} implies that 
    $\PROB[X_i\le x]=\PROB[V_i\le F_{\hat{\mu}_i}(x)]=F_{\hat{\mu}_i}(x)$ for all $x\in\R$, and $\PROB[Z_i\le z]=\PROB[V_i\le F_{\mu_i}(z)]=F_{\mu_i}(z)$ for all $z\in\R$. 
    Consequently, it holds that $\gamma_i\in\Gamma(\hat{\mu}_i,\mu_i)$, and
    \begin{align*}
    \int_{\R\times\R}|x-z|\DIFFM{\gamma_i}{\DIFF x,\DIFF z}=\EXP\big[|X_i-Z_i|\big]
    =\EXP\big[\big|F^{-1}_{\hat{\mu}_i}(V_i)-F^{-1}_{\mu_i}(V_i)\big|\big]
    =\int_{[0,1]}\big|F^{-1}_{\hat{\mu}_i}(v)-F^{-1}_{\mu_i}(v)\big|\DIFFX{v}.
    \end{align*}
    Applying \citep[Eq.~(3.1.6)]{rachev1998mass} then shows that
    $\int_{\R\times\R}|x-z|\DIFFM{\gamma_i}{\DIFF x,\DIFF z}=W_1(\hat{\mu}_i,\mu_i)$.
    The proof is now complete.
\end{proof}

\begin{proof}[Proof of Proposition~\ref{prop:reassembly-1d}]
Throughout the proof, 
let $F^{-1}_{\hat{\mu}_i}(u):=\inf\big\{x\in\R:F_{\hat{\mu}_i}(x)\ge u\big\}$ $\forall u\in[0,1]$
be the left-continuous generalized inverse of $F_{\hat{\mu}_i}$,
for $i=1,\ldots,N$.
To prove statement~\ref{props:reassembly-1d-copula}, 
let $(\Omega,\CF,\PROB)$ be a probability space and 
let $(V_1,\ldots,V_N):\Omega\to[0,1]^N$ be a random variable with distribution function $C$,
i.e., $\PROB\big[V_1\le\nobreak v_1,\;\ldots,\;V_N\le v_N\big]=C(v_1,\ldots,v_N)$ for all $(v_1,\ldots,v_N)\in[0,1]^N$.
Then, the properties of $C$ imply that $V_1,\ldots,V_N$ are uniformly distributed.
Moreover, for $i=1,\ldots,N$, 
let the random variables $X_i,Z_i:\Omega\to\R$ be defined by
$X_i:=F^{-1}_{\hat{\mu}_i}(V_i)$ and $Z_i:=F^{-1}_{\mu_i}(V_i)$. 
Lemma~\ref{lem:optimal-coupling-1d}\ref{lems:optimal-coupling-1d-optimality} then implies that
the law $\gamma_i\in\CP(\R\times\R)$ of the random variable
$(X_i,Z_i):\Omega\to\R\times\R$ satisfies 
$\gamma_i\in\Gamma(\hat{\mu}_i,\mu_i)$ and 
$\int_{\R\times\R}|x-z|\DIFFM{\gamma_i}{\DIFF x,\DIFF z}=W_1(\hat{\mu}_i,\mu_i)$.
Moreover, Lemma~\ref{lem:optimal-coupling-1d}\ref{lems:optimal-coupling-1d-equivalence} and
the properties of $C$
imply the following identities:
\begin{align*}
\PROB[X_1\le x_1,\ldots,X_N\le x_N]&=\PROB\big[V_1\le F_{\hat{\mu}_1}(x_1),\ldots,V_N\le F_{\hat{\mu}_N}(x_N)\big]\\
&=C\big(F_{\hat{\mu}_1}(x_1),\ldots,F_{\hat{\mu}_N}(x_N)\big)\\
&=F_{\hat{\mu}}(x_1,\ldots,x_N) && \forall (x_1,\ldots,x_N)\in\R^N,\\
\PROB[Z_1\le z_1,\ldots,Z_N\le z_N]&=\PROB\big[V_1\le F_{\mu_1}(z_1),\ldots,V_N\le F_{\mu_N}(z_N)\big]\\
&=C\big(F_{\mu_1}(z_1),\ldots,F_{\mu_N}(z_N)\big)\\
&=F_{\tilde{\mu}}(z_1,\ldots,z_N) && \forall (z_1,\ldots,z_N)\in\R^N.
\end{align*}
Therefore, the law of the random variable $(X_1,\ldots,X_N):\Omega\to\R^N$
is $\hat{\mu}\in\CP(\R^N)$,
and $F_{\tilde{\mu}}$ is the distribution function of the random variable
$(Z_1,\ldots,Z_N):\Omega\to\R^N$.
Subsequently, 
let $\tilde{\mu}\in\CP(\R^N)$ denote the law of the random variable
$(Z_1,\ldots,Z_N):\Omega\to\R^N$ and let $\gamma$ denote the law of the random variable $(X_1,\ldots,X_N,Z_1,\ldots,Z_N):\Omega\to\R^{2N}$. 
One checks that $\gamma$ satisfies all the required properties stated in Definition~\ref{def:reassembly}, and it hence holds that $\tilde{\mu}\in R(\hat{\mu};\mu_1,\ldots,\mu_N)$. 
This completes the proof of statement~\ref{props:reassembly-1d-copula}.

To prove statement~\ref{props:reassembly-1d-semidisc},
let $(\sigma_i)_{i=1:N}$, $(c_{i,j})_{j=1:J,\,i=0:N}$ be given by the statement of the proposition, and let 
$U_0$, $(U_{i,j})_{j=1:J,\,i=1:N}$ satisfy the properties in
(\ref{eqn:reassembly-1d-rvcollection}).
Subsequently, let us define the random variables $V_1,\ldots,V_N:\Omega\to[0,1]$ and $X_1,\ldots,X_N:\Omega\to\R$ as follows:
\begin{align*}
    V_i&:=\sum_{j=1}^{J}(c_{i,j}+a_jU_{i,j})\INDI_{[c_{0,j},c_{0,j}+a_j)}(U_0) && \forall 1\le i\le N, \\
    X_i&:=\sum_{j=1}^{J}x_{i,j}\INDI_{[c_{0,j},c_{0,j}+a_j)}(U_0) && \forall 1\le i\le N.
\end{align*}
Moreover, let us define $C:[0,1]^N\to[0,1]$ to be the distribution function of the random variable $(V_1,\ldots,V_N):\Omega\to[0,1]^N$.
The proof of statement~\ref{props:reassembly-1d-semidisc} is divided into the following five steps.
\begin{itemize}[beginpenalty=10000]
    \item \textit{Step 1: showing that $0=c_{i,\sigma_i(1)}<c_{i,\sigma_i(2)}<\cdots<c_{i,\sigma_i(J)}<1$, $c_{i,\sigma_i(j+1)}-c_{i,\sigma_i(j)}=a_{\sigma_i(j)}$ for $j=1,\ldots,J-1$, $i=1,\ldots,N$, and that
    $1-c_{i,\sigma_i(J)}=a_{\sigma_i(J)}$, for $i=1,\ldots,N$.}
    
    \item \textit{Step 2: showing that $X_i=F_{\hat{\mu}_i}^{-1}(V_i)$ $\PROB$-almost surely for $i=1,\ldots,N$.}
    
    \item \textit{Step 3: showing that $V_i$ has uniform distribution for $i=1,\ldots,N$.}
    
    \item \textit{Step 4: showing that the random variable $(X_1,\ldots,X_N):\Omega\to\R^N$ has law~$\hat{\mu}$.}
    
    \item \textit{Step 5: using statement~\ref{props:reassembly-1d-copula} to show that statement~\ref{props:reassembly-1d-semidisc} holds.}
\end{itemize}

\textit{Step 1.}
Observe that since $(a_j)_{j=1:J}\subset(0,1)$ and $\sum_{j=1}^{J}a_{\sigma_i(j)}=\sum_{j=1}^{J}a_j=1$ for $i=1,\ldots,N$,
we have
\begin{align*}
    c_{i,\sigma_i(j+1)}-c_{i,\sigma_i(j)}&=\Bigg(\sum_{l=1}^{\sigma_i^{-1}(\sigma_i(j+1))-1}a_{\sigma_i(l)}\Bigg) - \Bigg(\sum_{l=1}^{\sigma_i^{-1}(\sigma_i(j))-1}a_{\sigma_i(l)}\Bigg)\\
    &=a_{\sigma_i(j)}>0  \qquad\qquad \forall 1\le j\le J-1,\; \forall 1\le i\le N,\allowdisplaybreaks\\
    1-c_{i,\sigma_i(J)}&=1 - \Bigg(\sum_{l=1}^{\sigma_i^{-1}(\sigma_i(J))-1}a_{\sigma_i(l)}\Bigg)\\
    &=a_{\sigma_i(J)}>0  \hspace{122.5pt} \forall 1\le i\le N.
\end{align*}

\textit{Step 2.}
Let us fix arbitrary $i\in\{1,\ldots,N\}$, $j\in\{1,\ldots,J\}$, and $v\in\big(c_{i,\sigma_i(j)},c_{i,\sigma_i(j)}+a_{\sigma_i(j)}\big)$.
On the one hand,
it holds that
\begin{align*}
    F_{\hat{\mu}_i}(x_{i,\sigma_i(j)})=\hat{\mu}_i\big((-\infty,x_{i,\sigma_i(j)}]\big)=\sum_{l=1}^{J}a_{\sigma_i(l)}\INDI_{\{x_{i,\sigma_i(l)}\le x_{i,\sigma_i(j)}\}}\ge \sum_{l=1}^{j}a_{\sigma_i(l)}=c_{i,\sigma_i(j)}+a_{\sigma_i(j)}>v,
\end{align*}
which shows that $F_{\hat{\mu}_i}^{-1}(v)\le x_{i,\sigma_i(j)}$.
On the other hand, it holds for any $\epsilon>0$ that
\begin{align*}
    F_{\hat{\mu}_i}(x_{i,\sigma_i(j)}-\epsilon)\le \hat{\mu}_i\big((-\infty,x_{i,\sigma_i(j)})\big)=\sum_{l=1}^{J}a_{\sigma_i(l)}\INDI_{\{x_{i,\sigma_i(l)}<x_{i,\sigma_i(j)}\}}\le \sum_{l=1}^{j-1}a_{\sigma_i(l)}=c_{i,\sigma_i(j)}<v.
\end{align*}
This shows that $F_{\hat{\mu}_i}^{-1}(v)\ge x_{i,\sigma_i(j)}-\epsilon$ for all $\epsilon>0$,
and thus $F_{\hat{\mu}_i}^{-1}(v)= x_{i,\sigma_i(j)}$.
Because $\sigma_i$ is a bijection, the above argument has shown that
\begin{align}
    \label{eqn:reassembly-1d-proof-step2-1}
    \{c_{i,j}<V_i<c_{i,j}+a_j\}\subseteq \big\{F_{\hat{\mu}_i}^{-1}(V_i)=x_{i,j}\big\} \qquad \forall 1\le j\le J,\; \forall 1\le i\le N.
\end{align}
Moreover, the definitions of $V_i$ and $X_i$ along with the property of $(U_{i,j})_{j=1:J,\,i=1:N}$ in (\ref{eqn:reassembly-1d-rvcollection}) imply that
\begin{align}
    \begin{split}
        \PROB\big[X_i=x_{i,j}\big| c_{0,j}\le U_0 < c_{0,j}+a_j\big]&=1 \hspace{51.8pt} \qquad \forall 1\le j\le J,\; \forall 1\le i\le N,\\
        \!\!\!\PROB\big[c_{i,j}<V_{i}<c_{i,j}+a_j \big| c_{0,j}\le U_0 < c_{0,j}+a_j\big] &= \PROB\big[0<U_{i,j}<1 \big| c_{0,j}\le U_0 < c_{0,j}+a_j\big]=1\\
        & \hspace{94pt} \forall 1\le j\le J,\; \forall 1\le i\le N.
    \end{split}
    \label{eqn:reassembly-1d-proof-step2-2}
\end{align}
Combining (\ref{eqn:reassembly-1d-proof-step2-1}) and (\ref{eqn:reassembly-1d-proof-step2-2}) leads to
\begin{align*}
    \PROB\big[X_i=F_{\hat{\mu}_i}^{-1}(V_i)\big] 
    &\ge \sum_{j=1}^{J}\PROB\big[X_i=F_{\hat{\mu}_i}^{-1}(V_i)=x_{i,j} \big| c_{0,j}\le U_0 < c_{0,j}+a_j \big] \PROB[c_{0,j}\le U_0 < c_{0,j}+a_j]\\
    &\ge \sum_{j=1}^{J} a_j \PROB\big[X_i=x_{i,j},\; c_{i,j}<V_i<c_{i,j}+a_j \big| c_{0,j}\le U_0 < c_{0,j}+a_j \big]\\
    &=\sum_{j=1}^{J}a_j=1 \hspace{214pt} \qquad \forall 1\le i\le N.
\end{align*}
This completes Step~2.

\textit{Step 3.}
Let us fix an arbitrary $i\in\{1,\ldots,N\}$.
It follows from the definition of $V_i$ and the property of $(U_{i,j})_{j=1:J,\,i=1:N}$ in (\ref{eqn:reassembly-1d-rvcollection}) that
\begin{align*}
    \PROB[V_i\le v,\; c_{0,j}\le U_0 < c_{0,j}+a_j]
    &= \PROB\Big[U_{i,j}\le {\textstyle\frac{v-c_{i,j}}{a_j}} \Big| c_{0,j}\le U_0 < c_{0,j}+a_j\Big] \PROB[c_{0,j}\le U_0 < c_{0,j}+a_j]\\
    &=a_j \bigg(\frac{(v-c_{i,j})^+}{a_j} \wedge 1\bigg)\\
    &= (v-c_{i,j})^+ \wedge a_j \hspace{89pt} \qquad \forall v\in[0,1],\; \forall 1\le i\le J.
\end{align*}
Summing the above identity over $j=1,\ldots,J$ and then using the conclusion of Step~1 yields
\begin{align*}
    \PROB[V_i\le v]&= \sum_{j=1}^{J}\PROB[V_i\le v,\; c_{0,j}\le U_0 < c_{0,j}+a_j]\\
    &= \sum_{j=1}^{J} (v-c_{i,j})^+ \wedge a_j = \sum_{j=1}^{J} (v-c_{i,\sigma_i(j)})^+ \wedge a_{\sigma_i(j)}
    =v \qquad \forall v\in[0,1].
\end{align*}
Step~3 is now complete.

\textit{Step 4.}
For $j=1,\ldots,J$, one checks that on the event $\{c_{0,j}\le U_0<c_{0,j}+a_j\}$, it holds $\PROB$-almost surely that
$(X_1,\ldots,X_N)=(x_{1,j},\ldots,x_{N,j})=\BIx_j$.
Hence, because $(\BIx_j)_{j=1:J}$ are distinct points by assumption,
we get
$\PROB\big[(X_1,\ldots,X_N)=\BIx_j\big]=\PROB[c_{0,j}\le U_0<c_{0,j}+a_j]=a_j$ for $j=1,\ldots,J$.

\textit{Step 5.}
Finally, observe that Step~3 has shown that $C$ is a copula.
Combining Lemma~\ref{lem:optimal-coupling-1d}\ref{lems:optimal-coupling-1d-equivalence} with the conclusions of Step~2 and Step~4 leads to
\begin{align*}
    C\big(F_{\hat{\mu}_1}(x_1),\ldots,F_{\hat{\mu}_N}(x_N)\big)&=\PROB\big[V_1\le F_{\hat{\mu}_1}(x_1),\ldots,V_N\le F_{\hat{\mu}_N}(x_N)\big]\\
    &=\PROB[X_1\le x_1,\ldots,X_N\le x_N]\\
    &= F_{\hat{\mu}}(x_1,\ldots,x_N)  \qquad \forall (x_1,\ldots,x_N)\in\R^N.
\end{align*}
Consequently, $C$ satisfies the properties in statement~\ref{props:reassembly-1d-copula}.
Moreover, Lemma~\ref{lem:optimal-coupling-1d}\ref{lems:optimal-coupling-1d-equivalence} also implies that
\begin{align*}
    C\big(F_{{\mu}_1}(z_1),\ldots,F_{{\mu}_N}(z_N)\big)&=\PROB\big[V_1\le F_{{\mu}_1}(z_1),\ldots,V_N\le F_{{\mu}_N}(z_N)\big]\\
    &=\PROB[Z_1\le z_1,\ldots,Z_N\le z_N] \qquad \forall (z_1,\ldots,z_N)\in\R^N.
\end{align*}
Now, statement~\ref{props:reassembly-1d-semidisc} follows from statement~\ref{props:reassembly-1d-copula}.
The proof is complete.
\end{proof}

% Proof of Proposition (reassembly in $\R^d$)
The proof of Proposition~\ref{prop:reassembly-dd}
is built upon the following property of Euclidean spaces with $d\ge 2$ dimensions equipped with a norm under which the closed unit ball is strictly convex.

\begin{lemma}\label{lem:strict-convexity}
    Let $d\in\N\cap[2,\infty)$ and let $\R^d$ be equipped with a norm $\|\cdot\|$ under which the closed unit ball is strictly convex.
    Then, for any $\BIx_0,\BIx_1\in\R^d$ with $\BIx_0\ne\BIx_1$
    and for any $\beta\in\R$,
    the set $\big\{\BIz\in\R^d:\|\BIx_0-\BIz\|-\|\BIx_1-\BIz\|=\beta\big\}$ has Lebesgue measure~0.
\end{lemma}

\begin{proof}[Proof of Lemma~\ref{lem:strict-convexity}]
    The assumption that the closed unit ball $\big\{\BIz\in\R^d:\|\BIz\|\le 1\big\}$ is strictly convex implies 
    that $\|\cdot\|$ possesses the following property:
    \begin{align}
        \forall \BIv_0,\BIv_1\in\R^{d},\; \forall \xi\in(0,1): \quad \|\BIv_0\|=\|\BIv_1\|=\big\|\xi\BIv_0+(1-\xi)\BIv_1\big\|=1 \quad \Rightarrow \quad \BIv_0=\BIv_1.
        \label{eqn:strict-convexity-property1}
    \end{align}
    Moreover, $\|\cdot\|$ also has the following property:
    \begin{align}
        \forall \BIw_0,\BIw_1\in\R^d: \quad \|\BIw_0\|+\|\BIw_1\|=\|\BIw_0+\BIw_1\| \quad \Rightarrow \quad \|\BIw_1\|\BIw_0=\|\BIw_0\|\BIw_1.
        \label{eqn:strict-convexity-property2}
    \end{align}
    To see this, notice that 
    $\|\BIw_1\|\BIw_0=\|\BIw_0\|\BIw_1$ holds trivially if 
    $\BIw_0=\veczero$ or $\BIw_1=\veczero$.
    If $\BIw_0\ne\veczero$ and $\BIw_1\ne\veczero$
    satisfy $\|\BIw_0\|+\|\BIw_1\|=\|\BIw_0+\BIw_1\|$,
    letting 
    $\BIv_0:=\frac{\BIw_0}{\|\BIw_0\|}$,
    $\BIv_1:=\frac{\BIw_1}{\|\BIw_1\|}$,
    and $\xi:=\frac{\|\BIw_0\|}{\|\BIw_0\|+\|\BIw_1\|}\in(0,1)$
    yields
    $\|\BIv_0\|=\|\BIv_1\|=1$
    as well as
    $\big\|\xi\BIv_0+(1-\xi)\BIv_1\big\|=\frac{\|\BIw_0+\BIw_1\|}{\|\BIw_0\|+\|\BIw_1\|}=1$,
    and hence the property (\ref{eqn:strict-convexity-property1}) guarantees that
    $\|\BIw_1\|\BIw_0=\|\BIw_0\|\BIw_1$.
    In this proof, let us denote 
    $\mathrm{Hyp}(\BIv_0,\BIv_1,\beta):=\big\{\BIz\in\R^d:\|\BIv_0-\BIz\|-\|\BIv_1-\BIz\|=\beta\big\}$ 
    for any $\BIv_0,\BIv_1\in\R^{d}$ and any $\beta\in\R$,
    and let us denote the Lebesgue measure on $\R^d$ by 
    $\FL$.
    Notice that under the Euclidean norm $\|\cdot\|\leftarrow\|\cdot\|_2$,
    the set $\mathrm{Hyp}(\BIv_0,\BIv_1,\beta)$ coincides with one branch of a $d$-dimensional hyperboloid if it is non-empty.
    Let us fix arbitrary 
    $\BIx_0,\BIx_1\in\R^d$ with $\BIx_0\ne\BIx_1$,
    fix an arbitrary $\beta\in\R$, and denote
    $\BIx_{\lambda}:=(1-\lambda)\BIx_0+\lambda\BIx_1$ for all $\lambda\in\R$.
    We will use the property~(\ref{eqn:strict-convexity-property2}) of $\|\cdot\|$ to prove that $\FL\big(\mathrm{Hyp}(\BIx_0,\BIx_1,\beta)\big)=\nobreak0$.
    This will be carried out via the following three steps.
    \begin{itemize}
        \item\textit{Step~1: showing that the following properties hold:}
        \begin{align}
            \begin{split}
                \beta=-\|\BIx_0-\BIx_1\| \quad &\Leftrightarrow \quad \mathrm{Hyp}(\BIx_0,\BIx_1,\beta)=\big\{\BIx_{\lambda}:\lambda\in(-\infty,0]\big\} \\
                &\Leftrightarrow \quad \mathrm{Hyp}(\BIx_0,\BIx_1,\beta)\cap\big\{\BIx_{\lambda}:\lambda\in(-\infty,0]\big\}\ne\emptyset,\\
                \beta=\|\BIx_0-\BIx_1\| \quad &\Leftrightarrow \quad \mathrm{Hyp}(\BIx_0,\BIx_1,\beta)=\big\{\BIx_{\lambda}:\lambda\in[1,\infty)\big\} \\
                &\Leftrightarrow \quad \mathrm{Hyp}(\BIx_0,\BIx_1,\beta)\cap\big\{\BIx_{\lambda}:\lambda\in[1,\infty)\big\}\ne\emptyset.
            \end{split}\
            \label{eqn:strict-convexity-proof-degenerate}
        \end{align}
        
        \item\textit{Step~2: showing that 
        $\mathrm{Hyp}(\BIx_{\lambda},\BIx_{1+\lambda},\beta)\cap\mathrm{Hyp}(\BIx_{\lambda'},\BIx_{1+\lambda'},\beta) =\emptyset$ whenever $|\beta|\ne\|\BIx_0-\BIx_1\|$, $\lambda,\lambda'\in[0,1)$, and $\lambda\ne\lambda'$.}
        
        \item\textit{Step~3: showing that $\FL\big(\mathrm{Hyp}(\BIx_0,\BIx_1,\beta)\big)=0$.}
    \end{itemize}

    \textit{Step~1.}
    It is clear that 
    \begin{align}
        \begin{split}
        \mathrm{Hyp}(\BIx_0,\BIx_1,\beta)=\big\{\BIx_{\lambda}:\lambda\in(-\infty,0]\big\}\quad&\Rightarrow\quad \mathrm{Hyp}(\BIx_0,\BIx_1,\beta)\cap\big\{\BIx_{\lambda}:\lambda\in(-\infty,0]\big\}\ne\emptyset,\\
        \mathrm{Hyp}(\BIx_0,\BIx_1,\beta)=\big\{\BIx_{\lambda}:\lambda\in[1,\infty)\big\}\quad&\Rightarrow\quad\mathrm{Hyp}(\BIx_0,\BIx_1,\beta)\cap\big\{\BIx_{\lambda}:\lambda\in[1,\infty)\big\}\ne\emptyset.
        \end{split}
        \label{eqn:strict-convexity-proof-degenerate-1}
    \end{align}
    Observe that $\BIx_{\lambda}=\BIx_0+\lambda(\BIx_1-\BIx_0)$ $\forall\lambda\in\R$,
    and we thus have
    \begin{align}
        \|\BIx_0-\BIx_{\lambda}\|-\|\BIx_{1}-\BIx_{\lambda}\|=\big(|\lambda|-|1-\nobreak\lambda|\big)\|\BIx_0-\BIx_1\|\qquad \forall \lambda\in\R.
        \label{eqn:strict-convexity-proof-degenerate-extra}
    \end{align}
    From (\ref{eqn:strict-convexity-proof-degenerate-extra}), one verifies that
    \begin{align}
        \begin{split}
        \mathrm{Hyp}(\BIx_0,\BIx_1,\beta)\cap\big\{\BIx_{\lambda}:\lambda\in(-\infty,0]\big\}\ne\emptyset\quad&\Rightarrow\quad\beta=-\|\BIx_0-\BIx_1\|,\\
        \mathrm{Hyp}(\BIx_0,\BIx_1,\beta)\cap\big\{\BIx_{\lambda}:\lambda\in[1,\infty)\big\}\ne\emptyset \quad&\Rightarrow\quad \beta=\|\BIx_0-\BIx_1\|.
        \end{split}
        \label{eqn:strict-convexity-proof-degenerate-2}
    \end{align}
    Next, let us assume that 
    $\beta=-\|\BIx_0-\BIx_1\|$
    and fix an arbitrary $\BIz\in \mathrm{Hyp}(\BIx_0,\BIx_1,\beta)$.
    It thus holds that 
    $\|\BIx_0-\BIz\|-\|\BIx_1-\BIz\|=-\|\BIx_0-\BIx_1\|$.
    Applying (\ref{eqn:strict-convexity-property2}) with respect to 
    $\BIw_0\leftarrow \BIx_1-\BIx_0$, $\BIw_1\leftarrow \BIx_0-\BIz$
    yields
    $\|\BIx_0-\BIz\|(\BIx_1-\BIx_0)=\|\BIx_0-\BIx_1\|(\BIx_0-\BIz)$,
    which implies that
    $\BIz=\big(1+\frac{\|\BIx_0-\BIz\|}{\|\BIx_0-\BIx_1\|}\big)\BIx_0-\frac{\|\BIx_0-\BIz\|}{\|\BIx_0-\BIx_1\|}\BIx_1
    =\BIx_{\lambda_{-}}$,
    where $\lambda_{-}:=-\frac{\|\BIx_0-\BIz\|}{\|\BIx_0-\BIx_1\|}\in(-\infty,0]$.
    Consequently, we get 
    $\mathrm{Hyp}(\BIx_0,\BIx_1,\beta)\subseteq\big\{\BIx_{\lambda}:\lambda\in(-\infty,0]\big\}$.
    Moreover, $\big\{\BIx_{\lambda}:\lambda\in(-\infty,0]\big\}\subseteq \mathrm{Hyp}(\BIx_0,\BIx_1,\beta)$ 
    follows directly from (\ref{eqn:strict-convexity-proof-degenerate-extra}).
    We have thus shown that 
    \begin{align}
        \begin{split}
        \beta=-\|\BIx_0-\BIx_1\|\quad&\Rightarrow\quad\mathrm{Hyp}(\BIx_0,\BIx_1,\beta)=\big\{\BIx_{\lambda}:\lambda\in(-\infty,0]\big\}.
        \end{split}
        \label{eqn:strict-convexity-proof-degenerate-3-1}
    \end{align}
    Lastly, let us assume that
    $\beta=\|\BIx_0-\BIx_1\|$
    and fix an arbitrary $\BIz\in \mathrm{Hyp}(\BIx_0,\BIx_1,\beta)$.
    It holds that 
    $\|\BIx_0-\BIz\|-\|\BIx_1-\BIz\|=\|\BIx_0-\BIx_1\|$.
    Applying (\ref{eqn:strict-convexity-property2}) with respect to 
    $\BIw_0\leftarrow \BIx_0-\BIx_1$, $\BIw_1\leftarrow \BIx_1-\BIz$
    yields
    $\|\BIx_1-\BIz\|(\BIx_0-\BIx_1)=\|\BIx_0-\BIx_1\|(\BIx_1-\BIz)$,
    which implies that
    $\BIz=-\frac{\|\BIx_1-\BIz\|}{\|\BIx_0-\BIx_1\|}\BIx_0+\big(1+\frac{\|\BIx_1-\BIz\|}{\|\BIx_0-\BIx_1\|}\big)\BIx_1
    =\BIx_{\lambda_{+}}$,
    where $\lambda_{+}:=1+\frac{\|\BIx_0-\BIz\|}{\|\BIx_0-\BIx_1\|}\in[1,\infty)$.
    Consequently, we get 
    $\mathrm{Hyp}(\BIx_0,\BIx_1,\beta)\subseteq\big\{\BIx_{\lambda}:\lambda\in[1,\infty)\big\}$.
    Moreover, $\big\{\BIx_{\lambda}:\lambda\in[1,\infty)\big\}\subseteq \mathrm{Hyp}(\BIx_0,\BIx_1,\beta)$ 
    follows directly from (\ref{eqn:strict-convexity-proof-degenerate-extra}).
    We have thus shown that 
    \begin{align}
        \begin{split}
        \beta=\|\BIx_0-\BIx_1\|\quad&\Rightarrow\quad\mathrm{Hyp}(\BIx_0,\BIx_1,\beta)=\big\{\BIx_{\lambda}:\lambda\in[1,\infty)\big\}.
        \end{split}
        \label{eqn:strict-convexity-proof-degenerate-3-2}
    \end{align}
    Combining (\ref{eqn:strict-convexity-proof-degenerate-1}), (\ref{eqn:strict-convexity-proof-degenerate-2}),
    (\ref{eqn:strict-convexity-proof-degenerate-3-1}), and (\ref{eqn:strict-convexity-proof-degenerate-3-2}) proves 
    (\ref{eqn:strict-convexity-proof-degenerate}).

    \textit{Step~2.}
    In this step, let us assume that $|\beta|\ne\|\BIx_0-\BIx_1\|$.
    Observe that
    \begin{align}
        \mathrm{Hyp}(\BIx_{\lambda},\BIx_{1+\lambda},\beta)=\mathrm{Hyp}(\BIx_{0},\BIx_{1},\beta) + \lambda (\BIx_1-\BIx_0) \qquad \forall \lambda\in[0,1).
        \label{eqn:strict-convexity-proof-shift-invariance}
    \end{align}
    Let us fix arbitrary $\lambda,\lambda'\in[0,1)$ where $\lambda>\lambda'$.
    It holds by (\ref{eqn:strict-convexity-proof-shift-invariance}) that
    \begin{align*}
        &\mathrm{Hyp}(\BIx_{\lambda},\BIx_{1+\lambda},\beta)\cap \mathrm{Hyp}(\BIx_{\lambda'},\BIx_{1+\lambda'},\beta) \\
        &\qquad= \Big(\big[\mathrm{Hyp}(\BIx_0,\BIx_1,\beta) + (\lambda-\lambda')(\BIx_1-\BIx_0)\big]\cap \mathrm{Hyp}(\BIx_0,\BIx_1,\beta)\Big) + \lambda'(\BIx_1-\BIx_0)\\
        &\qquad=\big(\mathrm{Hyp}(\BIx_{\lambda-\lambda'},\BIx_{1+\lambda-\lambda'},\beta)\cap \mathrm{Hyp}(\BIx_0,\BIx_1,\beta)\big) + \lambda'(\BIx_1-\BIx_0).
    \end{align*}
    Therefore, in order to show that 
    $\mathrm{Hyp}(\BIx_{\lambda},\BIx_{1+\lambda},\beta)\cap \mathrm{Hyp}(\BIx_{\lambda'},\BIx_{1+\lambda'},\beta)=\nobreak\emptyset$,
    we can assume without loss of generality that $\lambda'=0$.
    In the following, let us take an arbitrary $\BIz\in\mathrm{Hyp}(\BIx_0,\BIx_1,\beta)$ 
    and show that
    $\BIz\notin \mathrm{Hyp}(\BIx_{\lambda},\BIx_{1+\lambda},\beta)$.
    Note that because $\lambda\in(0,1)$,
    we have 
    $\BIx_1=\frac{1}{1+\lambda}\BIx_{1+\lambda}+\frac{\lambda}{1+\lambda}\BIx_0$,
    and the triangle inequality implies that
    \begin{align}
        \|\BIx_\lambda-\BIz\|&=\big\|(1-\lambda)(\BIx_0-\BIz)+\lambda(\BIx_1-\BIz)\big\|
        \le (1-\lambda)\|\BIx_0-\BIz\| + \lambda\|\BIx_1-\BIz\|,
        \label{eqn:strict-convexity-proof-step2-ineq1}\\
        \|\BIx_1-\BIz\|&= \big\|{\textstyle\frac{1}{1+\lambda}}(\BIx_{1+\lambda}-\BIz)+{\textstyle\frac{\lambda}{1+\lambda}}(\BIx_0-\BIz)\big\|
        \le \frac{1}{1+\lambda}\|\BIx_{1+\lambda}-\BIz\|+\frac{\lambda}{1+\lambda}\|\BIx_0-\BIz\|.
        \label{eqn:strict-convexity-proof-step2-ineq2}
    \end{align}
    We will show that both (\ref{eqn:strict-convexity-proof-step2-ineq1}) and (\ref{eqn:strict-convexity-proof-step2-ineq2}) are strict inequalities.
    First, let us suppose for the sake of contradiction that the inequality in (\ref{eqn:strict-convexity-proof-step2-ineq1}) is an equality.
    Applying (\ref{eqn:strict-convexity-property2}) with respect to
    $\BIw_0\leftarrow (1-\nobreak\lambda)(\BIx_0-\BIz)$, $\BIw_1\leftarrow \lambda(\BIx_1-\BIz)$ leads to
    $\lambda(1-\lambda)\|\BIx_1-\BIz\|(\BIx_0-\BIz)=\lambda(1-\lambda)\|\BIx_0-\BIz\|(\BIx_1-\BIz)$,
    and hence
    \begin{align}
        \big(\|\BIx_0-\BIz\|-\|\BIx_1-\BIz\|\big)\BIz=\|\BIx_0-\BIz\|\BIx_1-\|\BIx_1-\BIz\|\BIx_0.
        \label{eqn:strict-convexity-proof-step2-contra1}
    \end{align}
    If $\|\BIx_0-\BIz\|-\|\BIx_1-\BIz\|=0$, then (\ref{eqn:strict-convexity-proof-step2-contra1}) implies that $\BIx_0=\BIx_1$, 
    which contradicts the assumption that $\BIx_0\ne\BIx_1$.
    Thus, $\|\BIx_0-\BIz\|-\|\BIx_1-\BIz\|\ne0$ and (\ref{eqn:strict-convexity-proof-step2-contra1})
    implies that
    $\BIz=-\frac{\|\BIx_1-\BIz\|}{\|\BIx_0-\BIz\|-\|\BIx_1-\BIz\|}\BIx_0+\frac{\|\BIx_0-\BIz\|}{\|\BIx_0-\BIz\|-\|\BIx_1-\BIz\|}\BIx_1=\BIx_{\hat{\lambda}}$,
    where $\hat{\lambda}:=\frac{\|\BIx_0-\BIz\|}{\|\BIx_0-\BIz\|-\|\BIx_1-\BIz\|}$.
    One checks that $\hat{\lambda}(1-\hat{\lambda})\le 0$ and thus $\hat{\lambda}\in(-\infty,0]\cup[1,\infty)$.
    This shows that $\big(\mathrm{Hyp}(\BIx_0,\BIx_1,\beta)\cap \big\{\BIx_{\bar{\lambda}}:\bar{\lambda}\in(-\infty,0]\big\}\big) \cup \big(\mathrm{Hyp}(\BIx_0,\BIx_1,\beta)\cap \big\{\BIx_{\bar{\lambda}}:\bar{\lambda}\in[1,\infty)\big\}\big)\neq\nobreak\emptyset$.
    Subsequently, applying (\ref{eqn:strict-convexity-proof-degenerate}) yields
    $|\beta|=\|\BIx_0-\BIx_1\|$, which contradicts the assumption that $|\beta|\ne\|\BIx_0-\BIx_1\|$.
    We thus conclude that the inequality in (\ref{eqn:strict-convexity-proof-step2-ineq1}) is strict.
    Next, let us suppose for the sake of contradiction that the inequality in (\ref{eqn:strict-convexity-proof-step2-ineq2}) is an equality.
    Applying (\ref{eqn:strict-convexity-property2}) with respect to
    $\BIw_0\leftarrow \frac{1}{1+\lambda}(\BIx_{1+\lambda}-\BIz)$, $\BIw_1\leftarrow \frac{\lambda}{1+\lambda}(\BIx_0-\BIz)$ yields
    $\frac{\lambda}{(1+\lambda)^2}\|\BIx_0-\BIz\|(\BIx_{1+\lambda}-\BIz)=\frac{\lambda}{(1+\lambda)^2}\|\BIx_{1+\lambda}-\BIz\|(\BIx_0-\BIz)$,
    and hence
    \begin{align}
        \begin{split}
            \big(\|\BIx_{1+\lambda}-\BIz\|-\|\BIx_0-\BIz\|\big)\BIz &= \|\BIx_{1+\lambda}-\BIz\|\BIx_0 - \|\BIx_0-\BIz\|\BIx_{1+\lambda}\\
            &=\big(\|\BIx_{1+\lambda}-\BIz\|+\lambda\|\BIx_0-\BIz\|\big)\BIx_0 - (1+\lambda)\|\BIx_0-\BIz\|\BIx_1.
        \end{split}
        \label{eqn:strict-convexity-proof-step2-contra2}
    \end{align}
    If $\|\BIx_{1+\lambda}-\BIz\|-\|\BIx_0-\BIz\|=0$, 
    then (\ref{eqn:strict-convexity-proof-step2-contra2}) implies that 
    $\BIx_0=\BIx_{1+\lambda}=-\lambda\BIx_{0}+(1+\lambda)\BIx_1$ and thus 
    $\BIx_0=\BIx_1$, which contradicts the assumption that $\BIx_0\ne\BIx_1$.
    Thus, $\|\BIx_{1+\lambda}-\BIz\|-\|\BIx_0-\BIz\|\ne 0$ and (\ref{eqn:strict-convexity-proof-step2-contra2}) implies that
    $\BIz=\frac{\|\BIx_{1+\lambda}-\BIz\|+\lambda\|\BIx_0-\BIz\|}{\|\BIx_{1+\lambda}-\BIz\|-\|\BIx_0-\BIz\|}\BIx_0-\frac{(1+\lambda)\|\BIx_0-\BIz\|}{\|\BIx_{1+\lambda}-\BIz\|-\|\BIx_0-\BIz\|}\BIx_{1}=\BIx_{\tilde{\lambda}}$,
    where $\tilde{\lambda}:=-\frac{(1+\lambda)\|\BIx_0-\BIz\|}{\|\BIx_{1+\lambda}-\BIz\|-\|\BIx_0-\BIz\|}$.
    One checks that $\tilde{\lambda}(1-\tilde{\lambda})\le 0$ and hence $\tilde{\lambda}\in(-\infty,0]\cup[1,\infty)$.
    This again shows that $\big(\mathrm{Hyp}(\BIx_0,\BIx_1,\beta)\cap \big\{\BIx_{\bar{\lambda}}:\bar{\lambda}\in(-\infty,0]\big\}\big) \cup \big(\mathrm{Hyp}(\BIx_0,\BIx_1,\beta)\cap \big\{\BIx_{\bar{\lambda}}:\bar{\lambda}\in[1,\infty)\big\}\big)\neq\nobreak\emptyset$.
    Subsequently, applying (\ref{eqn:strict-convexity-proof-degenerate}) yields
    $|\beta|=\|\BIx_0-\BIx_1\|$, which contradicts the assumption that $|\beta|\ne\|\BIx_0-\BIx_1\|$.
    Hence, we conclude that the inequality in (\ref{eqn:strict-convexity-proof-step2-ineq2}) is strict.
    Now, combining (\ref{eqn:strict-convexity-proof-step2-ineq1}) and (\ref{eqn:strict-convexity-proof-step2-ineq2}) results in
    \begin{align*}
        \|\BIx_{\lambda}-\BIz\|-\|\BIx_{1+\lambda}-\BIz\| &< \big((1-\lambda)\|\BIx_0-\BIz\|+\lambda\|\BIx_1-\BIz\|\big) + \big(\lambda\|\BIx_0-\BIz\|-(1+\lambda)\|\BIx_1-\BIz\|\big)\\
        &=\|\BIx_0-\BIz\|-\|\BIx_1-\BIz\|=\beta,
    \end{align*}
    which shows that $\BIz\notin\mathrm{Hyp}(\BIx_{\lambda},\BIx_{1+\lambda},\beta)$.
    This completes Step~2.

    \textit{Step~3.}
    In this step, let $B(r):=\big\{\BIz\in\R^d:\|\BIz\|\le r\big\}$ and 
    $\mathrm{Hyp}_{r}(\BIx_0,\BIx_1,\beta):=\mathrm{Hyp}(\BIx_0,\BIx_1,\beta)\cap B(r)$ for all $r>0$.
    In the case where $|\beta|=\|\BIx_0-\BIx_0\|$, 
    it follows from Step~1 that 
    $\mathrm{Hyp}(\BIx_0,\BIx_1,\beta)\subset\big\{\BIx_{\lambda}:\lambda\in\R\big\}$.
    Since $d\ge 2$ and $\big\{\BIx_{\lambda}:\lambda\in\R\big\}$ is a one-dimensional affine subset in $\R^d$,
    we get
    $\FL\big(\mathrm{Hyp}(\BIx_0,\BIx_1,\beta)\big)\le \FL\big(\big\{\BIx_{\lambda}:\lambda\in\R\big\}\big)=\nobreak0$.
    In the case where $|\beta|\ne\|\BIx_0-\BIx_1\|$, we have
    \begin{align}
        \bigcup_{\lambda\in[0,1)\,\cap\,\Q} \big(\mathrm{Hyp}_r(\BIx_0,\BIx_1,\beta) + \lambda (\BIx_1 - \BIx_0)\big)\subseteq B\big(r+\|\BIx_0-\BIx_1\|\big) \qquad \forall r>0.
        \label{eqn:strict-convexity-proof-countable-union}
    \end{align}
    Moreover, it follows from (\ref{eqn:strict-convexity-proof-shift-invariance}) that
    \begin{align*}
        \mathrm{Hyp}_{r}(\BIx_0,\BIx_1,\beta)+\lambda(\BIx_1-\BIx_0)& \subseteq \mathrm{Hyp}(\BIx_0,\BIx_1,\beta) + \lambda(\BIx_1-\BIx_0)\\
        &= \mathrm{Hyp}(\BIx_{\lambda},\BIx_{1+\lambda},\beta) \qquad \forall \lambda\in[0,1)\cap\Q,\; \forall r>0,
    \end{align*}
    and thus Step~2 implies that the sets 
    $\big(\mathrm{Hyp}_{r}(\BIx_0,\BIx_1,\beta)+\lambda(\BIx_1-\BIx_0)\big)_{\lambda\in[0,1)\,\cap\,\Q}$ are pair-wise disjoint for all $r>0$.
    Consequently, 
    (\ref{eqn:strict-convexity-proof-countable-union}) and the translation invariance property of $\FL$ imply that
    \begin{align*}
        \sum_{\lambda\in[0,1)\,\cap\,\Q}\FL\big(\mathrm{Hyp}_{r}(\BIx_0,\BIx_1,\beta)\big)
        &= \sum_{\lambda\in[0,1)\,\cap\,\Q}\FL\big(\mathrm{Hyp}_{r}(\BIx_0,\BIx_1,\beta) + \lambda(\BIx_1-\BIx_0)\big)\\
        &=\FL\Bigg(\bigcup_{\lambda\in[0,1)\,\cap\,\Q}\big[\mathrm{Hyp}_{r}(\BIx_0,\BIx_1,\beta) + \lambda(\BIx_1-\BIx_0)\big]\Bigg)\\
        &\le \FL\big(B\big(r+\|\BIx_0-\BIx_1\|\big)\big)<\infty \qquad\qquad\qquad\qquad \forall r>0.
    \end{align*}
    This shows that $\FL\big(\mathrm{Hyp}_{r}(\BIx_0,\BIx_1,\beta)\big)=0$ for all $r>0$.
    Finally, 
    we get $\FL\big(\mathrm{Hyp}(\BIx_0,\BIx_1,\beta)\big)=\FL\big(\bigcup_{n\in\N}\mathrm{Hyp}_{n}(\BIx_0,\BIx_1,\beta)\big)=\lim_{n\to\infty}\FL\big(\mathrm{Hyp}_{n}(\BIx_0,\BIx_1,\beta)\big)=\nobreak0$.
    The proof is now complete.
\end{proof}

\begin{proof}[Proof of Proposition~\ref{prop:reassembly-dd}]
Let us first fix an arbitrary $i\in\{1,\ldots,N\}$ and prove statement~\ref{props:reassembly-dd1}. 
Notice that since $d_{\CX_i}$ is induced by the norm $\|\cdot\|$ on $\R^{d_i}$,
it is continuous and non-negative.
Moreover, 
for any $\BIv\in\R^{d_i}$, it holds by the triangle inequality that
$d_{\CX_i}(\BIx,\BIz)\le d_{\CX_i}(\BIx,\BIv)+d_{\CX_i}(\BIz,\BIv)$ for all $\BIx,\BIz\in\R^{d_i}$,
where $d_{\CX_i}(\,\cdot\,,\BIv)$ is continuous and belongs to both $\CL^1(\R^{d_i},\hat{\mu}_i)$ and $\CL^1(\R^{d_i},\mu_i)$.
Consequently, it follows from 
\citep[Theorem~5.10(iii)]{villani2008optimal} that
\begin{align}
\begin{split}
W_1(\hat{\mu}_i,\mu_i)&=\inf_{\gamma_i\in\Gamma(\hat{\mu}_i,\mu_i)}\bigg\{\int_{\R^{d_i}\times\R^{d_i}}d_{\CX_i}(\BIx,\BIz)\DIFFM{\gamma_i}{\DIFF\BIx,\DIFF\BIz}\bigg\}\\
&=\max_{\phi\in\CL^1(\R^{d_i}\!,\,\hat{\mu}_i)}\left\{\int_{\R^{d_i}}\phi(\BIx)\DIFFM{\hat{\mu}_i}{\DIFF\BIx}-\int_{\R^{d_i}}\sup_{\BIx\in\R^{d_i}}\big\{\phi(\BIx)-\|\BIx-\BIz\|\big\}\DIFFM{\mu_i}{\DIFF\BIz}\right\},
\end{split}
\label{eqn:reassembly-dd-proof-kant}
\end{align}
where the maximum is attained.
Let $\phi^\star\in\CL^1(\R^{d_i},\hat{\mu}_i)$ be an arbitrary optimal solution of the maximization problem in (\ref{eqn:reassembly-dd-proof-kant}).
We will show that
\begin{align}
\int_{\R^{d_i}}\sup_{\BIx\in\R^{d_i}}\big\{\phi^\star(\BIx)-\|\BIx-\BIz\|\big\}\DIFFM{\mu_i}{\DIFF\BIz}=\int_{\R^{d_i}}\max_{1\le j\le J_i}\big\{\phi^\star(\BIx_{i,j})-\|\BIx_{i,j}-\BIz\|\big\}\DIFFM{\mu_i}{\DIFF\BIz}.
\label{eqn:reassembly-dd-proof-supmax}
\end{align}
Suppose for the sake of contradiction that (\ref{eqn:reassembly-dd-proof-supmax}) does not hold. Then, because $\sup_{\BIx\in\R^{d_i}}\big\{\phi^\star(\BIx)-\|\BIx-\nobreak\BIz\|\big\}\ge\max_{1\le j\le J_i}\big\{\phi^\star(\BIx_{i,j})-\|\BIx_{i,j}-\BIz\|\big\}$ for all $\BIz\in\R^{d_i}$, there exist $\beta>0$ and a set 
\begin{align*}
E_\beta:=\Bigg\{\BIz\in\R^{d_i}:\sup_{\BIx\in\R^{d_i}}\big\{\phi^\star(\BIx)-\|\BIx-\BIz\|\big\}-\max_{1\le j\le J_i}\big\{\phi^\star(\BIx_{i,j})-\|\BIx_{i,j}-\BIz\|\big\}>\beta\Bigg\}\in \CB(\R^{d_i})
\end{align*}
with $\mu_i(E_\beta)>0$. 
Let us define $\tilde{\phi}_{\beta}\in\CL^1(\R^{d_i},\hat{\mu}_i)$ as follows:
\begin{align*}
\tilde{\phi}_{\beta}(\BIx):=\begin{cases}
\phi^\star(\BIx) & \forall \BIx\in\{\BIx_{i,j}:1\le j\le J_i\},\\
\phi^\star(\BIx)-\beta & \forall \BIx\in\R^{d_i}\setminus \{\BIx_{i,j}:1\le j\le J_i\}. 
\end{cases}
\end{align*}
It holds that $\tilde{\phi}_{\beta}(\BIx)\le \phi^\star(\BIx)$ for all $\BIx\in\R^{d_i}$.
Subsequently, let us fix an arbitrary $\tilde{\BIz}\in E_{\beta}$.
It holds that
\begin{align}
    \begin{split}
    \sup_{\BIx\in\R^{d_i}}\big\{\phi^\star(\BIx)-\|\BIx-\tilde{\BIz}\|-\beta\big\}
    &>\max_{1\le j\le J_i}\big\{\phi^\star(\BIx_{i,j})-\|\BIx_{i,j}-\tilde{\BIz}\|\big\}\\
    &=\max_{1\le j\le J_i}\big\{\tilde{\phi}_{\beta}(\BIx_{i,j})-\|\BIx_{i,j}-\BIz\|\big\},
    \end{split}
    \label{eqn:reassembly-dd-proof-step1-property1}\\
    \sup_{\BIx\in\R^{d_i}}\big\{\phi^\star(\BIx)-\|\BIx-\tilde{\BIz}\|\big\}
    &=\sup_{\BIx\in\R^{d_i}\setminus\{\BIx_{i,j}:1\le j\le J_i\}}\big\{\phi^\star(\BIx)-\|\BIx-\tilde{\BIz}\|\big\}.
    \label{eqn:reassembly-dd-proof-step1-property2}
\end{align}
Then, (\ref{eqn:reassembly-dd-proof-step1-property1}) implies that there exists 
$\tilde{\BIx}\in\R^{d_i}\setminus\{\BIx_{i,j}:1\le j\le J_i\}$
which satisfies 
$\tilde{\phi}_{\beta}(\tilde{\BIx})-\|\tilde{\BIx}-\tilde{\BIz}\|=\phi^\star(\tilde{\BIx})-\|\tilde{\BIx}-\tilde{\BIz}\|-\beta > \max_{1\le j\le J_i}\big\{\tilde{\phi}_{\beta}(\BIx_{i,j})-\|\BIx_{i,j}-\BIz\|\big\}$.
It hence holds by (\ref{eqn:reassembly-dd-proof-step1-property2}) that
\begin{align*}
    \sup_{\BIx\in\R^{d_i}}\big\{\tilde{\phi}_{\beta}(\BIx)-\|\BIx-\tilde{\BIz}\|\big\}
    &= \sup_{\BIx\in\R^{d_i}\setminus\{\BIx_{i,j}:1\le j\le J_i\}}\big\{\tilde{\phi}_{\beta}(\BIx)-\|\BIx-\tilde{\BIz}\|\big\}\\
    &= \sup_{\BIx\in\R^{d_i}\setminus\{\BIx_{i,j}:1\le j\le J_i\}}\big\{\phi^\star(\BIx)-\|\BIx-\tilde{\BIz}\|\big\}-\beta\\
    &= \sup_{\BIx\in\R^{d_i}}\big\{\phi^\star(\BIx)-\|\BIx-\tilde{\BIz}\|\big\}-\beta.
\end{align*}
Since $\tilde{\BIz}\in E_\beta$ is arbitrary, we obtain
\begin{align*}
\int_{\R^{d_i}}\sup_{\BIx\in\R^{d_i}}\big\{\phi^\star(\BIx)-\|\BIx-\BIz\|\big\}\DIFFM{\mu_i}{\DIFF\BIz}-\int_{\R^{d_i}}\sup_{\BIx\in\R^{d_i}}\big\{\tilde{\phi}_{\beta}(\BIx)-\|\BIx-\BIz\|\big\}\DIFFM{\mu_i}{\DIFF\BIz}\ge\beta\mu_i(E_\beta)>0.
\end{align*}
Moreover, because $\int_{\R^{d_i}}\tilde{\phi}_{\beta}(\BIx)\DIFFM{\hat{\mu}_i}{\DIFF\BIx}=\int_{\R^{d_i}}\phi^\star(\BIx)\DIFFM{\hat{\mu}_i}{\DIFF\BIx}$,
we get
\begin{align*}
    &\int_{\R^{d_i}}\tilde{\phi}_{\beta}(\BIx)\DIFFM{\hat{\mu}_i}{\DIFF\BIx}-\int_{\R^{d_i}}\sup_{\BIx\in\R^{d_i}}\big\{\tilde{\phi}_{\beta}(\BIx)-\|\BIx-\BIz\|\big\}\DIFFM{\mu_i}{\DIFF\BIz}\\
    &\qquad >\int_{\R^{d_i}}\phi^\star(\BIx)\DIFFM{\hat{\mu}_i}{\DIFF\BIx}-\int_{\R^{d_i}}\sup_{\BIx\in\R^{d_i}}\big\{\phi^\star(\BIx)-\|\BIx-\nobreak\BIz\|\big\}\DIFFM{\mu_i}{\DIFF\BIz},
\end{align*}
which contradicts the optimality of $\phi^\star$ for the maximization problem in 
(\ref{eqn:reassembly-dd-proof-kant}).
Therefore, we conclude that (\ref{eqn:reassembly-dd-proof-supmax}) holds.
Combining (\ref{eqn:reassembly-dd-proof-kant}) and (\ref{eqn:reassembly-dd-proof-supmax})
leads to 
\begin{align}
    \begin{split}
        W_1(\hat{\mu}_i,\mu_i)&=\int_{\R^{d_i}}\phi^\star(\BIx)\DIFFM{\hat{\mu}_i}{\DIFF\BIx}-\int_{\R^{d_i}}\max_{1\le j\le J_i}\big\{\phi^\star(\BIx_{i,j})-\|\BIx_{i,j}-\nobreak\BIz\|\big\}\DIFFM{\mu_i}{\DIFF\BIz}\\
        &=\sum_{j=1}^{J_i}a_{i,j}\phi^\star(\BIx_{i,j})-\int_{\R^{d_i}}\max_{1\le j\le J_i}\big\{\phi^\star(\BIx_{i,j})-\|\BIx_{i,j}-\nobreak\BIz\|\big\}\DIFFM{\mu_i}{\DIFF\BIz}\\
        &\le \sup_{(\phi_{i,j})_{j=1:J_i}\subset\R}\Bigg\{\sum_{j=1}^{J_i}a_{i,j}\phi_{i,j}-\int_{\R^{d_i}}\max_{1\le j\le J_i}\big\{\phi_{i,j}-\|\BIx_{i,j}-\nobreak\BIz\|\big\}\DIFFM{\mu_i}{\DIFF\BIz}\Bigg\}.
    \end{split}
    \label{eqn:reassembly-dd-proof-kant-updated}
\end{align}
It remains to show that
\begin{align}
    \sup_{(\phi_{i,j})_{j=1:J_i}\subset\R}\Bigg\{\sum_{j=1}^{J_i}a_{i,j}\phi_{i,j}-\int_{\R^{d_i}}\max_{1\le j\le J_i}\big\{\phi_{i,j}-\|\BIx_{i,j}-\nobreak\BIz\|\big\}\DIFFM{\mu_i}{\DIFF\BIz}\Bigg\}\le W_1(\hat{\mu}_i,\mu_i).
    \label{eqn:reassembly-dd-proof-kant-converse}
\end{align}
To that end, let us fix arbitrary $(\phi_{i,j})_{j=1:J_i}\subset\R$ and define 
$\tilde{\phi}:\R^{d_i}\to\R$ as follows:
\begin{align*}
    \tilde{\phi}(\BIx):=\begin{cases}
        \tilde{\phi}_{i,j} & \text{if }\BIx=\BIx_{i,j},\; \forall 1\le j\le J_i, \\
        \tilde{\phi}_{i,1} - \|\BIx-\BIx_{i,1}\| & \forall \BIx \in \R^{d_i}\setminus \{\BIx_{i,j}:1\le j\le J_i\}.
    \end{cases}
\end{align*}
One checks that $\tilde{\phi}\in\CL^1(\R^{d_i},\hat{\mu}_i)$.
Moreover, it holds that
\begin{align*}
    \tilde{\phi}(\BIx)-\|\BIx-\BIz\|&=\tilde{\phi}_{i,j}-\|\BIx-\BIx_{i,1}\| - \|\BIx-\BIz\|\\
    &\le \tilde{\phi}(\BIx_{i,1})-\|\BIx_{i,1}-\BIz\|\\
    &\le \max_{1\le j\le J_i}\big\{\tilde{\phi}_{i,j}-\|\BIx_{i,j}-\BIz\|\big\} \qquad \forall \BIx\in\R^{d_i}\setminus \{\BIx_{i,j}:1\le j\le J_i\},\; \forall \BIz\in\R^{d_i}.
\end{align*}
It follows that
$\sup_{\BIx\in\R^{d_i}}\big\{\tilde{\phi}(\BIx)-\|\BIx-\BIz\|\big\}=\max_{1\le j\le J_i}\big\{\tilde{\phi}_{i,j}-\|\BIx_{i,j}-\BIz\|\big\}$ for all $\BIz\in\R^{d_i}$,
and thus (\ref{eqn:reassembly-dd-proof-kant}) implies that 
\begin{align*}
    W_1(\hat{\mu}_i,\mu_i)&\ge \int_{\R^{d_i}}\tilde{\phi}(\BIx)\DIFFM{\hat{\mu}_i}{\DIFF\BIx}-\int_{\R^{d_i}}\sup_{\BIx\in\R^{d_i}}\big\{\tilde{\phi}(\BIx)-\|\BIx-\BIz\|\big\}\DIFFM{\mu_i}{\DIFF\BIz}\\
    &= \sum_{j=1}^{J_i}a_{i,j}\tilde{\phi}_{i,j} - \int_{\R^{d_i}}\max_{1\le j\le J_i}\big\{\tilde{\phi}_{i,j}-\|\BIx_{i,j}-\BIz\|\big\}\DIFFM{\mu_i}{\DIFF\BIz}.
\end{align*}
Taking the supremum over all $(\tilde{\phi}_{i,j})_{j=1:J_i}\subset\R$ in the above inequality proves (\ref{eqn:reassembly-dd-proof-kant-converse}).
We can now define
$\phi^\star_{i,j}:=\phi^\star(\BIx_{i,j})$ for $j=1,\ldots,J_i$ and
conclude by (\ref{eqn:reassembly-dd-proof-kant-updated}) and (\ref{eqn:reassembly-dd-proof-kant-converse}) that 
$(\phi^\star_{i,j})_{j=1:J_i}$ is an optimal solution of (\ref{eqn:reassembly-dd-kant}),
and that the optimal value of (\ref{eqn:reassembly-dd-kant}) is equal to $W_1(\hat{\mu}_i,\mu_i)$.
This completes the proof of statement~\ref{props:reassembly-dd1}.

To prove statement~\ref{props:reassembly-dd2},
let us fix an arbitrary $i\in\{1,\ldots,N\}$ as well as an optimal solution 
$(\phi^\star_{i,j})_{j=1:J_i}\subset\R$ of the problem~(\ref{eqn:reassembly-dd-kant}),
and examine the first-order optimality condition of (\ref{eqn:reassembly-dd-kant})
with respect to $(\phi^\star_{i,j})_{j=1:J_i}$.
First, let us define the sets $(\widetilde{V}_{i,j})_{j=1:J_i}$ as follows: 
\begin{align}
    \begin{split}
    \widetilde{V}_{i,j}:=\big\{\BIz\in\R^{d_i}:\phi_{i,j}^\star-\|\BIx_{i,j}-\BIz\|>\phi_{i,j'}^\star-\|\BIx_{i,j'}-\BIz\|\;&\forall j'\in\{1,\ldots,J_i\}\setminus\{j\}\big\}\\
    & \qquad\qquad\qquad \forall 1\le j\le J_i.
    \end{split}
    \label{eqn:reassembly-dd-proof-Vtilde}
\end{align}
For $j=1,\ldots,J_i$, comparing (\ref{eqn:reassembly-dd-partition}) and (\ref{eqn:reassembly-dd-proof-Vtilde}) yields $\widetilde{V}_{i,j}\subseteq V_{i,j}$ and
\begin{align}
V_{i,j}\setminus\widetilde{V}_{i,j}\subseteq\bigcup_{j'\in\{1,\ldots,J_i\}\setminus\{j\}}\Big\{\BIz\in\R^{d_i}:\|\BIx_{i,j}-\BIz\|-\|\BIx_{i,j'}-\BIz\|=\phi_{i,j}^\star-\phi_{i,j'}^\star\Big\}.
\label{eqn:reassembly-dd-proof-boundary}
\end{align}
Recalling that $(\BIx_{i,j})_{j=1:J_i}$ are distinct by assumption, 
and applying Lemma~\ref{lem:strict-convexity} to each set in the union on the right-hand side of (\ref{eqn:reassembly-dd-proof-boundary}) with respect to
$\BIx_0\leftarrow \BIx_{i,j}$,
$\BIx_1\leftarrow\BIx_{i,j'}$,
$\beta\leftarrow \phi^\star_{i,j}-\phi^\star_{i,j'}$
shows that the right-hand side of (\ref{eqn:reassembly-dd-proof-boundary}) has Lebesgue measure~0,
and hence is $\mu_i$-negligible due to the assumption that $\mu_i$ is absolutely continuous with respect to the Lebesgue measure on $\R^{d_i}$.
Therefore, we conclude that 
\begin{align}
    \mu_i(\widetilde{V}_{i,j})=\mu_i(V_{i,j}) \qquad \forall 1\le j\le J_i.
    \label{eqn:reassembly-dd-proof-laguerre-meas-eq}
\end{align}
Moreover, using the property that $(\widetilde{V}_{i,j})_{j=1:J_i}$ are pair-wise disjoint,
we get $1\ge \mu_i\big(\bigcup_{j=1}^{J_i}\widetilde{V}_{i,j}\big)=\sum_{j=1}^{J_i}\mu_i(\widetilde{V}_{i,j})=\sum_{j=1}^{J_i}\mu_i(V_{i,j})\ge 1$,
which yields
\begin{align}
    \sum_{j=1}^{J_i}\INDI_{\widetilde{V}_{i,j}}(\BIz)
    =\sum_{j=1}^{J_i}\INDI_{V_{i,j}}(\BIz)
    = 1 \text{ for }\mu_i\text{-almost every } \BIz\in\R^{d_i}.
    \label{eqn:reassembly-dd-proof-laguerre-cover}
\end{align}
Next, let
$\Bphi^\star\in\R^{J_i}$,
$h:\R^{J_i}\times\R^{d_i}\to \R$,
$\Bzeta_{h}:\R^{d_i}\to\R^{J_i}$,
$u:\R^{J_i}\to\R$,
and $\Bzeta_{u}\in\R^{J_i}$ be defined as follows:
\begin{align*}
    \Bphi^\star&:=(\phi^\star_{i,1},\ldots,\phi^\star_{i,J_i})^\TRANSP,\\
    h(\Bphi,\BIz)&:= \max_{1\le j\le J_i}\big\{\phi_{i,j}- \|\BIx_{i,j}-\BIz\|\big\} && \forall \Bphi=(\phi_{i,1},\ldots,\phi_{i,J_i})^\TRANSP\in\R^{J_i},\; \forall \BIz\in\R^{d_i},\\
    \Bzeta_h(\BIz)&:= \big(\INDI_{\widetilde{V}_{i,1}}(\BIz),\ldots,\INDI_{\widetilde{V}_{i,J_i}}(\BIz)\big)^\TRANSP && \forall \BIz\in\R^{d_i},\\
    u(\Bphi)&:= \sum_{j=1}^{J_i}a_{i,j}\phi_{i,j} - \int_{\R^{d_i}}h(\Bphi,\BIz)\DIFFM{\mu_i}{\DIFF\BIz} && \forall \Bphi=(\phi_{i,1},\ldots,\phi_{i,J_i})^\TRANSP\in\R^{J_i},\\
    \Bzeta_u&:= \big(a_{i,1}-\mu_i(\widetilde{V}_{i,1}),\ldots,a_{i,J_i}-\mu_i(\widetilde{V}_{i,J_i})\big)^\TRANSP.
\end{align*}
For $j=1,\ldots,J_i$ and for any $\BIz\in \widetilde{V}_{i,j}$, it holds by the definition of $\widetilde{V}_{i,j}$ that there exists $\epsilon>0$ such that
\begin{align*}
    h(\Bphi^\star+\Blambda,\BIz)&=\phi^\star_{i,j}+\lambda_j-\|\BIx_{i,j}-\BIz\|=h(\Bphi^\star,\BIz) + \langle\Bzeta_h(\BIz),\Blambda\rangle \\ 
    & \hspace{130pt}\forall \Blambda=(\lambda_1,\ldots,\lambda_{J_i})^\TRANSP\in\big\{\BIw\in\R^{J_i}:\|\BIw\|_2<\epsilon\big\}.
\end{align*}
Hence, (\ref{eqn:reassembly-dd-proof-laguerre-cover}) guarantees that
\begin{align}
    \lim_{\Blambda\to\veczero}\frac{1}{\|\Blambda\|_2}\big|h(\Bphi^\star+\Blambda,\BIz)-h(\Bphi^\star,\BIz)-\langle\Bzeta_h(\BIz),\Blambda\rangle\big|=0 \text{ for }\mu_i\text{-almost every }\BIz\in\R^{d_i}.
    \label{eqn:reassembly-dd-proof-inner-gradient}
\end{align}
Moreover, let
\begin{align*}
    j(\Blambda,\BIz)\in \argmax_{1\le j\le J_i}\big\{\phi^\star_{i,j}+\lambda_j-\|\BIx_{i,j}-\BIz\|\big\} \qquad \forall \Blambda=(\lambda_1,\ldots,\lambda_{J_i})^\TRANSP\in\R^{J_i},\; \forall \BIz\in\R^{d_i}.
\end{align*}
It thus holds that
\begin{align*}
    h(\Bphi^\star+\Blambda,\BIz)-h(\Bphi^\star,\BIz)
    &=\phi^\star_{i,j(\Blambda,z)}+\lambda_{j(\Blambda,\BIz)}-\|\BIx_{i,j(\Blambda,\BIz)}-\BIz\| - \max_{1\le j\le J_i}\big\{\phi^\star_{i,j}-\|\BIx_{i,j}-\BIz\|\big\}\\
    &\le \phi^\star_{i,j(\Blambda,z)}+\lambda_{j(\Blambda,\BIz)}-\|\BIx_{i,j(\Blambda,\BIz)}-\BIz\| - \phi^\star_{i,j(\Blambda,\BIz)}+\|\BIx_{i,j(\Blambda,\BIz)}-\BIz\|\\
    &\le \|\Blambda\|_2 \qquad\qquad\qquad\qquad\qquad \forall \Blambda=(\lambda_1,\ldots,\lambda_{J_i})^\TRANSP\in\R^{J_i},\; \forall \BIz\in\R^{d_i},\\
    h(\Bphi^\star,\BIz)-h(\Bphi^\star+\Blambda,\BIz)
    &=\phi^\star_{i,j(\veczero,z)}-\|\BIx_{i,j(\veczero,\BIz)}-\BIz\| - \max_{1\le j\le J_i}\big\{\phi^\star_{i,j}+\lambda_{j}-\|\BIx_{i,j}-\BIz\|\big\}\\
    &\le \phi^\star_{i,j(\veczero,z)}-\|\BIx_{i,j(\veczero,\BIz)}-\BIz\| - \phi^\star_{i,j(\veczero,\BIz)} - \lambda_{j(\veczero,\BIz)}+\|\BIx_{i,j(\veczero,\BIz)}-\BIz\|\\
    &\le \|\Blambda\|_2 \qquad\qquad\qquad\qquad\qquad \forall \Blambda=(\lambda_1,\ldots,\lambda_{J_i})^\TRANSP\in\R^{J_i},\; \forall \BIz\in\R^{d_i}.
\end{align*}
Combining these two inequalities and using the Cauchy--Schwarz inequality yield
\begin{align}
    \begin{split}
    &\frac{1}{\|\Blambda\|_2}\big|h(\Bphi^\star+\Blambda,\BIz) - h(\Bphi^\star,\BIz) - \langle\Bzeta_h(\BIz),\Blambda\rangle\big|\\
    &\qquad\le \frac{1}{\|\Blambda\|_2}\big(\big|h(\Bphi^\star+\Blambda,\BIz) - h(\Bphi^\star,\BIz)\big|+\|\Bzeta_h(\BIz)\|_2\|\Blambda\|_2\big)\\
    &\qquad\le \frac{1}{\|\Blambda\|_2}\big(\|\Blambda\|_2+\|\Blambda\|_2\big) = 2 \qquad \forall \Blambda\in\R^{J_i},\; \forall \BIz\in\R^{d_i}.
    \end{split}
    \label{eqn:reassembly-dd-proof-dominated}
\end{align}
Consequently, we can use (\ref{eqn:reassembly-dd-proof-inner-gradient}),
(\ref{eqn:reassembly-dd-proof-dominated}), and Lebesgue's dominated convergence theorem to show that 
\begin{align*}
    &\lim_{\Blambda\to\veczero}\frac{1}{\|\Blambda\|_2}\big|u(\Bphi^\star+\Blambda)-u(\Bphi^\star)-\langle\Bzeta_{u},\Blambda\rangle\big|\\
    &\qquad= \lim_{\Blambda\to\veczero}\frac{1}{\|\Blambda\|_2} \bigg|\int_{\R^{d_i}}h(\Bphi^\star+\Blambda,\BIz)-h(\Bphi^\star,\BIz)-\langle\Bzeta_{h}(\BIz),\Blambda\rangle\DIFFM{\mu_i}{\DIFF\BIz}\bigg|\\
    &\qquad\le \lim_{\Blambda\to\veczero} \int_{\R^{d_i}}\frac{1}{\|\Blambda\|_2}\big|h(\Bphi^\star+\Blambda,\BIz)-h(\Bphi^\star,\BIz)-\langle\Bzeta_{h}(\BIz),\Blambda\rangle\big|\DIFFM{\mu_i}{\DIFF\BIz}\\
    &\qquad= \int_{\R^{d_i}}\lim_{\Blambda\to\veczero}\frac{1}{\|\Blambda\|_2} \big|h(\Bphi^\star+\Blambda,\BIz)-h(\Bphi^\star,\BIz)-\langle\Bzeta_{h}(\BIz),\Blambda\rangle\big|\DIFFM{\mu_i}{\DIFF\BIz}=0.
\end{align*}
Therefore, we have shown that $u:\R^{J_i}\to\R$ is differentiable at $\Bphi^\star$ with gradient $\Bzeta_{u}$.
Since $\Bphi^\star$ maximizes~$u$ by assumption,
we get $\Bzeta_u=\veczero$.
In view of (\ref{eqn:reassembly-dd-proof-laguerre-meas-eq}), 
we have shown that
$\mu_i(V_{i,j})=\mu_i(\widetilde{V}_{i,j})=a_{i,j}$ 
for $j=1,\ldots,J_i$.
This completes the proof of statement~\ref{props:reassembly-dd2}.

Lastly, to prove statement~\ref{props:reassembly-dd3},
let us first fix an arbitrary $i\in\{1,\ldots,N\}$.
Observe from statement~\ref{props:reassembly-dd2} and 
(\ref{eqn:reassembly-dd-proof-laguerre-cover})
that 
\begin{align*}
    \PROB[Z_i\in E]&=\sum_{j=1}^{J_i}\PROB[Z_i\in E | X_i=\BIx_{i,j}]\PROB[X_i=\BIx_{i,j}]\\
    &= \sum_{j=1}^{J_i} \frac{\mu_i(V_{i,j}\cap E)}{\mu_i(V_{i,j})} \hat{\mu}_i\big(\{\BIx_{i,j}\}\big)\\
    &=\sum_{j=1}^{J_i}\mu_i(V_{i,j}\cap E) = \mu_i(E) \qquad \forall E\in\CB(\R^{d_i}).
\end{align*}
Hence, the law of the random variable $Z_i:\Omega\to\R^{d_i}$ is~$\mu_i$.
Let $\gamma_i\in\CP(\R^{d_i}\times\R^{d_i})$ denote the law of the random variable 
$(X_i,Z_i):\Omega\to\R^{d_i}\times\R^{d_i}$.
It thus holds that $\gamma_i\in\Gamma(\hat{\mu}_i,\mu_i)$.
Next, let $\phi^\star:\R^{d_i}\to\R$ and $\psi^\star:\R^{d_i}\to\R$ be defined as follows:
\begin{align*}
    \phi^\star(\BIx)&:= \begin{cases}
        \phi^\star_{i,j} & \qquad\qquad \text{if }\BIx=\BIx_{i,j},\; \forall 1\le j\le J_i,\\
        \phi^\star_{i,1} - \|\BIx_{i,1}-\BIx\| & \qquad\qquad \forall \BIx\in\R^{d_i}\setminus \{\BIx_{i,j}:1\le j\le J_i\},
    \end{cases}\\
    \psi^\star(\BIz)&:=\max_{1\le j\le J_i}\big\{\phi^\star_{i,j}-\|\BIx_{i,j}-\BIz\|\big\} \hspace{-0.6pt}\qquad \forall \BIz\in\R^{d_i}.
\end{align*}
One can verify the following properties of $\phi^\star$ and $\psi^\star$:
\begin{align}
    \begin{split}
        \phi^\star(\BIx)-\psi^\star(\BIz)&=\phi^\star_{i,1}-\|\BIx_{i,1}-\BIx\| - \max_{1\le j\le J_i}\big\{\phi^\star_{i,j}-\|\BIx_{i,j}-\BIz\|\big\}\\
        &\le \phi^\star_{i,1}-\|\BIx_{i,1}-\BIx\| - \phi^\star_{i,1} + \|\BIx_{i,1}-\BIz\|\\
        &\le \|\BIx - \BIz\| \qquad \forall \BIx\in\R^{d_i}\setminus \{\BIx_{i,j}:1\le j\le J_i\},\; \forall \BIz\in\R^{d_i},
    \end{split}
    \label{eqn:reassembly-dd-proof-kant-property1}\\
    \phi^\star(\BIx_{i,j}) - \psi^\star(\BIz)&= \|\BIx_{i,j}-\BIz\| \hspace{75.0pt} \qquad \forall \BIz\in V_{i,j},\; \forall 1\le j\le J_i.
    \label{eqn:reassembly-dd-proof-kant-property2}
\end{align}
Combining (\ref{eqn:reassembly-dd-proof-kant-property1}),
(\ref{eqn:reassembly-dd-proof-kant-property2}),
and the properties of the random variable $(X_i,Z_i)$
yields that
$\phi^\star(\BIx)-\psi^\star(\BIz)\le \|\BIx-\BIz\|$ $\forall\BIx\in\R^{d_i}$, $\forall\BIz\in\R^{d_i}$,
as well as 
$\phi^\star(X_i)-\psi^\star(Z_i)=\|X_i-Z_i\|$ 
$\PROB$-almost surely.
Therefore, \citep[Theorem~5.10(iii)]{villani2008optimal}
guarantees that 
$\int_{\R^{d_i}\times\R^{d_i}}\|\BIx-\BIz\|\DIFFM{\gamma_i}{\DIFF\BIx,\DIFF\BIz}=W_1(\hat{\mu}_i,\mu_i)$.
Finally, 
let $\gamma\in\CP(\R^{d_1}\times\cdots\times\R^{d_N}\times \R^{d_1}\times\cdots\times\R^{d_N})$ denote the law of the random variable
$(X_1,\ldots,X_N,Z_1,\ldots,Z_N):\Omega\to \R^{d_1}\times\cdots\times\R^{d_N}\times \R^{d_1}\times\cdots\times\R^{d_N}$.
One verifies that $\gamma$
satisfies all the properties required by Definition~\ref{def:reassembly},
and therefore $\tilde{\mu}\in R(\hat{\mu};\mu_1,\ldots,\mu_N)$.
The proof is now complete.
\end{proof}

\subsection{Proofs of results in Section~\ref{ssec:momentsetdd}}\label{ssec:proof-momentsets}

Before presenting the proof in this subsection, let us first derive the following crucial properties of simplices.

\begin{lemma}\label{lem:simplices}
    Let $d\in\N$, let $n\in\N_0$ where $0\le n\le d$,
    and let $C\subset\R^d$ be an $n$-simplex, 
    i.e., $C$ is the convex hull of $n+1$ affinely independent points in $\R^d$. 
    Then, the following statements hold.
    \begin{enumerate}[label=(\roman*),beginpenalty=10000]
        \item\label{lems:simplices-funcs}%
        There exist affine functions 
        $\big(\bar{\lambda}_{C,\BIw}:\aff(C)\to\R\big)_{\BIw\in V(C)}$
        which satisfy 
        \begin{align}
            \sum_{\BIw\in V(C)}\bar{\lambda}_{C,\BIw}(\BIx)=1, \quad \sum_{\BIw\in V(C)}\bar{\lambda}_{C,\BIw}(\BIx)\BIw=\BIx \qquad \forall \BIx \in \aff(C).
            \label{eqn:simplices-funcs}
        \end{align}
        
        \item\label{lems:simplices-uniqueness}%
        For each $\BIx\in\aff(C)$, the representation of $\BIx$ as an affine combination of $V(C)$ in (\ref{eqn:simplices-funcs}) is unique, 
        that is,
        if there exist $(\tilde{\lambda}_{\BIw})_{\BIw\in V(C)}\subset\R$ that satisfy $\sum_{\BIw\in V(C)}\tilde{\lambda}_{\BIw}=1$ and 
        $\sum_{\BIw\in V(C)}\tilde{\lambda}_{\BIw}\BIw=\BIx$, then it necessarily holds that $\tilde{\lambda}_{\BIw}=\bar{\lambda}_{C,\BIw}(\BIx)$ $\forall\BIw\in V(C)$.
        
        \item\label{lems:simplices-face}%
        Every $F\in\FF(C)$ is an $n'$-simplex with $0\le n'\le n$, and 
        $F\in\FF(C)$ if and only if $V(F)\subseteq V(C)$.
        
        \item\label{lems:simplices-facefuncs}%
        Let $F\in\FF(C)$ and let $\big(\bar{\lambda}_{F,\BIw}:\aff(F)\to\R\big)_{\BIw\in V(F)}$ satisfy (\ref{eqn:simplices-funcs}) with $C$ replaced by $F$.
        Then, it holds that $\bar{\lambda}_{C,\BIw}(\BIx)=0$ $\forall \BIx\in F$, $\forall\BIw\in V(C)\setminus V(F)$,
        and
        $\bar{\lambda}_{C,\BIw}(\BIx)=\bar{\lambda}_{F,\BIw}(\BIx)$ $\forall \BIx\in F$, $\forall \BIw\in V(F)$.
        
        \item\label{lems:simplices-nonneg}%
        For all $\BIx\in\aff(C)$,
        $\BIx\in C$ if and only if $\bar{\lambda}_{C,\BIw}(\BIx)\ge \nobreak0$ $\forall \BIw\in V(C)$, 
        and $\BIx\in\relint(C)$ if and only if $\bar{\lambda}_{C,\BIw}(\BIx)>0$ $\forall \BIw\in V(C)$. 
        
    \end{enumerate}
\end{lemma}

\begin{proof}[Proof of Lemma~\ref{lem:simplices}]
    To begin, let us first enumerate $V(C)=\{\BIv_0,\BIv_1,\ldots,\BIv_n\}$,
    define
    \begin{align*}
        \bar{\Delta}_n:=\big\{(\lambda_0,\allowbreak\lambda_1,\ldots,\lambda_n)\allowbreak\in\R^{n+1}:{\textstyle\sum_{i=0}^{n}}\lambda_i=1\big\},
    \end{align*}
    and define $\bar{\BIz}:\bar{\Delta}_n\to\aff(C)$ as follows:
    \begin{align*}
        \bar{\BIz}(\lambda_0,\lambda_1,\ldots,\lambda_n):=\sum_{i=0}^{n}\lambda_i\BIv_i \qquad \forall (\lambda_0,\lambda_1,\ldots,\lambda_n)\in\bar{\Delta}_n.
    \end{align*}
    Since $\{\BIv_0,\BIv_1,\ldots,\BIv_n\}$ are affinely independent,
    $\bar{\BIz}$ is a bijective and affine function from 
    $\bar{\Delta}_n$ to $\aff(C)$,
    which admits an affine inverse that we denote by
    $\bar{\Blambda}(\,\cdot\,)=\big(\bar{\lambda}_{C,\BIv_0}(\,\cdot\,),\bar{\lambda}_{C,\BIv_1}(\,\cdot\,),\ldots,\bar{\lambda}_{C,\BIv_n}(\,\cdot\,)\big):\aff(C)\to\bar{\Delta}_n$.
    This yields the affine functions $\big(\bar{\lambda}_{C,\BIw}:\aff(C)\to\R\big)_{\BIw\in V(C)}$
    which satisfy (\ref{eqn:simplices-funcs}).
    Moreover, the bijectivity of $\bar{\Blambda}:\aff(C)\to\bar{\Delta}_n$ also implies that the representation of any $\BIx\in\aff(C)$ as an affine combination of $V(C)$ in (\ref{eqn:simplices-funcs}) is unique.
    We have thus completed the proofs of statement~\ref{lems:simplices-funcs} and statement~\ref{lems:simplices-uniqueness}.

    Next, statement~\ref{lems:simplices-face} follows directly from the definition of faces.
    To prove statement~\ref{lems:simplices-facefuncs},
    observe that by the properties of 
    $\big(\bar{\lambda}_{F,\BIw}:\aff(F)\to\R\big)_{\BIw\in V(F)}$
    and by statement~\ref{lems:simplices-face}, 
    each $\BIx\in F$ can be represented as an affine combination
    $\BIx=\sum_{\BIw\in V(C)}\tilde{\lambda}_{\BIw}\BIw$ where
    $\tilde{\lambda}_{\BIw}=\bar{\lambda}_{F,\BIw}(\BIx)$ $\forall \BIw\in V(F)$ and $\tilde{\lambda}_{\BIw}=\nobreak0$ $\forall \BIw\in V(C)\setminus V(F)$.
    Subsequently, statement~\ref{lems:simplices-facefuncs} follows from
    statement~\ref{lems:simplices-uniqueness}.
    
    Lastly, to prove statement~\ref{lems:simplices-nonneg},
    observe that $\bar{\BIz}\big(\bar{\Delta}_n\cap \R^{n+1}_+\big)=\conv\big(V(C)\big)=C$.
    Moreover, \citep[Theorem~6.6]{rockafellar1970convex} implies that
    $\bar{\BIz}\big(\bar{\Delta}_n\cap (0,1)^{n+1}\big)=\bar{\BIz}\big(\relint\big(\bar{\Delta}_n\cap \R^{n+1}_{+}\big)\big)=\relint\big(\bar{\BIz}\big(\bar{\Delta}_n\cap \R^{n+1}_{+}\big)\big)=\relint(C)$.
    Using the property that $\bar{\Blambda}(\,\cdot\,)$ is the inverse of $\bar{\BIz}(\,\cdot\,)$ then proves statement~\ref{lems:simplices-nonneg}.
    The proof is now complete.
\end{proof}

Let us also establish the following lemma applicable to polyhedral covers defined in Definition~\ref{def:polycover}, which contain polyhedra that are not necessarily simplices.
This lemma will be used in the proofs of Proposition~\ref{prop:momentset-simplex}, 
Proposition~\ref{prop:mmot-complexity-constant}, 
and Theorem~\ref{thm:momentset-euclidean} later.

% Additional Lemma that shows that relative interior of faces partition the union of the polyhedral cover
\begin{lemma}\label{lem:relint-partition} 
Let $d\in\N$, let $\CY\subseteq\R^d$, and let $\FC$ be a polyhedral cover of $\CY$; see Definition~\ref{def:polycover}. 
Then, the sets in $\big\{\relint(F):F\in\FF(\FC)\big\}$ are pair-wise disjoint and $\bigcup_{F\in\FF(\FC)}\relint(F)=\bigcup_{C\in\FC}C$. 
\end{lemma}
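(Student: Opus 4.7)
The plan is to reduce both claims to the classical fact that, for any single polyhedron $P$, the non-empty faces of $P$ have pairwise disjoint relative interiors whose union equals $P$ (see, e.g., \citep[Theorem~18.2]{rockafellar1970convex}). The covering equality $\bigcup_{F\in\FF(\FC)}\relint(F)=\bigcup_{C\in\FC}C$ is then immediate: the inclusion $\subseteq$ holds since every $F\in\FF(\FC)$ is a face of some $C\in\FC$ and therefore $\relint(F)\subseteq F\subseteq C$; the reverse inclusion follows because, for any $\BIx\in C$, the classical fact provides a unique non-empty face $F$ of $C$ with $\BIx\in\relint(F)$, and this $F$ lies in $\FF(\FC)$ by definition.

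The main work is the pairwise disjointness of $\{\relint(F):F\in\FF(\FC)\}$. Suppose $F_1,F_2\in\FF(\FC)$ with $\relint(F_1)\cap\relint(F_2)\ne\emptyset$; pick $\BIx$ in this intersection and write $F_i\subseteq C_i$ for some $C_i\in\FC$ ($i=1,2$). If $C_1=C_2$, the classical fact applied to $C_1$ immediately yields $F_1=F_2$. Otherwise $C_1\cap C_2\ne\emptyset$, so by condition (iii) of Definition~\ref{def:polycover} the intersection $G:=C_1\cap C_2$ is a face of both $C_1$ and $C_2$.

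The key auxiliary step is the following standard lemma: if $F$ is a face of a convex set $C$ and $F'$ is another face of $C$ with $F\cap\relint(F')\ne\emptyset$, then $F'\subseteq F$. Indeed, pick $\BIy\in F\cap\relint(F')$; any $\BIz\in F'$ yields, by the definition of relative interior, a point $\BIz'\in F'\subseteq C$ such that $\BIy=\lambda\BIz+(1-\lambda)\BIz'$ for some $\lambda\in(0,1)$, and the face property of $F$ forces $\BIz\in F$. Applying this with $C\leftarrow C_1$, $F\leftarrow G$, $F'\leftarrow F_1$ gives $F_1\subseteq G\subseteq C_2$; similarly $F_2\subseteq G\subseteq C_1$. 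Since $F_1$ is a face of $C_1$ contained in $G$, $F_1$ is also a face of $G$, and by the transitivity of the face relation together with the fact that $G$ is a face of $C_2$, we conclude that $F_1$ is a face of $C_2$. By the same argument $F_2$ is a face of $C_2$. Now $F_1$ and $F_2$ are two faces of the single polyhedron $C_2$ whose relative interiors share the point $\BIx$, so the classical fact forces $F_1=F_2$, completing the proof.

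The main obstacle is precisely the cross-polyhedron case $C_1\ne C_2$: handling it cleanly hinges on the face-transitivity argument above, which turns the mutual intersection condition (iii) of Definition~\ref{def:polycover} into the statement that $F_1$ and $F_2$ can both be viewed as faces of a single polyhedron ($C_2$, or equivalently $G$), thereby reducing to the single-polyhedron version.
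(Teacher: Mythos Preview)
Your proof is correct and follows essentially the same approach as the paper: both use condition~(iii) of Definition~\ref{def:polycover} to pass to the common face $G=C_1\cap C_2$, invoke the fact that a face meeting $\relint(F_i)$ must contain $F_i$ (your auxiliary lemma is exactly \citep[Theorem~18.1]{rockafellar1970convex}, which the paper cites), and then conclude $F_1=F_2$ since they are faces of a common polyhedron with overlapping relative interiors (the paper cites \citep[Corollary~18.1.2]{rockafellar1970convex} and works in $G$, while you route through $C_2$ via face transitivity---a cosmetic difference).
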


\begin{proof}[Proof of Lemma~\ref{lem:relint-partition}]
$\bigcup_{F\in\FF(\FC)}\relint(F)=\bigcup_{C\in\FC}C$ follows directly from \citep[Theorem~18.2]{rockafellar1970convex}. 
We will show that if $F,F'\in\FF(\FC)$ and $F\ne F'$ then $\relint(F)\cap\relint(F')=\emptyset$. 
Suppose that 
$F\in\FF(C)$ for $C\in\FC$, 
$F'\in\FF(C')$ for $C'\in\FC$, and 
$\relint(F)\cap\relint(F')\ne\emptyset$. 
Then, Definition~\ref{def:polycover} guarantees that
$C\hspace{1pt}\cap\hspace{1pt} C'\in \FF(C)\hspace{1pt}\cap\hspace{1pt} \FF(C')$. 
It hence holds that $\relint(F)\cap\relint(F')\subseteq F\cap F'\subseteq C\cap C'$. 
Thus, since $F$ is a convex set satisfying $\relint(F)\cap(C\cap C')\supseteq \relint(F)\cap\relint(F') \ne\emptyset$, 
where $C\cap C'\in \FF(C)$, we have by \citep[Theorem~18.1]{rockafellar1970convex} that $F\subseteq C\cap C'$. 
It hence follows from the definition of faces that $F\in \FF(C\cap C')$. 
Moreover, it follows from the same argument that $F'\in\FF(C\cap C')$, and thus 
$F=F'$ by \citep[Corollary~18.1.2]{rockafellar1970convex}. 
The proof is now complete.
\end{proof}

With Lemma~\ref{lem:simplices} and Lemma~\ref{lem:relint-partition} established, we are now ready to present the proof of Proposition~\ref{prop:momentset-simplex}.

% Proof of Proposition (simplicial cover)
\begin{proof}[Proof of Proposition~\ref{prop:momentset-simplex}]
    Let us first prove statement~\ref{props:momentset-simplex-bisection}.
    It holds by construction that, for every $r\in\N_0$,
    $\FC_r$ contains only $d$-simplices and that $\bigcup_{C\in\FC_r}C=C_0\supseteq\CY$.
    Observe that the property $C_1\cap C_2\in \FF(C_1)\cap \FF(C_2)$ $\forall C_1,C_2\in\FC_{r}$ holds for $r=\nobreak0$.
    In the following, we will use induction to show that this property holds for all $r\in\N_0$.
    To that end, let us assume that 
    $C_1\cap C_2\in \FF(C_1)\cap \FF(C_2)$ $\forall C_1,C_2\in\FC_{r-1}$ holds for some $r\in\N$.
    Let us take arbitrary $C_1,C_2\in\FC_{r}$.
    It follows from the construction of $\FC_{r}$ that there exist unique $\widetilde{C}_1,\widetilde{C}_2\in\FC_{r-1}$ such that 
    $C_1\subseteq\widetilde{C}_1$, $C_2\subseteq\widetilde{C}_2$.
    Subsequently, we define $\widetilde{F}:=\widetilde{C}_1\cap\widetilde{C}_2$, which belongs to $\FF(\widetilde{C}_1)\cap\FF(\widetilde{C}_2)$ by the induction hypothesis. 
    Note that $V(\widetilde{F})=V(\widetilde{C}_1)\cap V(\widetilde{C}_2)$ as a consequence of Lemma~\ref{lem:simplices}\ref{lems:simplices-face}.
    Moreover, we define $F:=\conv\big(V(C_1)\cap V(C_2)\big)\subseteq C_1\cap C_2$.
    Applying Lemma~\ref{lem:simplices}\ref{lems:simplices-face} then yields 
    $F\in\FF(C_1)\cap\FF(C_2)$.
    Thus, it remains to show that $C_1\cap C_2\subseteq F$.
    To that end, we will examine the relationships between $C_1$ and $\widetilde{C}_1$, and between $C_2$ and $\widetilde{C}_2$, when updating $\FC_{r-1}$ to $\FC_{r}$.
    Let $(\BIv,\BIv')\in\argmax_{(\BIw,\BIw')}\big\{\|\BIw-\BIw'\|:\BIw,\BIw'\in V(C),\; C\in \FC_{r-1}\big\}$ be the edge that is bisected when constructing $\FC_{r}$.
    The construction of $\FC_{r}$ implies that 
    \begin{align}
        \forall i\in\{1,2\},\; \begin{cases}
            \{\BIv,\BIv'\}\nsubseteq V(\widetilde{C}_i) \; \Leftrightarrow \; V(C_i)=V(\widetilde{C}_i), \\
            \{\BIv,\BIv'\}\subseteq V(\widetilde{C}_i) \; \Leftrightarrow \; V(C_i)=\big(V(\widetilde{C}_i)\setminus \{\BIw\}\big)\cup \big\{{\textstyle\frac{\BIv+\BIv'}{2}}\big\},\; \BIw\in\{\BIv,\BIv'\}.
        \end{cases}
        \label{eqn:momentset-simplex-proof-bisection-vertices}
    \end{align}
    We will prove the following statements:
    \begin{align}
        &\forall i\in\{1,2\},\; \forall \BIw\in\{\BIv,\BIv'\}: \nonumber\\
        \begin{split}
        &\qquad\qquad\qquad\qquad V(C_i)=\big(V(\widetilde{C}_i)\setminus \{\BIw\}\big)\cup \big\{{\textstyle\frac{\BIv+\BIv'}{2}}\big\},\; \{\BIv,\BIv'\}\nsubseteq V(\widetilde{F}) \\
        &\qquad\qquad\qquad\qquad\qquad\qquad\qquad\qquad \Rightarrow \; \widetilde{F}\cap C_i \subseteq \conv\big(V(\widetilde{F})\setminus \{\BIw\}\big),
        \end{split}
        \label{eqn:momentset-simplex-proof-bisection-property1}\\
        \begin{split}
        &\qquad\qquad\qquad\qquad V(C_i)=\big(V(\widetilde{C}_i)\setminus \{\BIw\}\big)\cup \big\{{\textstyle\frac{\BIv+\BIv'}{2}}\big\},\; \{\BIv,\BIv'\}\subseteq V(\widetilde{F}) \\
        &\qquad\qquad\qquad\qquad\qquad\qquad\qquad\qquad \Rightarrow \; \widetilde{F}\cap C_i \subseteq \conv\Big(\big(V(\widetilde{F})\setminus \{\BIw\}\big)\cup\big\{{\textstyle\frac{\BIv+\BIv'}{2}}\big\}\Big).
        \end{split}
        \label{eqn:momentset-simplex-proof-bisection-property2}
    \end{align}
    To show these, let $i=1$, $\BIw=\BIv$ without loss of generality,
    and assume that 
    $V(C_1)=\big(V(\widetilde{C}_1)\setminus \{\BIv\}\big)\cup \big\{{\textstyle\frac{\BIv+\BIv'}{2}}\big\}$.
    Then, (\ref{eqn:momentset-simplex-proof-bisection-vertices}) implies that 
    $\{\BIv,\BIv'\}\subseteq V(\widetilde{C}_1)$.
    Let $\BIz\in\widetilde{F}\cap C_1$ be arbitrary,
    and let $\big(\bar{\lambda}_{\widetilde{C}_1,\tilde{\BIw}}:\aff(\widetilde{C}_1)\to\R\big)_{\tilde{\BIw}\in V(\widetilde{C}_1)}$ be the functions given by 
    Lemma~\ref{lem:simplices}\ref{lems:simplices-funcs} with respect to $C\leftarrow\widetilde{C}_1$.
    It follows that 
    \begin{align}
        \begin{split}
            \BIz&=\sum_{\tilde{\BIw}\in V(\widetilde{C}_1)}\bar{\lambda}_{\widetilde{C}_1,\tilde{\BIw}}(\BIz)\tilde{\BIw} \\
            &= \Bigg(\sum_{\tilde{\BIw}\in V(\widetilde{C}_1)\setminus \{\BIv,\BIv'\}}\bar{\lambda}_{\widetilde{C}_1,\tilde{\BIw}}(\BIz)\tilde{\BIw}\Bigg) + 2\bar{\lambda}_{\widetilde{C}_1,\BIv}(\BIz)\big({\textstyle\frac{\BIv+\BIv'}{2}}\big) + \big(\bar{\lambda}_{\widetilde{C}_1,\BIv'}(\BIz)-\bar{\lambda}_{\widetilde{C}_1,\BIv}(\BIz)\big)\BIv'.
        \end{split}
        \label{eqn:momentset-simplex-proof-bisection-property-step1}
    \end{align}
    Subsequently, because 
    $V(C_1)=\big(V(\widetilde{C}_1\setminus\{\BIv,\BIv'\})\big)\cup \big\{\frac{\BIv+\BIv'}{2},\BIv'\big\}$ and $\BIz\in C_1$,
    applying Lemma~\ref{lem:simplices}\ref{lems:simplices-uniqueness} and 
    Lemma~\ref{lem:simplices}\ref{lems:simplices-nonneg} with respect to $C\leftarrow C_1$ to (\ref{eqn:momentset-simplex-proof-bisection-property-step1}) yields that 
    \begin{align}
        \bar{\lambda}_{\widetilde{C}_1,\tilde{\BIw}}(\BIz)\ge 0 \quad \forall \tilde{\BIw}\in V(\widetilde{C}_1)\setminus \{\BIv,\BIv'\}, \qquad 
        \bar{\lambda}_{\widetilde{C}_1,\BIv'}(\BIz)\ge \bar{\lambda}_{\widetilde{C}_1,\BIv}(\BIz)\ge\nobreak0.
        \label{eqn:momentset-simplex-proof-bisection-property-step2}
    \end{align}
    Moreover, since $\widetilde{F}\in\FF(\widetilde{C}_1)$,
    we apply Lemma~\ref{lem:simplices}\ref{lems:simplices-facefuncs} with respect to $C\leftarrow\widetilde{C}_1$, $F\leftarrow\widetilde{F}$ to get
    \begin{align}
        \bar{\lambda}_{\widetilde{C}_1,\tilde{\BIw}}(\BIz)=0 \quad \forall \tilde{\BIw}\in V(\widetilde{C}_1)\setminus V(\widetilde{F}).
        \label{eqn:momentset-simplex-proof-bisection-property-step3}
    \end{align}
    Now, let us apply (\ref{eqn:momentset-simplex-proof-bisection-property-step1}),
    (\ref{eqn:momentset-simplex-proof-bisection-property-step2}),
    and (\ref{eqn:momentset-simplex-proof-bisection-property-step3})
    to prove 
    (\ref{eqn:momentset-simplex-proof-bisection-property1})
    and (\ref{eqn:momentset-simplex-proof-bisection-property2}).
    In the case where $\BIv\notin V(\widetilde{F})$, 
    we get
    $\conv\big(V(\widetilde{F})\setminus\{\BIv\}\big)=\widetilde{F}\supseteq \widetilde{F}\cap C_1$.
    In the case where $\BIv'\notin V(\widetilde{F})$,
    (\ref{eqn:momentset-simplex-proof-bisection-property-step3}) shows that 
    $\bar{\lambda}_{\widetilde{C}_1,\BIv'}(\BIz)=\nobreak0$ and hence 
    (\ref{eqn:momentset-simplex-proof-bisection-property-step1}),
    (\ref{eqn:momentset-simplex-proof-bisection-property-step2}), and 
    Lemma~\ref{lem:simplices}\ref{lems:simplices-nonneg} imply that
    $\BIz\in\conv\big(V(\widetilde{F})\setminus \{\BIv,\BIv'\}\big)=\conv\big(V(\widetilde{F})\setminus \{\BIv\}\big)$.
    We have thus shown that (\ref{eqn:momentset-simplex-proof-bisection-property1}) holds.
    In the case where $\{\BIv,\BIv'\}\subseteq\widetilde{F}$,
    (\ref{eqn:momentset-simplex-proof-bisection-property-step1}),
    (\ref{eqn:momentset-simplex-proof-bisection-property-step2}),
    (\ref{eqn:momentset-simplex-proof-bisection-property-step3}),
    and Lemma~\ref{lem:simplices}\ref{lems:simplices-nonneg}
    imply that 
    $\BIz\in\conv\Big(V(\widetilde{F})\setminus\{\BIv,\BIv'\}\cup\big\{\frac{\BIv+\BIv'}{2},\BIv'\big\}\Big)=\conv\Big(\big(V(\widetilde{F})\setminus \{\BIv\}\big)\cup\big\{{\textstyle\frac{\BIv+\BIv'}{2}}\big\}\Big)$, which proves (\ref{eqn:momentset-simplex-proof-bisection-property2}).
    
    Now that (\ref{eqn:momentset-simplex-proof-bisection-property1})
    and (\ref{eqn:momentset-simplex-proof-bisection-property2}) have been established,
    let us prove $C_1\cap C_2\subseteq F$ by inspecting the four cases below in view of 
    (\ref{eqn:momentset-simplex-proof-bisection-vertices}).

    \underline{Case~1:}~$\{\BIv,\BIv'\}\nsubseteq V(\widetilde{C}_1)$ and $\{\BIv,\BIv'\}\nsubseteq V(\widetilde{C}_2)$.
    In this case,
    (\ref{eqn:momentset-simplex-proof-bisection-vertices}) implies that
    $V(\widetilde{F})\subseteq V(C_1)\cap V(C_2)$.
    Consequently, we get
    $C_1\cap C_2\subseteq \widetilde{C}_1\cap \widetilde{C}_2 = \widetilde{F}\subseteq \conv\big(V(C_1)\cap V(C_2)\big)=F$.

    \underline{Case~2:}~$\{\BIv,\BIv'\}\nsubseteq V(\widetilde{C}_1)$ and $\{\BIv,\BIv'\}\subseteq V(\widetilde{C}_2)$.
    In this case,
    we have 
    $\{\BIv,\BIv'\}\nsubseteq V(\widetilde{C}_1)\cap V(\widetilde{C}_2)=V(\widetilde{F})$,
    and (\ref{eqn:momentset-simplex-proof-bisection-vertices}) implies that 
    $V(C_1)=V(\widetilde{C}_1)$ and $V(C_2)=\big(V(\widetilde{C}_2)\setminus\{\BIw\}\big)\cup \big\{\frac{\BIv+\BIv'}{2}\big\}$ where $\BIw\in\{\BIv,\BIv'\}$.
    Subsequently, the premise of (\ref{eqn:momentset-simplex-proof-bisection-property1}) holds with respect to $i=2$, and it hence follows that 
    $\widetilde{F}\cap C_2\subseteq \conv\big(V(\widetilde{F})\setminus \{\BIw\}\big)\subseteq\conv\big(V(C_1)\cap V(C_2)\big)=F$.
    This yields
    $C_1\cap C_2\subseteq\widetilde{C}_1\cap \widetilde{C}_2\cap C_2=\widetilde{F}\cap C_2\subseteq F$.

    \underline{Case~3:}~$\{\BIv,\BIv'\}\subseteq V(\widetilde{C}_1)$ and $\{\BIv,\BIv'\}\nsubseteq V(\widetilde{C}_2)$.
    In this case, $C_1\cap C_2\subseteq F$ follows from the same argument as Case~2 by symmetry.

    \underline{Case~4:}~$\{\BIv,\BIv'\}\subseteq V(\widetilde{C}_1)$ and $\{\BIv,\BIv'\}\subseteq V(\widetilde{C}_2)$.
    In this case, we have
    $\{\BIv,\BIv'\}\subseteq V(\widetilde{C}_1)\cap V(\widetilde{C}_2)=V(\widetilde{F})$, 
    as well as 
    $V(C_1)=\big(V(\widetilde{C}_1)\setminus\{\BIw_1\}\big)\cup \big\{\frac{\BIv+\BIv'}{2}\big\}$,
    $V(C_2)=\big(V(\widetilde{C}_2)\setminus\{\BIw_2\}\big)\cup \big\{\frac{\BIv+\BIv'}{2}\big\}$,
    where $\BIw_1,\BIw_2\in\{\BIv,\BIv'\}$.
    Thus, the premise of (\ref{eqn:momentset-simplex-proof-bisection-property2}) holds with respect to both $i=1$ and $i=2$, and it follows that
    \begin{align}
        \begin{split}
            \widetilde{F}\cap C_1 &\subseteq \conv\Big(\big(V(\widetilde{F})\setminus \{\BIw_1\}\big)\cup\big\{{\textstyle\frac{\BIv+\BIv'}{2}}\big\}\Big), \\
            \widetilde{F}\cap C_2 &\subseteq \conv\Big(\big(V(\widetilde{F})\setminus \{\BIw_2\}\big)\cup\big\{{\textstyle\frac{\BIv+\BIv'}{2}}\big\}\Big).
        \end{split}
        \label{eqn:momentset-simplex-proof-bisection-case4-1}
    \end{align}
    If $\BIw_1=\BIw_2$, then 
    $\big(V(\widetilde{F})\setminus \{\BIw_1\}\big)\cup\big\{{\textstyle\frac{\BIv+\BIv'}{2}}\big\}=\big(V(\widetilde{F})\setminus \{\BIw_2\}\big)\cup\big\{{\textstyle\frac{\BIv+\BIv'}{2}}\big\}\subseteq V(C_1)\cap V(C_2)$,
    and (\ref{eqn:momentset-simplex-proof-bisection-case4-1}) yields
    $C_1\cap C_2 = \widetilde{C}_1\cap \widetilde{C}_2\cap C_1 \cap C_2=\big(\widetilde{F}\cap C_1\big)\cap \big(\widetilde{F}\cap C_2\big)\subseteq \conv\big(V(C_1)\cap V(C_2)\big)=F$.
    On the other hand, if $\BIw_1\ne\BIw_2$, then (\ref{eqn:momentset-simplex-proof-bisection-case4-1}) yields
    \begin{align}
        \begin{split}
        C_1\cap C_2&= \widetilde{C}_1\cap \widetilde{C}_2\cap C_1 \cap C_2=\big(\widetilde{F}\cap C_1\big)\cap \big(\widetilde{F}\cap C_2\big)\\
        &\subseteq \conv\Big(\big(V(\widetilde{F})\setminus \{\BIw_1\}\big)\cup\big\{{\textstyle\frac{\BIv+\BIv'}{2}}\big\}\Big) \cap \conv\Big(\big(V(\widetilde{F})\setminus \{\BIw_2\}\big)\cup\big\{{\textstyle\frac{\BIv+\BIv'}{2}}\big\}\Big).
        \end{split}
        \label{eqn:momentset-simplex-proof-bisection-case4-2}
    \end{align}
    Observe that 
    $\conv\Big(\big(V(\widetilde{F})\setminus \{\BIw_1\}\big)\cup\big\{{\textstyle\frac{\BIv+\BIv'}{2}}\big\}\Big)$ and $\conv\Big(\big(V(\widetilde{F})\setminus \{\BIw_2\}\big)\cup\big\{{\textstyle\frac{\BIv+\BIv'}{2}}\big\}\Big)$
    are two \mbox{$d$-simplices} formed by bisecting the simplex $\widetilde{F}$ at the midpoint of $\BIv,\BIv'\in V(\widetilde{F})$.
    Hence, it is straightforward to see that 
    $\conv\Big(\big(V(\widetilde{F})\setminus \{\BIw_1\}\big)\cup\big\{{\textstyle\frac{\BIv+\BIv'}{2}}\big\}\Big) \hspace{1pt}\cap\hspace{1pt} \conv\Big(\big(V(\widetilde{F})\setminus \{\BIw_2\}\big)\cup\big\{{\textstyle\frac{\BIv+\BIv'}{2}}\big\}\Big)=\conv\Big(\big(V(\widetilde{F})\setminus \{\BIv,\BIv'\}\big)\cup\big\{{\textstyle\frac{\BIv+\BIv'}{2}}\big\}\Big)$.
    Substituting this into (\ref{eqn:momentset-simplex-proof-bisection-case4-2}) yields 
    $C_1\cap C_2\subseteq \conv\Big(\big(V(\widetilde{F})\setminus \{\BIv,\BIv'\}\big)\cup\big\{{\textstyle\frac{\BIv+\BIv'}{2}}\big\}\Big)\subseteq \conv\big(V(C_1)\cap V(C_2)\big)=F$.

    We have thus shown that $C_1\cap C_2\subseteq F$ holds in all four cases, and hence $C_1\cap C_2 \in \FF(C_1)\cap \FF(C_2)$ holds for all $C_1,C_2\in\FC_{r}$.
    Therefore, we conclude by induction that $\bigcup_{C\in\FC_r}C\supseteq\CY$ and 
    $C_1\cap C_2\in\FF(C_1)\cap \FF(C_2)$ $\forall C_1,C_2\in\FC_{r}$ hold for all $r\in\N_0$.

    Lastly, we will show that 
    $\lim_{r\to\infty}\max_{C\in\FC_r}\max_{\BIv,\BIv'\in V(C)}\big\{\|\BIv-\BIv'\|_2\big\}=\nobreak0$ 
    and use the equivalence of norms on $\R^d$ to conclude that 
    $\lim_{r\to\infty}\max_{C\in\FC_r}\max_{\BIv,\BIv'\in V(C)}\big\{\|\BIv-\BIv'\|\big\}=\nobreak0$.
    To that end, for any $d$-simplex $C\subset\R^d$ and any distinct pair of its extreme points $\BIw,\BIw'\in V(C)$, we refer to $\conv\big(\{\BIw,\BIw'\}\big)$
    as the edge between $\BIw,\BIw'$.
    The construction of $\FC_r$ from $\FC_{r-1}$ corresponds to finding a longest edge $\conv\big(\{\BIv,\BIv'\}\big)$ in all $C\in\FC_{r-1}$ and subsequently bisecting each $d$-simplex $C\in\FC_{r-1}$ containing this edge at the midpoint of the edge.
    Let $C\in\FC_{r-1}$ be an arbitrary $d$-simplex that satisfy $\{\BIv,\BIv'\}\subseteq V(C)$ and let 
    $C_1:=\conv\Big(\big(V(C)\setminus \{\BIv\}\big)\cup \big\{\frac{\BIv+\BIv'}{2}\big\}\Big)$,
    $C_2:=\conv\Big(\big(V(C)\setminus \{\BIv'\}\big)\cup \big\{\frac{\BIv+\BIv'}{2}\big\}\Big)$
    be the two new $d$-simplices resulted from bisecting $C$ at the midpoint of $\BIv$ and $\BIv'$.
    One checks that
    \begin{align*}
        &\max_{\BIw,\BIw'\in V(C_1)}\big\{\|\BIw-\BIw'\|_2\big\} \vee \max_{\BIw,\BIw'\in V(C_2)}\big\{\|\BIw-\BIw'\|_2\big\} \\
        &\qquad= \max_{\BIw,\BIw'\in V(C),\,\{\BIw,\BIw'\}\ne\{\BIv,\BIv'\}}\big\{\|\BIw-\BIw'\|_2\big\} \vee \max_{\BIw\in V(C)}\Big\{\big\|\BIw-{\textstyle\frac{\BIv+\BIv'}{2}}\big\|_2\Big\},
    \end{align*}
    where $\max_{\BIw,\BIw'\in V(C),\,\{\BIw,\BIw'\}\ne\{\BIv,\BIv'\}}\big\{\|\BIw-\BIw'\|_2\big\}$ corresponds to the maximum $\|\cdot\|_2$-length of the edges of $C$ other than $\conv\big(\{\BIv,\BIv'\}\big)$,
    and 
    $\max_{\BIw\in V(C)}\Big\{\big\|\BIw-{\textstyle\frac{\BIv+\BIv'}{2}}\big\|_2\Big\}$
    corresponds to the maximum 
    $\|\cdot\|_2$-length of the newly added edges due to the bisection of $C$.
    It can be shown via an elementary argument in Euclidean geometry (e.g., by using the law of sines) that 
    $\max_{\BIw\in V(C)}\Big\{\big\|\BIw-{\textstyle\frac{\BIv+\BIv'}{2}}\big\|_2\Big\}\le \frac{\sqrt{3}}{2}\|\BIv-\BIv'\|_2$.
    One observes that the bisection of the simplex $C$ removes a longest edge with $\|\cdot\|_2$-length equal to $\|\BIv-\BIv'\|_2$ while adding finitely many new edges each with $\|\cdot\|_2$-length at most $\frac{\sqrt{3}}{2}\|\BIv-\BIv'\|_2$.
    Since the process of constructing $(\FC_r)_{r\in\N_0}$ begins with finitely many edges in $\FC_0$, we conclude that 
    $\lim_{r\to\infty}\max_{C\in\FC_r}\max_{\BIv,\BIv'\in V(C)}\big\{\|\BIv-\BIv'\|\big\}=\lim_{r\to\infty}\max_{C\in\FC_r}\max_{\BIv,\BIv'\in V(C)}\big\{\|\BIv-\BIv'\|_2\big\}=\nobreak0$.
    The proof of statement~\ref{props:momentset-simplex-bisection} is now complete.

    To prove statement~\ref{props:momentset-simplex-uniquerepresentation},
    we let $\big(\bar{\lambda}_{F,\BIw}:\aff(F)\to\R\big)_{\BIw\in V(F)}$ be given by Lemma~\ref{lem:simplices}\ref{lems:simplices-funcs} for each $F\in\FF(\FC)$, and define
    \begin{align}
        \lambda_{F,\BIw}(\BIx):=\begin{cases}
            \bar{\lambda}_{F,\BIw}(\BIx) & \forall \BIx\in F, \\
            0 & \forall \BIx \in \big(\bigcup_{C\in\FC}C\big)\setminus F, 
        \end{cases}\qquad \forall \BIw\in V(F),\; \forall F\in\FF(\FC).
        \label{eqn:momentset-simplex-proof-conversion}
    \end{align}
    One then checks that $\big(\lambda_{F,\BIw}:\bigcup_{C\in\FC}C\to\R\big)_{\BIw\in V(F),\,F\in\FF(\FC)}$ satisfy all the conditions in statement~\ref{props:momentset-simplex-uniquerepresentation}.

    To prove statement~\ref{props:momentset-simplex-continuity}, let us fix an arbitrary $\BIv\in V(\FC)$.
    It follows from statement~\ref{props:momentset-simplex-uniquerepresentation} and the definition of $g_{\BIv}$ in (\ref{eqn:momentset-simplex-testfuncs}) that $g_{\BIv}$ is non-negative. 
    To show the continuity of $g_{\BIv}$, let us first fix an arbitrary $F\in\FF(\FC)$ and let $\big(\bar{\lambda}_{F,\BIw}:\aff(F)\to\R\big)_{\BIw\in V(F)}$ be given by Lemma~\ref{lem:simplices}\ref{lems:simplices-funcs} with $C\leftarrow F$.
    Moreover, for each $F'\in\FF(F)$, let $\big(\bar{\lambda}_{F',\BIw}:\aff(F')\to\R\big)_{\BIw\in V(F')}$ be given by Lemma~\ref{lem:simplices}\ref{lems:simplices-funcs} with $C\leftarrow F'$.
    Subsequently, if $\BIv\notin V(F)$, then (\ref{eqn:momentset-simplex-testfuncs}) implies that $g_{\BIv}(\BIx)=\nobreak0$ $\forall\BIx\in \relint(F')$.
    If $\BIv\in V(F)\setminus V(F')$, then (\ref{eqn:momentset-simplex-testfuncs}), 
    (\ref{eqn:momentset-simplex-proof-conversion}), and
    Lemma~\ref{lem:simplices}\ref{lems:simplices-facefuncs} yield
    $g_{\BIv}(\BIx)=\nobreak0=\bar{\lambda}_{F,\BIv}(\BIx)$ $\forall\BIx\in \relint(F')$.
    If $\BIv\in V(F')$, then (\ref{eqn:momentset-simplex-testfuncs}), 
    (\ref{eqn:momentset-simplex-proof-conversion}), and
    Lemma~\ref{lem:simplices}\ref{lems:simplices-facefuncs}
    guarantee that
    $g_{\BIv}(\BIx)=\lambda_{F',\BIv}(\BIx)=\bar{\lambda}_{F',\BIv}(\BIx)=\bar{\lambda}_{F,\BIv}(\BIx)$ $\forall\BIx\in\relint(F')$.
    Since \citep[Theorem~18.2]{rockafellar1970convex} states that 
    $\big\{\relint(F'):F'\in\FF(F)\big\}$ are pair-wise disjoint and 
    $\bigcup_{F'\in\FF(F)}\relint(F')=F$, we can combine the three cases above to obtain
    \begin{align}
        \begin{split}
            \BIv\in V(F) \quad &\Rightarrow \quad g_{\BIv}(\BIx)=\bar{\lambda}_{F,\BIv}(\BIx) \qquad \forall \BIx\in F,\\
            \BIv\in V(\FC)\setminus V(F) \quad &\Rightarrow \quad g_{\BIv}(\BIx)=0 \hspace{30.2pt} \qquad \forall \BIx\in F,
        \end{split}
        \label{eqn:momentset-simplex-proof-piecewise-continuity}
    \end{align}
    where $\bar{\lambda}_{F,\BIv}$ is continuous on $F$ by Lemma~\ref{lem:simplices}\ref{lems:simplices-funcs}.
    In particular, (\ref{eqn:momentset-simplex-proof-piecewise-continuity}) shows that $g_{\BIv}$ is affine on each closed set $C\in\FC$,
    and hence $g_{\BIv}$ is also continuous on $\bigcup_{C\in\FC}C$.
    Furthermore, Lemma~\ref{lem:relint-partition},
    (\ref{eqn:momentset-simplex-proof-piecewise-continuity}), 
    and 
    the properties of the functions 
    $\big(\bar{\lambda}_{F,\BIw}:\aff(F)\to\R\big)_{\BIw\in V(F)}$ in Lemma~\ref{lem:simplices}\ref{lems:simplices-funcs} show that
    $\sum_{\BIv\in V(\FC)}g_{\BIv}(\BIx)=\sum_{\BIv\in V(F)}\bar{\lambda}_{F,\BIv}(\BIx)=\nobreak1$ $\forall\BIx\in F$,
    and hence 
    $\sum_{\BIv\in V(\FC)}g_{\BIv}(\BIx)=\nobreak1$ holds for all $\BIx\in\bigcup_{C\in\FC}C$.
    This completes the proof of statement~\ref{props:momentset-simplex-continuity}.

    Finally, to prove statement~\ref{props:momentset-simplex-upperbound},
    observe that $\FC$ is a bounded polyhedral cover since it satisfies the required properties in Definition~\ref{def:polycover}.
    Moreover, observe that statement~\ref{props:momentset-simplex-continuity}, 
    Lemma~\ref{lem:relint-partition},
    (\ref{eqn:momentset-simplex-proof-piecewise-continuity}), and the properties of the functions 
    $\big(\bar{\lambda}_{F,\BIw}:\aff(F)\to\R\big)_{\BIw\in V(F),\,F\in\FF(\FC)}$ in Lemma~\ref{lem:simplices}\ref{lems:simplices-funcs} guarantee that the continuous and non-negative functions in $\CG_{\mathsf{simp}}(\FC)$ satisfy the properties \ref{defs:cover-vif-normalize} and \ref{defs:cover-vif-disjoint} in Definition~\ref{def:cover-interp}.
    Consequently, applying Theorem~\ref{thm:momentset-euclidean}\ref{thms:momentset-euclidean-bounded} proves that 
    $W_1(\mu,\nu)\le 2\eta(\FC)$
    whenever $\mu,\nu\in\CP(\CY)$ satisfy
    $\mu\overset{\CG_{\mathsf{simp}}(\FC)}{\scalebox{3.5}[1]{$\sim$}}\nu$.
    The proof is now complete.
\end{proof}

\begin{proof}[Proof of Proposition~\ref{prop:momentset-errorcontrol}]
    For $i=1,\ldots,N$, 
    let us construct $\FC_i$ and $\widetilde{\CG}_i:=\CG_{\mathsf{simp}}(\FC_i)$ by Corollary~\ref{cor:momentset-scalability} with respect to 
    $p\leftarrow1$,
    $d\leftarrow d_i$,
    $\CY\leftarrow\CX_i$,
    $(\underline{M}_i,\overline{M}_i)_{i=1:d}\leftarrow(\underline{M}_{i,j},\overline{M}_{i,j})_{j=1:d_i}$,
    $\epsilon\leftarrow\frac{\epsilon-\epsilon_{\mathsf{LSIP}}}{NL_f}$,
    and 
    $C_{\|\cdot\|}\leftarrow C_{i,\|\cdot\|}$.
    Corollary~\ref{cor:momentset-scalability} guarantees that 
    $|\widetilde{\CG}_i|=\prod_{j=1}^{d_i}\bigg(1+\bigg\lceil\frac{2NL_f(\overline{M}_{i,j}-\underline{M}_{i,j})C_{i,\|\cdot\|}d_i}{\epsilon-\epsilon_{\mathsf{LSIP}}}\bigg\rceil\bigg)$ 
    and 
    $\specialoverline{W}_{1}(\mu_i,[\mu_i]_{\widetilde{\CG}_i})\le \sup_{\nu_i\overset{\widetilde{\CG}_i}{\sim}\nu'_i}\big\{W_1(\nu_i,\nu'_i)\big\} \le \frac{\epsilon-\epsilon_{\mathsf{LSIP}}}{NL_f}$.
    Moreover, observe that
    $\lspan_1(\widetilde{\CG}_i)=\lspan_1(\CG_i)$ for $i=1,\ldots,N$ and
    $(\mu_1,\ldots,\mu_N,f)\in\CA(L_f,\BCX,0,0,\ldots,0,0)$.
    Consequently, if
    $\hat{\mu}\in\Gamma\big([\mu_1]_{\CG_1},\ldots,[\mu_N]_{\CG_N}\big)$
    is an $\epsilon_{\mathsf{LSIP}}$-optimal solution of \eqref{eqn:mmotlb},
    then
    Theorem~\ref{thm:lowerbound}\ref{thms:lowerbound-epsilonoptimal} implies that every $\tilde{\mu}\in R(\hat{\mu};\mu_1,\ldots,\mu_N)$ is an $\epsilon_{\mathsf{apx}}$-optimal solution of \eqref{eqn:mmot}, 
    where $\epsilon_{\mathsf{apx}}:=\epsilon_{\mathsf{LSIP}}+\sum_{i=1}^NL_f\specialoverline{W}_{1}(\mu_i,[\mu_i]_{\CG_i})=\epsilon_{\mathsf{LSIP}}+\sum_{i=1}^NL_f\specialoverline{W}_{1}(\mu_i,[\mu_i]_{\widetilde{\CG}_i}) \le \epsilon$.
    The proof is now complete.
\end{proof}

The proof of Proposition~\ref{prop:mmot-complexity-constant} utilizes the following properties of $(g_{i,j})_{j=0:m_i,\,i=1:N}$ under Setting~\ref{sett:compact},
which directly follow from Proposition~\ref{prop:momentset-simplex} and its proof.
These properties will also be referred to later in the proof of Proposition~\ref{prop:boundedness-mmotlb-dual-lsip}.
\begin{enumerate}[label=\normalfont(P\arabic*),beginpenalty=10000]
    \item\label{pp:mmot-complexity-property-normalization}%
    For $i=1,\ldots,N$, $\sum_{j=0}^{m_i}g_{i,j}(\BIx)=1$ $\forall\BIx\in\CX_i$.
    
    \item\label{pp:mmot-complexity-property-orthonormality}%
    For $i=1,\ldots,N$,
    it holds that $g_{i,j}(\BIv_{i,j'})=\INDI_{\{j=j'\}}$ $\forall 0\le j\le m_i$, $\forall 0\le j'\le m_i$.
    In particular, it holds for $i=1,\ldots,N$ and for any $\BIy_i=(y_{i,1},\ldots,y_{i,m_i})^\TRANSP\in\R^{m_i}$ that
    $\langle\BIg_i(\BIv_{i,0}),\BIy_i\rangle=0$ and
    $\langle\BIg_i(\BIv_{i,j}),\BIy_i\rangle=y_{i,j}$ $\forall 1\le j\le m_i$.
    
    \item\label{pp:mmot-complexity-property-pwa}%
    For $i=1,\ldots,N$, for any $F\in\FF(\FC_i)$, and for any $\BIy_i\in\R^{m_i}$, 
    $F\ni \BIx\mapsto \langle\BIg_i(\BIx),\BIy_i\rangle\in\R$ is affine.
    
    \item\label{pp:mmot-complexity-property-disjoint}%
    For $i=1,\ldots,N$, $j=0,1,\ldots,m_i$, and for any $F\in\FF(\FC_i)$,
    it holds that 
    \begin{align*}
        \BIv_{i,j}\notin V(F)\quad&\Rightarrow\quad g_{i,j}(\BIx)=0 \qquad\forall \BIx\in F,\\
        \BIv_{i,j}\in V(F)\quad &\Rightarrow\quad g_{i,j}(\BIx)>0 \qquad\forall \BIx\in \relint(F).
    \end{align*}
\end{enumerate}
Our strategy to prove Proposition~\ref{prop:mmot-complexity-constant} is to begin with an arbitrary optimal solution $(y_0^\star,\BIy^\star)$ of \eqref{eqn:mmotlb-dual-lsip} and apply a sequence of operations to modify it, 
where each operation preserves the feasibility and optimality of 
$(y_0^\star,\BIy^\star)$.
After the operations, $(y_0^\star,\BIy^\star)$ is guaranteed to satisfy $\big\|(y_0^\star,\BIy^{\star\TRANSP})^\TRANSP\big\|_2\le 2L_f D(\BCX)(m+1)$.
The following lemma presents the operations that will be applied to $(y_0^\star,\BIy^\star)$ in the proof of Proposition~\ref{prop:mmot-complexity-constant}, and the properties that are preserved under these operations.

\begin{lemma}\label{lem:mmot-complexity-optimizers-bound}
Let the assumptions of Proposition~\ref{prop:mmot-complexity-constant} hold
and 
let $y_0\in\R$, $\BIy=(y_{1,1},\ldots,y_{1,m_1},\ldots,\allowbreak y_{N,1},\ldots,\allowbreak y_{N,m_N})^\TRANSP\in\R^m$ be feasible for \eqref{eqn:mmotlb-dual-lsip}.
Then, if we apply either of the two operations below to modify $(y_0,\BIy)$, the resulting $(y_0,\BIy)$ will remain feasible for \eqref{eqn:mmotlb-dual-lsip} and its objective value will not decrease.
\widowpenalty-1000
\begin{enumerate}[label=(\alph*),beginpenalty=10000]

\item\label{lems:mmot-complexity-optimizers-bound-y-above}%
If $\gamma_{\hat{i}}:=\max_{1\le j\le m_{\hat{i}}}\{y_{\hat{i},j}\}-2L_f D(\BCX) >0$ for some $\hat{i}\in\{1,\ldots,N\}$, then update $y_{0}\leftarrow y_{0}+\gamma_{\hat{i}}$, $y_{\hat{i},j}\leftarrow y_{\hat{i},j}-\gamma_{\hat{i}}$ for $j=1,\ldots,m_{\hat{i}}$, 
and leave the remaining terms unmodified.

\item\label{lems:mmot-complexity-optimizers-bound-y-below}%
If $y_{\hat{i},\hat{j}}<-2L_f D(\BCX)$ for some $\hat{i}\in\{1,\ldots,N\}$, $\hat{j}\in\{1,\ldots,m_{\hat{i}}\}$, then update $y_{\hat{i},\hat{j}}\leftarrow -2L_f D(\BCX)$,
and leave the remaining terms unmodified.
\end{enumerate}
\end{lemma}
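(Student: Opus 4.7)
The plan is to treat operations (a) and (b) separately and, for each, to verify (i) that the objective does not decrease and (ii) that feasibility for \eqref{eqn:mmotlb-dual-lsip} is preserved. Non-decrease of the objective is the easy part: invoking the partition-of-unity identity $\sum_{j=0}^{m_{\hat{i}}} g_{\hat{i}, j} \equiv 1$ on $\CX_{\hat{i}}$ (property \ref{defs:cover-vif-normalize} applied across all simplices of $\FC_{\hat{i}}$) and integrating against $\mu_{\hat{i}}$, the change in the objective equals $\gamma_{\hat{i}} \int_{\CX_{\hat{i}}} g_{\hat{i}, 0}\DIFFX{\mu_{\hat{i}}}$ under (a) and $(-2J - y_{\hat{i}, \hat{j}}) \int_{\CX_{\hat{i}}} g_{\hat{i}, \hat{j}}\DIFFX{\mu_{\hat{i}}}$ under (b); both are non-negative, since $g_{\hat{i}, 0}, g_{\hat{i}, \hat{j}} \ge 0$ by \ref{defs:cover-vif-nonneg} and the scalar prefactors are non-negative by the hypotheses of the operations.

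For the feasibility part, a direct calculation with the same partition of unity shows that the pointwise change in the LHS at any $\BIx = (x_1, \ldots, x_N) \in \BCX$ equals $\gamma_{\hat{i}} g_{\hat{i}, 0}(x_{\hat{i}}) \ge 0$ under (a) and $(-2J - y_{\hat{i}, \hat{j}}) g_{\hat{i}, \hat{j}}(x_{\hat{i}}) \ge 0$ under (b), so the task reduces to showing that the old slack $f(\BIx) - \text{old LHS}(\BIx)$ is at least as large as this increment at every $\BIx$. My key tool will be feasibility of $(y_0, \BIy)$ at ``axial probe points'': for fixed $\BIx$, let $\BIx^{(j)}$ denote the point obtained by replacing the $\hat{i}$-th coordinate of $\BIx$ by $\BIv_{\hat{i}, j}$ for $j \in \{0, 1, \ldots, m_{\hat{i}}\}$, and set $A := y_0 + \sum_{i \ne \hat{i}} \sum_k y_{i, k} g_{i, k}(x_i)$. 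By \ref{defs:cover-vif-orthogonal}, old feasibility at $\BIx^{(j)}$ reads $A + y_{\hat{i}, j} \le f(\BIx^{(j)})$ for $j \ge 1$ and $A \le f(\BIx^{(0)})$. For operation (a), I will pick $j^\star$ with $y_{\hat{i}, j^\star} = \gamma_{\hat{i}} + 2J$ and combine feasibility at $\BIx^{(j^\star)}$ with the Lipschitz bound $f(\BIx^{(j^\star)}) - f(\BIx) \le L_f\|\BIv_{\hat{i}, j^\star} - x_{\hat{i}}\| \le L_f D(\BCX) = J$ to deduce the uniform global estimate $A + \gamma_{\hat{i}} \le f(\BIx) - J$. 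Rewriting, again by partition of unity, $\text{new LHS}(\BIx) = (A + \gamma_{\hat{i}}) g_{\hat{i}, 0}(x_{\hat{i}}) + \sum_{j=1}^{m_{\hat{i}}} (A + y_{\hat{i}, j}) g_{\hat{i}, j}(x_{\hat{i}})$, then bounding the first summand by $(f(\BIx) - J)\, g_{\hat{i}, 0}(x_{\hat{i}})$ and the remaining summands by $f(\BIx^{(j)})\, g_{\hat{i}, j}(x_{\hat{i}})$, and finally invoking the Lipschitz piecewise-affine interpolation estimate $\sum_{j=0}^{m_{\hat{i}}} g_{\hat{i}, j}(x_{\hat{i}})\, f(\BIx^{(j)}) \le f(\BIx) + L_f \eta(\FC_{\hat{i}})$ (valid since \ref{defs:cover-vif-disjoint} forces only vertices of the simplex of $\FC_{\hat{i}}$ containing $x_{\hat{i}}$ to carry positive weight) should deliver $\text{new LHS}(\BIx) \le f(\BIx)$. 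Operation (b) is handled analogously: from $A \le f(\BIx^{(0)})$ and the Lipschitz bound $f(\BIx^{(0)}) - f(\BIx^{(\hat{j})}) \le L_f \|\BIv_{\hat{i}, 0} - \BIv_{\hat{i}, \hat{j}}\| \le J$ one obtains $-2J \le f(\BIx^{(\hat{j})}) - A$, and the analogous partition-of-unity rewriting of $\text{new LHS}$ yields the required absorption.

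The main obstacle I anticipate is the final step of the feasibility verification under operation (a): one has to show that the margin $J g_{\hat{i}, 0}(x_{\hat{i}})$ delivered by the uniform estimate $A + \gamma_{\hat{i}} \le f(\BIx) - J$ is always enough to absorb the Lipschitz piecewise-affine interpolation error accumulated from the axial bounds on the $j \ge 1$ summands. This requires careful exploitation of the simplicial geometry of $\FC_{\hat{i}}$ (so that $x_{\hat{i}}$ and all $\BIv_{\hat{i}, j}$ with $g_{\hat{i}, j}(x_{\hat{i}}) > 0$ share a common simplex) together with the specific definition $\gamma_{\hat{i}} = \max_j y_{\hat{i}, j} - 2J$; in particular, the factor $2$ in the threshold $2J$ is precisely what turns the Lipschitz constant $L_f D(\BCX) = J$ from a liability into a usable margin.
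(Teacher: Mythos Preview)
Your treatment of the objective value is correct and matches the paper. The feasibility argument, however, has a genuine gap at exactly the point you flag as an ``obstacle'', and the resolution you sketch does not close it.

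Under operation~(a), write $\lambda_0:=g_{\hat i,0}(x_{\hat i})$ and let $C\in\FC_{\hat i}$ be the simplex containing $x_{\hat i}$ (so $\BIv_{\hat i,0}\in V(C)$ when $\lambda_0>0$). Your chain gives
\[
\text{new LHS}(\BIx)\le (f(\BIx)-J)\lambda_0+\sum_{j\ge1}f(\BIx^{(j)})g_{\hat i,j}(x_{\hat i}),
\]
and you then appeal to the interpolation estimate $\sum_{j\ge0}g_{\hat i,j}(x_{\hat i})f(\BIx^{(j)})\le f(\BIx)+L_f\eta(\FC_{\hat i})$. Combining these only yields $\text{new LHS}(\BIx)\le f(\BIx)+L_f\eta(\FC_{\hat i})+\lambda_0[f(\BIx)-J-f(\BIx^{(0)})]$; the bracket is $\le 0$, but the residual $L_f\eta(\FC_{\hat i})$ is not absorbed by the margin $J\lambda_0$ when $\lambda_0$ is small. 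Concretely, take $C$ with $\|\BIw_l-\BIw_{l'}\|\equiv\eta$ and $\lambda_0\to0$, $\lambda_1=\lambda_2=\tfrac{1-\lambda_0}{2}$: the LHS of the inequality you need, $\sum_{j\ge1}\lambda_j[f(\BIx^{(j)})-f(\BIx)]\le J\lambda_0$, can be of order $L_f\eta/2$ while the RHS tends to $0$. The factor $2$ in the threshold $2J$ does not help here, because the vertex-wise bounds $(A+y_{\hat i,j})\le f(\BIx^{(j)})$ discard the affine relationship between the summands.

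What does work is to avoid splitting over the vertices $j\ge1$ and instead use old feasibility at the single projected point $\bar\BIx^\dagger:=\tfrac{1}{1-\lambda_0}\sum_{l\ge1}\lambda_l\BIw_l$ on the face of $C$ opposite $\BIv_{\hat i,0}$. Since the new LHS is affine on $C$ in the $\hat i$-th slot and $x_{\hat i}=\lambda_0\BIv_{\hat i,0}+(1-\lambda_0)\bar\BIx^\dagger$, one has
\[
\text{new LHS}(\BIx)=\lambda_0\cdot(\text{new LHS at }\BIv_{\hat i,0})+(1-\lambda_0)\cdot(\text{new LHS at }\bar\BIx^\dagger)
\le \lambda_0(A+\gamma_{\hat i})+(1-\lambda_0)f(\tilde\BIx^\dagger),
\]
where $\tilde\BIx^\dagger$ replaces $x_{\hat i}$ by $\bar\BIx^\dagger$. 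Now $A+\gamma_{\hat i}\le f(\BIx)-J$ (your own estimate) and $f(\tilde\BIx^\dagger)\le f(\BIx)+L_f\|x_{\hat i}-\bar\BIx^\dagger\|=f(\BIx)+L_f\lambda_0\|\BIv_{\hat i,0}-\bar\BIx^\dagger\|$. The crucial point is that this Lipschitz error carries a factor $\lambda_0$, so it is dominated by $J\lambda_0$ since $\|\BIv_{\hat i,0}-\bar\BIx^\dagger\|\le D(\BCX)$. This is precisely the paper's argument, presented there as a contradiction via comparing slopes of the affine function $\nu\mapsto\text{new LHS}\big((1-\nu)\bar\BIx^\dagger+\nu\BIv_{\hat i,0}\big)$ on $[0,1]$. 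Operation~(b) is handled in the same way with $\BIv_{\hat i,\hat j}$ in place of $\BIv_{\hat i,0}$.
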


\begin{proof}[Proof of Lemma~\ref{lem:mmot-complexity-optimizers-bound}]
In this proof, 
we let $\hat{y}_0$, $\hat{\BIy}_1=(\hat{y}_{1,1},\ldots,\hat{y}_{1,m_1})^\TRANSP,\ldots,\hat{\BIy}_N=(\hat{y}_{N,1},\ldots,\hat{y}_{N,m_N})^\TRANSP$, 
$\hat{\BIy}=(\hat{\BIy}_1^\TRANSP,\ldots,\hat{\BIy}_N^\TRANSP)^\TRANSP$
denote the values of 
$y_0$, $\BIy_1:=(y_{1,1},\ldots,y_{1,m_1})^\TRANSP,\ldots,\BIy_N:=(y_{N,1},\ldots,y_{N,m_N})^\TRANSP$, 
$\BIy=(\BIy_1^\TRANSP,\ldots,\BIy_N^\TRANSP)^\TRANSP$ after modification. 
Moreover, let us denote $J:=L_f D(\BCX)$ for notational simplicity.
Now, by the assumption that $\max_{\BIx\in\BCX}\big\{f(\BCX)\big\}=0$, the assumptions \ref{settc:compact-measures} and \ref{settc:compact-cost} in Setting~\ref{sett:compact},
and the feasibility of $(y_0^\star,\BIy^\star)$ for \eqref{eqn:mmotlb-dual-lsip},
we have 
\begin{align}
    \min_{\BIx\in\BCX}\big\{f(\BCX)\big\}\ge -J
    \label{eqn:mmot-complexity-optimizers-bound-proof-ineq1}
\end{align}
as well as
\begin{align}
    y_0+\sum_{i=1}^{N}\langle\BIg_i(\BIx_i),\BIy_i\rangle\le f(\BIx_1,\ldots,\BIx_N)\le 0 \qquad \forall(\BIx_1,\ldots,\BIx_N)\in\BCX.
    \label{eqn:mmot-complexity-optimizers-bound-proof-ineq2}
\end{align}

Let us first show that $(\hat{y}_0,\hat{\BIy})$ remains feasible for \eqref{eqn:mmotlb-dual-lsip} after operation~\ref{lems:mmot-complexity-optimizers-bound-y-above}.
Suppose without loss of generality that $\gamma_{1}:=\max_{1\le j\le m_{1}}\{y_{1,j}\}-2J >0$, and we apply operation~\ref{lems:mmot-complexity-optimizers-bound-y-above} with $\hat{i}\leftarrow 1$,
which results in
$\hat{y}_0=y_0+\gamma_1$,
$\hat{\BIy}_1=\BIy_1-\gamma_1\vecone_{m_1}$,
as well as 
$\hat{\BIy}_i=\BIy_i$ for $i=2,\ldots,N$.
For any $\BIx=(\BIx_1,\ldots,\BIx_N)\in\BCX$, it holds by the property~\ref{pp:mmot-complexity-property-normalization} that $1-\big(\sum_{j=1}^{m_{1}}g_{1,j}(\BIx_{1})\big)=g_{1,0}(\BIx_{1})$, and hence
\begin{align}
\begin{split}
\hat{y}_{0}+\sum_{i=1}^N\langle\BIg_i(\BIx_i),\hat{\BIy}_i\rangle&=y_{0}+\gamma_{1}+\Bigg(\sum_{j=1}^{m_{1}}(y_{1,j}-\gamma_{1})g_{1,j}(\BIx_{1})\Bigg)+
\Bigg(\sum_{i=2}^N\langle\BIg_i(\BIx_i),\BIy_i\rangle\Bigg)\\
&=\Bigg(y_0+\sum_{i=1}^N\langle\BIg_i(\BIx_i),\BIy_i\rangle\Bigg)+\gamma_{1}g_{1,0}(\BIx_{1}) \qquad \forall (\BIx_1,\ldots,\BIx_N)\in\BCX.
\end{split}
\label{eqn:mmot-complexity-optimizers-bound-proof-shifted}
\end{align}
Suppose for the sake of contradiction that there exists $(\bar{\BIx}_1,\ldots,\bar{\BIx}_N)\in\BCX$ with
$\hat{y}_0+ \sum_{i=1}^N \langle\BIg_i(\bar{\BIx}_i),\hat{\BIy}_i\rangle >f(\bar{\BIx}_1,\ldots,\bar{\BIx}_N)$.
It then follows from (\ref{eqn:mmot-complexity-optimizers-bound-proof-shifted})
and (\ref{eqn:mmot-complexity-optimizers-bound-proof-ineq2})
that $g_{1,0}(\bar{\BIx}_1)>0$.
Since Lemma~\ref{lem:relint-partition} implies that $\CX_{1}=\bigcup_{C\in\FC_{1}}C=\bigcup_{F\in\FF(\FC_1)}\relint(F)$, 
let us fix an $\overbar{F}\in\FF(\FC_{1})$ such that $\bar{\BIx}_{1}\in\relint(\overbar{F})$.
Then, $\overbar{F}$ is a $\bar{d}_1$-simplex for some $0\le \bar{d}_1\le d_1$.
Moreover, the property~\ref{pp:mmot-complexity-property-disjoint} guarantees that
$\BIv_{1,0}\in V(F)$.
Subsequently, let $\{\BIw_0,\BIw_1,\ldots,\BIw_{\bar{d}_{1}}\}$ be an enumeration of $V(\overbar{F})$ where $\BIw_0=\BIv_{1,0}$,
and express 
$\bar{\BIx}_{1}=\sum_{l=0}^{\bar{d}_{1}}\lambda_l\BIw_l$ where $\lambda_l>\nobreak 0$ $\forall 0\le l\le \bar{d}_{1}$ and $\sum_{l=0}^{\bar{d}_{1}}\lambda_l=1$;
see Lemma~\ref{lem:simplices}\ref{lems:simplices-uniqueness} and Lemma~\ref{lem:simplices}\ref{lems:simplices-nonneg}.
Notice that the property~\ref{pp:mmot-complexity-property-orthonormality} and (\ref{eqn:mmot-complexity-optimizers-bound-proof-ineq2}) imply that
\begin{align}
    \begin{split}
        y_{0}+y_{1,j}+\sum_{i=2}^N\langle\BIg_i(\bar{\BIx}_i),\hat{\BIy}_i\rangle &=y_{0}+\langle\BIg_1(\BIv_{1,j}),\BIy_1\rangle+\sum_{i=2}^N\langle\BIg_i(\bar{\BIx}_i),\BIy_i\rangle\le 0 \qquad \forall 1\le j\le m_1.
    \end{split}
\label{eqn:mmot-complexity-optimizers-bound-proof-nonpos-a}
\end{align}
It thus follows from 
the property~\ref{pp:mmot-complexity-property-orthonormality} and
(\ref{eqn:mmot-complexity-optimizers-bound-proof-ineq1}) that
\begin{align*}
\begin{split}
\hat{y}_{0}+\langle\BIg_1(\BIv_{1,0}),\hat{\BIy}_1\rangle+\sum_{i=2}^N\langle\BIg_i(\bar{\BIx}_i),\hat{\BIy}_i\rangle&=\Bigg(y_{0}+\max_{1\le j\le m_{1}}\{y_{1,j}\}+\sum_{i=2}^N\langle\BIg_i(\bar{\BIx}_i),\hat{\BIy}_i\rangle\Bigg)-2J\\
&\le -2J < f(\BIv_{1,0},\bar{\BIx}_2,\ldots,\bar{\BIx}_N),
\end{split}
\end{align*}
which shows that $\bar{\BIx}_{1}\ne\BIv_{1,0}=\BIw_0$ and that $\lambda_0\ne 1$. 
Therefore, we get $\lambda_0\in(0,1)$. 
Now, let us define $\bar{\BIx}^{\dagger}_{1}:=\frac{1}{1-\lambda_0}\sum_{l=1}^{\bar{d}_{1}}\lambda_l\BIw_l\in \conv\big(\{\BIw_1,\ldots,\BIw_{\bar{d}_{1}}\}\big)\in \FF(\overbar{F})$, 
and define 
$\psi_{a}(\nu):=\big\langle\BIg_1\big((1-\nu)\bar{\BIx}^\dagger_1+\nu\BIv_{1,0}\big),\hat{\BIy}_1\big\rangle$ for $\nu\in[0,1]$, which is an affine function on $[0,1]$ due to the property~\ref{pp:mmot-complexity-property-pwa}. 
Notice that $g_{1,0}(\bar{\BIx}^\dagger_{1})=0$ due to $\bar{\BIx}^{\dagger}_{1}\in \conv\big(\{\BIw_1,\ldots,\BIw_{\bar{d}_{1}}\}\big)\in\FF(\FC_1)$
and the property~\ref{pp:mmot-complexity-property-disjoint}.
Consequently, 
(\ref{eqn:mmot-complexity-optimizers-bound-proof-ineq2}),
(\ref{eqn:mmot-complexity-optimizers-bound-proof-shifted}), and the property \ref{pp:mmot-complexity-property-orthonormality} imply that
\begin{align*}
\psi_{a}(0)
&=\langle\BIg_1(\bar{\BIx}^\dagger_1),\hat{\BIy}_1\rangle\le f(\bar{\BIx}^\dagger_{1},\bar{\BIx}_2,\ldots,\bar{\BIx}_N)-\Bigg(\hat{y}_0+\sum_{i=2}^N \langle\BIg_i(\bar{\BIx}_i),\hat{\BIy}_i\rangle\Bigg),\\
\psi_{a}(\lambda_0)
&=\langle\BIg_1(\bar{\BIx}_1),\hat{\BIy}_1\rangle>f(\bar{\BIx}_{1},\bar{\BIx}_2,\ldots,\bar{\BIx}_N)-\Bigg(\hat{y}_0+\sum_{i=2}^N \langle\BIg_i(\bar{\BIx}_i),\hat{\BIy}_i\rangle\Bigg),\\
\psi_{a}(1)
&=\langle\BIg_1(\BIv_{1,0}),\hat{\BIy}_1\rangle=0.
\end{align*}
Note that we are using the crucial assumption $\bigcup_{C\in\FC_1}C=\CX_1$ here to guarantee that $\psi_{a}\big([0,1]\big)\subseteq \CX_1$.
In the following, we will show that $\psi_{a}$ is not affine on $[0,1]$, which leads to a contradiction.
On the one hand, we have by the assumptions \ref{settc:compact-measures} and \ref{settc:compact-cost} that
\begin{align*}
\frac{\psi_{a}(\lambda_0)-\psi_{a}(0)}{\lambda_0-0}&>\frac{1}{\lambda_0}\big(f(\bar{\BIx}_{1},\bar{\BIx}_2,\ldots,\bar{\BIx}_N)-f(\bar{\BIx}^\dagger_{1},\bar{\BIx}_2,\ldots,\bar{\BIx}_N)\big)\ge -\frac{L_f}{\lambda_0}\|\bar{\BIx}_1-\bar{\BIx}^\dagger_1\|.
\end{align*}
Moreover, it holds that 
\begin{align*}
\bar{\BIx}_{1}-\bar{\BIx}^\dagger_{1}&=\Bigg(\sum_{l=0}^{\bar{d}_{1}}\lambda_l\BIw_l\Bigg)-\frac{1}{1-\lambda_0}\Bigg(\sum_{l=1}^{\bar{d}_{1}}\lambda_l\BIw_l\Bigg)=\lambda_0\BIw_0-\frac{\lambda_0}{1-\lambda_0}\Bigg(\sum_{l=1}^{\bar{d}_{1}}\lambda_l\BIw_l\Bigg)=\lambda_0(\BIv_{1,0}-\bar{\BIx}^\dagger_{1}).
\end{align*}
Combining these yields
\begin{align}
\begin{split}
\frac{\psi_{a}(\lambda_0)-\psi_{a}(0)}{\lambda_0-0}&> -L_{f}\|\BIv_{1,0}-\bar{\BIx}^\dagger_{1}\|\ge -L_f D(\BCX)= -J.
\end{split}
\label{eqn:mmot-complexity-constants-proof-slope-a1}
\end{align}
On the other hand, we have by (\ref{eqn:mmot-complexity-optimizers-bound-proof-nonpos-a}) and (\ref{eqn:mmot-complexity-optimizers-bound-proof-ineq1}) that
\begin{align}
\frac{\psi_{a}(1)-\psi_{a}(\lambda_0)}{1-\lambda_0}&< \frac{1}{1-\lambda_0} \Bigg[y_{0}+\max_{1\le j\le m_{1}}\{y_{1,j}\}-2J+\Bigg(\sum_{i=2}^N \langle\BIg_i(\bar{\BIx}_i),\hat{\BIy}_i\rangle\Bigg)-f(\bar{\BIx}_1,\ldots,\bar{\BIx}_N)\Bigg]\nonumber\\
&\le \frac{1}{1-\lambda_0} \big(-2J-f(\bar{\BIx}_1,\ldots,\bar{\BIx}_N)\big)< -J.
\label{eqn:mmot-complexity-constants-proof-slope-a2}
\end{align}
Combining (\ref{eqn:mmot-complexity-constants-proof-slope-a1}) and (\ref{eqn:mmot-complexity-constants-proof-slope-a2}) 
leads to $\frac{\psi_{a}(\lambda_0)-\psi_{a}(0)}{\lambda_0-0}>\frac{\psi_{a}(1)-\psi_{a}(\lambda_0)}{1-\lambda_0}$, which
contradicts the affine property of $\psi_{a}(\,\cdot\,)$ on $[0,1]$. 
Therefore, we conclude that $(\hat{y}_0,\hat{\BIy})$ is feasible for \eqref{eqn:mmotlb-dual-lsip} after operation~\ref{lems:mmot-complexity-optimizers-bound-y-above}.
Furthermore, it follows from (\ref{eqn:mmot-complexity-optimizers-bound-proof-shifted}) that $\hat{y}_0+\langle\bar{\BIg},\hat{\BIy}\rangle=y_0+\langle\bar{\BIg},\BIy\rangle+\gamma_1\int_{\CX_1}g_{1,0}\DIFFX{\mu_1} \ge y_0+\langle\bar{\BIg},\BIy\rangle$,
and thus operation~\ref{lems:mmot-complexity-optimizers-bound-y-above} does not decrease the objective value of $(y_0,\BIy)$ for \eqref{eqn:mmotlb-dual-lsip}.

Next, let us show that $(\hat{y}_0,\hat{\BIy})$ remains feasible for \eqref{eqn:mmotlb-dual-lsip} after operation~\ref{lems:mmot-complexity-optimizers-bound-y-below}.
Suppose without loss of generality that $y_{1,1}<-2J$,
and we apply operation~\ref{lems:mmot-complexity-optimizers-bound-y-below} with $\hat{i}\leftarrow 1$, $\hat{j}\leftarrow 1$,
which results in 
$\hat{y}_0=y_0$,
$\hat{y}_{1,1}=-2J$,
$\hat{y}_{1,j}=y_{1,j}$ for $j=2,\ldots,m_1$,
as well as
$\hat{\BIy}_i=\BIy_i$ for $i=2,\ldots,N$.
Then, (\ref{eqn:mmot-complexity-optimizers-bound-proof-ineq2}) and the property \ref{pp:mmot-complexity-property-orthonormality} imply that
\begin{align}
\begin{split}
\hat{y}_{0}+\sum_{i=2}^N\langle\BIg_i(\BIx_i),\hat{\BIy}_i\rangle&=y_{0}+\langle\BIg_1(\BIv_{1,0}),\BIy_1\rangle+\sum_{i=2}^N\langle\BIg_i(\BIx_i),\BIy_i\rangle\le 0 \qquad \forall (\BIx_1,\ldots,\BIx_N)\in\BCX.
\end{split}
\label{eqn:mmot-complexity-optimizers-bound-proof-nonpos-b}
\end{align}
Suppose for the sake of contradiction that there exists $(\bar{\BIx}_1,\ldots,\bar{\BIx}_N)\in\BCX$ with $\hat{y}_{0}+\sum_{i=1}^N \langle\BIg_i(\bar{\BIx}_i),\hat{\BIy}_i\rangle >f(\bar{\BIx}_1,\ldots,\bar{\BIx}_N)$.
It then follows from (\ref{eqn:mmot-complexity-optimizers-bound-proof-ineq2}) that $g_{1,1}(\bar{\BIx}_1)>\nobreak0$.
Using the same argument in the proof of the properties of operation~\ref{lems:mmot-complexity-optimizers-bound-y-above}, let us fix an $\overbar{F}\in\FC_{1}$ such that $\bar{\BIx}_{1}\in\relint(\overbar{F})$,
enumerate $V(\overbar{F})=\{\BIw_0,\BIw_1,\ldots,\BIw_{\bar{d}_{1}}\}$ where $0\le \bar{d}_1\le d_1$, $\BIw_0=\BIv_{1,1}$,
and express 
$\bar{\BIx}_{1}=\sum_{l=0}^{\bar{d}_{1}}\lambda_l\BIw_l$ where $\lambda_l>\nobreak 0$ $\forall 0\le l\le \bar{d}_1$, 
$\sum_{l=0}^{\bar{d}_{1}}\lambda_l=1$.
It holds by the property \ref{pp:mmot-complexity-property-orthonormality}, (\ref{eqn:mmot-complexity-optimizers-bound-proof-nonpos-b}), and 
(\ref{eqn:mmot-complexity-optimizers-bound-proof-ineq1}) that 
\begin{align*}
\hat{y}_{0}+\langle\BIg_1(\BIv_{1,1}),\hat{\BIy}_{1}\rangle+\sum_{i=2}^N\langle\BIg_i(\bar{\BIx}_i),\hat{\BIy}_i\rangle\le -2J<f(\BIv_{1,1},\bar{\BIx}_2,\ldots,\bar{\BIx}_N),
\end{align*}
which shows that $\bar{\BIx}_{1}\ne\BIv_{1,1}=\BIw_0$ and thus $\lambda_0\ne 1$. 
Therefore, we get $\lambda_0\in(0,1)$. 
Now, let us define $\bar{\BIx}^{\dagger}_{1}:=\frac{1}{1-\lambda_0}\sum_{l=1}^{\bar{d}_{1}}\lambda_l\BIw_l\in \conv\big(\{\BIw_1,\ldots,\BIw_{\bar{d}_{1}}\}\big)\in \FF(\overbar{F})$, and define 
$\psi_{b}(\nu):=\big\langle\BIg_1\big((1-\nu)\bar{\BIx}^{\dagger}_{1}+\nu \BIv_{1,1}\big),\hat{\BIy}_1\big\rangle$ for $\nu\in[0,1]$, 
which is an affine function on $[0,1]$ due to the property~\ref{pp:mmot-complexity-property-pwa}. 
Notice that the property~\ref{pp:mmot-complexity-property-disjoint} implies that 
$g_{1,1}(\bar{\BIx}^\dagger_1)=\nobreak0$ and hence
$\langle\BIg_1(\bar{\BIx}^\dagger_1),\hat{\BIy}_1\rangle=\langle\BIg_1(\bar{\BIx}^\dagger_1),\BIy_1\rangle$.
Consequently, we get from (\ref{eqn:mmot-complexity-optimizers-bound-proof-ineq2}) and the property~\ref{pp:mmot-complexity-property-orthonormality} that
\begin{align*}
\psi_{b}(0)&=\langle\BIg_1(\bar{\BIx}^\dagger_{1}),\hat{\BIy}_1\rangle=\langle\BIg_1(\bar{\BIx}^\dagger_{1}),\BIy_1\rangle\le f(\bar{\BIx}^\dagger_{1},\bar{\BIx}_2,\ldots,\bar{\BIx}_N)-\Bigg(y_{0}+\sum_{i=2}^N\langle\BIg_i(\bar{\BIx}_i),\BIy_i\rangle\Bigg),\\
\psi_{b}(\lambda_0)&=\langle\BIg_1(\bar{\BIx}_{1}),\hat{\BIy}_1\rangle>f(\bar{\BIx}_{1},\bar{\BIx}_2,\ldots,\bar{\BIx}_N)-\Bigg(\hat{y}_{0}+\sum_{i=2}^N\langle\BIg_i(\bar{\BIx}_i),\hat{\BIy}_i\rangle\Bigg),\\
\psi_{b}(1)&=\langle\BIg_1(\BIv_{1,1}),\hat{\BIy}_1\rangle=\hat{y}_{1,1}=-2J.
\end{align*}
Here, we are again using the crucial assumption $\bigcup_{C\in\FC_1}C=\CX_1$ to guarantee that $\psi_{b}\big([0,1]\big)\subseteq\CX_1$.
In the following, we will show that $\psi_b$ is not affine on $[0,1]$ to get a contradiction.
On the one hand, since $\hat{y}_0=y_0$ and $\hat{\BIy}_i=\BIy_i$ for $i=2,\ldots,N$, we have by the assumptions \ref{settc:compact-measures} and \ref{settc:compact-cost} that
\begin{align*}
\begin{split}
\frac{\psi_{b}(\lambda_0)-\psi_{b}(0)}{\lambda_0-0}&>\frac{1}{\lambda_0}\big(f(\bar{\BIx}_{1},\bar{\BIx}_2,\ldots,\bar{\BIx}_N)-f(\bar{\BIx}^\dagger_{1},\bar{\BIx}_2,\ldots,\bar{\BIx}_N)\big)\ge -\frac{L_f}{\lambda_0}\|\bar{\BIx}_1-\bar{\BIx}^\dagger_1\|.
\end{split}
\end{align*}
Same as in the proof of the properties of operation~\ref{lems:mmot-complexity-optimizers-bound-y-above}, we have $\bar{\BIx}_{1}-\bar{\BIx}^\dagger_{1}=\lambda_0(\BIv_{1,1}-\bar{\BIx}^\dagger_{1})$, and we thus get
\begin{align}
\begin{split}
\frac{\psi_{b}(\lambda_0)-\psi_{b}(0)}{\lambda_0-0}&> -L_{f}\|\BIv_{1,1}-\bar{\BIx}^\dagger_{1}\|\ge -L_f D(\BCX)= -J.
\end{split}
\label{eqn:mmot-complexity-constants-proof-slope-b1}
\end{align}
On the other hand, we have by (\ref{eqn:mmot-complexity-optimizers-bound-proof-nonpos-b}) and (\ref{eqn:mmot-complexity-optimizers-bound-proof-ineq1}) that
\begin{align}
\begin{split}
\frac{\psi_{b}(1)-\psi_{b}(\lambda_0)}{1-\lambda_0}&< \frac{1}{1-\lambda_0}\Bigg[-2J-f(\bar{\BIx}_{1},\bar{\BIx}_2,\ldots,\bar{\BIx}_N)+\Bigg(\hat{y}_{0}+\sum_{i=2}^N\langle\BIg_i(\bar{\BIx}_i),\hat{\BIy}_i\rangle\Bigg)\Bigg] \\
&\le \frac{1}{1-\lambda_0} \big(-2J-f(\bar{\BIx}_{1},\bar{\BIx}_2,\ldots,\bar{\BIx}_N)\big)< -J.
\end{split}
\label{eqn:mmot-complexity-constants-proof-slope-b2}
\end{align}
Combining (\ref{eqn:mmot-complexity-constants-proof-slope-b1}) and (\ref{eqn:mmot-complexity-constants-proof-slope-b2})
yields $\frac{\psi_{b}(\lambda_0)-\psi_{b}(0)}{\lambda_0-0}>\frac{\psi_{b}(1)-\psi_{b}(\lambda_0)}{1-\lambda_0}$,
which contradicts the affine property of $\psi_b$ on $[0,1]$.
Therefore, we conclude that $(y_0,\BIy)$ remains feasible for \eqref{eqn:mmotlb-dual-lsip} after operation~\ref{lems:mmot-complexity-optimizers-bound-y-below}.
Lastly, since operation~\ref{lems:mmot-complexity-optimizers-bound-y-below} only modifies the value of $y_{1,1}<-2J$ to $\hat{y}_{1,1}=-2J$ and 
leaves the remaining terms unchanged,
it holds that 
$\hat{y}_0+\langle\bar{\BIg},\hat{\BIy}\rangle= y_0 +\langle\bar{\BIg},\BIy\rangle+(\hat{y}_{1,1}-y_{1,1})\int_{\CX_i}g_{1,1}\DIFFX{\mu_1}\ge y_0 +\langle\bar{\BIg},\BIy\rangle$.
The proof is now complete. 
\end{proof}

\begin{proof}[Proof of Proposition~\ref{prop:mmot-complexity-constant}] 
In this proof, let $J:=L_f D(\BCX)$ for notational simplicity. 
To begin, we observe that the condition~\ref{settc:algo-vertices} in Setting~\ref{sett:algo} is satisfied,
and it thus holds by Proposition~\ref{prop:boundedness-mmotlb-dual-lsip}\ref{props:boundedness-mmotlb-dual-lsip-boundedness} that 
\eqref{eqn:mmotlb-dual-lsip} admits an optimal solution.
Hence,
let us take an arbitrary optimal solution $(y^\star_0,\BIy^\star)$ of \eqref{eqn:mmotlb-dual-lsip} and denote $\BIy^\star=(y^\star_{1,1},\ldots,y^\star_{1,m_1},\ldots,y^\star_{N,1},\ldots,y^\star_{N,m_N})^\TRANSP$. 
Subsequently, we can apply a finite sequence of operations~\ref{lems:mmot-complexity-optimizers-bound-y-above} \ref{lems:mmot-complexity-optimizers-bound-y-below} in Lemma~\ref{lem:mmot-complexity-optimizers-bound} to modify $(y^\star_0,\BIy^\star)$ until $|y^\star_{i,j}|\le 2J$ holds for $j=1,\ldots,m_i$, $i=1,\ldots,N$.
The properties of these operations in Lemma~\ref{lem:mmot-complexity-optimizers-bound} guarantee that $(y^\star_0,\BIy^\star)$ remains an optimal solution of \eqref{eqn:mmotlb-dual-lsip} after these operations.
Now, it holds by the optimality of  $(y^\star_0,\BIy^\star)$ for \eqref{eqn:mmotlb-dual-lsip}, the property~\ref{pp:mmot-complexity-property-normalization}, and (\ref{eqn:mmot-complexity-optimizers-bound-proof-ineq1}) that
\begin{align*}
y^\star_{0}&=\inf_{(\BIx_1,\ldots,\BIx_N)\in\BCX}\left\{f(\BIx_1,\ldots,\BIx_N)-\left(\sum_{i=1}^N\sum_{j=1}^{m_i}y^{\star}_{i,j}g_{i,j}(\BIx_i)\right)\right\}\ge-(2N+1)J.
\end{align*}
On the other hand, it holds 
by the property~\ref{pp:mmot-complexity-property-orthonormality} that $y^\star_{0}=y^\star_{0}+\sum_{i=1}^N\sum_{j=1}^{m_i}y^{\star}_{i,j}g_{i,j}(\BIv_{i,0})\le f(\BIv_{1,0},\ldots,\BIv_{N,0})\le 0$.
Hence, we get
\begin{align*}
    \big\|(y^\star_0,\BIy^{\star\TRANSP})^\TRANSP\big\|_2^2\le (2N+1)^2J^2+\sum_{i=1}^N\sum_{j=1}^{m_i}(2J)^2=(4N^2+4N+1+4m)J^2< 4(m+1)^2J^2
\end{align*}
and thus $\big\|(y^\star_0,\BIy^{\star\TRANSP})^\TRANSP\big\|_2<2(m+1)J=2L_fD(\BCX)(m+1)$.
We have thus shown that \eqref{eqn:mmotlb-dual-lsip} admits an optimal solution 
$(y_0^\star,\BIy^\star)$ which satisfies 
$M_{\mathsf{opt}}:=\big\|(y^\star_0,\BIy^{\star\TRANSP})^\TRANSP\big\|_2<2L_fD(\BCX)(m+1)$.
This proves statement~\ref{props:mmot-complexity-constant-optimizer}.
Furthermore, 
notice that $\big\|\BIg_i(\BIx_i)\big\|_2\le 1$ $\forall \BIx_i\in\CX_i$, $\forall 1\le i\le N$ holds due to the property~\ref{pp:mmot-complexity-property-normalization},
and one verifies that all assumptions of Theorem~\ref{thm:mmot-complexity} are satisfied.
Consequently, it follows from Theorem~\ref{thm:mmot-complexity} that there exists an algorithm which,
for any $\epsilon_{\mathsf{LSIP}}>0$,
computes an $\epsilon_{\mathsf{LSIP}}$-optimal solution of \eqref{eqn:mmotlb-dual-lsip} with 
$O\big(m\log(mM_{\mathsf{opt}}/\epsilon_{\mathsf{LSIP}})\big)=O\big(m\log(L_fD(\BCX)m/\epsilon_{\mathsf{LSIP}})\big)$ calls to $\mathtt{Oracle}(\,\cdot\,)$ 
and $O\big(m^{\omega+1}\log(mM_{\mathsf{opt}}/\epsilon_{\mathsf{LSIP}})\big)=O\big(m^{\omega+1}\log(L_fD(\BCX)m/\epsilon_{\mathsf{LSIP}})\big)$ additional arithmetic operations.
The proof is now complete.
\end{proof}

\subsection{Proofs of results in Section~\ref{sec:numerics}}\label{ssec:proof-numerics}

% Proof of Proposition (boundedness of optimizers)
\begin{proof}[Proof of Proposition~\ref{prop:boundedness-mmotlb-dual-lsip}]
    Let us first prove statement~\ref{props:boundedness-mmotlb-dual-lsip-boundedness}.
    To begin, let us assume that \ref{settc:algo-fullsupport} holds and fix an arbitrary $i\in\{1,\ldots,N\}$.
    For any $C\in\FC_i$, observe from the property \ref{pp:mmot-complexity-property-pwa} that
    $\BIg_i(\,\cdot\,)$ is affine on $C$.
    Subsequently, \ref{settc:algo-fullsupport} implies that
    $\aff\big(\BIg_i(\CX_i\cap\hspace{1pt} C)\big)=\aff\big(\BIg_i(C)\big)$ $\forall C\in\FC_i$,
    and we hence get from the property \ref{pp:mmot-complexity-property-orthonormality} of $(g_{i,j})_{j=0:m_i,\,i=1:N}$ that
    \begin{align*}
        \aff\big(\{\BIg_i(\BIx):\BIx\in\CX_i\}\big) &= \aff\Bigg(\bigcup_{C\in\FC_i}\BIg_i(\CX_i\cap C)\Bigg)
         = \aff\Bigg(\bigcup_{C\in\FC_i}\aff\big(\BIg_i(\CX_i\cap C)\big)\Bigg)\\
        &= \aff\Bigg(\bigcup_{C\in\FC_i}\aff\big(\BIg_i(C)\big)\Bigg)
         = \aff\Big(\BIg_i\big({\textstyle\bigcup_{C\in\FC_i}}C\big)\Big)
         = \R^{m_i}.
    \end{align*}
    This shows that there exist $\BIx_{i,1},\ldots,\BIx_{i,m_i+1}\in\CX_i$ such that 
    $\BIg_i(\BIx_{i,1}),\ldots,\BIg_{i}(\BIx_{i,m_i+1})\in\R^{m_i}$ are affinely independent.
    We can conclude by Proposition~\ref{prop:duality-settings}\ref{props:duality-setting2} that the condition~\ref{thmc:duality1int} in Theorem~\ref{thm:duality} holds.

    In the case where \ref{settc:algo-vertices} holds, 
    it follows from 
    the properties 
    \ref{pp:mmot-complexity-property-normalization} and
    \ref{pp:mmot-complexity-property-orthonormality} of $(g_{i,j})_{j=0:m_i,\,i=1:N}$ that 
    $\int_{\CX_i}g_{i,j}\DIFFX{\mu_i}>\nobreak0$ $\forall 1\le j\le m_i$,  $\forall 1\le i\le N$,
    $\sum_{j=1}^{m_i}\int_{\CX_i}g_{i,j}\DIFFX{\mu_i} = 1 - \int_{\CX_i}g_{i,0}\DIFFX{\mu_i}<\nobreak1$, $\forall 1\le i\le N$, 
    as well as 
    \begin{align*}
        \conv\Big(\big\{\BIg_i(\BIx):\BIx\in\CX_i\big\}\Big)\supseteq \big\{(z_1,\ldots,z_{m_i})^\TRANSP\in\R^{m_i}_{+}:{\textstyle\sum_{j=1}^{m_i}}z_j\le 1\big\} \qquad \forall 1\le i\le N.
    \end{align*}
    We can thus conclude that
    \begin{align*}
        \bar{\BIg}&=(\bar{\BIg}_1^\TRANSP,\ldots,\bar{\BIg}_N^\TRANSP)^\TRANSP\in\bigtimes_{i=1}^{N}\inter\Big(\conv\Big(\big\{\BIg_i(\BIx):\BIx\in\CX_i\big\}\Big)\Big)=\inter\Big(\conv\Big(\big\{\BIg(\BIx):\BIx\in\BCX\big\}\Big)\Big),
    \end{align*}
    and that \ref{thmc:duality1int} holds.
    Now that we have shown that the condition \ref{thmc:duality1int} holds when either \ref{settc:algo-fullsupport} or \ref{settc:algo-vertices} holds,
    statement~\ref{props:boundedness-mmotlb-dual-lsip-boundedness} follows from Theorem~\ref{thm:duality}\ref{thms:duality2} 
    and the equivalence between (i) and (iii) in \citep[Corollary~9.3.1]{goberna1998linear}.

    Now, let us prove statement~\ref{props:boundedness-mmotlb-dual-lsip-explicit}.
    Observe from 
    Line~\ref{alglin:cpinit-minprob}, Line~\ref{alglin:cpinit-subtraction}, and Line~\ref{alglin:cpinit-zeroindices} of Algorithm~\ref{alg:cp-mmot-initialization}
    that the set $\CI$ is non-empty in each iteration of the while-loop in Line~\ref{alglin:cpinit-whileloop}, and that Algorithm~\ref{alg:cp-mmot-initialization} terminates after finitely many iterations.
    Thus, we let $T\in\N$ denote the total number of iterations of the while-loop before exiting, and
    we let $t\in\N$ denote the iteration counter for the while-loop throughout the proof of statement~\ref{props:boundedness-mmotlb-dual-lsip-explicit}. 
    Subsequently, 
    for $t=1,\ldots,T$,
    let 
    $\eta_{\min}^{(t)}$, 
    $\big(\BIx_{i}^{(t)}\big)_{i=1:N}$, 
    $\BIx^{(t)}$,
    $\CI^{(t)}$
    denote the values of 
    $\eta_{\min}$, 
    $(\BIx_{i})_{i=1:N}$, 
    $\BIx$,
    $\CI$
    in the $t$-th iteration of the while-loop, respectively, 
    let $\big(\eta_{i,j}^{(t)}\big)_{j=0:m_i,\,i=1:N}$ denote the values of $(\eta_{i,j})_{j=0:m_i,\,i=1:N}$ after the update in Line~\ref{alglin:cpinit-subtraction} in the $t$-th iteration of the while-loop, 
    and let $\big(r_i^{(t)}\big)_{i=1:N}$ and $\big(r_i^{(t+1)}\big)_{i=1:N}$ denote the values of $(r_i)_{i=1:N}$ before and after the update in Line~\ref{alglin:cpinit-advance} in the $t$-th iteration of the while-loop, respectively. 
    Moreover, let $\eta_{i,j}^{(0)}:=\int_{\CX_i}g_{i,j}\DIFFX{\mu_i}$ 
    $\forall 0\le\nobreak j\le\nobreak m_i$, $\forall 1\le\nobreak i\le\nobreak N$. 
    By Lines~\mbox{\ref{alglin:cpinit-subtraction}--\ref{alglin:cpinit-zeroindices}} and the property
    \ref{pp:mmot-complexity-property-orthonormality}
    of $(g_{i,j})_{j=0:m_i,\,i=1:N}$, 
    it holds that
    \begin{align*}
    \int_{\BCX}g_{i,j}\circ \pi_i\DIFFX{\hat{\mu}^{(0)}}&=\sum_{(\BIx,\eta)\in\CQ}\eta g_{i,j}\circ\pi_i(\BIx)\\
    &=\sum_{t=1}^{T}\eta_{\min}^{(t)} g_{i,j}\big(\BIv_{i,r_i^{(t)}}\big)=\sum_{t=1}^{T}\INDI_{\big\{r_i^{(t)}=j\big\}}\eta_{\min}^{(t)}
    \qquad \forall 0\le j\le m_i,\; \forall 1\le i\le N.
    \end{align*} 
    Thus, it suffices to show that $\sum_{t=1}^{T}\INDI_{\big\{r_i^{(t)}=j\big\}}\eta_{\min}^{(t)}=\int_{\CX_i}g_{i,j}\DIFFX{\mu_i}$ 
    $\forall 0\le j\le m_i$, $\forall 1\le i\le N$. 
    To that end, let us fix arbitrary $i\in\{1,\ldots,N\}$, $j\in\{0,1,\ldots,m_i\}$, 
    and let 
    \begin{align*}
        \underline{t}&:=\min\big\{t: 1\le t\le T,\;r_i^{(t)}=j\big\}, \qquad \overline{t}:=\max\big\{t: 1\le t\le T,\;r_i^{(t)}=j\big\}.
    \end{align*}
    Line~\ref{alglin:cpinit-advance} guarantees that 
    $r^{(t)}_i=j$ for $t=\underline{t},\ldots,\overline{t}$.
    For $t=1,\ldots,T$, 
    it follows from Line~\ref{alglin:cpinit-subtraction} that $\eta_{i,j}^{(t)}\ne \eta_{i,j}^{(t-1)}$ if and only if $r_i^{(t)}=j$. 
    Consequently, it holds that $\eta_{i,j}^{(\underline{t}-1)}=\eta_{i,j}^{(0)}$. 
    Moreover, Line~\ref{alglin:cpinit-advance} and Line~\ref{alglin:cpinit-zeroindices} imply that $\eta_{i,j}^{(\overline{t})}=0$.
    Therefore, we get
    \begin{align*}
        \sum_{t=1}^{T}\INDI_{\big\{r_i^{(t)}=j\big\}}\eta_{\min}^{(t)}&=\sum_{t=\underline{t}}^{\overline{t}}\eta_{\min}^{(t)}=\sum_{t=\underline{t}}^{\overline{t}}\eta_{i,j}^{(t-1)}-\eta_{i,j}^{(t)}=\eta_{i,j}^{(\underline{t}-1)}-\eta_{i,j}^{(\overline{t})}=\eta_{i,j}^{(0)}=\int_{\CX_i}g_{i,j}\DIFFX{\mu_i}.
    \end{align*}
    Denoting $\CQ=\big\{(\BIx,\eta_{\BIx}):\BIx\in\BCX^{\dagger(0)}\big\}$,
    we have shown that
    \begin{align}
        \int_{\BCX}g_{i,j}\circ \pi_i\DIFFX{\hat{\mu}^{(0)}}=\sum_{\BIx\in\BCX^{\dagger(0)}}g_{i,j}\circ \pi_i(\BIx)\eta_{\BIx}=\int_{\CX_i}g_{i,j}\DIFFX{\mu_i} \qquad\forall 0\le j\le m_i,\;\forall 1\le i\le N,
        \label{eqn:boundedness-mmotlb-dual-lsip-explicit-proof-integrals}
    \end{align}
    and it hence holds that
    $\hat{\mu}^{(0)}\in \Gamma\big([\mu_1]_{\CG_1},\ldots,[\mu_N]_{\CG_N}\big)$.
    Furthermore, observe that $(\eta_{\BIx})_{\BIx\in\BCX^{\dagger(0)}}$ is feasible for the following dual of the LP problem (\ref{eqn:boundedness-mmotlb-dual-lsip}):
    \begin{align}
        \begin{split}
        \minimize_{(\mu_{\BIx})}\quad & \sum_{\BIx\in\BCX^{\dagger(0)}}f(\BIx)\mu_{\BIx}\\
        \text{subject to}\quad & \sum_{\BIx\in\BCX^{\dagger(0)}}\mu_{\BIx}=1, \qquad \sum_{\BIx\in\BCX^{\dagger(0)}}\BIg(\BIx)\mu_{\BIx}=\bar{\BIg},\\
        & \mu_{\BIx}\ge 0 \qquad\forall \BIx\in\BCX^{\dagger(0)}.
        \label{eqn:boundedness-mmotlb-dual-lsip-explicit-proof-duallp}
        \end{split}
    \end{align}
    The weak duality between (\ref{eqn:boundedness-mmotlb-dual-lsip}) and (\ref{eqn:boundedness-mmotlb-dual-lsip-explicit-proof-duallp}) then shows that the optimal value of (\ref{eqn:boundedness-mmotlb-dual-lsip}) is bounded from above by $\sum_{\BIx\in\BCX^{\dagger(0)}}f(\BIx)\eta_{\BIx} <\nobreak \infty$, and that (\ref{eqn:boundedness-mmotlb-dual-lsip}) has an optimal solution.

    Lastly, let us assume $\mathtt{flag}=\nobreak0$ when Algorithm~\ref{alg:cp-mmot-initialization} terminates and prove that (\ref{eqn:boundedness-mmotlb-dual-lsip}) has bounded superlevel sets as well as $\big|\BCX^{\dagger(0)}\big|=m+1$.
    Notice that for $t=1,\ldots,T$, it holds by Line~\ref{alglin:cpinit-subtraction}, Line~\ref{alglin:cpinit-zeroindices}, and Line~\ref{alglin:cpinit-advance} that $\Big(\sum_{j=0}^{m_i}\eta_{i,j}^{(t)}\Big)-\Big(\sum_{j=0}^{m_i}\eta_{i,j}^{(t-1)}\Big)=\eta_{\min}^{(t)}>0$ for $i=1,\ldots,N$, i.e., the decrements in the values of $\sum_{j=0}^{m_i}\eta_{i,j}^{(t)}$ are equal to the same positive number for $i=1,\ldots,N$. 
    Combining this with the property that $\sum_{j=0}^{m_i}\eta_{i,j}^{(0)}=1$ $\forall 1\le i\le N$, 
    it holds that $r_i^{(T)}=m_i$ and $\eta_{i,m_i}^{(T)}=0$ $\forall 1\le i\le N$.  
    Moreover, since $\sum_{i=1}^N r_i^{(1)}=0$
    and since $\mathtt{flag}=\nobreak0$ when Algorithm~\ref{alg:cp-mmot-initialization} terminates, 
    it holds by Line~\ref{alglin:cpinit-flagcond} and Line~\ref{alglin:cpinit-advance} that
    $|\CI^{(t)}|=\nobreak1$ and 
    $\Big(\sum_{i=1}^N r_i^{(t+1)}\Big) - \Big(\sum_{i=1}^N r_i^{(t)}\Big)=1$ for $t=1,\ldots,T-1$,
    which yield
    \begin{align*}
        T-1=\sum_{t=1}^{T-1}\Bigg[\Bigg(\sum_{i=1}^N r_i^{(t+1)}\Bigg) - \Bigg(\sum_{i=1}^N r_i^{(t)}\Bigg)\Bigg]=\Bigg(\sum_{i=1}^N r_i^{(T)}\Bigg)-\Bigg(\sum_{i=1}^N r_i^{(1)}\Bigg)=\sum_{i=1}^Nm_i=m.
    \end{align*}
    Line~\ref{alglin:cpinit-zeroindices} and Line~\ref{alglin:cpinit-output} then guarantee that $\big|\BCX^{\dagger(0)}\big|=T=m+1$. 

    In the following, we let $\hat{i}^{(t)}\in\{1,\ldots,N\}$ denote the unique element of $\CI^{(t)}$ for $t=1,\ldots,m$.
    Note that $\BIx^{(1)}=(\BIv_{1,0},\ldots,\BIv_{N,0})$ since $r_1^{(1)}=\cdots=r_N^{(1)}=0$.
    The property
    \ref{pp:mmot-complexity-property-orthonormality}
    of $(g_{i,j})_{j=0:m_i,\,i=1:N}$ then
    implies that $\BIg\big(\BIx^{(1)}\big)=\big(\BIg_1(\BIv_{1,0})^\TRANSP,\ldots,\BIg_N(\BIv_{N,0})^\TRANSP\big)^\TRANSP=\nobreak\veczero_m$. 
    We will show that the $m$~vectors
    $\BIg\big(\BIx^{(2)}\big),\ldots,\BIg\big(\BIx^{(m+1)}\big)\in\R^{m}$ are linearly independent.
    To that end, let us fix arbitrary $(a_t)_{t=2:m+1}\subset\nobreak\R$ that satisfy $\sum_{t=2}^{m+1}a_t\BIg\big(\BIx^{(t)}\big)=\nobreak\veczero_m$
    and show via backward induction that $a_t=0$ for $t=2,\ldots,m+1$.
    Suppose for some $k\in\{2,\ldots,m+1\}$ that
    $a_t=\nobreak0$ holds for all $t=k+1,\ldots, m+1$;
    note that this holds vacuously in the case where $k=m+1$.
    In view of Line~\ref{alglin:cpinit-advance}, it holds that
    $r^{(k)}_{\hat{i}^{(k-1)}}=r^{(k-1)}_{\hat{i}^{(k-1)}} + 1$.
    Thus, let us denote $\tilde{i}:=\hat{i}^{(k-1)}$, $\tilde{j}:=r^{(k)}_{\hat{i}^{(k-1)}}$.
    It subsequently follows from Line~\ref{alglin:cpinit-subtraction}
    that
    $\BIx^{(k)}_{\tilde{i}}=\BIv_{\tilde{i},\tilde{j}}$
    and that 
    $\BIx^{(t)}_{\tilde{i}}\ne \BIv_{\tilde{i},\tilde{j}}$ for $t=1,\ldots,k-1$.
    Combining this with the property~\ref{pp:mmot-complexity-property-orthonormality}
    of $(g_{i,j})_{j=0:m_i,\,i=1:N}$ yields
    $g_{\tilde{i},\tilde{j}}\big(\BIx^{(k)}_{\tilde{i}}\big)=1$ 
    and 
    $g_{\tilde{i},\tilde{j}}\big(\BIx^{(t)}_{\tilde{i}}\big)=0$ 
    for $t=1,\ldots,k-1$.
    Now, using the assumption $\sum_{t=2}^{m+1}a_t\BIg\big(\BIx^{(t)}\big)=\nobreak\veczero_m$ and the induction hypothesis,
    we get
    \begin{align*}
        0=\sum_{t=2}^{m+1}a_tg_{\tilde{i},\tilde{j}}\big(\BIx^{(t)}_{\tilde{i}}\big)=a_{k} + \sum_{t=k+1}^{m+1}a_tg_{\tilde{i},\tilde{j}}\big(\BIx^{(t)}_{\tilde{i}}\big) = a_k,
    \end{align*}
    and hence $a_t=0$ holds for $t=k,\ldots,m+1$.
    Therefore, we conclude by induction that $a_t=0$ for $t=2,\ldots,m+1$,
    and thus
    $\BIg\big(\BIx^{(2)}\big),\ldots,\BIg\big(\BIx^{(m+1)}\big)$ are linearly independent.
    Since
    $\BCX^{\dagger(0)}=\big\{\BIx^{(1)},\ldots,\BIx^{(m+1)}\big\}$,
    it holds that
    $\BIg\big(\BIx^{(1)}\big),\ldots,\BIg\big(\BIx^{(m+1)}\big)$ are $m+1$ affinely independent points in $K:=\conv\Big(\big\{\BIg(\BIx):\BIx\in\nobreak\BCX^{\dagger(0)}\big\}\Big)$.
    Moreover, recall that (\ref{eqn:boundedness-mmotlb-dual-lsip-explicit-proof-integrals}) has shown 
    $\bar{\BIg}=\sum_{\BIx\in\BCX^{\dagger(0)}}\eta_{\BIx}\BIg(\BIx)=\sum_{t=1}^{m+1}\eta_{\BIx^{(t)}}\BIg\big(\BIx^{(t)}\big)$,
    and that $\eta_{\BIx^{(t)}}>0$ $\forall 1\le t\le m+1$. 
    This shows that $\bar{\BIg}\in\inter(K)$. 
    Let $U:=\cone\big(\big\{(1,\BIg(\BIx)^\TRANSP)^\TRANSP:\BIx\in\BCX^{(\dagger(0))}\big\}\big)$. 
    Since $(0,\veczero_m^\TRANSP)^\TRANSP,\big(1,\BIg\big(\BIx^{(1)}\big)^\TRANSP\big)^\TRANSP,\ldots,\big(1,\BIg\big(\BIx^{(m+1)}\big)^\TRANSP\big)^\TRANSP\in U$ are $m+2$ affinely independent points in $\R^{m+1}$, \citep[Corollary~6.8.1]{rockafellar1970convex} implies that $(1,\bar{\BIg}^\TRANSP)^\TRANSP\in\relint(U)=\inter(U)$, and the non-emptiness and boundedness of the set of optimal solutions of the LP problem (\ref{eqn:boundedness-mmotlb-dual-lsip}) is a consequence of \citep[Theorem~8.1(vi)]{goberna1998linear}, with $M\leftarrow U$, $c\leftarrow(1,\bar{\BIg}^\TRANSP)^\TRANSP$ in the notation of \citep{goberna1998linear}.
    By the equivalence between (i) and (iii) in \citep[Corollary~9.3.1]{goberna1998linear}, (\ref{eqn:boundedness-mmotlb-dual-lsip}) has non-empty and bounded superlevel sets.
    The proof is now complete.
\end{proof}

%% Proof of Proposition (extension of Lipschitz function)
\begin{proof}[Proof of Proposition~\ref{prop:extension-lip}]
For any $x,x'\in D$, it holds by the $L_f$-Lipschitz continuity of $f$ that $f(x)\le f(x')+L_fd_{\CY}(x,x')$. Subsequently, taking the infimum over $x'\in D$ yields $f(x)\le \tilde{f}(x)$. Moreover, $\tilde{f}(x)\le f(x)+L_fd_{\CY}(x,x)=f(x)$, which proves that $\tilde{f}(x)=f(x)$ for all $x\in D$. 
To prove the $L_f$-Lipschitz continuity of $\tilde{f}$, let $x_1,x_2\in\widetilde{D}$ be arbitrary. Then, it holds for any $x'\in D$ that $\tilde{f}(x_2)\le f(x')+L_fd_{\CY}(x_2,x')\le f(x')+L_fd_{\CY}(x_1,x_2)+L_fd_{\CY}(x_1,x')$ and hence
\begin{align*}
\tilde{f}(x_2)-L_fd_{\CY}(x_1,x_2)\le \inf_{x'\in D}\big\{f(x')+L_fd_{\CY}(x_1,x')\big\}=\tilde{f}(x_1).
\end{align*}
Subsequently, exchanging the roles of $x_1$ and $x_2$ yields $\big|\tilde{f}(x_1)-\tilde{f}(x_2)\big|\le L_fd_{\CY}(x_1,x_2)$, which shows that $\tilde{f}$ is $L_f$-Lipschitz continuous. The proof is now complete. 
\end{proof}

% Proof of Proposition (cutting-plane algorithm)
\begin{proof}[Proof of Proposition~\ref{prop:cpalgo-properties}]
Due to the compactness of $\CX_1,\ldots,\CX_N$ and the continuity of the test functions in $\CG_1,\ldots,\CG_N$, the set $\big\{\BIg(\BIx):\BIx\in\BCX\big\}$ is bounded. Moreover, the global minimization problem solved by $\mathtt{Oracle}(\,\cdot\,)$ in Line~\ref{alglin:cp-global} is bounded from below. 
Therefore, statement~\ref{props:cpalgo-termination} follows from \citep[Theorem~11.2]{goberna1998linear} with $g(\,\cdot\,,\cdot\,)\leftarrow\Big(\BCX\times\R^{1+m}\ni\big(\BIx,(y_0,\BIy)\big)\mapsto-\big(y_0+\langle\BIg(\BIx),\BIy\rangle-f(\BIx)\big)\in\R\Big)$. 

In order to prove statements~\ref{props:cpalgo-bounds}, \ref{props:cpalgo-dual}, and \ref{props:cpalgo-primal}, we will show that $(\hat{y}_0,\hat{\BIy})$ is a feasible solution of \eqref{eqn:mmotlb-dual-lsip} with objective value equal to $\alpha_{\mathsf{relax}}^{\SFL\SFB}$ and that $\hat{\mu}$ is a feasible solution of \eqref{eqn:mmotlb} with objective value equal to $\alpha_{\mathsf{relax}}^{\SFU\SFB}$. 
Subsequently, since Line~\ref{alglin:cp-termination1}, Line~\ref{alglin:cp-termination2}, and Line~\ref{alglin:cp-bounds} guarantee that $\alpha_{\mathsf{relax}}^{\SFU\SFB}-\alpha_{\mathsf{relax}}^{\SFL\SFB}=y^{(r)}_0-s^{(r)}\le\epsilon_{\mathsf{LSIP}}$, statements~\ref{props:cpalgo-bounds}, \ref{props:cpalgo-dual}, and \ref{props:cpalgo-primal} will follow from the weak duality in Theorem~\ref{thm:duality}\ref{thms:duality-weak}. 
On the one hand, by Line~\ref{alglin:cp-global} and Line~\ref{alglin:cp-dual}, it holds that 
\begin{align*}
\hat{y}_0+\langle\BIg(\BIx),\hat{\BIy}\rangle-f(\BIx)&=s^{(r)}+\langle\BIg(\BIx),\BIy^{(r)}\rangle-f(\BIx)\\
&= \min_{\BIx'\in\BCX}\big\{f(\BIx')-\langle\BIg(\BIx'),\BIy^{(r)}\rangle\big\} -\big(f(\BIx)-\langle\BIg(\BIx),\BIy^{(r)}\rangle\big)\le 0 \qquad \forall \BIx\in\BCX.
\end{align*}
Moreover, it follows from Line~\ref{alglin:cp-lp}, Line~\ref{alglin:cp-bounds}, and Line~\ref{alglin:cp-dual} that $\hat{y}_0+\langle\bar{\BIg},\hat{\BIy}\rangle=s^{(r)}+\langle\bar{\BIg},\BIy^{(r)}\rangle=\alpha^{(r)}-y^{(r)}_0+s^{(r)}=\alpha_{\mathsf{relax}}^{\SFL\SFB}$. 
This shows that $(\hat{y}_0,\hat{\BIy})$ is a feasible solution of \eqref{eqn:mmotlb-dual-lsip} with objective value~$\alpha_{\mathsf{relax}}^{\SFL\SFB}$. 
On the other hand, by Line~\ref{alglin:cp-lp}, $\big(\mu^{(r)}_{\BIx}\big)_{\BIx\in\BCX^{\dagger(r)}}$ is an optimal solution of the following LP problem, which corresponds to the dual of the LP problem in Line~\ref{alglin:cp-lp}:
\begin{align*}
\begin{split}
\minimize_{(\mu_{\BIx})_{\BIx\in\BCX^{\dagger(r)}}}\quad & \sum_{\BIx\in\BCX^{\dagger(r)}}f(\BIx)\mu_{\BIx} \\
\text{subject to}\quad & \sum_{\BIx\in\BCX^{\dagger(r)}}\mu_{\BIx}=1, \qquad \sum_{\BIx\in\BCX^{\dagger(r)}}\BIg(\BIx)\mu_{\BIx}=\bar{\BIg},\\
& \mu_{\BIx}\ge0 \qquad \forall\BIx\in\BCX^{\dagger(r)}.
\end{split}
%\label{eqn:cpalgo-primallp}
\end{align*}
Consequently, it follows from Line~\ref{alglin:cp-primal} that $\hat{\mu}$ is a positive Borel measure that satisfies $\hat{\mu}(\BCX)=\sum_{\BIx\in\BCX^{\dagger(r)}}\mu^{(r)}_{\BIx}{=1}$ and $\int_{\BCX}g_{i,j}\circ\pi_i\DIFFX{\hat{\mu}}=\sum_{\BIx\in\BCX^{\dagger(r)}}g_{i,j}\circ\pi_i(\BIx)\mu^{(r)}_{\BIx}=\int_{\CX_i}g_{i,j}\DIFFX{\mu_i}$ $\forall 1\le j\le m_i$, $\forall 1\le i\le N$.
This shows that $\hat{\mu}\in\Gamma\big([\mu_1]_{\CG_1},\ldots,[\mu_N]_{\CG_N}\big)$. 
Moreover, since $\BCX^{\dagger(r)}\supseteq\BCX^{\dagger(0)}$ by Line~\ref{alglin:cp-aggregate}, it follows from the assumption about $\BCX^{\dagger(0)}$ in Remark~\ref{rmk:cpalgo} that the LP problem in Line~\ref{alglin:cp-lp} is feasible and bounded from above. 
Subsequently, the strong duality of LP problems yields that 
$\int_{\BCX}f\DIFFX{\hat{\mu}}=\sum_{\BIx\in\BCX^{\dagger(r)}}f(\BIx)\mu^{(r)}_{\BIx}=\alpha^{(r)}$. 
Since $\alpha_{\mathsf{relax}}^{\SFU\SFB}=\alpha^{(r)}$ by Line~\ref{alglin:cp-bounds}, $\hat{\mu}$ is a feasible solution of \eqref{eqn:mmotlb} with objective value $\alpha_{\mathsf{relax}}^{\SFU\SFB}$.
Moreover, it holds that $\support(\hat{\mu})\subseteq\BCX^{\dagger(r)}$ is a finite set. 
The proof is now complete. 
\end{proof}

%% Proof of Theorem (MMOT algorithm)
\begin{proof}[Proof of Theorem~\ref{thm:mmotalgo}]
It follows from Proposition~\ref{prop:cpalgo-properties}\ref{props:cpalgo-bounds}--\ref{props:cpalgo-primal} that 
$\hat{y}_0+\langle\bar{\BIg},\hat{\BIy}\rangle=\alpha_{\mathsf{relax}}^{\mathsf{LB}}\le$ \eqref{eqn:mmotlb-dual-lsip} $\le\alpha_{\mathsf{relax}}^{\mathsf{UB}}=\int_{\BCX}f\DIFFX{\hat{\mu}}$, 
$\alpha_{\mathsf{relax}}^{\mathsf{UB}}-\alpha_{\mathsf{relax}}^{\mathsf{LB}}\le\epsilon_{\mathsf{LSIP}}$, 
and $\hat{\mu}$ is an $\epsilon_{\mathsf{LSIP}}$-optimal solution of \eqref{eqn:mmotlb-dual-lsip}.
Moreover, Line~\ref{alglin:mmot-reassembly} computes $\tilde{\mu}\in R(\hat{\mu};\mu_1,\ldots,\mu_N)\subseteq\Gamma(\mu_1,\ldots,\mu_N)$, and thus $\tilde{\mu}$ is feasible for \eqref{eqn:mmot}. 
Consequently, we obtain from Line~\ref{alglin:mmot-bounds} and Theorem~\ref{thm:lowerbound}\ref{thms:lowerbound-control} that
\begin{align}
\begin{split}
    \tilde{\epsilon}_{\mathsf{sub}}&=\int_{\BCX}f\DIFFX{\tilde{\mu}}-\alpha_{\mathsf{relax}}^{\mathsf{LB}}=\left(\int_{\BCX}f\DIFFX{\tilde{\mu}}-\int_{\BCX}f\DIFFX{\hat{\mu}}\right)+\left(\int_{\BCX}f\DIFFX{\hat{\mu}}-\alpha_{\mathsf{relax}}^{\mathsf{LB}}\right)\\
    &\le \epsilon_{\mathsf{LSIP}}+L_f\sum_{i=1}^N\specialoverline{W}_{1}(\mu_i,[\mu_i]_{\CG_i})\le \epsilon_{\mathsf{LSIP}}+L_f\sum_{i=1}^N\rho_i=\epsilon_{\mathsf{theo}}.
\end{split}
\label{eqn:mmotalgo-proof-error}
\end{align}
Since $\alpha^{\mathsf{LB}}=\alpha^{\mathsf{LB}}_{\mathsf{relax}}\le$ \eqref{eqn:mmotlb-dual-lsip} $=$ \eqref{eqn:mmotlb} $\le$ \eqref{eqn:mmot}, 
(\ref{eqn:mmotalgo-proof-error}) shows that statement~\ref{thms:mmotalgo-bounds} and statement~\ref{thms:mmotalgo-primal} hold.
Furthermore, one observes from Line~\ref{alglin:mmot-dual} that $(\tilde{h}_i)_{i=1:N}$ is feasible for \eqref{eqn:mmot-dual} 
with objective $\sum_{i=1}^{N}\int_{\CX_i}\tilde{h}_i\DIFFX{\mu_i}=\hat{y}_0 + \langle\bar{\BIg},\hat{\BIy}\rangle=\alpha_{\mathsf{relax}}^{\mathsf{LB}}=\alpha^{\mathsf{LB}}$, 
where
$\alpha^{\mathsf{LB}}=\int_{\BCX}f\DIFFX{\tilde{\mu}}-\tilde{\epsilon}_{\mathsf{sub}}\ge$ $\eqref{eqn:mmot}-\tilde{\epsilon}_{\mathsf{sub}}$ $\ge$ $\eqref{eqn:mmot-dual}-\nobreak\tilde{\epsilon}_{\mathsf{sub}}$.
This proves statement~\ref{thms:mmotalgo-dual}.
Lastly, 
to prove statement~\ref{thms:mmotalgo-control},
notice that
for $i=1,\ldots,N$,
the construction of $\FC_i$ satisfying $\eta(\FC_i)\le \frac{\epsilon-\epsilon_{\mathsf{LSIP}}}{2NL_f}$ from $\widetilde{\FC}_i$ can be done via the iterative bisection procedure in Proposition~\ref{prop:momentset-simplex}\ref{props:momentset-simplex-bisection}.
Subsequently, when using $(\CG_i)_{i=1:N}$, $\BIg(\,\cdot\,)$, $\bar{\BIg}$ constructed by \ref{settc:compact-testfuncs},
it is guaranteed by
(\ref{eqn:mmotalgo-proof-error}) 
and Proposition~\ref{prop:momentset-simplex}\ref{props:momentset-simplex-upperbound} 
that $\tilde{\epsilon}_{\mathsf{sub}}\le \epsilon_{\mathsf{LSIP}}+L_f\big(\sum_{i=1}^N\specialoverline{W}_{1}(\mu_i,[\mu_i]_{\CG_i})\big)\le \epsilon_{\mathsf{LSIP}}+L_f\big(\sum_{i=1}^N2\eta(\FC_i)\big)\le\nobreak\epsilon$.
The proof is now complete.
\end{proof}

%\pagebreak
\subsection{Proofs of results in Section~\ref{sec:experiments}}\label{ssec:proof-experiments}\nopagebreak
\begin{proof}[Proof of Proposition~\ref{prop:experiments-fluid}]
    To prove statement~\ref{props:experiments-fluid-setting},
    it suffices to show that $f$ is $(2L_{\Xi}+2)$-Lipschitz continuous.
    To that end,
    one observes that since $\Xi$ is $L_{\Xi}$-Lipschitz continuous and 
    $\FL_{[0,1]}\circ \Xi^{-1}=\FL_{[0,1]}$,
    it holds necessarily that $L_{\Xi}\ge 1$. 
    Consequently, we have
    \begin{align*}
        \big|f(\BIx)-f(\BIx')\big| &\le 2|x_N|\big|\Xi(x_1)-\Xi(x'_1)\big| + 2\big|\Xi(x'_1)\big||x_N-x'_N|\\
        &\qquad + \sum_{i=1}^{N-1} 2|x_i||x_{i+1}-x'_{i+1}| + 2|x'_{i+1}||x_{i}-x'_{i}|\\
        &\le (2L_{\Xi}+2)\sum_{i=1}^N|x_i-x'_i| \qquad \forall \BIx=(x_1,\ldots,x_N),\, \BIx'=(x'_1,\ldots,x'_N)\in[0,1]^N.
    \end{align*}
    Thus, statement~\ref{props:experiments-fluid-setting} holds.
    Next, statement~\ref{props:experiments-fluid-shift} and statement~\ref{props:experiments-fluid-primaldual-solutions}
    follow from the observations
    that the cost function of (\ref{eqn:experiments-fluid-OT}) satisfies
    \begin{align*}
        \big(x_N-\Xi(x_1)\big)^2 + \sum_{i=1}^{N-1}(x_i-x_{i+1})^2 &= f(x_1,\ldots,x_N) + x_1^2 + \Xi(x_1)^2 + 2\sum_{i=2}^{N} x_i^2 \\
        &\qquad \qquad\qquad\qquad \qquad \forall (x_1,\ldots,x_N)\in[0,1]^N,
    \end{align*}
    and that
    \begin{align*}
        \int_{[0,1]^N} {\textstyle x_1^2 + \Xi(x_1)^2 + 2\sum_{i=2}^{N} x_i^2} \DIFFM{\mu}{\DIFF x_1,\ldots,\DIFF x_N}&=\int_{[0,1]}\Xi(x)^2 + (2N-1)x^2 \DIFFM{\FL_{[0,1]}}{\DIFF x} \\
        &= 2N\int_{0}^{1}x^2\DIFFX{x}=\frac{2N}{3} \qquad \forall \mu\in\Gamma(\mu_1,\ldots,\mu_N).
    \end{align*}

    Lastly, statement~\ref{props:experiments-fluid-reassembly} can be proved by a direct application of Proposition~\ref{prop:reassembly-1d}\ref{props:reassembly-1d-semidisc}.
    To see this, let 
    $\big(F_{\mu_i}(\,\cdot\,)\big)_{i=1:N}$, 
    $\big(F_{\mu_i}^{-1}(\,\cdot\,)\big)_{i=1:N}$ be defined as in Proposition~\ref{prop:reassembly-1d},
    let 
    $\sigma_1:\{1,\ldots,J\}\to\{1,\ldots,J\}$ be the identity mapping,
    and define $c_{0,j}:=\sum_{l=1}^{j-1}a_l$ $\forall 1\le j\le J$.
    One checks that $F_{\mu_i}^{-1}(u)=u$ $\forall u\in(0,1)$, $\forall 1\le i\le N$,
    and that $c_{0,j}=c_{1,j}$ $\forall 1\le j\le J$.
    Moreover, let $(\Omega,\CF,\PROB)$ be a probability space with 
    $\Omega:=[0,1]$, $\CF:=\CB([0,1])$, $\PROB:=\FL_{[0,1]}$,
    define 
    $U_0:\Omega\to[0,1]$ by $U_0(\omega):=\omega$ $\forall \omega\in[0,1]$,
    define $\big(U_{i,j}:\Omega\to[0,1]\big)_{j=1:J,\,i=1:N}$ by Remark~\ref{rmk:reassembly-1d-uniform-examples}\ref{rmks:reassembly-1d-uniform-examples-deterministic},
    and define 
    $(Z_i:\Omega\to\R)_{i=1:N}$ by (\ref{eqn:reassembly-1d-invtrans}).
    One verifies that
    $Z_i(\omega)=\Xi_i(\omega)$ $\forall \omega\in(0,1)$ for $i=1,\ldots,N$,
    which shows that 
    $\mu_1\circ(\Xi_1,\ldots,\Xi_N)^{-1}$ is the law of the random variable $(Z_1,\ldots,Z_N):\Omega\to[0,1]^N$.
    Consequently, it holds by Proposition~\ref{prop:reassembly-1d}\ref{props:reassembly-1d-semidisc} that $\mu_1\circ(\Xi_1,\ldots,\Xi_N)^{-1}\in R(\hat{\mu};\mu_1,\ldots,\mu_N)$.
    The proof is now complete.
\end{proof}

\begin{proof}[Proof of Proposition~\ref{prop:experiments-barycenter-mmot}]
    Throughout this proof, let us define $(f_i:\CX_i\times\R^d\to\R)_{i=1:N}$ as follows:
    \begin{align*}
        f_i(\BIx_i,\BIz):=\|\BIz\|_2^2-2\langle\BIx_i,\BIz\rangle \qquad\forall \BIx_i\in\CX_i,\;\forall\BIz\in\R^d,\; \forall 1\le i\le N.
    \end{align*}
    Statement~\ref{props:experiments-barycenter-mmot-formulation}
    follows directly from \citep[Proposition~3]{carlier2010matching}.
    To prove statement~\ref{props:experiments-barycenter-setting},
    observe that 
    \begin{align*}
        %\begin{split}
        \bar{\BIz}(\BIx_1,\ldots,\BIx_N)&= \argmin_{\BIz\in\R^d}\Bigg\{\sum_{i=1}^N f_i(\BIx_i,\BIz)\Bigg\} \qquad \forall (\BIx_1,\ldots,\BIx_N)\in\BCX,
        %\end{split}
        %\label{eqn:experiments-barycenter-mmot-proof-mean}
    \end{align*}
    and that
    \begin{align}
        \begin{split}
        f(\BIx_1,\ldots,\BIx_N)&=\frac{1}{N}\sum_{i=1}^N f_i\big(\BIx_i,\bar{\BIz}(\BIx_1,\ldots,\BIx_N)\big)\\
        &=\min_{\BIz\in\CZ}\Bigg\{\frac{1}{N}\sum_{i=1}^N f_i(\BIx_i,\BIz)\Bigg\} \qquad \forall (\BIx_1,\ldots,\BIx_N)\in\BCX.
        \end{split}
        \label{eqn:experiments-barycenter-mmot-proof-objective}
    \end{align}
    Subsequently, one checks that
    \begin{align}
        \big|f_i(\BIx_i,\BIz)-f_i(\hat{\BIx}_i,\BIz)\big| \le 2\sup_{\BIz\in\CZ}\big\{\|\BIz\|_2\big\} \|\BIx_i-\hat{\BIx}_i\|_2 \qquad\forall \BIx_i,\hat{\BIx}_i\in\CX_i,\;\forall \BIz\in\CZ, \; \forall 1\le i\le N,
        \label{eqn:experiments-barycenter-couplings-proof-lipschitzeach}
    \end{align}
    and hence (\ref{eqn:experiments-barycenter-mmot-proof-objective}) implies that
    \begin{align*}
        f(\BIx)-f(\hat{\BIx}) &\le \frac{1}{N}\sum_{i=1}^{N} f_i\big(\BIx_i,\bar{\BIz}(\hat{\BIx})\big) - f_i\big(\hat{\BIx}_i,\bar{\BIz}(\hat{\BIx})\big) \\
        &\le \frac{2}{N}\sup_{\BIz\in\CZ}\big\{\|\BIz\|_2\big\} \sum_{i=1}^N \|\BIx_i-\hat{\BIx}_i\|_2  \qquad \forall \BIx=(\BIx_1,\ldots,\BIx_N),\hat{\BIx}=(\hat{\BIx}_1,\ldots,\hat{\BIx}_N)\in\BCX.
    \end{align*}
    Exchanging the roles of $\BIx$ and $\hat{\BIx}$ in the inequalities above proves statement~\ref{props:experiments-barycenter-setting}.
    Moreover, (\ref{eqn:experiments-barycenter-mmot-proof-objective}) also shows that
    \begin{align}
        \min_{\BIz\in\CZ}\Bigg\{\frac{1}{N}\sum_{i=1}^N \|\BIx_i-\BIz\|_2^2\Bigg\} = f(\BIx_1,\ldots,\BIx_N) + \frac{1}{N}\sum_{i=1}^{N}\|\BIx_i\|_2^2 \quad \forall (\BIx_1,\ldots,\BIx_N)\in\BCX.
        \label{eqn:experiments-barycenter-mmot-proof-quadratic-terms}
    \end{align}
    Combining (\ref{eqn:experiments-barycenter-mmot-proof-quadratic-terms}) with statement~\ref{props:experiments-barycenter-mmot-formulation} then proves 
    statement~\ref{props:experiments-barycenter-bounds}.
    Furthermore, combining (\ref{eqn:experiments-barycenter-mmot-proof-quadratic-terms}) 
    and Theorem~\ref{thm:mmotalgo}\ref{thms:mmotalgo-dual} proves
    statement~\ref{props:experiments-barycenter-dual-solution}.

    Next, note that since $\tilde{\mu}\circ (\pi_i,\bar{\BIz})^{-1}\in\Gamma(\mu_i,\tilde{\nu})$ for $i=1,\ldots,N$, we have
    \begin{align*}
        \frac{1}{N}\sum_{i=1}^NW_2\big(\mu_i,\tilde{\nu}\big)^2 
        & \le \int_{\BCX} {\textstyle \frac{1}{N}\sum_{i=1}^N} \big\|\BIx_i-\bar{\BIz}(\BIx_1,\ldots,\BIx_N)\big\|_2^2 \DIFFM{\tilde{\mu}}{\DIFF\BIx_1,\ldots,\DIFF\BIx_N} \\
        & =\int_{\BCX} \min_{\BIz\in\CZ}\Big\{{\textstyle\frac{1}{N}\sum_{i=1}^N}\|\BIx_i-\BIz\|_2^2\Big\} \DIFFM{\tilde{\mu}}{\DIFF\BIx_1,\ldots,\DIFF\BIx_N}.
    \end{align*}
    Combining this with Theorem~\ref{thm:mmotalgo}\ref{thms:mmotalgo-primal}, (\ref{eqn:experiments-barycenter-mmot-proof-quadratic-terms}),
    and statement~\ref{props:experiments-barycenter-mmot-formulation} proves statement~\ref{props:experiments-barycenter-primal-solution-W1}.

    Lastly, let us prove statement~\ref{props:experiments-barycenter-primal-solution-W2}.
    Observe that the existence of $\hat{\gamma}$ is guaranteed by repeated applications of the gluing lemma (i.e., Lemma~\ref{lem:gluing}), similar to the proof of Lemma~\ref{lem:reassembly}.
    Subsequently, since $\breve{\mu}\circ (\pi_i,\bar{\BIz})^{-1}\in\Gamma(\mu_i,\breve{\nu})$ for $i=1,\ldots,N$,
    it holds by (\ref{eqn:experiments-barycenter-mmot-proof-quadratic-terms})
    and (\ref{eqn:experiments-barycenter-mmot-proof-objective})
    that
    \begin{align}
        \begin{split}
            \frac{1}{N}\sum_{i=1}^{N}W_2(\mu_i,\breve{\nu})^2 &\le \int_{\BCX}{\textstyle\frac{1}{N}\sum_{i=1}^{N}}\big\|\BIx_i-\bar{\BIz}(\BIx_1,\ldots,\BIx_N)\big\|_2^2 \DIFFM{\breve{\mu}}{\DIFF\BIx_1,\ldots,\DIFF\BIx_N}\\
            &=\int_{\BCX}\min_{\BIz\in\CZ}\Big\{{\textstyle\frac{1}{N}\sum_{i=1}^{N}}\|\BIx_i-\BIz\|_2^2\Big\}\DIFFM{\breve{\mu}}{\DIFF\BIx_1,\ldots,\DIFF\BIx_N}\\
            &=\int_{\BCX}f\DIFFX{\breve{\mu}}+C_{\mathsf{quad}}=\beta^{\mathsf{UB}}.
        \end{split}
        \label{eqn:experiments-barycenter-proof-primal-solution-W2-step1}
    \end{align}
    This proves that $\beta^{\mathsf{UB}}$ is an upper bound for the optimal value of (\ref{eqn:experiments-barycenter-definition}).
    Moreover,
    since statement~\ref{props:experiments-barycenter-bounds} has already established that
    $\tilde{\alpha}^{\mathsf{LB}}+C_{\mathsf{quad}}\le \inf_{\nu\in\CP_2(\R^d)}\big\{\frac{1}{N}\sum_{i=1}^N W_2(\mu_i,\nu)^2\big\}$,
    we get 
    \begin{align*}
        \Bigg(\frac{1}{N}\sum_{i=1}^{N}W_2(\mu_i,\breve{\nu})^2\Bigg) - \inf_{\nu\in\CP_2(\R^d)}\Bigg\{\frac{1}{N}\sum_{i=1}^N W_2(\mu_i,\nu)^2\Bigg\} &\le \beta^{\mathsf{UB}} - \tilde{\alpha}^{\mathsf{LB}}-C_{\mathsf{quad}} = \tilde{\xi}_{\mathsf{sub}},
    \end{align*}
    and hence $\breve{\nu}$ is a $\tilde{\xi}_{\mathsf{sub}}$-optimal solution of (\ref{eqn:experiments-barycenter-definition}).
    It remains to show that $\tilde{\xi}_{\mathsf{sub}}\le \epsilon_{\mathsf{theo}}$.
    Continuing from (\ref{eqn:experiments-barycenter-proof-primal-solution-W2-step1}), it holds by the properties of  $(\hat{\gamma}_i)_{i=1:N}$ that
    \begin{align}
        \begin{split}
            \beta^{\mathsf{UB}}&=\int_{\BCX}\min_{\BIz\in\CZ}\Big\{{\textstyle\frac{1}{N}\sum_{i=1}^{N}}\|\BIx_i-\BIz\|_2^2\Big\}\DIFFM{\breve{\mu}}{\DIFF\BIx_1,\ldots,\DIFF\BIx_N}\\
            &\le \int_{\BCX\times\CZ}{\textstyle\frac{1}{N}\sum_{i=1}^{N}}\|\BIx_i-\BIz\|_2^2\DIFFM{\hat{\gamma}}{\DIFF\BIx_1,\ldots,\BIx_N,\BIz}\\
            &= \frac{1}{N}\sum_{i=1}^{N}\int_{\CX_i\times\CZ}\|\BIx_i-\BIz\|_2^2\DIFFM{\hat{\gamma}_i}{\DIFF\BIx_i,\DIFF\BIz}\\
            &= C_{\mathsf{quad}} + \frac{1}{N}\sum_{i=1}^{N}\inf_{\gamma_i\in\Gamma(\mu_i,\hat{\nu})}\bigg\{\int_{\CX_i\times\CZ}f_i(\BIx_i,\BIz)\DIFFM{\gamma_i}{\DIFF\BIx_i,\DIFF\BIz}\bigg\}.
        \end{split}
        \label{eqn:experiments-barycenter-proof-primal-solution-W2-step2}
    \end{align}
    Next,
    let us denote $\bar{\CX}_i:=\CX_i$ for $i=1,\ldots,N$,
    denote $\overbar{\BCX}:=\bigtimes_{i=1}^{N}\bar{\CX}_i$, 
    denote the marginal of $\hat{\mu}$ on $\CX_i$ by $\hat{\mu}_i$ for $i=1,\ldots,N$, and consider $\gamma\in\CP(\BCX\times \overbar{\BCX})$ that satisfies the conditions in Definition~\ref{def:reassembly}.
    It subsequently follows from 
    the definition of $\hat{\nu}$,
    (\ref{eqn:experiments-barycenter-couplings-proof-lipschitzeach}),
    (\ref{eqn:experiments-barycenter-mmot-proof-objective}), 
    Proposition~\ref{prop:cpalgo-properties}\ref{props:cpalgo-primal}, 
    and 
    Proposition~\ref{prop:cpalgo-properties}\ref{props:cpalgo-bounds} that
    \begin{align}
        \begin{split}
            \hspace{14pt}&\hspace{-20pt} \frac{1}{N}\sum_{i=1}^{N}\inf_{\gamma_i\in\Gamma(\mu_i,\hat{\nu})}\bigg\{\int_{\CX_i\times\CZ}f_i(\BIx_i,\BIz)\DIFFM{\gamma_i}{\DIFF\BIx_i,\DIFF\BIz}\bigg\} \\
            &\le \int_{\BCX\times\overbar{\BCX}} \textstyle\frac{1}{N}\sum_{i=1}^N f_i\big(\BIx_i,\bar{\BIz}(\hat{\BIx}_1,\ldots,\hat{\BIx}_N)\big) \DIFFM{\gamma}{\DIFF\hat{\BIx}_1,\ldots,\DIFF\hat{\BIx}_N,\DIFF\BIx_1,\ldots,\DIFF\BIx_N}\\
            &\le \int_{\BCX} {\textstyle\frac{1}{N}\sum_{i=1}^N} f_i\big(\hat{\BIx}_i,\bar{\BIz}(\hat{\BIx}_1,\ldots,\hat{\BIx}_N)\big) \DIFFM{\hat{\mu}}{\DIFF\hat{\BIx}_1,\ldots,\DIFF\hat{\BIx}_N} + \frac{2}{N}\sup_{\BIz\in\CZ}\big\{\|\BIz\|_2\big\}\sum_{i=1}^N W_1(\hat{\mu}_i,\mu_i)\\
            &\le \int_{\BCX}f\DIFFX{\hat{\mu}}+ \frac{2}{N}\sup_{\BIz\in\CZ}\big\{\|\BIz\|_2\big\}\sum_{i=1}^N \specialoverline{W}_{1}(\mu_i,[\mu_i]_{\CG_i})\\
            &\le \tilde{\alpha}^{\mathsf{LB}}+\epsilon_{\mathsf{LSIP}} + \frac{2}{N}\sup_{\BIz\in\CZ}\big\{\|\BIz\|_2\big\}\sum_{i=1}^N \rho_i = \tilde{\alpha}^{\mathsf{LB}} + \epsilon_{\mathsf{theo}}.
        \end{split}
        \label{eqn:experiments-barycenter-proof-primal-solution-W2-step3}
    \end{align}
    Now, combining (\ref{eqn:experiments-barycenter-proof-primal-solution-W2-step2}) and (\ref{eqn:experiments-barycenter-proof-primal-solution-W2-step3}) 
    proves that $\tilde{\xi}_{\mathsf{sub}}=\beta^{\mathsf{UB}}-\tilde{\alpha}^{\mathsf{LB}}-C_{\mathsf{quad}}\le \epsilon_{\mathsf{theo}}$.
    The proof is now complete. 
\end{proof}

\subsection{Proofs of results in Section~\ref{sec:momentset-extended}}\label{ssec:proof-momentset-extended}

\begin{proof}[Proof of Proposition~\ref{prop:momentset-simplex-VIFS}]
    The claims of this proposition have already been established in the proof of Proposition~\ref{prop:momentset-simplex}.
\end{proof}

% Proof of Proposition (hyper-rectangular cover and associated IFB)
\begin{proof}[Proof of Proposition~\ref{prop:cover-cube}]
    This proof is structured as follows.
    We will first construct a VIFS 
    $\CG_{\mathsf{VIFS}}:=\big\{g_{\BIv}:\BIv\in V(\FC)\big\}\subset\lspan_1(\CG_0)$ for~$\FC$.
    Since $V(\FC_0)=V(\FC)$, this set will also be a VIFS for $\FC_0$, which will prove statement~\ref{props:cover-cube-bounded}.
    Moreover, since $\CG_0\subset\CG_p$ for all $p\in[1,\infty)$,
    it follows that $\CG_{\mathsf{VIFS}}\subset\lspan_1(\CG_p)$ for all $p\in[1,\infty)$.
    Hence, we will subsequently prove that
    the functions $\CG_{p\text{-}\mathsf{RFS}}$ defined in statement~\ref{props:cover-cube-unbounded} satisfy the property 
    \ref{defs:cover-rf-bound} as well as $\CG_{p\text{-}\mathsf{RFS}}\subset\lspan_1(\CG_p)$ in order to complete the proof of statement~\ref{props:cover-cube-unbounded}.

    Let us begin by introducing the notations used in this proof. 
    We denote the $d$-dimensional vector with all entries equal to $\infty$ by $\vecinfty$. 
    For any $\BIx=(x_1,\ldots,x_d)^\TRANSP,\BIx'=(x'_1,\ldots,x_d')^\TRANSP\in\big(\R\cup\{-\infty,\infty\}\big)^d$, 
    we denote $(\BIx,\BIx'):=\big\{(z_1,\ldots,z_d)^\TRANSP\in\R^d:x_i<z_i<x'_i\;\forall 1\le i\le d\big\}$, 
    $(\BIx,\BIx']:=\big\{(z_1,\ldots,z_d)^\TRANSP\in\nobreak\R^d:x_i<z_i\le x'_i$ $\forall 1\le i\le d\big\}$, 
    and $[\BIx,\BIx']:=\big\{(z_1,\ldots,z_d)^\TRANSP\in\R^d:x_i\le z_i\le x'_i$ $\forall 1\le i\le d\big\}$.
    Throughout the proof, we will adopt the following equivalent expression of $\CG_0$:
    \begin{align}
    \begin{split}
    \CG_0&=\bigg\{\R^d\ni(x_1,\ldots,x_d)^\TRANSP\mapsto {\textstyle\bigvee_{i=1}^{d}} \beta_i^{-1}(x_i-\kappa_i)^+ : \kappa_i\in\{\kappa_{i,0},\ldots,\kappa_{i,n_i},\infty\}\;\forall 1\le i\le d \bigg\}.
    \end{split}
    \label{eqn:cover-cube-proof-basis}
    \end{align}

    The proof is divided into eight steps as follows.
    \begin{itemize}
        \item%
        \textit{Step~1: defining an affine transformation $\BIT:\R^d\to\R^d$ such that $\BIT(V(\FC))$ becomes a grid $\BIL:=\bigtimes_{i=1}^{d}\{0,1,\ldots,n_i\}$ in which each grid point has integer-valued coordinates.}
        
        \item%
        \textit{Step~2: defining an index set $\BIJ:=\bigtimes_{i=1}^{d}\{0,1,\ldots,n_i-1,\infty\}$, a ``predecessor'' function $\BIp(\,\cdot\,)$ on~$\BIJ$, an expanded index set $\overline{\BIJ}:=\BIJ\cup\BIp(\BIJ)$, and a bijection $\BIl:\BIJ\to\BIL$.}
        
        \item%
        \textit{Step~3: defining a class of functions $\big\{\big(Q(\,\cdot\,;\BIj):\R^d\to[0,1]\big):\BIj\in\overline{\BIJ}\big\}$, and showing the property that $Q(\BIz;\BIj)=\mu_{\BIz}\big((-\vecinfty,\BIj]\big)$ $\forall\BIz\in\R^d$, $\forall\BIj\in\overline{\BIJ}$,
        where $\mu_{\BIz}\in\CP(\R^d)$ is the law of a random variable that is uniformly distributed on the line segment $\conv\big(\{\BIz-\vecone,\BIz\}\big)$.}
        
        \item%
        \textit{Step~4: defining a class of functions $\big\{\big(M(\,\cdot\,;\BIj',\BIj):\R^d\to[0,1]\big):\BIj',\BIj\in\BIJ,\;\BIj'\le\BIj\big\}$, and showing the property that
        $M(\BIz;\BIj',\BIj)=\mu_{\BIz}\big((\BIp(\BIj'),\BIj]\cap(\BIz-\vecone,\BIz)\big)$ $\forall\BIz\in\R^d$, $\forall\BIj',\BIj\in\BIJ$, $\BIj'\le\BIj$.}
        
        \item%
        \textit{Step~5: defining two index sets $\BIK_{\TL}:=\bigtimes_{i=1}^{d}\{-\infty,0,\ldots,n_i\}$, $\BIK_{\TR}:=\bigtimes_{i=1}^{d}\{0,\ldots,n_i,\infty\}$ to represent the transformed faces $\big\{\BIT(F):F\in\FF(\FC)\big\}$, as well as two mappings 
        $\BIq_{\TL}:\BIK_{\TL}\to\BIJ$, $\BIq_{\TR}:\BIK_{\TR}\to\BIJ$, and characterizing certain cases in which $M(\,\cdot\,;\BIj',\BIj)$ evaluates to~0 or~1.}
        
        \item%
        \textit{Step~6: showing by linear independence that each function in $\big\{M(\,\cdot\,;\BIj',\BIj):\BIj',\BIj\in\BIJ,\;\BIj'\le\BIj\big\}$ can be represented as a linear combination of $\big\{M(\,\cdot\,;\BIj,\BIj):\BIj\in\BIJ\big\}$.}
        
        \item%
        \textit{Step~7: defining $g_{\BIT^{-1}(\BIl(\BIj))}(\BIx):=M\big(\BIT(\BIx);\BIj,\BIj\big)$ $\forall\BIx\in\R^d$, $\forall \BIj\in\BIJ$ and showing that $\big\{g_{\BIT^{-1}(\BIl(\BIj))}:\BIj\in\BIJ\big\}\subset\lspan_1(\CG_0)$ 
        contains non-negative and continuous functions which
        satisfy the properties \ref{defs:cover-vif-normalize} and \ref{defs:cover-vif-disjoint}.}
        
        \item%
        \textit{Step~8: showing that the functions $\CG_{p\text{-}\mathsf{RFS}}$ satisfy $\CG_{p\text{-}\mathsf{RFS}}\subset\lspan_1(\CG_p)$ and the property~\ref{defs:cover-rf-bound}.}
    \end{itemize}

    \textit{Step~1.}
    One can observe from the definition of $\FC$ that
    \begin{align}
    \begin{split}
    \FF(\FC)&=\Big\{I_1\times\cdots\times I_d: I_i\in\big\{(-\infty,\kappa_{i,0}],\{\kappa_{i,0}\},[\kappa_{i,0},\kappa_{i,1}],\ldots,\\
    &\hspace{140pt}[\kappa_{i,n_i-1},\kappa_{i,n_i}],\{\kappa_{i,n_i}\},[\kappa_{i,n_i},\infty)\big\}\;\forall 1\le i\le d\Big\},
    \end{split}
    \label{eqn:cover-cube-proof-face-original}
    \end{align}
    and that $V(\FC)=\bigtimes_{i=1}^d\{\kappa_{i,0},\ldots,\kappa_{i,n_i}\}$.
    Let us define $\BIT:\R^d\to\R^d$ as follows:
    \begin{align*}
        \BIT(\BIx)&:=\big(\beta_1^{-1}(x_1-\underline{\kappa}_1),\ldots,\beta_d^{-1}(x_d-\underline{\kappa}_d)\big)^\TRANSP \qquad \forall\BIx=(x_1,\ldots,x_d)^\TRANSP\in\R^d,
    \end{align*}
    and define $\BIL:=\bigtimes_{i=1}^{d}\{0,1,\ldots,n_i\}$.
    Notice that $\BIT$ is affine and bijective, and it holds that
    \begin{align}
        \BIT(V(\FC))&=\bigtimes_{i=1}^{d}\{0,\ldots,n_i\}=:\BIL, 
        \label{eqn:cover-cube-proof-vertex-rep}\\
        \begin{split}
        F\in\FF(\FC) \; & \Rightarrow\; \exists \BIk_{\TL}\in\bigtimes_{i=1}^{d}\{-\infty,0,\ldots,n_i\},\; \exists \BIk_{\TR}\in\bigtimes_{i=1}^{d}\{0,\ldots,n_i,\infty\},\\
        &\phantom{\Rightarrow}\hspace{9.0pt} \BIk_{\TL}\le \BIk_{\TR},\; \BIT(F)=[\BIk_{\TL},\BIk_{\TR}],\; \BIT(V(F))=[\BIk_{\TL},\BIk_{\TR}] \cap \BIL.
        \end{split}
        \label{eqn:cover-cube-proof-face-rep}
    \end{align}

    \textit{Step~2.}
    Let us define $\BIJ:=\bigtimes_{i=1}^{d}\{0,1,\ldots,n_i-1,\infty\}$,
    and define $\BIp(\,\cdot\,)$ as follows:
    \begin{align*}
        p_i(j)&:=\begin{cases}
            -\infty & \text{if }j=0,\\
            j-1 & \text{if }j\in\{1,\ldots,n_i-1\},\\
            n_i-1 & \text{if }j=\infty,
        \end{cases}
        \qquad\forall 1\le i\le d,\\
        \BIp(\BIj)&:=\big(p_1(j_1),\ldots,p_d(j_d)\big)^\TRANSP \hspace{1pt}\qquad \forall \BIj=(j_1,\ldots,j_d)^\TRANSP\in\BIJ.
    \end{align*}
    Let the expanded index set $\overline{\BIJ}$ be defined as 
    $\overline{\BIJ}:=\BIJ\cup\BIp(\BIJ)=\bigtimes_{i=1}^{d}\{-\infty,0,\ldots,n_i-1,\infty\}$.
    Moreover, let $\BIl(\,\cdot\,)$ be defined as follows:
    \begin{align*}
        l_i(j)&:=\begin{cases}
            j & \text{if }j\in\{0,\ldots,n_i-1\}, \\
            n_i & \text{if }j=\infty, 
        \end{cases}
        \hspace{14.5pt}\qquad \forall 1\le i\le d,\\
        \BIl(\BIj)&:=\big(l_1(j_1),\ldots,l_d(j_d)\big)^\TRANSP \qquad \forall \BIj=(j_1,\ldots,j_d)^\TRANSP\in\BIJ.
    \end{align*}
    Note that $\BIl:\BIJ\to\BIL$ is a bijection.

    \textit{Step~3.}
    To begin, let us define
    \begin{align*}
        b(\BIz;\BIy)&:=\bigvee_{i=1}^{d}(z_i-y_i)^+ \qquad \forall \BIz=(z_1,\ldots,z_d)^\TRANSP\in\R^{d},\; \forall \BIy=(y_1,\ldots,y_d)^\TRANSP\in\big(\R\cup\{\infty\}\big)^d.
    \end{align*}
    Subsequently, let the class of functions 
    $\big\{\big(Q(\,\cdot\,;\BIj):\R^d\to[0,1]\big):\BIj\in\overline{\BIJ}\big\}$
    be defined as follows:
    \begin{align*}
        Q(\BIz;\BIj)&:=\begin{cases}
            1 - b(\BIz;\BIj) + b(\BIz;\BIj+\vecone) & \forall \BIj\in\BIJ,\\
            0 & \forall \BIj\in\overline{\BIJ}\setminus\BIJ,
        \end{cases}\qquad \forall\BIz\in\R^d.
    \end{align*}
    One verifies by (\ref{eqn:cover-cube-proof-basis}) that 
    \begin{align}
        \big\{Q(\BIT(\,\cdot\,);\BIj):\BIj\in\overline{\BIJ}\big\}\subset \lspan_1(\CG_0).
        \label{eqn:cover-cube-proof-Q-span}
    \end{align}
    Moreover, one checks that $Q(\,\cdot\,;\cdot\,)$ can also be expressed as follows:
    \begin{align}
        \begin{split}
        Q(\BIz;\BIj)&= 1 \wedge \Bigg(1-\bigvee_{i=1}^{d}(z_i-j_i)\Bigg)^+ \\
        &= \bigwedge_{i=1}^{d}\big[\big(j_i-(z_i-1)\big)^{+}\wedge 1\big] \quad\;\;\; \forall \BIz=(z_1,\ldots,z_d)^\TRANSP\in\R^{d},\; \forall \BIj=(j_1,\ldots,j_d)^\TRANSP\in\overline{\BIJ}.
        \end{split}
        \label{eqn:cover-cube-proof-Q-continuity}
    \end{align}
    Subsequently, if we fix $\BIz\in\R^d$ and treat $\BIj$ as the argument,
    then one can observe that 
    $Q(\BIz;\BIj)$ resembles a distribution function.
    Concretely,
    let us define $\big\{\big(\widetilde{Q}(\,\cdot\,;\BIz):\big(\R\cup\{-\infty,\infty\}\big)^d\to[0,1]\big):\BIz\in\R^d\big\}$ as follows:
    \begin{align*}
        \widetilde{Q}(\BIy;\BIz)&:=\bigwedge_{i=1}^{d}\big[\big(y_i-(z_i-1)\big)^{+}\wedge 1\big]\\
        &\qquad \forall \BIy=(y_1,\ldots,y_d)^\TRANSP\in\big(\R\cup\{-\infty,\infty\}\big)^d,\; \forall \BIz=(z_1,\ldots,z_d)^\TRANSP\in\R^{d}.
    \end{align*}
    Then, notice that for $i=1,\ldots,d$, 
    $\big(\R\cup\{-\infty,\infty\}\big)\ni y_i\mapsto \big(y_i-(z_i-1)\big)^+\wedge 1 \in [0,1]$
    corresponds to the distribution of a random variable which is uniformly distributed on the interval $[z_i-1,z_i]\subset\R$.
    Consequently, the definition of the comonotonicity copula and Sklar's theorem (see, e.g., \citep[Equation~(5.7) \& Equation~(5.3) \& Theorem~5.3]{mcneil2005quantitative})
    guarantee that
    $\widetilde{Q}(\,\cdot\,;\BIz)$ is the distribution function of a random variable which is uniformly distributed on the line segment
    $\conv\big(\{\BIz-\vecone,\BIz\}\big)\subset\R^d$, for every $\BIz\in\R^d$.
    Let $\mu_{\BIz}\in\CP(\R^d)$ denote the law of this random variable, i.e.,
    $\mu_{\BIz}$ satisfies 
    \begin{align*}
        \mu_{\BIz}\big((-\vecinfty,\BIy]\big) = \widetilde{Q}(\BIy;\BIz) \qquad \forall \BIy\in \big(\R\cup\{-\infty,\infty\}\big)^d,\; \forall \BIz\in\R^d.
    \end{align*}
    In particular, it holds that
    \begin{align}
        Q(\BIz;\BIj)&=\widetilde{Q}(\BIj;\BIz)=\mu_{\BIz}\big((-\vecinfty,\BIj]\big) \qquad \forall \BIz\in\R^{d},\; \forall \BIj\in\overline{\BIJ},
        \label{eqn:cover-cube-proof-Q-meas}
    \end{align}
    and 
    \begin{align}
        \mu_{\BIz}\big((\BIz-\vecone,\BIz)\big)=1 \qquad \forall \BIz\in\R^{d}.
        \label{eqn:cover-cube-proof-Q-concentration}
    \end{align}
    This completes Step~3.

    \textit{Step~4.}
    Let $\Bsigma:\overline{\BIJ}\times\overline{\BIJ}\times\{0,1\}^d\to\overline{\BIJ}$ and the class of functions 
    $\big\{\big(M(\,\cdot\,;\BIj',\BIj):\R^d\to[0,1]\big):\BIj',\BIj\in\BIJ,\;\BIj'\le\BIj\big\}$
    be defined as follows:
    \begin{align}
        \sigma_i(j',j,\iota)&:=\begin{cases}
            j & \text{if }\iota=0,\\
            j' & \text{if }\iota=1,
        \end{cases}\qquad \forall j',j\in\{-\infty,0,\ldots,n_i-1,\infty\},\nonumber\\
        \Bsigma(\BIj',\BIj,\Biota)&:=\big(\sigma_1(j_1',j_1,\iota_1),\ldots,\sigma_d(j_d',j_d,\iota_d)\big)^\TRANSP \nonumber\\
        &\qquad \forall \BIj'=(j_1',\ldots,j_d')^\TRANSP,\BIj=(j_1,\ldots,j_d)^\TRANSP\in\overline{\BIJ},\; \forall \Biota=(\iota_1,\ldots,\iota_d)^\TRANSP\in\{0,1\}^d. \nonumber\allowdisplaybreaks\\
        M(\BIz;\BIj',\BIj)&:=\sum_{\Biota\in\{0,1\}^d}(-1)^{\|\Biota\|_1} Q\big(\BIz;\Bsigma(\BIp(\BIj'),\BIj,\Biota)\big) \qquad \forall \BIz\in\R^d,\; \forall \BIj',\BIj\in\BIJ,\; \BIj'\le \BIj.
        \label{eqn:cover-cube-proof-M-def}
    \end{align}
    It subsequently follows from (\ref{eqn:cover-cube-proof-Q-meas}),
    (\ref{eqn:cover-cube-proof-Q-concentration}), and the following combinatorial identity:
    \begin{align*}
        \INDI_{(\BIp(\BIj'),\BIj]}(\BIz) = \sum_{\Biota\in\{0,1\}^d}(-1)^{\|\Biota\|_1}\INDI_{\big(-\vecinfty,\Bsigma(\BIp(\BIj'),\BIj,\Biota)\big]}(\BIz) \qquad \forall \BIz\in\R^{d},\; \forall \BIj',\BIj\in\BIJ,\;\BIj'\le\BIj
    \end{align*}
    that the functions $\big\{M(\,\cdot\,;\BIj',\BIj):\BIj',\BIj\in\BIJ,\;\BIj'\le\BIj\big\}$ possess the following property:
    \begin{align}
        M(\BIz;\BIj',\BIj)&=\mu_{\BIz}\big((\BIp(\BIj'),\BIj]\big) = \mu_{\BIz}\big((\BIp(\BIj'),\BIj] \cap (\BIz-\vecone,\BIz)\big) \quad \forall \BIz\in\R^{d},\; \forall \BIj',\BIj\in\BIJ,\;\BIj'\le\BIj.
        \label{eqn:cover-cube-proof-M-meas}
    \end{align}

    \textit{Step~5.}
    Let us define $\BIK_{\TL}:=\bigtimes_{i=1}^{d}\{-\infty,0,\ldots,n_i\}$, $\BIK_{\TR}:=\bigtimes_{i=1}^{d}\{0,\ldots,n_i,\infty\}$.
    Recall from (\ref{eqn:cover-cube-proof-face-rep}) that
    if $F\in\FF(\FC)$, then $\BIT(F)=[\BIk_{\TL},\BIk_{\TR}]$ where $\BIk_{\TL}\in\BIK_{\TL}$, $\BIk_{\TR}\in\BIK_{\TR}$.
    Subsequently, let us define $\BIq_{\TL}:\BIK_{\TL}\to\BIJ$,
    $\BIq_{\TR}:\BIK_{\TR}\to\BIJ$ as follows:
    \begin{align*}
        q_{i,\TL}(k)&:=\begin{cases}
            0 & \text{if }k=-\infty,\\
            k & \text{if }k\in\{0,\ldots,n_i-1\}, \\
            \infty & \text{if }k=n_i,
        \end{cases}\qquad \forall 1\le i\le d,\\
        q_{i,\TR}(k)&:=\begin{cases}
            k & \text{if }k\in\{0,\ldots,n_i-1\}, \\
            \infty & \text{if }k\in\{n_i,\infty\},
        \end{cases}\hspace{1pt}\qquad \forall 1\le i\le d,\\
        \BIq_{\TL}(\BIk_{\TL})&:= \big(q_{1,\TL}(k_1),\ldots,q_{d,\TL}(k_d)\big)^\TRANSP \hspace{0.8pt}\qquad\qquad \forall \BIk_{\TL}=(k_1,\ldots,k_d)^\TRANSP\in\BIK_{\TL}, \\
        \BIq_{\TR}(\BIk_{\TR})&:= \big(q_{1,\TR}(k_1),\ldots,q_{d,\TR}(k_d)\big)^\TRANSP \hspace{-0.9pt}\qquad\qquad \forall \BIk_{\TR}=(k_1,\ldots,k_d)^\TRANSP\in\BIK_{\TR}.
    \end{align*}
    With these notions, we will establish the following properties of 
    $\big\{M(\,\cdot\,;\BIj',\BIj):\BIj',\BIj\in\BIJ,\;\BIj'\le\BIj\big\}$:
    \begin{align}
        \begin{split}
            &\forall \BIj\in\BIJ,\; \forall \BIk_{\TL}\in\BIK_{\TL},\; \BIk_{\TR}\in\BIK_{\TR},\; \forall \BIz\in\R^{d}:\\
            &\qquad\BIz\in[\BIk_{\TL},\BIk_{\TR}],\; \BIl(\BIj)\notin[\BIk_{\TL},\BIk_{\TR}] \; \Rightarrow\; \begin{cases}
                M\big(\BIl(\BIj); \BIq_{\TL}(\BIk_{\TL}),\BIq_{\TR}(\BIk_{\TR})\big) = 0, \\
                M\big(\BIz; \BIq_{\TL}(\BIk_{\TL}),\BIq_{\TR}(\BIk_{\TR})\big) = 1, \\
                M(\BIz;\BIj,\BIj)=0.
            \end{cases}
        \end{split}
        \label{eqn:cover-cube-proof-M-01}
    \end{align}
    In view of the property~(\ref{eqn:cover-cube-proof-M-meas})
    and the definitions of $\BIq_{\TL}(\,\cdot\,)$, $\BIq_{\TR}(\,\cdot\,)$, $\BIp(\,\cdot\,)$, $\BIl(\,\cdot\,)$,
    it suffices to prove the following coordinate-wise statement:
    \begin{align*}
        &\forall i\in\{1,\ldots,d\},\; \forall j\in\{0,\ldots,n_i-1,\infty\},\; 
        \forall k_{\TL}\in\{-\infty,0,\ldots,n_i\},\; 
        \forall k_{\TR}\in\{0,\ldots,n_i,\infty\},\;
        \forall z\in\R\!:\\
        &\qquad z\in [k_{\TL},k_{\TR}],\; l_i(j)\notin [k_{\TL},k_{\TR}] \;\Rightarrow\; \begin{cases}
            \big(p_i(q_{i,\TL}(k_{\TL})),q_{i,\TR}(k_{\TR})\big] \cap \big(l_i(j)-1,l_i(j)\big) = \emptyset,\\
            \big(p_i(q_{i,\TL}(k_{\TL})),q_{i,\TR}(k_{\TR})\big] \supset (z-1,z),\\
            \big(p_i(j),j\big] \cap (z-1, z) = \emptyset,
        \end{cases}
    \end{align*}
    which can be directly verified from 
    the definitions of 
    $\big(q_{i,\TL}(\,\cdot\,)\big)_{i=1:d}$, 
    $\big(q_{i,\TR}(\,\cdot\,)\big)_{i=1:d}$,
    $\big(p_{i}(\,\cdot\,)\big)_{i=1:d}$, and
    $\big(l_{i}(\,\cdot\,)\big)_{i=1:d}$.

    \textit{Step~6.}
    Similar to Step~5, 
    we can show the following property of 
    $\big\{M(\,\cdot\,;\BIj',\BIj):\BIj',\BIj\in\BIJ,\;\BIj'\le\BIj\big\}$:
    \begin{align}
        M\big(\BIl(\BIj');\BIj,\BIj\big)&= \INDI_{\{\BIj=\BIj'\}} \qquad \forall \BIj,\BIj'\in\BIJ
        \label{eqn:cover-cube-proof-M-orthonormality}
    \end{align}
    by (\ref{eqn:cover-cube-proof-M-meas}), the definitions of $\BIp(\,\cdot\,)$, $\BIl(\,\cdot\,)$, and by directly verifying the following coordinate-wise statements:
    \begin{align*}
        &\forall i\in\{1,\ldots,d\},\; \forall j, j' \in\{0,\ldots,n_i-1,\infty\}: \\
        &\qquad  j\ne j' \;\Rightarrow\; \big(p_i(j),j\big]\cap \big(l_i(j')-1,l_i(j')\big)=\emptyset, \\
        &\qquad  j=j' \; \Rightarrow\; \big(p_i(j),j\big]\supset \big(l_i(j')-1,l_i(j')\big).
    \end{align*}
    Thus, we conclude by the property (\ref{eqn:cover-cube-proof-M-orthonormality}) that the functions 
    $\big\{M(\,\cdot\,;\BIj,\BIj):\BIj\in\BIJ\big\}$ are linearly independent.
    Moreover, observe from (\ref{eqn:cover-cube-proof-M-def}) that each function in 
    $\big\{M(\,\cdot\,;\BIj',\BIj):\BIj',\BIj\in\BIJ,\;\BIj'\le\BIj\big\}$
    is a linear combination of the functions 
    $\big\{Q(\,\cdot\,;\BIj):\BIj\in\BIJ\big\}$;
    recall that 
    $Q(\,\cdot\,;\BIj)=0$ for all $\BIj\in\overline{\BIJ}\setminus\BIJ$.
    Therefore, $\big\{M(\,\cdot\,;\BIj,\BIj):\BIj\in\BIJ\big\}$ form a basis of the $|\BIJ|$-dimensional linear space spanned by $\big\{M(\,\cdot\,;\BIj',\BIj):\BIj',\BIj\in\BIJ,\;\BIj'\le\BIj\big\}$,
    and it holds by (\ref{eqn:cover-cube-proof-M-orthonormality}) that
    \begin{align}
        M(\BIz;\hat{\BIj}',\hat{\BIj})&=\sum_{\BIj\in\BIJ}M\big(\BIl(\BIj);\hat{\BIj}',\hat{\BIj}\big)M(\BIz;\BIj,\BIj) \qquad \forall \BIz\in\R^d,\; \forall \hat{\BIj}',\hat{\BIj}\in\BIJ,\; \hat{\BIj}'\le\hat{\BIj}.
        \label{eqn:cover-cube-proof-M-representation}
    \end{align}

    \textit{Step~7.}
    Now, let us define $g_{\BIT^{-1}(\BIl(\BIj))}(\BIx):=M\big(\BIT(\BIx);\BIj,\BIj\big)$ $\forall\BIx\in\R^d$, $\forall \BIj\in\BIJ$.
    First, observe from (\ref{eqn:cover-cube-proof-vertex-rep}) and the bijectivity of $\BIl$ that $\BIT^{-1}(\BIl(\BIJ))=\BIT^{-1}(\BIL)=V(\FC)$.
    Thus, for each $\BIv\in V(\FC)$,
    there exists a unique $\BIj\in\BIJ$ such that $\BIT^{-1}(\BIl(\BIj))=\BIv$.
    This means that 
    we can re-index the class of functions $\big\{g_{\BIT^{-1}(\BIl(\BIj))}:\BIj\in\BIJ\big\}$ by $\big\{g_{\BIv}:\BIv\in V(\FC)\big\}$.
    Second, for every $\BIj\in\BIJ$, 
    it follows from (\ref{eqn:cover-cube-proof-M-meas}), (\ref{eqn:cover-cube-proof-Q-continuity}), (\ref{eqn:cover-cube-proof-M-def}), and (\ref{eqn:cover-cube-proof-Q-span}) that 
    $g_{\BIT^{-1}(\BIl(\BIj))}$ is non-negative, continuous, and
    $g_{\BIT^{-1}(\BIl(\BIj))}\in\lspan_1(\CG_0)$.
    It remains to show that 
    $\big\{g_{\BIv}:\BIv\in V(\FC)\big\}$ satisfies the properties \ref{defs:cover-vif-normalize} and \ref{defs:cover-vif-disjoint}.
    To that end, let us fix an arbitrary face $F\in\FF(\FC)$ as well as an arbitrary point $\BIx\in F$.
    It follows from (\ref{eqn:cover-cube-proof-face-rep}) that there exist
    $\BIk_{\TL}\in\BIK_{\TL}$ and $\BIk_{\TR}\in\BIK_{\TR}$ which satisfy
    $\BIk_{\TL}\le\BIk_{\TR}$,
    $\BIT(F)=[\BIk_{\TL},\BIk_{\TR}]$, and
    $\BIT(V(F))=[\BIk_{\TL},\BIk_{\TR}]\cap\BIL=\big\{\BIl(\BIj):\BIl(\BIj)\in [\BIk_{\TL},\BIk_{\TR}],\; \BIj\in\BIJ\big\}$.
    Since $\BIT(\BIx)\in[\BIk_{\TL},\BIk_{\TR}]$, it subsequently holds by the properties (\ref{eqn:cover-cube-proof-M-01}), 
    (\ref{eqn:cover-cube-proof-M-representation}), and (\ref{eqn:cover-cube-proof-M-orthonormality}) that
    \begin{align*}
        1&=M\big(\BIT(\BIx);\BIq_{\TL}(\BIk_{\TL}),\BIq_{\TR}(\BIk_{\TR})\big)=\sum_{\BIj\in\BIJ}M\big(\BIl(\BIj);\BIq_{\TL}(\BIk_{\TL}),\BIq_{\TR}(\BIk_{\TR})\big)M\big(\BIT(\BIx);\BIj,\BIj\big)\\
        &=\sum_{\BIj\in\BIJ}\INDI_{\big\{\BIl(\BIj)\in[\BIk_{\TL},\BIk_{\TR}]\big\}}g_{\BIT^{-1}(\BIl(\BIj))}(\BIx)=\sum_{\BIv\in V(F)}g_{\BIv}(\BIx).
    \end{align*}
    This shows that $\big\{g_{\BIv}:\BIv\in V(\FC)\big\}$ satisfies the property \ref{defs:cover-vif-normalize}.
    Moreover, let us take an arbitrary $\hat{\BIv}\in V(\FC)\setminus V(F)$ and let $\hat{\BIj}:=\BIl^{-1}(\BIT(\hat{\BIv}))$.
    It holds that $\BIl(\hat{\BIj})=\BIT(\hat{\BIv})\notin [\BIk_{\TL},\BIk_{\TR}]$,
    and thus the property~(\ref{eqn:cover-cube-proof-M-01}) implies that
    \begin{align*}
        g_{\hat{\BIv}}(\BIx)&=g_{\BIT^{-1}(\BIl(\hat{\BIj}))}(\BIx)=M(\BIT(\BIx);\hat{\BIj},\hat{\BIj})=0.
    \end{align*}
    This shows that $\big\{g_{\BIv}:\BIv\in V(\FC)\big\}$ satisfies the property \ref{defs:cover-vif-disjoint}.
    In summary, we have shown that $\CG_{\mathsf{VIFS}}:=\big\{g_{\BIv}:\BIv\in V(\FC)\big\}\subset\lspan_1(\CG_0)\subset\lspan_1(\CG_p)$ is a VIFS for both $\FC$ and $\FC_0$,
    and the proof of statement~\ref{props:cover-cube-bounded} is now complete.

    \textit{Step~8.}
    Now, let us fix an arbitrary $p\in[1,\infty)$.
    For $i=1,\ldots,d$, let $\BIe_i$ denote the $i$-th standard basis vector of $\R^d$.
    One checks from the definition of $\FC$ that $D(\FC)=\{-\BIe_1,\BIe_1,\ldots,-\BIe_d,\BIe_d\}$. 
    Let us label the functions in $\CG_{p\text{-}\mathsf{RFS}}$ as follows:
    \begin{align*}
    \overline{g}_{\BIe_i}(\BIx)&:=\big(C_{\|\cdot\|}(x_i-\kappa_{i,n_i})^+\big)^p \qquad \forall \BIx=(x_1,\ldots,x_d)^\TRANSP\in\R^{d},\; \forall 1\le i\le d,\\
    \overline{g}_{-\BIe_i}(\BIx)&:=\big(C_{\|\cdot\|}(\kappa_{i,0}-x_i)^+\big)^p \hspace{4.0pt} \qquad \forall \BIx=(x_1,\ldots,x_d)^\TRANSP\in\R^{d},\; \forall 1\le i\le d.
    \end{align*}
    In the case where $p=1$, it follows from the definition of $\CG_1$ that $\overline{g}_{\BIe_i}\in\lspan_1(\CG_1)$ for $i=1,\ldots,d$. 
    Moreover, due to the following identity
    \begin{align*}
    C_{\|\cdot\|}(\kappa_{i,0}-x_i)^+=C_{\|\cdot\|}\kappa_{i,0}-C_{\|\cdot\|} x_i+C_{\|\cdot\|}(x_i-\kappa_{i,0})^+ \qquad \forall 1\le i\le d,
    \end{align*}
    we have $\overline{g}_{-\BIe_i}\in\lspan_1(\CG_1)$ for $i=1,\ldots,d$.
    In the case where $p>1$, it follows from the definition of $\CG_p$ that $\overline{g}_{\BIe_i},\overline{g}_{-\BIe_i}\in\lspan_1(\CG_p)$ for $i=1,\ldots,d$.
    This shows that $\CG_{p\text{-}\mathsf{RFS}}\subset\lspan_1(\CG_p)$.
    To prove the property \ref{defs:cover-rf-bound}, let us fix an arbitrary $F\in\FF(\FC)$ that is unbounded, and fix an arbitrary $\BIx=(x_1,\ldots,x_d)^\TRANSP\in F$. 
    By (\ref{eqn:cover-cube-proof-face-original}), $F$ can be expressed as $F=\bigtimes_{i=1}^{d}I_i$, where $I_i\in\big\{(-\infty,\kappa_{i,0}],\{\kappa_{i,0}\},[\kappa_{i,0},\kappa_{i,1}],\ldots,[\kappa_{i,n_i-1},\kappa_{i,n_i}],\{\kappa_{i,n_i}\},[\kappa_{i,n_i},\infty)\big\}$ for $i=1,\ldots,d$. Observe that for $i=1,\ldots,d$, $\BIe_i\in D(F)$ if and only if $I_i=[\kappa_{i,n_i},\infty)$, and that $-\BIe_i\in D(F)$ if and only if $I_i=(-\infty,\kappa_{i,0}]$. 
    Let us define $(\widetilde{I}_i)_{i=1:d}$ as follows:
    \begin{align*}
    \widetilde{I}_i:=\begin{cases}
    \{\kappa_{i,n_i}\} & \text{if }\BIe_i\in D(F),\\
    \{\kappa_{i,0}\} & \text{if }{-\BIe_i}\in D(F),\\
    I_i & \text{if }\BIe_i\notin D(F)\text{ and }{-\BIe_i}\notin D(F),
    \end{cases}\qquad \forall 1\le i\le d.
    \end{align*}
    We thus get $\conv(V(F))=\bigtimes_{i=1}^{d}\widetilde{I}_i$. 
    Now, let us define
    \begin{align*}
        \hat{x}_i&:=\begin{cases}
            \kappa_{i,n_i} & \text{if }\BIe_i\in D(F), \\
            \kappa_{i,0} & \text{if }{-\BIe_i}\in D(F),\\
            x_i & \text{if }\BIe_i\notin D(F)\text{ and }{-\BIe_i}\notin D(F),
        \end{cases}\qquad \forall 1\le i\le d,
    \end{align*}
    and define $\hat{\BIx}:=(\hat{x}_1,\ldots,\hat{x}_d)^\TRANSP$. 
    Hence, it holds that $\hat{\BIx}\in\conv(V(F))$.
    From the above definitions, it follows that
    \begin{align*}
        |x_i-\hat{x}_i|^p&=\begin{cases}
            \big((x_i-\kappa_{i,n_i})^+\big)^p=C_{\|\cdot\|}^{-p}\overline{g}_{\BIe_i}(\BIx) & \text{if }\BIe_i\in D(F), \\
            \big((\kappa_{i,0}-x_i)^+\big)^p=C_{\|\cdot\|}^{-p}\overline{g}_{-\BIe_i}(\BIx) & \text{if }{-\BIe_i}\in D(F),\\
            0 & \text{if }\BIe_i\notin D(F)\text{ and }{-\BIe_i}\notin D(F),
        \end{cases} \qquad \forall 1\le i\le d,
    \end{align*}
    and thus $\|\BIx-\hat{\BIx}\|_p^p=\sum_{i=1}^{d}|x_i-\hat{x}_i|^p=C_{\|\cdot\|}^{-p}\sum_{\BIu\in D(F)}\overline{g}_{\BIu}(\BIx)$.
    Consequently, we get
    \begin{align*}
    \left(\min_{\BIy\in\conv(V(F))}\big\{\|\BIx-\BIy\|\big\}\right)^p&\le\|\BIx-\hat{\BIx}\|^p\le C_{\|\cdot\|}^p\|\BIx-\hat{\BIx}\|_p^p=\sum_{\BIu\in D(F)}\overline{g}_{\BIu}(\BIx).
    \end{align*}
    Therefore, we have shown that $\CG_{p\text{-}\mathsf{RFS}}$ satisfies the property \ref{defs:cover-rf-bound}.
    The proof is now complete.
\end{proof}

% Proof of Theorem (moment set $W_p$ control)
\begin{proof}[Proof of Theorem~\ref{thm:momentset-euclidean}]
    Since all functions in 
    a VIFS or a $p$-RFS are defined on $\bigcup_{C\in\FC}C$ and every $\mu\in\CP_p(\CY;\CG)$ can be extended to a probability measure in $\CP_p\big(\bigcup_{C\in\FC}C;\CG\big)$, we can assume without loss of generality that $\CY=\bigcup_{C\in\FC}C$.
    Subsequently, let us recall from Lemma~\ref{lem:relint-partition} that $\big\{\relint(F):F\in\FF(\FC)\big\}$ is a disjoint partition of $\CY$. 
    Throughout the proof, let $\big\{g_{\BIv}:\BIv\in V(\FC)\big\}\subset\lspan_1(\CG)$ be a VIFS for $\FC$,
    and let $\big\{\overline{g}_{\BIu}:\BIu\in D(\FC)\big\}$ be a possibly empty $p$-RFS for $\FC$.
    The proof is divided into five steps as follows.
    \begin{itemize}
        \item%
        \emph{Step~1: decomposing an arbitrary $\mu\in\CP_p(\CY;\CG)$ into a mixture of probability measures
        $\big\{\mu_F\in\CP_p(\CY):F\in\FF_{\mu}(\FC)\big\}$
        where $\FF_{\mu}(\FC):=\big\{F\in\FF(\FC):\mu(\relint(F))>0\big\}$,
        and each $\mu_F$ is concentrated on $\relint(F)$ for $F\in\FF_{\mu}(\FC)$.}

        \item%
        \emph{Step~2: approximating each $\mu_F$ by a discrete $\hat{\mu}_F\in\CP_p(\CY)$ whose support lies in~$V(F)$.}

        \item%
        \emph{Step~3: deriving an upper bound for each $W_p(\mu_F,\hat{\mu}_F)$.}

        \item%
        \emph{Step~4: taking the mixture of $\big\{\hat{\mu}_{F}:F\in\FF_{\mu}(\FC)\big\}$ to make a discrete approximation $\hat{\mu}\in\CP_p(\CY)$ of $\mu$ and deriving an upper bound for $W_p(\mu,\hat{\mu})$.}
        
        \item%
        \emph{Step~5: showing that if $\nu\in\CP_p(\CY;\CG)$ satisfies $\nu\overset{\CG}{\sim}\mu$, then the discrete approximation $\hat{\nu}\in\CP_p(\CY)$ of $\nu$ constructed via Steps~1--4 is equal to $\hat{\mu}$, yielding the upper bound $W_p(\mu,\nu)\le W_p(\mu,\hat{\mu})+W_{p}(\nu,\hat{\nu})$.}

    \end{itemize}

    \emph{Step~1.}
    Let us fix an arbitrary $\mu\in\CP_p(\CY;\CG)$ and define
    \begin{align*}
        \FF_{\mu}(\FC):=\big\{F\in\FF(\FC):\mu(\relint(F))>0\big\}.
    \end{align*} 
    Then, it follows from Lemma~\ref{lem:relint-partition} that $\mu\left(\bigcup_{F\in\FF_{\mu}(\FC)}\relint(F)\right)=1$. 
    Subsequently, let us define $\big\{\mu_{F}\in\CP_p(\CY):F\in\FF_{\mu}(\FC)\big\}$ as follows:
    \begin{align*}
        \mu_F(\DIFF\BIx):=\frac{\INDI_{\relint(F)}}{\mu(\relint(F))}\mu(\DIFF\BIx)\qquad \forall F\in\FF_{\mu}(\FC).
    \end{align*}
    It thus holds that
    \begin{align*}
        \sum_{F\in\FF_{\mu}(\FC)}\mu(\relint(F))\mu_F(E)&=\mu\Big(\big({\textstyle\bigcup_{F\in\FF_{\mu}(\FC)}}\relint(F)\big)\cap E\Big)=\mu(E) \qquad \forall E\in\CB(\CY),
    \end{align*}
    and hence we get
    \begin{align}
        \sum_{F\in\FF_{\mu}(\FC)}\mu(\relint(F))\mu_F(\DIFF\BIx)=\mu(\DIFF\BIx).
        \label{eqn:momentset-euclidean-proof-mixtureweight-sum}
    \end{align}

    \emph{Step~2.}
    Let $(\hat{\mu}_F)_{F\in\FF_{\mu}(\FC)}$ be defined as follows:
    \begin{align*}
        \hat{\mu}_F(\DIFF\BIx):=\sum_{\BIv\in V(F)}\bigg(\int_{\CY}g_{\BIv}\DIFFX{\mu_F}\bigg)\delta_{\BIv}(\DIFF\BIx) \qquad \forall F\in\FF_{\mu}(\FC).
    \end{align*}
    Then, using the property~\ref{defs:cover-vif-normalize} of $\big\{g_{\BIv}:\BIv\in V(\FC)\big\}$, we get 
    \begin{align*}
        \hat{\mu}_F(\CY)&=\sum_{\BIv\in V(F)}\int_{\CY}g_{\BIv}\DIFFX{\mu_F}=\int_{\CY}\Big(\textstyle\sum_{\BIv\in V(F)}g_{\BIv}\Big)\DIFFX{\mu_F}\\
        &=\frac{1}{\mu(\relint(F))}\int_{\CY}\Big(\textstyle\sum_{\BIv\in V(F)}g_{\BIv}\Big)\INDI_{\relint(F)}\DIFFX{\mu}=1 \qquad \forall F\in\FF_{\mu}(\FC).
    \end{align*}
    Since $\int_{\CY}g_{\BIv}\DIFFX{\mu_F}\ge 0$ for all $\BIv\in V(\FC)$, it holds that $\hat{\mu}_F\in\CP_p(\CY)$ for all $F\in\FF_{\mu}(\FC)$. 
    Moreover, for every $\BIv\in V(\FC)$, it follows from the orthonormality property of $\big\{g_{\BIv}:\BIv\in V(\FC)\big\}$
    in (\ref{eqn:cover-vif-orthonormality})
    that 
    \begin{align}
        \int_{\CY}g_{\BIv}\DIFFX{\hat{\mu}_F}=\sum_{\BIv'\in V(F)}\bigg(\int_{\CY}g_{\BIv'}\DIFFX{\mu_F}\bigg)g_{\BIv}(\BIv')=\int_{\CY}g_{\BIv}\DIFFX{\mu_F} \qquad \forall \BIv\in V(F),\;\forall F\in\FF_{\mu}(\FC).
        \label{eqn:momentset-euclidean-proof-discrete-eq1}
    \end{align}
    Moreover, it follows from the property \ref{defs:cover-vif-disjoint} that 
    \begin{align}
        \int_{\CY}g_{\BIv}\DIFFX{\hat{\mu}_F}=0=\int_{\CY}g_{\BIv}\DIFFX{\mu_F} \qquad \forall \BIv\in V(\FC)\setminus V(F),\;\forall F\in\FF_{\mu}(\FC).
        \label{eqn:momentset-euclidean-proof-discrete-eq2}
    \end{align}
    Combining (\ref{eqn:momentset-euclidean-proof-discrete-eq1}) and (\ref{eqn:momentset-euclidean-proof-discrete-eq2}) yields
    \begin{align}
    \int_{\CY}g_{\BIv}\DIFFX{\hat{\mu}_F}=\int_{\CY}g_{\BIv}\DIFFX{\mu_F}\qquad \forall \BIv\in V(\FC),\;\forall F\in\FF_{\mu}(\FC).
    \label{eqn:momentset-euclidean-proof-discrete-characterization}
    \end{align}

    \emph{Step~3.}
    For each $F\in\FF_{\mu}(\FC)$ that is bounded
    and for any $\gamma_{F}\in\Gamma(\mu_F,\hat{\mu}_F)$,
    it holds that $\support(\gamma_F)\subseteq F\times F = \conv(V(F))\times \conv(V(F))$. 
    It thus follows from a convex maximization argument that
    \begin{align}
    \begin{split}
        W_p(\mu_F,\hat{\mu}_F)&=\inf_{\gamma_F\in\Gamma(\mu_F,\hat{\mu}_F)}\bigg\{\int_{\CY\times\CY}\|\BIx-\BIz\|^p\DIFFM{\gamma_F}{\DIFF\BIx,\DIFF\BIz}\bigg\}^{\frac{1}{p}}
        \\
        &\le \max_{\BIv,\BIv'\in V(F)}\big\{\|\BIv-\BIv'\|^p\big\}^{\frac{1}{p}}\le\eta(\FC) \qquad \forall F\in\FF_{\mu}(\FC),\; F\text{ is bounded}.
    \end{split}
    \label{eqn:momentset-euclidean-proof-coupling-boundedface}
    \end{align}
    For each unbounded $F\in\FF_{\mu}(\FC)$, let $T_F:F\to F$ be a Borel measurable mapping such that $T_F(\BIx)\in\argmin_{\BIz\in\conv(V(F))}\big\{\|\BIx-\BIz\|\big\}$ for all $\BIx\in F$, which exists by \citep[Proposition~7.33]{bertsekas1978stochastic}.
    Subsequently, each $\gamma_F\in\Gamma(\mu_F\circ T_F^{-1},\hat{\mu}_F)$ satisfies $\support(\gamma_F)\subseteq \conv(V(F))\times \conv(V(F))$.
    It follows from the same convex maximization argument as above that
    \begin{align}
        W_p(\mu_F\circ T_{F}^{-1},\hat{\mu}_F)\le \eta(\FC) \qquad \forall F\in\FF_{\mu}(\FC),\; F\text{ is unbounded}.
        \label{eqn:momentset-euclidean-proof-coupling-unboundedface-1}
    \end{align}
    Moreover, the property \ref{defs:cover-rf-bound} of $\big\{\overline{g}_{\BIu}:\BIu\in D(\FC)\big\}$ implies that
    \begin{align}
        \begin{split}
        W_p(\mu_F,\mu_F\circ T_{F}^{-1}) &\le \bigg(\int_{\CY}\big\|\BIx-T_F(\BIx)\big\|^p\DIFFM{\mu_F}{\DIFF\BIx}\bigg)^{\frac{1}{p}}\\
        &=\bigg(\int_{\CY}\left(\min_{\BIz\in\conv(V(F))}\big\{\|\BIx-\BIz\|\big\}\right)^{p}\DIFFM{\mu_F}{\DIFF\BIx}\bigg)^{\frac{1}{p}}\\
        &\le \bigg(\int_{\CY}{\textstyle\sum_{\BIu\in D(F)}}\overline{g}_{\BIu}(\BIx)\DIFFM{\mu_F}{\DIFF\BIx}\bigg)^{\frac{1}{p}} \\
        &\le \Bigg(\sum_{\BIu\in D(\FC)}\int_{\CY}\overline{g}_{\BIu}\DIFFX{\mu_F}\Bigg)^{\frac{1}{p}} \qquad \forall F\in\FF_{\mu}(\FC),\; F\text{ is unbounded}.
        \end{split}
        \label{eqn:momentset-euclidean-proof-coupling-unboundedface-2}
    \end{align}
    Combining (\ref{eqn:momentset-euclidean-proof-coupling-unboundedface-1}) and (\ref{eqn:momentset-euclidean-proof-coupling-unboundedface-2}) leads to
    \begin{align}
    W_p(\mu_F,\hat{\mu}_F)\le\eta(\FC)+\Bigg(\sum_{\BIu\in D(\FC)}\int_{\CY}\overline{g}_{\BIu}\DIFFX{\mu_F}\Bigg)^{\frac{1}{p}} \qquad \forall F\in\FF_{\mu}(\FC),\; F\text{ is unbounded}.
    \label{eqn:momentset-euclidean-proof-coupling-unboundedface}
    \end{align}

    \emph{Step~4.}
    Now, let us define $\hat{\mu}:=\sum_{F\in\FF_{\mu}(\FC)}\mu(\relint(F))\hat{\mu}_F$.
    Since $\mu\Big(\bigcup_{F\in\FF_{\mu}(\FC)}\relint(F)\Big)=1$, we have $\hat{\mu}\in\CP_p(\CY)$. 
    Moreover, it holds that $\support(\hat{\mu})=\bigcup_{F\in\FF_{\mu}(\FC)}\support(\hat{\mu}_F)\subseteq V(\FC)$.
    Consequently, by the orthonormality property of $\big\{g_{\BIv}:\BIv\in V(\FC)\big\}$ in (\ref{eqn:cover-vif-orthonormality}) 
    as well as 
    (\ref{eqn:momentset-euclidean-proof-discrete-characterization})
    and 
    (\ref{eqn:momentset-euclidean-proof-mixtureweight-sum}), we get
    \begin{align}
    \begin{split}
    \hat{\mu}(\{\BIv\})&=\int_{\CY}g_{\BIv}\DIFFX{\hat{\mu}}=\sum_{F\in\FF_{\mu}(\FC)}\mu(\relint(F))\int_{\CY}g_{\BIv}\DIFFX{\hat{\mu}_F}\\
    &=\sum_{F\in\FF_{\mu}(\FC)}\mu(\relint(F))\int_{\CY}g_{\BIv}\DIFFX{\mu_F}=\int_{\CY}g_{\BIv}\DIFFX{\mu} \qquad \forall \BIv\in V(\FC).
    \end{split}
    \label{eqn:momentset-euclidean-proof-invariance}
    \end{align}
    Next, for each $F\in\FF_{\mu}(\FC)$, let $\gamma_{F}\in\Gamma(\mu_F,\hat{\mu}_F)$ be an optimal coupling of $\mu_F$ and $\hat{\mu}_F$
    with respect to the cost function $\CY\times\CY\ni (\BIx,\BIz)\mapsto \|\BIx-\BIz\|^p\in\R$,
    i.e.,
    $\int_{\CY\times\CY}\|\BIx-\BIz\|^p\DIFFM{\gamma_F}{\DIFF\BIx,\DIFF\BIz}=W_p(\mu_F,\hat{\mu}_F)^p$ $\forall F\in\FF_{\mu}(\FC)$.
    Subsequently, let us define 
    $\gamma:=\sum_{F\in\FF_{\mu}(\FC)}\mu(\relint(F))\gamma_F$.
    One checks that $\gamma\in\Gamma(\mu,\hat{\mu})$.
    In the case where $\FC$ is bounded, every $F\in\FF_{\mu}(\FC)$ is bounded, and it hence follows from (\ref{eqn:momentset-euclidean-proof-coupling-boundedface}) that
    \begin{align*}
        W_{p}(\mu,\hat{\mu})&\le\bigg(\int_{\CY\times \CY}\|\BIx-\BIz\|^p\DIFFM{\gamma}{\DIFF\BIx,\DIFF\BIz}\bigg)^\frac{1}{p}\\
        &=\Bigg(\sum_{F\in\FF_{\mu}(\FC)}\mu(\relint(F))\int_{\CY\times \CY}\|\BIx-\BIz\|^p\DIFFM{\gamma_F}{\DIFF\BIx,\DIFF\BIz}\Bigg)^{\frac{1}{p}}\\
        &=\Bigg(\sum_{F\in\FF_{\mu}(\FC)}\mu(\relint(F))W_p(\mu_F,\hat{\mu}_F)^{p}\Bigg)^{\frac{1}{p}} \le\eta(\FC).
    \end{align*}
    In the case where $\FC$ is not bounded, it follows from (\ref{eqn:momentset-euclidean-proof-coupling-boundedface}), (\ref{eqn:momentset-euclidean-proof-coupling-unboundedface}), and the Minkowski inequality that
    \begin{align*}
        W_p(\mu,\hat{\mu})&\le\bigg(\int_{\CY\times \CY}\|\BIx-\BIz\|^p\DIFFM{\gamma}{\DIFF\BIx,\DIFF\BIz}\bigg)^\frac{1}{p}\\
        &=\Bigg(\sum_{F\in\FF_{\mu}(\FC)}\mu(\relint(F))\int_{\CY\times \CY}\|\BIx-\BIz\|^p\DIFFM{\gamma_F}{\DIFF\BIx,\DIFF\BIz}\Bigg)^{\frac{1}{p}}\allowdisplaybreaks\\
        &= \Bigg(\sum_{F\in\FF_{\mu}(\FC)}\mu(\relint(F))W_p(\mu_F,\hat{\mu}_F)^p\Bigg)^{\frac{1}{p}}\allowdisplaybreaks\\
        &\le \Bigg(\sum_{F\in\FF_{\mu}(\FC)}\mu(\relint(F))\Bigg(\eta(\FC)+\Bigg(\sum_{\BIu\in D(\FC)}\int_{\CY}\overline{g}_{\BIu}\DIFFX{\mu_F}\Bigg)^{\frac{1}{p}}\Bigg)^p\Bigg)^{\frac{1}{p}}\allowdisplaybreaks\\
        &\le \Bigg(\sum_{F\in\FF_{\mu}(\FC)}\mu(\relint(F))\eta(\FC)^p\Bigg)^{\frac{1}{p}} + \Bigg(\sum_{F\in\FF_{\mu}(\FC)}\mu(\relint(F))\sum_{\BIu\in D(\FC)}\int_{\CY}\overline{g}_{\BIu}\DIFFX{\mu_F}\Bigg)^{\frac{1}{p}}\\
        &=\eta(\FC)+\Bigg(\sum_{\BIu\in D(\FC)}\int_{\CY}\overline{g}_{\BIu}\DIFFX{\mu}\Bigg)^{\frac{1}{p}}.
    \end{align*}
    In summary, we have derived the upper bound 
    $W_{p}(\mu,\hat{\mu})\le\eta(\FC)$ for the case where $\FC$ is bounded,
    and the upper bound
    $W_{p}(\mu,\hat{\mu})\le\eta(\FC)+\big(\sum_{\BIu\in D(\FC)}\int_{\CY}\overline{g}_{\BIu}\DIFFX{\mu}\big)^{\frac{1}{p}}$
    for the case where $\FC$ is not bounded.

    \emph{Step~5.}
    Finally, let us take an arbitrary $\nu\in\CP_p(\CY;\CG)$ which satisfies $\mu\overset{\CG}{\sim}\nu$, and repeat Steps~1--4 above to construct $\hat{\nu}\in\CP_p(\CY)$. 
    By the same arguments above, we have $W_p(\nu,\hat{\nu})\le\eta(\FC)$ in the case where $\FC$ is bounded.
    Using the assumption that $\big\{\overline{g}_{\BIu}:\BIu\in D(\FC)\big\}\subset\lspan_1(\CG)$,
    we have $W_p(\nu,\hat{\nu})\le\eta(\FC)+\big(\sum_{\BIu\in D(\FC)}\int_{\CY}\overline{g}_{\BIu}\DIFFX{\nu}\big)^{\frac{1}{p}}=\eta(\FC)+\big(\sum_{\BIu\in D(\FC)}\int_{\CY}\overline{g}_{\BIu}\DIFFX{\mu}\big)^{\frac{1}{p}}$ in the case where $\FC$ is not bounded.
    Moreover, by (\ref{eqn:momentset-euclidean-proof-invariance}) and the assumption that $\big\{g_{\BIv}:\BIv\in V(\FC)\big\}\subset\lspan_1(\CG)$, we have $\hat{\nu}(\{\BIv\})=\int_{\CY}g_{\BIv}\DIFFX{\nu}=\int_{\CY}g_{\BIv}\DIFFX{\mu}=\hat{\mu}(\{\BIv\})$ for all $\BIv\in V(\FC)$.
    But since $\support(\hat{\mu})\subseteq V(\FC)$ and $\support(\hat{\nu})\subseteq V(\FC)$, this implies that $\hat{\mu}=\hat{\nu}$. 
    Thus, it holds that $W_p(\mu,\nu)\le W_p(\mu,\hat{\mu})+W_p(\hat{\mu},\hat{\nu})+W_p(\nu,\hat{\nu})\le 2\eta(\FC)$ in the case where $\FC$ is bounded, and $W_p(\mu,\nu)\le W_p(\mu,\hat{\mu})+W_p(\hat{\mu},\hat{\nu})+W_p(\nu,\hat{\nu})\le 2\eta(\FC)+2\left(\sum_{\BIu\in D(\FC)}\int_{\CY}\overline{g}_{\BIu}\DIFFX{\mu}\right)^{\frac{1}{p}}$ in the case where $\FC$ is not bounded.
    The proof is now complete.
\end{proof}

%% Proof of Corollary (number of test functions)
\begin{proof}[Proof of Corollary~\ref{cor:momentset-scalability}]
We will first prove the following inequality:
\begin{align}
    \max_{C\in\FC}\max_{\BIv,\BIv'\in V(C)}\big\{\|\BIv-\BIv'\|_{\infty}\big\}\le \frac{\epsilon}{2C_{\|\cdot\|} d^{1/p}},
    \label{eqn:momentset-scalability-proof-ineq}
\end{align}
which will then lead to 
$\eta(\FC)\le C_{\|\cdot\|}\max_{C\in\FC}\max_{\BIv,\BIv'\in V(C)}\big\{\|\BIv-\BIv'\|_{p}\big\}\le \frac{\epsilon}{2}$.
To prove (\ref{eqn:momentset-scalability-proof-ineq}),
let us suppose for the sake of contradiction that 
there exist $C\in\FC$ and $\BIv=(v_1,\ldots,v_d)^\TRANSP,\BIv'=(v'_1,\ldots,v'_d)^\TRANSP\in V(C)$ 
with $\|\BIv-\BIv'\|_{\infty}>\frac{\epsilon}{2C_{\|\cdot\|} d^{1/p}}$.
Then, in view of the definition of $\widehat{V}$
and the property that
$\frac{\overline{M}_i-\underline{M}_i}{n_i}\le \frac{\epsilon}{2C_{\|\cdot\|} d^{1/p}}$ for $i=1,\ldots,d$,
there exists $\hat{i}\in\{1,\ldots,d\}$ such that 
$|v_{\hat{i}}-v'_{\hat{i}}|\ge \frac{2(\overline{M}_{\hat{i}}-\underline{M}_{\hat{i}})}{n_{\hat{i}}}$.
Let us assume without loss of generality that $v_{\hat{i}}-v'_{\hat{i}}\ge\frac{2(\overline{M}_{\hat{i}}-\underline{M}_{\hat{i}})}{n_{\hat{i}}}$,
and let
$\BIe_{\hat{i}}\in\R^d$ denote the $\hat{i}$-th standard basis vector of $\R^d$.
It subsequently follows from the definition of $\widehat{V}$ that both
$\BIw:=\BIv - \frac{\overline{M}_{\hat{i}}-\underline{M}_{\hat{i}}}{n_{\hat{i}}}\BIe_{\hat{i}}$
and $\BIw':=\BIv' + \frac{\overline{M}_{\hat{i}}-\underline{M}_{\hat{i}}}{n_{\hat{i}}}\BIe_{\hat{i}}$ belong to $\widehat{V}$.
Next, let $\BIb=(b_1,\ldots,b_d)^\TRANSP\in\R^d$ and $r>0$ denote the center and radius of the circum-hypersphere $S_{C}$ of $C$, 
i.e., $S_C=\big\{\BIx\in\R^d: \|\BIx-\BIb\|_2=r\big\}$.
It thus holds that 
$\|\BIv-\BIb\|_2=\|\BIv'-\BIb\|_2=r$.
Moreover, since $\BIw,\BIw'\in V(\FC)$, 
the Delaunay condition guarantees that 
$\|\BIw-\BIb\|_2\ge r$, $\|\BIw'-\BIb\|_2\ge r$.
Combining these properties
yields 
\begin{align*}
    r^2 &\le \Big\|\BIv-\BIb-{\textstyle\frac{\overline{M}_{\hat{i}}-\underline{M}_{\hat{i}}}{n_{\hat{i}}}}\BIe_{\hat{i}}\Big\|_2^2=r^2 + \frac{2(\overline{M}_{\hat{i}}-\underline{M}_{\hat{i}})}{n_{\hat{i}}}(b_{\hat{i}}-v_{\hat{i}}) + \bigg(\frac{\overline{M}_{\hat{i}}-\underline{M}_{\hat{i}}}{n_{\hat{i}}}\bigg)^2,\\
    r^2 &\le \Big\|\BIv'-\BIb+{\textstyle\frac{\overline{M}_{\hat{i}}-\underline{M}_{\hat{i}}}{n_{\hat{i}}}}\BIe_{\hat{i}}\Big\|_2^2=r^2 + \frac{2(\overline{M}_{\hat{i}}-\underline{M}_{\hat{i}})}{n_{\hat{i}}}(v'_{\hat{i}}-b_{\hat{i}}) + \bigg(\frac{\overline{M}_{\hat{i}}-\underline{M}_{\hat{i}}}{n_{\hat{i}}}\bigg)^2.
\end{align*}
Summing up the two inequalities above,
we get $v_{\hat{i}}-v'_{\hat{i}}\le \frac{\overline{M}_{\hat{i}}-\underline{M}_{\hat{i}}}{n_{\hat{i}}}$
which contradicts 
$v_{\hat{i}}-v'_{\hat{i}}\ge \frac{2(\overline{M}_{\hat{i}}-\underline{M}_{\hat{i}})}{n_{\hat{i}}}$.
We have thus proved that 
$\eta(\FC)\le \frac{\epsilon}{2}$.

Consequently, it holds by Proposition~\ref{prop:momentset-simplex-VIFS}
that $\CG_{\mathsf{simp}}(\FC)$ is a VIFS for~$\FC$,
and thus Theorem~\ref{thm:momentset-euclidean}\ref{thms:momentset-euclidean-bounded} guarantees
that $W_p(\mu,\nu)\le 2\eta(\FC)\le \epsilon$ for any $\mu,\nu\in\CP_p\big(\CY;\CG_{\mathsf{simp}}(\FC)\big)$ satisfying 
$\mu\overset{\CG_{\mathsf{simp}}(\FC)}{\scalebox{3.5}[1]{$\sim$}}\nu$. 
Moreover, it holds that
\begin{align*}
    \big|\CG_{\mathsf{simp}}(\FC)\big|=|\widehat{V}|=\prod_{i=1}^d(1+n_i)=\prod_{i=1}^d\Bigg(1+\Bigg\lceil\frac{2(\overline{M}_i-\underline{M}_i)C_{\|\cdot\|} d^{1/p}}{\epsilon}\Bigg\rceil\Bigg).
\end{align*}
The proof is now complete.
\end{proof}

%% Proof of Corollary (Breeden-Litzenberger)
\begin{proof}[Proof of Corollary~\ref{cor:breedenlitzenberger}]
For $i=1,\ldots,d$, let $\CI_i:=\big\{(-\infty,\kappa_{i,0}]$, $[\kappa_{i,0},\kappa_{i,1}],\ldots,[\kappa_{i,n_i-1},\kappa_{i,n_i}],\allowbreak[\kappa_{i,n_i},\infty)\big\}$, and let $\FC:=\big\{I_1\times\cdots\times I_d : I_i\in\CI_i\;\forall 1\le i\le d\big\}$. 
One observes that $\FC$ is a polyhedral cover of $\CY$ consisting of $d$-dimensional cubes with side lengths all equal to~$\beta$,
and hence $\eta(\FC)\le C_{\|\cdot\|}\beta\|\vecone\|_p= C_{\|\cdot\|} d^{1/p}\beta$.
By letting $\beta_1=\cdots=\beta_d=\beta$ in Proposition~\ref{prop:cover-cube} and then multiplying each function in $\CG_0$ by the positive constant $\beta$, it 
holds that $\lspan_1(\CG_p)$ for $\CG_p$ defined in Corollary~\ref{cor:breedenlitzenberger} is identical to $\lspan_1(\CG_p)$ for $\CG_p$ defined in Proposition~\ref{prop:cover-cube}.
Thus, (\ref{eqn:breedenlitzenberger-bound}) follows from Proposition~\ref{prop:cover-cube}\ref{props:cover-cube-unbounded}
and Theorem~\ref{thm:momentset-euclidean}\ref{thms:momentset-euclidean-unbounded}.

To prove the last statement in Corollary~\ref{cor:breedenlitzenberger}, let us fix an arbitrary $\epsilon>0$ and let $\beta=\frac{\epsilon}{4C_{\|\cdot\|} d^{1/p}}$. For $i=1,\ldots,d$, since $\mu_i\in\CP_p(\R)$, it holds by Lebesgue's dominated convergence theorem that there exists $\underline{\kappa}_i\in\R$ such that
\begin{align*}
\int_{\R}\big((\underline{\kappa}_i-x_i)^+\big)^p\DIFFM{\mu_i}{\DIFF x_i}\le \frac{1}{2d}\left(\frac{\epsilon}{4C_{\|\cdot\|}}\right)^p.
\end{align*}
Subsequently, by applying Lebesgue's dominated convergence theorem again, there exists $n_i\in\N$ such that
\begin{align*}
\int_{\R}\big((x_i-\underline{\kappa}_i-n_i\beta)^+\big)^p\DIFFM{\mu_i}{\DIFF x_i}\le \frac{1}{2d}\left(\frac{\epsilon}{4C_{\|\cdot\|}}\right)^p.
\end{align*}
With these choices of $\beta$, $(\underline{\kappa}_i)_{i=1:d}$, and $(n_i)_{i=1:d}$, we have by (\ref{eqn:breedenlitzenberger-bound}) that
\begin{align*}
\specialoverline{W}_{p}(\mu,[\mu]_{\CG_p})&\le 2C_{\|\cdot\|} d^{1/p}\beta+2C_{\|\cdot\|}\Bigg(\sum_{i=1}^d\int_{\R}\big((\kappa_{i,0}-x_i)^+\big)^p+\big((x_i-\kappa_{i,n_i})^+\big)^p\DIFFM{\mu_i}{\DIFF x_i}\Bigg)^{\frac{1}{p}}\le \epsilon.
\end{align*}
The proof is now complete. 
\end{proof}

\bibliographystyle{abbrvnat}
\bibliography{references} 

\end{document}